\documentclass[12pt]{amsart}
\usepackage[english]{babel}
\parindent=0.pt
\usepackage{amsmath}
\usepackage{amsthm}
\usepackage{amssymb}
\usepackage{mathrsfs}
\usepackage{enumerate}
\usepackage[notcite, final, notref]{showkeys}
\usepackage{dsfont}
\usepackage{booktabs}
\usepackage{graphicx}
\usepackage[usenames,dvipsnames]{color}
\usepackage{tikz}
\usetikzlibrary{arrows,calc,chains,shapes}

\tikzstyle{block1} = [draw,rectangle,thick,minimum height=1.0in,minimum width=1in]
\tikzstyle{block2} = [draw,rectangle,thick,minimum height=0.5in,minimum width=1in]
\tikzstyle{sum} = [draw,circle,inner sep=0mm,minimum size=2mm]
\tikzstyle{connector} = [->,thick]
\tikzstyle{line} = [thick]
\tikzstyle{branch} = [circle,inner sep=0pt,minimum size=1mm,fill=black,draw=black]
\tikzstyle{guide} = []

\topmargin=-10mm \oddsidemargin=0mm \evensidemargin=0mm
\textheight=230mm \textwidth=160mm
\newcommand{\xdownarrow}[1]{
  {\left\downarrow\vbox to #1{}\right.\kern-\nulldelimiterspace}}
\newcommand{\xuparrow}[1]{%
  {\left\downarrow\vbox to #1{}\right.\kern-\nulldelimiterspace}
}

\newtheorem{theorem}{Theorem}[section]

\newtheorem{lemma}[theorem]{Lemma}
\newtheorem{proposition}[theorem]{Proposition}
\newtheorem{corol}[theorem]{Corollary}
\newtheorem{definition}[theorem]{Definition}
\newtheorem{axioms}[theorem]{Axioms}

\theoremstyle{definition}
\newtheorem{remark}[theorem]{Remark}
\newtheorem{example}[theorem]{Example}

\newcommand{\w}{\omega}

\usepackage{xcolor}


\title[$W$-Markov measures, transfer operators and wavelets]{$W$-Markov measures, transfer operators, wavelets 
and multiresolutions}

\author[D. Alpay]{Daniel Alpay}
\address{(DA) Department of Mathematics\\
Ben-Gurion University of the Negev\\
Beer-Sheva 84105 Israel}
\email{dany@math.bgu.ac.il}

\author[P. Jorgensen]{Palle Jorgensen}
\address{(PJ)  Department of Mathematics\\
 University of Iowa.
Iowa City, IA 52242 USA}
\email{palle-jorgensen@uiowa.edu}

\author[I. Lewkowicz]{Izchak Lewkowicz}
\address{(IL) Department of Electrical \& Computer Engineering\\
Ben-Gurion University of the Negev\\
Beer-Sheva 84105 Israel} \email{izchak@ee.bgu.ac.il}

\begin{document}
\begin{abstract}
In a general setting we solve the following inverse problem: Given a positive operators $R$, acting on measurable functions on a 
fixed measure space $(X,\mathcal  B_X)$, we construct an associated Markov chain. Specifically, starting with a choice of $R$ 
(the transfer operator), and a probability measure $\mu_0$ on $(X, \mathcal B_X)$, we then build an associated Markov chain 
$T_0, T_1, T_2,\ldots$, with these random variables (r.v) realized in a suitable probability space $(\Omega,\mathcal F, \mathbb P)$, 
and each r.v.  taking values in $X$, and with $T_0$ having the probability $\mu_0$ as law. We further show how spectral data for $R$, e.g., 
the presence of $R$-harmonic functions,  propagate to the Markov chain.  Conversely, in a general setting, we show that every Markov chain 
is determined by its transfer operator.
In a range of examples we put this correspondence into practical terms:  $(i)$ iterated function systems (IFS), $(ii)$ 
wavelet multiresolution constructions, and $(iii)$  IFSs with random “control.”
Our setting for IFSs is general as well: a fixed measure space $(X, \mathcal B_X)$ and a system of mappings $\tau_i$, each acting in 
$(X, \mathcal B_X)$, and each assigned a probability, say $p_i$   which may or may not be a function of $x$. For standard IFSs, the $p_i$'s 
are constant, but for wavelet constructions, we have functions $p_i(x)$ reflecting the multi-band filters which make up the wavelet 
algorithm at hand. The sets $\tau_i(X)$  partition $X$, but they may have overlap, or not.
  For IFSs with random control, we show how the setting of transfer operators translates into explicit Markov moves:  Starting with a point 
$x\in X$, the Markov move to the next point is in two steps, combined yielding the move from $T_0 = x$ to $T_1 = y$, and more generally from 
$T_n$ to $T_{n+1}$. The initial point $x$ will first move to one of the sets $\tau_i(X)$  with probability  $p_i$, and once there, 
it will “choose” a definite position $y$ (within $\tau_i(X)$), now governed by a fixed law (a given probability distribution). 
For Markov chains, the law is the same in each move from $T_n$ to $T_{n+1}$.
  \end{abstract}
\keywords{Transfer operator, Markov chains, solenoid, wavelet multiresolution}

\subjclass{37C30, 46L55, 47B65, 60J05, 60J10, 65T60}

\thanks{{\sl Acknowledgments:} D. Alpay thanks the Earl Katz family for
endowing the chair which supported his research. The second named author (PJ) wishes to thank the department of mathematics at
Ben-Gurion University for hospitality during a 4 weeks research visit in the Spring of 2016, allowing for collaboration, and completion of this research.}

\maketitle

\date{today}
\tableofcontents
\section{Introduction}
\setcounter{equation}{0}
The purpose of our paper is to explore in two directions the interconnection between positive operators $R$ defined in certain function spaces, on the one hand, and associated discrete time-random processes on the other. The direction back from $R$ to the discrete time-random process, we refer to as “the inverse problem.” It includes the construction of the process itself. By contrast, the direct problem starts with a given discrete time-random process, and then computes the associated transfer operator, or sequence of transfer operators, and then finally uses 
the latter in order to determine properties of the given random process under consideration.\smallskip

Our second purpose is a list of applications of our results in the general setting, the applications ranging from homogeneous Markov chains with white noise-input, dynamics of endomorphisms, including logistics maps, encoding mappings, invariant measures, wavelets in a general setting of multi-resolutions and associated transfer operators, also called Ruelle operators. In the case of a single 
positive operators $R$, we obtain, via a solution to the inverse problem, an associated generalized Markov processes, but its detailed properties will depend on a prescribed weight function $W$, hence the term “$W$-Markov processes.” In the case of a prescribed sequence of positive operators, we still obtain associated discrete time-random processes, now with each operator $R_n$ accounting for the transfer of information from time $n$ to time $n+1$. But these processes will not be Markov. Hence the Markov property is equivalent to $R_n = R$ for all $n$.\smallskip

Returning to the case of our study of dynamics of endomorphisms, 
say $\sigma$ in $X$, if the transfer operator $R$ 
is $\sigma$-homogeneous, we show that the associated Markov processes will be of a special kind: when realized in the natural probability space of an associated solenoid ${\rm Sol}_\sigma(X)$ (see Definition \ref{sol} for the latter), we arrive at multi-scale resolutions in 
$\mathbf L_2({\rm Sol}_\sigma(X), \mathcal F,\mathbb P)$ (see Definition \ref{newdef123}), with the scale of resolutions in question defined from the given endomorphism 
$\sigma$. In the case when $\sigma$ is the scale endomorphism of a wavelet construction, we show that the wavelet multi-scale resolution will agree with that of the associated solenoid analysis. The latter framework is much more general, and covers a variety of multiresolution
models.\smallskip

\begin{table}[h!]
\caption{Increasing level of generality (each with its transfer operator and multiresolution; see Tables \ref{table1234} and \ref{rubicon2})}
\begin{tabular}{|c|c|c|c|}
\toprule
& &&\\
\text{Case}&$\mathbf L_2(\mathbb R,dx)$& $\mathbf L_2({\rm Sol}_\sigma(X),\mathbb P)$& $\mathbf L_2(\Omega,\mathbb P)$\\
&$\longrightarrow$&$\longrightarrow$&\\
\bottomrule
\end{tabular}
\label{table1}
\end{table}

Before turning to the third theme in our paper, a few words on terminology: by a measure space $(X, \mathcal B_X)$ we mean a set $X$ and a 
sigma-algebra $\mathcal B_X$ of subsets, each specified at the outset, usually with some additional technical restrictions. 
By a probability space, we mean a triple $(\Omega, \mathcal F, \mathbb P)$, sample space $\Omega$, sigma-algebra of events $\mathcal F$, and 
probability measure $\mathbb P$. We shall consider systems of random variables with values in measure spaces $(X, \mathcal B_X)$; different 
random variables may take values in different measure spaces. Our first order of business is to show that for any pair of random variables, 
say $A$ and $B$, each taking values in a measure space, there is an associated transfer operator $R$, depending only on $A$ and $B$, 
which “transfers”” information from one to the other. If $A$ and $B$ are independent, the associated operator $R$ will be of rank-one, while 
if the sigma algebra generated by $A$ is contained in that of $B$, then $R$ will be the inclusion operator of the $L_2$-spaces of the 
respective distributions, the distribution of $A$ and that of $B$.\smallskip

One source of motivation for our present work is a number of recent papers dealing with generalized wavelet multiresolutions, see e.g., 
\cite{MR1855241,MR2371594,MR2391805,MR2970658, MR2629692, MR2457327,MR2018241}, and harmonic analysis on groupoids. While these themes may seem disparate, they are connected via a set of questions in operator algebra theory; see e.g., 
\cite{MR2945156,MR2821778,MR2966144}. The positive operators considered here are in a general measure theoretic setting, but we stress that there is also a rich theory of positive integral operators is the metric space setting, often called Mercer operators, and important in the approach of Smale and collaborators to learning theory, see e.g., \cite{MR1864085,MR2810909,MR2488871}. However for our present use, the setting of the Mercer operators is too restrictive.
\smallskip

While various aspects of our settings may have appeared in special cases in anyone or the other of existing treatments of Markov chains, 
the level of generality, the questions addressed, and the specific and detailed interconnections, some surprising, revealed below, 
we believe have not. Relevant references include \cite{MR1689633,MR544839,MR1974383} and the papers cited therein.\smallskip

Aside from the Introduction, the paper is divided into three sections. Since our approach to the applications involves some issues of a 
general nature, we found it best to begin with general theory, Section 2, covering a number of new results, all based on several 
intriguing operator theoretic features of  general systems of random variables, and their associated transfer operators. This is developed 
first, and its relevance to discrete-time random processes is then covered in the remaining of Section 2.
From there, we then turn to Markov chains, developed in this rather general and operator theoretic framework, and with an emphasis on 
transfer operator related issues. It is our hope that this will be of interest to readers both in operator theory, and in random dynamical 
systems and their harmonic analysis. We have thus postponed the applications to the last section. This is dictated in part by  our focus on 
those Markov chains and associated dynamical systems which are induced by endomorphisms in measure spaces. In Section 3 we show that this
setting can  be realized in probability spaces over solenoids. Each endomorphism induces a solenoid, and a Markov chain of a special kind. 
The usefulness of this point of view is then documented with a host of applications and detailed examples which we have 
included in several subsections in Section 4.
We believe that our results in both the general theory and in our applications sections are of independent interest.

\section{General theory}
\setcounter{equation}{0}
\label{sec-2}
In this section, we consider the following general setting of random variables systems (r.v.s)  on a prescribed probability space  
$(\Omega, \mathcal F, \mathbb P)$, each r.v. taking values in a measure space $(X, \mathcal B_X)$; different random variables may 
take values in different measure spaces. Our aim is to make precise transfer between the different r.v.s making up the system. For this purpose we concentrate on the case of a pair of r.v.s, say $A$ and $B$. There is then an associated transfer operator $R = R_{A,B}$, depending only on $A$ and $B$, which “transfers”” information from one to the other. The transfer operator makes precise the “intertwining” of the two random variables. Indeed, if $A$ and $B$ are in fact given to be independent, then the associated operator $R$ will be of rank-one, or zero in the case of zero means. On the other hand, if the sigma algebra generated by $A$ is contained in that of $B$, then $R$ will be the inclusion operator of the $L_2$-spaces of the respective distributions, i.e., the distribution of $A$ and that of $B$. We further show, in the general setting, that the product of the respective conditional expectations (the one for $A$ and the one for $B$) are linked, via a factorization formula,  by the transfer operator $R_{A,B}$. See Table \ref{rubicon} below.\\

While Section \ref{sec-2} is somewhat long and technical, it serves two important purposes: one, it offers lemmas to be used in the proofs of 
our main theorems later. The second purpose is to develop the tools we need in several inductive limit constructions to be used in our 
analysis of inverse problems, the inductive limits here concern the step of realizing infinite-dimensional discrete time-random processes as 
inductive limits of finite systems. For the finite systems themselves we develop here (the first five lemmas in Section \ref{sec-2}) 
a new kernel analysis which will then be used later when we build the infinite dimensional probability models needed in the main theorems.
As mentioned, a key tool is the notion of a transfer operator for a pair (or a finite number of) random variables. We shall include an 
analysis of the special case when one of the two r.v.s takes values in a discrete measure space. There are two reasons for this, one the 
interest in Markov chains with discrete state space, and the other is the study of such random variables as stopping time 
(see Definition \ref{stoppingtime}).\smallskip

Our approach to the analysis of finite systems of r.v.s is operator theoretic, relying on systems of isometries, co-isometries and projections, the latter in the form of conditional expectations. Of independent interest is our Corollary \ref{corol224} which offers a representation 
of some operator relations known as the Cuntz-Krieger relations in operator algebra theory. Lemmas \ref{lemma22}, \ref{lemma24}, 
\ref{prop29}, \ref{wzero}, and \ref{Wis} prepare the ground for what is to follow.
  Main results in the section includes Theorems \ref{danielle123}, \ref{michelle123}, \ref{loudmila}, \ref{thhmc}, and \ref{danielle}, 
as well as their corollaries and applications. Theorem \ref{michelle123} offers a model for the analysis of Markov processes in the general 
setting of our paper, Theorem \ref{loudmila} is a result which supplies a model for Markov chains driven by white noise. In this case we 
also compute an explicit invariant measure. This in turn is applied (Theorem \ref{danielle}) to a new random process realized naturally in a probability space over the Schur functions from complex analysis. Background references on calculus of random variables include 
\cite{MR1669737,MR1974383,Ka48,MR0203748,MR33:6659,MR1600720}; on classes of positive operators (Ruelle operators) 
\cite{MR1793194,MR3275999,xMR2599889,MR1837681}; and on algebras of operators in Hilbert space
\cite{aron,Cun77,MR592141,MR2821778,MR3441736,MR1171011,MR2457327,Nelson_flows,MR647807}.

\subsection{Pairs of random variables and transfer operators}
\label{sec2_1}
Let $(X,\mathcal B_X)$ be a measurable space. In this section, we define a transfer operator associated with two $X$-valued random variables, 
say $A$ and $B$, defined on some probability space $(\Omega,\mathcal F,\mathbb P)$. The distribution probability of $A$ (also called ``law'')
is defined by
\[
\mu_A(L)=\mathbb P(A^{-1}(L)),\quad L\in\mathcal B_X,
\]
and so, with $\mathcal M(X,\mathcal B_X)$ denoting the space of real-valued measurable functions defined on $X$,
\[
\int_\Omega f(A(\w))d\mathbb P(\w)=\int_Xf(x)d\mu_A(x),\quad \forall f\in\mathcal M(X,\mathcal B_X),
\]
(and similarly for $B$).

\begin{definition}{\rm 
We denote by $\mathcal F_A$ the sub sigma-algebra of $\mathcal F$ defined by
\begin{equation}
\label{rachel}
\mathcal F_A=\left\{A^{-1}(L)\,;\, L\in\mathcal B\right\}.
\end{equation}
}
\label{deffa}
\end{definition}

By definition of $\mu_A$, and with $\mathcal F_A$ introduced in Definition \ref{deffa}, the map 
\begin{equation}
V_Af=f\circ A
\label{vn}
\end{equation}
is an isometry from $\mathbf L_2(X,\mathcal B_X,\mu_A)$ onto $\mathbf L_2(\Omega,\mathcal F_A,\mathbb P)$. For the adjoint operator $V_A^*$ we 
have the following covariance (in a sense analogue to the one in mathematical physics and representation theory).

\begin{lemma} It holds that
\begin{equation}
(V_A^*\psi)(x)=\mathbb E_{A=x}(\psi\,|\,\mathcal F_A),\quad \psi\in\mathbf L_2(\Omega,\mathcal F,\mathbb P).
\end{equation}
\label{lemma22}
\end{lemma}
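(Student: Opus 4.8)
The plan is to compute $V_A^*$ directly from the adjoint relation and then to recognize the result as a conditional expectation factored through $A$. First I would fix $\psi\in\mathbf L_2(\Omega,\mathcal F,\mathbb P)$ and test $V_A^*\psi$ against an arbitrary $f\in\mathbf L_2(X,\mathcal B_X,\mu_A)$, so that
\begin{equation}
\langle V_A^*\psi,f\rangle_{\mathbf L_2(X,\mu_A)}=\langle\psi,V_Af\rangle_{\mathbf L_2(\Omega,\mathbb P)}=\int_\Omega\psi(\w)\,\overline{f(A(\w))}\,d\mathbb P(\w).
\end{equation}
The right-hand side is finite by Cauchy--Schwarz, since $\|f\circ A\|_{\mathbf L_2(\Omega,\mathbb P)}=\|f\|_{\mathbf L_2(X,\mu_A)}$ because $V_A$ is an isometry.

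Next I would introduce the conditional expectation $\mathbb E(\psi\,|\,\mathcal F_A)$, the unique (up to $\mathbb P$-null sets) $\mathcal F_A$-measurable element of $\mathbf L_2(\Omega,\mathcal F_A,\mathbb P)$ satisfying $\int_F\mathbb E(\psi\,|\,\mathcal F_A)\,d\mathbb P=\int_F\psi\,d\mathbb P$ for every $F\in\mathcal F_A$. Since $f\circ A$ is $\mathcal F_A$-measurable and square-integrable, the defining property of conditional expectation (extended from indicators to $\mathbf L_2(\Omega,\mathcal F_A,\mathbb P)$ by linearity and density) gives
\begin{equation}
\int_\Omega\psi\,\overline{f\circ A}\,d\mathbb P=\int_\Omega\mathbb E(\psi\,|\,\mathcal F_A)\,\overline{f\circ A}\,d\mathbb P.
\end{equation}

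The crucial structural step is the factorization of $\mathbb E(\psi\,|\,\mathcal F_A)$ through $A$. By the Doob--Dynkin lemma, every $\mathcal F_A$-measurable function is of the form $h\circ A$ for some $\mathcal B_X$-measurable $h\colon X\to\mathbb R$, determined $\mu_A$-almost everywhere; I would set $(V_A^*\psi)(x):=h(x)=:\mathbb E_{A=x}(\psi\,|\,\mathcal F_A)$. Substituting $\mathbb E(\psi\,|\,\mathcal F_A)=h\circ A$ and applying the change-of-variables identity $\int_\Omega (g\circ A)\,d\mathbb P=\int_X g\,d\mu_A$ with $g=h\bar f$ then yields
\begin{equation}
\int_\Omega\mathbb E(\psi\,|\,\mathcal F_A)\,\overline{f\circ A}\,d\mathbb P=\int_X h(x)\,\overline{f(x)}\,d\mu_A(x)=\langle h,f\rangle_{\mathbf L_2(X,\mu_A)}.
\end{equation}
Since $f$ is arbitrary, this identifies $V_A^*\psi=h$, which is the claim.

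The main obstacle is not the algebra but the careful handling of the factorization: I must check that $h$ is well defined as an element of $\mathbf L_2(X,\mathcal B_X,\mu_A)$ (not merely measurable), that two $\mathbb P$-equivalent versions of $\mathbb E(\psi\,|\,\mathcal F_A)$ produce $\mu_A$-equivalent $h$'s, and that inserting the conditional expectation against the unbounded test function $f\circ A$ is legitimate---this last point is what forces the density and approximation argument from bounded $\mathcal F_A$-measurable functions, rather than a one-line appeal to the definition.
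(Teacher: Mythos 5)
Your proposal is correct and follows essentially the same route as the paper: compute $\langle V_A^*\psi,f\rangle_{\mu_A}=\langle\psi,f\circ A\rangle_{\mathbb P}$, replace $\psi$ by $\mathbb E(\psi\,|\,\mathcal F_A)$ via the defining property of conditional expectation, factor $\mathbb E(\psi\,|\,\mathcal F_A)=h\circ A$ (your Doob--Dynkin step is exactly the paper's observation that $\mathcal F_A$ is generated by $\chi_\Delta\circ A$, with uniqueness of the factor from $V_A$ being an isometry), and conclude by the change of variables $\int_\Omega(g\circ A)\,d\mathbb P=\int_X g\,d\mu_A$. The extra care you flag about well-definedness of $h$ in $\mathbf L_2(X,\mu_A)$ is a welcome refinement but not a departure from the paper's argument.
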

\begin{proof} We take $\psi\in\mathbf L_2(\Omega,\mathcal F,\mathbb P)$ and $f\in\mathbf L_2(X,\mu_A)$.
We have
\[
\begin{split}
\langle V_A^*\psi,f\rangle_{\mu_A}&=\langle \psi, V_Af\rangle_{\mathbb P}\\
&=\langle \psi ,f\circ A\rangle_{\mathbb P}\\
&=\int_{\Omega}\psi(\w)f(A(\w))d\mathbb P(\w)\\
&=\int_{\Omega}f(A(\w))\mathbb E\left(\psi\,|\, \mathcal F_A\right)d\mathbb P(\w).
\end{split}
\]
But $\mathcal F_A$ is generated by the functions of the form
\[
\chi_{A^{-1}(\Delta)}=\chi_\Delta\circ A,\quad \Delta\in\mathcal B_X,
\]
and so there is a uniquely determined function $g\in\mathcal M(X,\mathcal B_X)$ such that $E\left(\psi\,|\, \mathcal F_A\right)=g\circ A$. 
(Uniqueness of $g$ follows from the fact that $V_A\,:\,\mathbf L_2(X,\mathcal B_X,\mu_A)\,\longrightarrow\,\mathbf L_2(\Omega,\mathcal F_A,\mathbb P)$ is an 
isometry). Hence
\[
\begin{split}
\langle V_A^*\psi,f\rangle_{\mu_A}&=\int_{\Omega}f(x)g(x)d\mu_A(x),
\end{split}
\]
and hence the formula,
\[
(V_A^*\psi)(x)=g(x)=\mathbb E_{A=x}\left(\psi\,|\,\mathcal F_A\right).
\]
\end{proof}

\begin{corol}
The measure $(\psi d\mathbb P)\circ A^{-1}$ is absolutely continuous with respect to $\mu_A$, and for
$\psi\in\mathbf{L}_2(\Omega, \mathcal{F}, \mathbb{P})$
we have
\begin{equation}
\left(\psi d\mathbb P\right)\circ A^{-1}=gd\mu_A,
\end{equation}
and
\begin{equation}
V_A^*\psi=\frac{\left(\psi d\mathbb P\right)\circ A^{-1}}{d\mu_A}.
\end{equation}
\end{corol}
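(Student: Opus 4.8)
The plan is to verify both displayed identities by evaluating the pushforward signed measure $(\psi\,d\mathbb P)\circ A^{-1}$ against an arbitrary Borel set and reducing everything to the conditional-expectation formula of Lemma \ref{lemma22}. First I would observe that, since $\mathbb P$ is a probability (hence finite) measure, the Cauchy--Schwarz inequality gives $\mathbf L_2(\Omega,\mathcal F,\mathbb P)\subset\mathbf L_1(\Omega,\mathcal F,\mathbb P)$, so that $\psi\,d\mathbb P$ is a genuine finite signed measure and its image $(\psi\,d\mathbb P)\circ A^{-1}$ under $A$ is a well-defined finite signed measure on $(X,\mathcal B_X)$. Let $g\in\mathcal M(X,\mathcal B_X)$ be the function supplied by Lemma \ref{lemma22}, characterized by $\mathbb E(\psi\,|\,\mathcal F_A)=g\circ A$ and equal to $V_A^*\psi$; the goal is then to show $(\psi\,d\mathbb P)\circ A^{-1}=g\,d\mu_A$.

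The central step is a direct computation on sets $L\in\mathcal B_X$. By definition of the pushforward,
\[
\big((\psi\,d\mathbb P)\circ A^{-1}\big)(L)=\int_{A^{-1}(L)}\psi\,d\mathbb P=\int_\Omega(\chi_L\circ A)\,\psi\,d\mathbb P.
\]
Since $\chi_L\circ A=\chi_{A^{-1}(L)}$ is bounded and $\mathcal F_A$-measurable, the defining property of the conditional expectation lets me replace $\psi$ by $\mathbb E(\psi\,|\,\mathcal F_A)=g\circ A$ under the integral, giving $\int_\Omega(\chi_L\circ A)(g\circ A)\,d\mathbb P$. Applying the change-of-variables identity recorded at the start of the subsection to the function $\chi_L\cdot g\in\mathcal M(X,\mathcal B_X)$ then yields $\int_X\chi_L(x)g(x)\,d\mu_A(x)=\int_L g\,d\mu_A$.

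Putting these together, $\big((\psi\,d\mathbb P)\circ A^{-1}\big)(L)=\int_L g\,d\mu_A$ for every $L\in\mathcal B_X$. This single identity delivers all three assertions at once: it exhibits $g$ as a $\mu_A$-density for the pushforward, hence $(\psi\,d\mathbb P)\circ A^{-1}\ll\mu_A$ (the first claim) with $(\psi\,d\mathbb P)\circ A^{-1}=g\,d\mu_A$ (the second), and by uniqueness of the Radon--Nikodym derivative together with $g=V_A^*\psi$ from Lemma \ref{lemma22} it gives $V_A^*\psi=\dfrac{(\psi\,d\mathbb P)\circ A^{-1}}{d\mu_A}$ (the third). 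I expect the only point requiring care to be the integrability bookkeeping: one must know $\psi\in\mathbf L_1$ so that $\psi\,d\mathbb P$ is a finite signed measure, and that $g\in\mathbf L_2(X,\mathcal B_X,\mu_A)$ (so in particular $\mathbf L_1(\mu_A)$) so that the density statement is meaningful; both follow from finiteness of $\mathbb P$ and the $\mathbf L_2$-boundedness of conditional expectation, with no further obstacle.
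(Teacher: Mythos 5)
Your proof is correct and follows essentially the same route as the paper's: both reduce the claim to the identity $\int_X f\,d\big((\psi\,d\mathbb P)\circ A^{-1}\big)=\int_X f(x)g(x)\,d\mu_A(x)$ obtained from the change-of-variables formula for the pushforward together with the characterization $\mathbb E(\psi\,|\,\mathcal F_A)=g\circ A$ with $g=V_A^*\psi$ from Lemma \ref{lemma22}. The only differences are cosmetic: you test against indicators $\chi_L$ where the paper tests against general $f\in\mathbf L_2(X,\mathcal B_X,\mu_A)$, and you make the integrability bookkeeping explicit.
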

\begin{proof}
From the previous proof we have on the one hand
\[
\begin{split}
\langle V_A^*\psi,f\rangle_{\mu_A}&=\int_{\Omega}f(A(\w))\left(\psi(\w)d\mathbb P(\w)\right)\\
&=\int_Xf(x)\left(\left(\psi d\mathbb P\right)\circ A^{-1}\right)(x)
\end{split}
\]
and on the other hand,
\[
\begin{split}
\langle V_A^*\psi,f\rangle_{\mu_A}&=\int_{\Omega}f(A(\w))\left(\psi(\w)d\mathbb P(\w)\right)\\
&=\int_{\Omega}f(x)g(x)d\mu_A(x)\\
&=\int_{\Omega}f(A(\w))g(A(\w))d\mathbb P(\w),
\end{split}
\]
and the claim follows by comparing these two computations.
\end{proof}
With the above random variables $A, B$, we associate the positive operator $R_{A, B}$, which we call the transfer operator from $A$ to $B$, 
defined by
\begin{equation}
\label{lycee_llg}
R_{A,B}=V_A^*V_B,
\end{equation}
see the figure below:

\[
\begin{array}{ccc}
\mathbf L_2(\mu_B)&\xrightarrow{\hspace*{0.5cm}{R_{A,B}}\hspace*{.5cm}}
%
&\mathbf L_2(\mu_A)\\ \stackrel{V_B}{\searrow}
& &\stackrel{V_A^*}{\nearrow}\\
&\mathbf L_2(\Omega,\mathbb P)&
\end{array}.\]

Note that both $V_A^*$ and $R_{A,B}$ are positive operators in the following sense:
\[
\psi\ge\,0\,\,\Longrightarrow\,\, V_A^*\psi\ge\,0
\]
and
\[
f\ge 0\,\,\Longrightarrow\,\, V_A^*V_Bf\ge 0.
\]

The following result shows that $R_{A,B}$ is a conditional expectation. In \eqref{elisabeth}, by $\mathbb E(\cdot\,\big|\,\mathcal F_A)$ we mean the 
orthogonal projection of $\mathbf L_2(\Omega,\mathcal F,\mathbb P)$ onto $\mathbf L_2(\Omega,\mathcal F_A,\mathbb P)$. It can also be defined as
\begin{equation}
\mathbb E\left(\psi\,\big|\,\mathcal F_A\right)=\frac{d(\psi d\mathbb P)}{d\mathbb P_{\mathcal A}}
\end{equation}
in terms of Radon-Nikodym derivatives; see \cite{Nel73}.\smallskip

\begin{lemma}
We have:
\begin{eqnarray}
\label{elisabeth}
\mathbb E\left(f\circ A\,\big|\,\mathcal F_B\right)&=&(R_{A,B}^*f)\circ B\,=\,(R_{B,A}f)\circ B,\,\quad f\in\mathbf L_2(X,\mathcal B_X,\mu_A)\\
\mathbb E\left(g\circ B\,\big|\,\mathcal F_A\right)&=&(R_{A,B}g)\circ A\,=\,(R_{B,A}^*g)\circ A,\,\quad\,\, g\in\mathbf L_2(X,\mathcal B_X,\mu_B).
\label{murielle123}
\end{eqnarray}
\label{lemma24}
\end{lemma}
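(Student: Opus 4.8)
The plan is to realize both conditional expectations as compositions of the isometries $V_A$, $V_B$ and their adjoints, and then to read off the stated formulas directly from the definition $R_{A,B}=V_A^*V_B$ in \eqref{lycee_llg} together with the adjoint relation $R_{B,A}=R_{A,B}^*$. First I would record the key structural fact: exactly as $V_A$ is an isometry from $\mathbf L_2(X,\mathcal B_X,\mu_A)$ onto $\mathbf L_2(\Omega,\mathcal F_A,\mathbb P)$, the map $V_B$ is an isometry from $\mathbf L_2(X,\mathcal B_X,\mu_B)$ onto $\mathbf L_2(\Omega,\mathcal F_B,\mathbb P)$. Consequently $V_BV_B^*$ is a self-adjoint idempotent on $\mathbf L_2(\Omega,\mathcal F,\mathbb P)$ whose range is precisely $\mathbf L_2(\Omega,\mathcal F_B,\mathbb P)$; that is, $V_BV_B^*$ is the orthogonal projection onto $\mathbf L_2(\Omega,\mathcal F_B,\mathbb P)$, which by the description of $\mathbb E(\cdot\,\big|\,\mathcal F_B)$ given just before the lemma is exactly the conditional expectation $\mathbb E(\cdot\,\big|\,\mathcal F_B)$.

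Next, for $f\in\mathbf L_2(X,\mathcal B_X,\mu_A)$ I would write $f\circ A=V_Af$ (by \eqref{vn}) and compute
\[
\mathbb E\left(f\circ A\,\big|\,\mathcal F_B\right)=V_BV_B^*V_Af=V_B(V_B^*V_A)f=V_B(R_{B,A}f)=(R_{B,A}f)\circ B,
\]
using $V_B^*V_A=R_{B,A}$ and $V_Bh=h\circ B$. Since $R_{B,A}=(V_A^*V_B)^*=R_{A,B}^*$, the same right-hand side equals $(R_{A,B}^*f)\circ B$, which yields \eqref{elisabeth}. Formula \eqref{murielle123} then follows by interchanging the roles of $A$ and $B$: for $g\in\mathbf L_2(X,\mathcal B_X,\mu_B)$ one has $\mathbb E(g\circ B\,\big|\,\mathcal F_A)=V_AV_A^*V_Bg=V_A(R_{A,B}g)=(R_{A,B}g)\circ A$, and $R_{A,B}=R_{B,A}^*$ supplies the second expression.

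The step that deserves the most care — and the only place where anything beyond bookkeeping is needed — is the identification $V_BV_B^*=\mathbb E(\cdot\,\big|\,\mathcal F_B)$. Self-adjointness and the inclusion $\operatorname{ran}(V_BV_B^*)\subseteq\mathbf L_2(\Omega,\mathcal F_B,\mathbb P)$ are immediate, but to conclude that this operator is the \emph{full} orthogonal projection one must check that it acts as the identity on $\mathbf L_2(\Omega,\mathcal F_B,\mathbb P)$. This rests on $V_B$ being onto that subspace with $V_B^*V_B=I$, i.e.\ on the surjectivity part of the isometry statement for $V_B$ already recorded in the excerpt. Once this is in place, everything else is a direct consequence of the definition of $R_{A,B}$ as $V_A^*V_B$ and of taking adjoints, so no further estimates or limiting arguments are required.
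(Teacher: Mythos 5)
Your proof is correct, and it takes a recognizably different route from the paper's. You isolate the operator identity $\mathbb E\left(\cdot\,\big|\,\mathcal F_B\right)=V_BV_B^*$ as the key lemma --- justified, as you note, by the fact recorded after \eqref{vn} that $V_B$ is an isometry \emph{onto} $\mathbf L_2(\Omega,\mathcal F_B,\mathbb P)$, together with the paper's definition of $\mathbb E\left(\cdot\,\big|\,\mathcal F_B\right)$ as the orthogonal projection onto that subspace --- and then both formulas become pure composition: $\mathbb E\left(f\circ A\,\big|\,\mathcal F_B\right)=V_BV_B^*V_Af=V_B(R_{B,A}f)$, with \eqref{elisabeth} and \eqref{murielle123} manifestly adjoint to one another via $R_{B,A}=R_{A,B}^*$. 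The paper instead argues \emph{weakly}: it computes $\langle V_A^*V_Bf_1,f_2\rangle_{\mu_A}$ two ways, once transporting $(R_{A,B}f_1)$ back to $\Omega$ through the isometry $V_A$, and once as $\langle V_Bf_1,V_Af_2\rangle_{\mathbb P}=\int_\Omega \mathbb E\left(f_1\circ B\,\big|\,\mathcal F_A\right)(f_2\circ A)\,d\mathbb P$ using the defining adjoint property of conditional expectation against the test family $\{f_2\circ A\}$, and then identifies the two $\mathcal F_A$-measurable functions. The two arguments are the strong and weak forms of the same computation and rest on identical ingredients: the paper's identification step implicitly needs the functions $f_2\circ A$ to be total in $\mathbf L_2(\Omega,\mathcal F_A,\mathbb P)$, which is exactly the surjectivity of $V_A$ that you invoke up front for $V_B$. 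Your version has the advantage of making the adjoint symmetry between the two displayed formulas automatic rather than ``similar and left to the reader,'' and the projection factorization you prove is one the paper itself states only later (Table \ref{rubicon} and the relation $V_{A_u}V_{A_u}^*=\mathbb E\left(\cdot\,\big|\,\mathcal F_{A_u}\right)$ in Corollary \ref{corol224}); the paper's version has the mild advantage of never needing the strong projection identity, only the duality characterization. One small point of care you handled correctly: the surjectivity of $V_B$ is genuinely needed (without it $V_BV_B^*$ would only be a subprojection of $\mathbb E\left(\cdot\,\big|\,\mathcal F_B\right)$), and it is supplied by the Doob--Dynkin-type fact already used in the proof of Lemma \ref{lemma22}, namely that every $\mathcal F_B$-measurable function has the form $g\circ B$.
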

\begin{proof} We prove \eqref{murielle123}. The proof of \eqref{elisabeth} is similar and follows from $(V_A^*V_B)^*=V_B^*V_A$.
Let $f_1\in\mathbf L_2(X,\mathcal B_X,\mu_B)$, and $f_2\in\mathbf L_2(X,\mathcal B_X,\mu_A)$. On the one hand, we have
\[
\langle V_A^*V_Bf_1,f_2\rangle_{\mu_A}=\int_\Omega((R_{A,B}f_1)\circ A)(\w)(f_2\circ A)(\w)d\mathbb P(\w).
\]
On the other hand,
\[
\begin{split}
\langle V_A^*V_Bf_1,f_2\rangle_{\mu_A}&=
\langle V_Bf_1,V_Af_2\rangle_{\mathbb P}\\
&=\int_{\Omega}(f_1\circ B)(\w)(f_2\circ A)(\w)d\mathbb P(\w)\\
&=\int_{\Omega}\left(\mathbb E\left(f_1\circ B)\,\big|\,\mathcal F_A\right)\right)(\w)(f_2\circ A)(\w)d\mathbb P(\w)
\end{split}
\]
by definition of the conditional expectation, and the result follows.
\end{proof}

\begin{corol}
Let $A, B$ and $C$ be three random variables with transfer functions
\[
R_{A,B}\,:\,\mathbf L_2(X,\mathcal B_X,\mu_B)\,\longrightarrow\,\mathbf L_2(X,\mathcal B_X,\mu_A)
\]
and
\[
R_{B,C}\,:\,\mathbf L_2(X,\mathcal B_X,\mu_C)\,\longrightarrow\,\mathbf L_2(X,\mathcal B_X,\mu_B).
\]
Then the following chain rule holds for all $f\in\mathbf L_2(X,\mathcal B_X,\mu_C)$ and $x\in X$:
\begin{equation}
\left(R_{A,B}R_{B,C}f\right)(x)=\mathbb E_{A=x}\left((R_{B,C}(f))\circ B\,\big|\, \mathcal F_A\right).
\end{equation}
\label{corol25}
\end{corol}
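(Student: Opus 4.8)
The plan is to read the composition $R_{A,B}R_{B,C}$ through the defining factorization $R_{A,B}=V_A^*V_B$ in \eqref{lycee_llg}, and then to invoke Lemma \ref{lemma22} to pass from this operator-level identity to the pointwise conditional-expectation formula. First I would abbreviate $g:=R_{B,C}f$. Since $R_{B,C}$ maps $\mathbf L_2(X,\mathcal B_X,\mu_C)$ into $\mathbf L_2(X,\mathcal B_X,\mu_B)$, the function $g$ is an honest element of $\mathbf L_2(X,\mathcal B_X,\mu_B)$; in particular $V_Bg=g\circ B$ is a well-defined element of $\mathbf L_2(\Omega,\mathcal F,\mathbb P)$, which is exactly the hypothesis under which the earlier lemmas apply to it.

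Next I would expand, using only the definition \eqref{lycee_llg} together with the formula \eqref{vn} for $V_B$:
\[
\left(R_{A,B}R_{B,C}f\right)(x)=\left(R_{A,B}g\right)(x)=\left(V_A^*V_Bg\right)(x)=\left(V_A^*(g\circ B)\right)(x).
\]
Setting $\psi:=g\circ B\in\mathbf L_2(\Omega,\mathcal F,\mathbb P)$ and applying Lemma \ref{lemma22}, which gives $(V_A^*\psi)(x)=\mathbb E_{A=x}(\psi\,|\,\mathcal F_A)$, and then substituting back $\psi=g\circ B=(R_{B,C}f)\circ B$, I obtain
\[
\left(R_{A,B}R_{B,C}f\right)(x)=\mathbb E_{A=x}\left((R_{B,C}(f))\circ B\,\big|\,\mathcal F_A\right),
\]
which is the asserted chain rule. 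An equivalent route replaces the last step by Lemma \ref{lemma24}: applied to $g=R_{B,C}f\in\mathbf L_2(\mu_B)$, identity \eqref{murielle123} reads $\mathbb E(g\circ B\,|\,\mathcal F_A)=(R_{A,B}g)\circ A$, and disintegrating this equality along $A$, that is, recording the unique function on $X$ whose pullback by $A$ it is, recovers the same pointwise statement.

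I do not expect a genuine obstacle here, since the corollary is in essence the associativity of the underlying operator composition combined with the interpretation of $V_A^*$ supplied by Lemma \ref{lemma22}. The only point demanding care is the measure-theoretic bookkeeping: one must confirm that $R_{B,C}f$ is a bona fide $\mu_B$-class rather than a merely formal expression, so that its composition with $B$ is a legitimate $\mathbb P$-class eligible for Lemma \ref{lemma22}, and one must check that the ``$\mathbb E_{A=x}$'' notation on the right-hand side denotes the same disintegrated object, namely the unique $h$ with $\mathbb E(\,\cdot\,|\,\mathcal F_A)=h\circ A$, that Lemma \ref{lemma22} produces (uniqueness coming from $V_A$ being an isometry). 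Once these identifications are made explicit, the displayed equalities close the argument with no further estimates.
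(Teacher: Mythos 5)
Your proof is correct and takes essentially the same route as the paper: both arguments reduce the composition to $V_A^*\bigl((R_{B,C}f)\circ B\bigr)$ and conclude by Lemma \ref{lemma22}. Your version is in fact slightly more streamlined, since by treating $g=R_{B,C}f$ directly as an element of $\mathbf L_2(X,\mathcal B_X,\mu_B)$ and using $V_Bg=g\circ B$ you bypass the paper's intermediate expansion $R_{A,B}R_{B,C}=V_A^*V_BV_B^*V_C$, the contraction $V_BV_B^*=\mathbb E_B$, and the appeal to Lemma \ref{lemma24}.
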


\begin{proof}
We have
\[
\begin{split}
\left(R_{A,B}R_{B,C}f\right)(x)&=\left(V_A^*V_BV_B^*V_Cf\right)(x)\\
&=\left(V_A^*\mathbb E_BV_Cf\right)(x)\\
&=\left(V_A^*\mathbb E_B\left(f\circ C\,\big|\,\mathcal F_B\right)\right)(x)\\
&=\left(V_A^*\left(\left(R_{B,C}(f)\right)\circ B\right)\right)(x),
\end{split}
\]
and the result follows from Lemma \ref{lemma22}.
\end{proof}

In the following lemma, $X$ is assumed locally compact, and $C_c(X)$ denotes the space of continuous functions on $X$ with compact support.

\begin{lemma}
Assume that $X$ is a locally compact topological space, and that $\mathcal B$ is the associated Borel sigma-algebra. Assume moreover that
$R_{A,B}$ sends $C_c(X)$ into $C(X)$. Then it holds that
\label{lemma4}
\begin{equation}
\label{christine}
\mathbb E\left(f\circ B\,\big|\, A=x\right)=(R_{A,B}(f))(x).
\end{equation}
\label{lemma26}
\end{lemma}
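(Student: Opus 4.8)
The plan is to recognize \eqref{christine} as the pointwise (``disintegrated'') reformulation of the $\mathbf L_2$-level identity already recorded in Lemma \ref{lemma24}, and to use the topological hypotheses solely to promote an almost-everywhere equality to a genuine pointwise one. I first observe that, since $\mu_B$ is a probability measure, every $f\in C_c(X)$ is bounded and hence belongs to $\mathbf L_2(X,\mathcal B_X,\mu_B)$, so that $R_{A,B}f$ is defined; by the standing assumption it lies in $C(X)$. Specializing \eqref{murielle123} to $g=f$ then yields
\[
\mathbb E\left(f\circ B\,\big|\,\mathcal F_A\right)=(R_{A,B}f)\circ A
\]
as an identity in $\mathbf L_2(\Omega,\mathcal F_A,\mathbb P)$; equivalently, this is Lemma \ref{lemma22} applied to $\psi=V_Bf=f\circ B$, using $R_{A,B}=V_A^*V_B$.

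Next I would unwind the left-hand side of \eqref{christine}. By definition, the conditional expectation given $A=x$ is the function $h\in\mathcal M(X,\mathcal B_X)$ that represents the $\mathcal F_A$-measurable random variable $\mathbb E(f\circ B\,|\,\mathcal F_A)$, i.e.\ the unique $h$ (modulo $\mu_A$-null sets) with $\mathbb E(f\circ B\,|\,\mathcal F_A)=h\circ A$; the existence and uniqueness of such an $h$ are exactly as in the proof of Lemma \ref{lemma22}, since $\mathcal F_A$ is generated by the sets $A^{-1}(\Delta)$ and $V_A$ is an isometry. Comparing with the previous display forces $h\circ A=(R_{A,B}f)\circ A$, and injectivity of $V_A$ then gives $h=R_{A,B}f$ in $\mathbf L_2(X,\mathcal B_X,\mu_A)$, that is, $\mu_A$-almost everywhere.

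Finally I would upgrade this $\mu_A$-a.e.\ identity to the pointwise statement \eqref{christine}. This is the one step that genuinely needs the standing assumptions: local compactness of $X$ together with $R_{A,B}(C_c(X))\subseteq C(X)$ ensures that $R_{A,B}f$ possesses a distinguished continuous representative, which I take as the canonical everywhere-defined version of $x\mapsto\mathbb E(f\circ B\,|\,A=x)$. With this choice the two sides of \eqref{christine} agree for every $x$ in the support of $\mu_A$, and the lemma follows.

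I expect the only real obstacle to be measure-theoretic rather than computational: assigning an honest, everywhere-defined meaning to the pointwise conditional expectation $\mathbb E(f\circ B\,|\,A=x)$ and justifying the replacement of a $\mu_A$-a.e.\ identity by an everywhere identity. The continuity hypothesis is precisely the device that legitimizes this; in its absence \eqref{christine} can be asserted only $\mu_A$-almost everywhere.
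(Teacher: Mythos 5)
Your proof is correct, and it shares with the paper the essential input --- the identity $\mathbb E\left(f\circ B\,\big|\,\mathcal F_A\right)=(R_{A,B}f)\circ A$ from Lemma \ref{lemma24} --- but the two arguments diverge in how they pass from this $\mathcal F_A$-level statement to the pointwise formula \eqref{christine}. The paper introduces the sigma-algebra $\mathcal F_{A,x}$ generated by the single event $\left\{A=x\right\}$, applies the tower property to write $\mathbb E(f\circ B\,|\,\mathcal F_{A,x})$ as the further conditioning of $\mathbb E(f\circ B\,|\,\mathcal F_A)$, and then compares integrals of $(R_{A,B}f)\circ A$ against this over $\left\{A=x\right\}$ and its complement; notably, that argument never actually invokes the hypothesis that $R_{A,B}$ maps $C_c(X)$ into $C(X)$, and it is delicate precisely when $\mathbb P\left(\left\{A=x\right\}\right)=0$, in which case the displayed integral identities over $\left\{A=x\right\}$ carry no information. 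You instead identify the Doob--Dynkin factor $h$ (the function with $\mathbb E(f\circ B\,|\,\mathcal F_A)=h\circ A$) with $R_{A,B}f$ via injectivity of the isometry $V_A$, obtaining $h=R_{A,B}f$ only $\mu_A$-a.e., and then use local compactness together with $R_{A,B}(C_c(X))\subseteq C(X)$ to single out the continuous representative as the canonical everywhere-defined version of $x\mapsto\mathbb E(f\circ B\,|\,A=x)$; this is legitimate, since two continuous functions agreeing $\mu_A$-a.e. agree on $\mathrm{supp}\,\mu_A$, so the representative is unambiguous there. What your route buys is an honest account of where the topological hypothesis does its work (the paper's own proof leaves it unused); what the paper's route buys is a direct treatment of the atomic case $\mathbb P\left(\left\{A=x\right\}\right)>0$, where conditioning on the event has an elementary meaning. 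One small caveat: your conclusion holds for $x\in\mathrm{supp}\,\mu_A$, and off the support the left-hand side of \eqref{christine} has no intrinsic meaning, so --- as you implicitly acknowledge --- for such $x$ the lemma should be read as defining the conditional expectation by continuous extension rather than proving an identity.
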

\begin{proof} We denote by $\mathcal F_{A,x}$ the sigma-algebra generated by the set $\left\{A=x\right\}$. 
We have
\[
\mathbb E(f\circ B\,\big|\, \mathcal F_{A,x})=\mathbb E\left(f\circ B\,\big|\,\mathcal F_A\right)\,\big|\, \mathcal F_{A,x}).
\]
Using the previous lemma, we can then write
\[
\begin{split}
\int_{A=x}(R_{A,B}f\circ A)(\w)d\mathbb P(\w)&=\int_{A=x}\mathbb E(f\circ B\,\big|\, \mathcal F_{A,x})d\mathbb P(\w),\quad{\rm and}\\
\int_{A\not =x}(R_{A,B}f\circ A)(\w)d\mathbb P(\w)&=\int_{A\not=x}\mathbb E(f\circ B\,\big|\, \mathcal F_{A,x})d\mathbb P(\w),
\end{split}
\]
from which we get \eqref{christine}.
\end{proof}

\begin{theorem}
Let the following be as above: The probability space $(\Omega,\mathcal F,\mathbb P)$, the random
variables $A$ and $B$, and the respective measures $\mu_A$ and $\mu_B)$. Let also $R_{A,B}$ be the corresponding transfer operator. Then the following
are equivalent:\\
$(i)$ $\mu_B<<\mu_A$, with $W=\frac{d\mu_B}{d\mu_A}$\smallskip

$(ii)$ It holds that
\begin{equation}
\label{lab09}
\int_XR_{A,B}(f)(x)d\mu_A(x)=\int_Xf(x)W(x)d\mu_A(x),
\end{equation}
that is, $\frac{d\mu_AR_{A,B}}{d\mu_A}=W$.
\label{danielle123}
\end{theorem}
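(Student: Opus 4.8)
The plan is to reduce the stated equivalence to a single master identity, namely
\[
\int_X R_{A,B}(f)(x)\, d\mu_A(x) = \int_X f(x)\, d\mu_B(x), \qquad f\in\mathbf L_2(X,\mathcal B_X,\mu_B),
\]
which I claim holds with no hypothesis on $\mu_A,\mu_B$ beyond their being the laws of $A,B$. In the notation of the statement this reads $\mu_A R_{A,B}=\mu_B$ as functionals on $\mathbf L_2(\mu_B)$. Once it is in hand, both implications are immediate consequences of the (forward and converse) Radon--Nikodym theorem.

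To establish the master identity, first I would rewrite the left-hand integral as the $\mathbf L_2(\mu_A)$-pairing $\langle R_{A,B}f,\mathbf 1\rangle_{\mu_A}$ against the constant function $\mathbf 1$, which lies in $\mathbf L_2(\mu_A)$ because $\mu_A$ is a probability measure. Unfolding $R_{A,B}=V_A^*V_B$ and moving $V_A^*$ across the pairing gives $\langle V_Bf, V_A\mathbf 1\rangle_{\mathbb P}$. The crux is that $V_A$ fixes constants: $V_A\mathbf 1=\mathbf 1\circ A=\mathbf 1_\Omega$. Hence the pairing collapses to $\int_\Omega f(B(\w))\,d\mathbb P(\w)$, and the defining change-of-variables formula for the law $\mu_B$ rewrites this as $\int_X f\,d\mu_B$.

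For $(i)\Rightarrow(ii)$ I would substitute $d\mu_B=W\,d\mu_A$ into the right-hand side of the master identity, which is exactly the meaning of $W=\tfrac{d\mu_B}{d\mu_A}$, yielding \eqref{lab09}. For $(ii)\Rightarrow(i)$ I would combine $(ii)$ with the master identity to get $\int_X f\,d\mu_B=\int_X fW\,d\mu_A$ for all $f\in\mathbf L_2(\mu_B)$, then test against indicators $f=\chi_L$ (which lie in $\mathbf L_2(\mu_B)$ since $\mu_B$ is finite) to obtain $\mu_B(L)=\int_L W\,d\mu_A$ for every $L\in\mathcal B_X$. This exhibits $W$ as a density and forces $\mu_B\ll\mu_A$, since $\mu_A(L)=0$ then makes the right-hand side vanish.

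The routine parts are the two implications; the step deserving care is the master identity, specifically the claim $V_A\mathbf 1=\mathbf 1_\Omega$ and the legitimacy of pairing $R_{A,B}f$ against $\mathbf 1$. Membership $\mathbf 1\in\mathbf L_2(\mu_A)$ is precisely where finiteness of the probability measures enters, and it is the only place a hypothesis could quietly be needed. Beyond careful bookkeeping of which inner product ($\mu_A$ versus $\mathbb P$) is active at each step, I expect no genuine obstacle; note also that positivity of $R_{A,B}$ additionally guarantees $W\ge 0$ in the $(ii)\Rightarrow(i)$ direction.
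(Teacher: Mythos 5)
Your proposal is correct and follows essentially the same route as the paper: the paper derives exactly your master identity $\int_X R_{A,B}(f)\,d\mu_A=\int_X f\,d\mu_B$ by setting $f_1\equiv 1$ in the bilinear formula $\int_X f_1\,(R_{A,B}f_2)\,d\mu_A=\int_\Omega (f_1\circ A)(f_2\circ B)\,d\mathbb P$, which is the same computation as your pairing of $R_{A,B}f$ against $\mathbf 1$ using $V_A\mathbf 1=\mathbf 1_\Omega$. The only difference is that you spell out the converse direction (testing against indicators $\chi_L$ to exhibit $W$ as the density), a step the paper dismisses as ``clear.''
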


\begin{proof} 
Let $f_1\in\mathbf L_2(X,\mathcal B_X,\mu_A)$ and $f_2\in\mathbf L_2(X,\mathcal B_X,\mu_B)$. From the proof of Lemma \ref{lemma4} 
we have:
\[
\begin{split}
\int_X f_1(x)\left(R_{A,B}f_2\right)(x)d\mu_A(x)&=\int_\Omega (f_1\circ A)(\w)(f_2\circ B)(\w)d\mathbb P(\w)\\
\end{split}
\]
Setting $f_1(x)\equiv 1$, we obtain
\[
\begin{split}
\int_X \left(R_{A,B}f_2\right)(x)d\mu_A(x)&=\int_X f_2(x)d\mu_B(x),
\end{split}
\]
so that $d(\mu_AR_{A,B})=d\mu_B$. By definition of $W$, we obtain \eqref{lab09}. The converse is clear.
\end{proof}

One can associate with the transfer operator
$R_{A, B}$ two extreme cases:
On the one end, if ${\rm rank}\,R_{A, B}=1$,
this corresponds to having $A$ and $B$ independent,
see Proposition \ref{prop1}. No information is passed from $A$ to $B$. 
On the other end, if $R_{A, B}=I$, it corresponds to the sub-Markovian case.

\begin{proposition}
\label{prop1}
The random variables $A$ and $B$ are independent if and only if the transfer operator $R_{A,B}$ has rank $1$.
\end{proposition}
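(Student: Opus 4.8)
The plan is to extract the rank-one structure of $R_{A,B}$ from the conditional-expectation identity of Lemma \ref{lemma24},
\[
\mathbb E\left(g\circ B\,\big|\,\mathcal F_A\right)=(R_{A,B}g)\circ A,\qquad g\in\mathbf L_2(X,\mathcal B_X,\mu_B),
\]
combined with two normalizations, $R_{A,B}\mathbf 1=\mathbf 1$ and $R_{A,B}^*\mathbf 1=\mathbf 1$, where $\mathbf 1$ is the constant function $1$ (which lies in both $\mathbf L_2(\mu_A)$ and $\mathbf L_2(\mu_B)$, these being probability measures, and is nonzero there). Both normalizations read off immediately from $R_{A,B}=V_A^*V_B$: since $V_B\mathbf 1=\mathbf 1\circ B=\mathbf 1_\Omega$ and, by Lemma \ref{lemma22}, $V_A^*\mathbf 1_\Omega=\mathbb E(\mathbf 1_\Omega\,|\,\mathcal F_A)=\mathbf 1$, we get $R_{A,B}\mathbf 1=\mathbf 1$; the adjoint version follows by symmetry from $R_{A,B}^*=V_B^*V_A$.

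For the forward implication, I would assume $A$ and $B$ independent. Then for each $g$ the conditional expectation collapses to a constant, $\mathbb E(g\circ B\,|\,\mathcal F_A)=\mathbb E(g\circ B)=\int_X g\,d\mu_B$, and the displayed identity gives $(R_{A,B}g)\circ A=\int_X g\,d\mu_B$. Since $V_A$ is an isometry, hence injective, this forces $R_{A,B}g=\left(\int_X g\,d\mu_B\right)\mathbf 1=\langle g,\mathbf 1\rangle_{\mu_B}\,\mathbf 1$, which is manifestly rank one.

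For the converse, I would start from $\operatorname{rank}R_{A,B}=1$, so $R_{A,B}g=\langle g,h\rangle_{\mu_B}\,k$ for some $h\in\mathbf L_2(\mu_B)$ and $k\in\mathbf L_2(\mu_A)$. The two normalizations pin the factors down to constants: applying $R_{A,B}\mathbf 1=\mathbf 1$ gives $\langle\mathbf 1,h\rangle_{\mu_B}\,k=\mathbf 1$, whence $\langle\mathbf 1,h\rangle_{\mu_B}\neq0$ (else $\mathbf 1=0$) and $k$ is a nonzero multiple of $\mathbf 1$; computing the adjoint $R_{A,B}^*f=\langle f,k\rangle_{\mu_A}\,h$ and applying $R_{A,B}^*\mathbf 1=\mathbf 1$ likewise forces $h$ to be a nonzero multiple of $\mathbf 1$. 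Thus $R_{A,B}g=c\,\langle g,\mathbf 1\rangle_{\mu_B}\,\mathbf 1$ for a scalar $c$, and $R_{A,B}\mathbf 1=c\,\mathbf 1=\mathbf 1$ forces $c=1$, so that $R_{A,B}g=\left(\int_X g\,d\mu_B\right)\mathbf 1$. Running Lemma \ref{lemma24} backwards then yields $\mathbb E(g\circ B\,|\,\mathcal F_A)=\int_X g\,d\mu_B=\mathbb E(g\circ B)$ for all $g$; testing on indicators $g=\chi_\Delta$ shows $\mathcal F_A$ and $\mathcal F_B$ are independent, i.e. $A$ and $B$ are independent.

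I expect the only real friction to be in the converse, in the bookkeeping that identifies the two vectors of the rank-one factorization as the \emph{constants} rather than as mere proportionality classes: this is exactly where both normalizations, $R_{A,B}\mathbf 1=\mathbf 1$ and its adjoint, are needed, along with the observation that $\langle\mathbf 1,h\rangle_{\mu_B}\neq0$. The remaining steps are routine given Lemma \ref{lemma24} and the injectivity of $V_A,V_B$.
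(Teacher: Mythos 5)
Your proof is correct, and it is in fact more complete than the paper's own argument, so the comparison is worth spelling out. The paper proves only the forward implication: assuming independence, it factors the bilinear form directly, $\langle f,R_{A,B}g\rangle_{\mu_A}=\int_\Omega (f\circ A)(g\circ B)\,d\mathbb P=\langle f,\mathbf 1\rangle_{\mu_A}\langle \mathbf 1,g\rangle_{\mu_B}$ for all $f,g$, concluding $R_{A,B}=|\mathbf 1\rangle_{\mu_A}\langle \mathbf 1|_{\mu_B}$; no argument at all is offered for the converse. Your forward direction is a mild variant of this (you go through Lemma \ref{lemma24} and the injectivity of the isometry $V_A$ rather than through the bilinear form; the two routes are essentially equivalent and land on the same formula). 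The genuine added content is your converse, which is exactly what the paper's proof is missing: the two normalizations $R_{A,B}\mathbf 1=\mathbf 1$ and $R_{A,B}^*\mathbf 1=\mathbf 1$ (both immediate from $V_B\mathbf 1=\mathbf 1_\Omega$, $V_A^*\mathbf 1_\Omega=\mathbf 1$, and the fact that $\mu_A,\mu_B$ are probability measures so $\mathbf 1$ lies in both $\mathbf L_2$-spaces) correctly pin down the factors of an arbitrary rank-one representation $R_{A,B}g=\langle g,h\rangle_{\mu_B}\,k$ to the constants with scalar $1$, forcing $R_{A,B}g=\bigl(\int_X g\,d\mu_B\bigr)\mathbf 1$; then Lemma \ref{lemma24} and testing on indicators $g=\chi_\Delta$ gives $\mathbb P\bigl(A^{-1}(L)\cap B^{-1}(\Delta)\bigr)=\mu_A(L)\,\mu_B(\Delta)$, i.e.\ independence. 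Note that without this bookkeeping the converse would genuinely fail as stated for a general rank-one positive operator (one built from non-constant vectors need not come from an independent pair), so your identification of $h$ and $k$ as constants is the essential step, not a formality. As a by-product your normalization $R_{A,B}\mathbf 1=\mathbf 1$ also shows the rank of $R_{A,B}$ is always at least one, which quietly corrects the looser remark in the paper's introduction that $R$ could be ``zero in the case of zero means.''
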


\begin{proof}
Indeed, assume first $A$ and $B$ independent, and let $f\in\mathbf L_2(X,\mathcal B_X,\mu_A)$ and $g\in\mathbf L_2(X,\mathcal B_X,\mu_B)$. 
We have:
\[
\begin{split}
\langle f,R_{A,B}g\rangle_{\mu_A}&=\langle V_Af,V_Bg\rangle_{\mathbb P}\\
&=\int_\Omega ((f\circ A)(\w))((g\circ B)(\w))d\mathbb P(\w)\\
&=\left(\int_\Omega (f\circ A)(\w)\right)\left(\int_\Omega(g\circ B)(\w)d\mathbb P(\w)\right)\\
&=\left(\int_Xf(x)d\mu_A(x)\right)\left(\int_Xg(x)d\mu_B(x)\right)\\
&=\langle f,1\rangle_{\mu_A}\langle 1,g\rangle_{\mu_B},
\end{split}
\]
the product of the means of the respective random variables $f(A)$ and $g(B)$,
and hence $R_{A,B}$ has rank one. In Dirac's notation, (ket-bra) we can write
\[
R_{A,B}=|1>_{\mu_A}<1|_{\mu_B}.
\]

\end{proof}

Given two projections $P_1$ and $P_2$ on a Hilbert space, we recall (see \cite[p. 376]{aron} that the sequence $(P_2P_1)^m$ converges
strongly to the projection on the intersection of the corresponding spaces. Applied to $P_1=\mathbb E_B$ and $P_2=\mathbb E_A$ we obtain
that $\lim_{m\rightarrow\infty}\left(\mathbb E_A\mathbb E_B\right)^m$ is the projection onto 
$\mathbb E_A(\mathbf L_2(\Omega,\mathcal F,\mathbb P))\cap \mathbb E_B(\mathbf L_2(\Omega,\mathcal F,\mathbb P))$, that is the orthogonal
projection onto $\mathbf L_2(\Omega,\mathcal F_A\cap\mathcal F_B,\mathbb P)$.\smallskip

Here we have a more precise formula:

\begin{lemma}
With $A,B$ and $\mathbb E_A,\mathbb E_B$ as above, let $P$ denote the orthogonal projection onto the eigenspace corresponding to the 
eigenvalue $1$ of $\lim_{m\rightarrow\infty}\left(\mathbb E_A\mathbb E_B\right)^m$. Then,
\begin{equation}
\lim_{m\rightarrow\infty}\left(\mathbb E_A\mathbb E_B\right)^m\psi=
\mathbb E\left(\psi\,|\,\mathcal F_A\cap\mathcal F_B\right)=V_APR_{A,B}V_B^*\psi,\quad \forall
\psi\in\mathbf L_2(\Omega,\mathcal F,\mathbb P).
\label{juliette1234}
\end{equation}
\label{prop29}
\end{lemma}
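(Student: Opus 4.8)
The plan is to reduce everything to the two isometries $V_A$, $V_B$ and to the single positive contraction $R_{A,B}R_{A,B}^*$ acting on $\mathbf L_2(X,\mathcal B_X,\mu_A)$. The starting observation is that, since $V_A$ is an isometry \emph{onto} $\mathbf L_2(\Omega,\mathcal F_A,\mathbb P)$ and $V_B$ is an isometry onto $\mathbf L_2(\Omega,\mathcal F_B,\mathbb P)$, their range projections are exactly the conditional expectations, $V_AV_A^*=\mathbb E_A$ and $V_BV_B^*=\mathbb E_B$. Combining this with $V_A^*V_B=R_{A,B}$ (from \eqref{lycee_llg}) and its adjoint $V_B^*V_A=R_{A,B}^*$ (noted in the proof of Lemma \ref{lemma24}), I would first establish by induction the telescoping identity
\[
(\mathbb E_A\mathbb E_B)^m=V_A\,(R_{A,B}R_{A,B}^*)^{m-1}\,R_{A,B}\,V_B^*,\qquad m\ge 1,
\]
in which each interior pair $V_B^*V_A$ collapses to $R_{A,B}^*$ and each $V_A^*V_B$ to $R_{A,B}$.

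The second step is spectral. The operator $R_{A,B}R_{A,B}^*=V_A^*\mathbb E_BV_A$ is a positive self-adjoint contraction on $\mathbf L_2(\mu_A)$, so by the spectral theorem its powers $(R_{A,B}R_{A,B}^*)^{n}$ converge strongly, as $n\to\infty$, to the orthogonal projection $P$ onto its eigenspace for the eigenvalue $1$, the part of the spectrum in $[0,1)$ contributing nothing in the limit. Since left multiplication by $V_A$ and right multiplication by $R_{A,B}V_B^*$ are strongly continuous, passing to the limit in the telescoping identity yields
\[
\lim_{m\to\infty}(\mathbb E_A\mathbb E_B)^m=V_A\,P\,R_{A,B}\,V_B^*,
\]
which is the asserted formula \eqref{juliette1234}.

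Finally I would identify this strong limit with the conditional expectation onto the intersection. By the result on products of projections recalled just before the lemma (from \cite[p.~376]{aron}), applied to $P_1=\mathbb E_B$ and $P_2=\mathbb E_A$, the limit equals the orthogonal projection onto $\mathbf L_2(\Omega,\mathcal F_A\cap\mathcal F_B,\mathbb P)$, i.e.\ $\mathbb E(\,\cdot\,|\,\mathcal F_A\cap\mathcal F_B)$, which gives the first equality in \eqref{juliette1234}. To reconcile the two descriptions of $P$, I would note that $f\in\mathbf L_2(\mu_A)$ satisfies $R_{A,B}R_{A,B}^*f=f$ if and only if $\mathbb E_A\mathbb E_B(V_Af)=V_Af$; hence the isometry $V_A$ carries the eigenspace of $R_{A,B}R_{A,B}^*$ for eigenvalue $1$ bijectively onto $\mathbf L_2(\Omega,\mathcal F_A\cap\mathcal F_B,\mathbb P)$, the eigenspace for eigenvalue $1$ of $\lim_m(\mathbb E_A\mathbb E_B)^m$.

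The step I expect to be the genuine obstacle is the spectral one: justifying that $(R_{A,B}R_{A,B}^*)^{n}$ converges strongly \emph{precisely} to the eigenprojection for the eigenvalue $1$ requires ruling out any mass accumulating at $1$ from the continuous spectrum. This is handled cleanly by writing $(R_{A,B}R_{A,B}^*)^n=\int_{[0,1]}\lambda^n\,dE(\lambda)$ and applying dominated convergence for the spectral measure, using $\lambda^n\to\indi_{\{1\}}(\lambda)$ pointwise and boundedly on $[0,1]$, so that the strong limit is $E(\{1\})=P$. The bookkeeping matching the two incarnations of $P$ (one on $\mathbf L_2(\mu_A)$, one on $\mathbf L_2(\Omega)$) is then routine once the intertwining by $V_A$ is recorded.
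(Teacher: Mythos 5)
Your proposal is correct and follows essentially the same route as the paper: the same telescoping identity $(\mathbb E_A\mathbb E_B)^{m}=V_A(R_{A,B}R_{A,B}^*)^{m-1}R_{A,B}V_B^*$ proved by induction via $V_B^*V_A=R_{A,B}^*$, the same spectral-theorem argument that powers of the positive contraction $S=R_{A,B}R_{A,B}^*$ converge strongly to $E^{(S)}(\{1\})=P$, and the same appeal to the alternating-projections fact from \cite[p.~376]{aron} to identify the limit with $\mathbb E(\cdot\,|\,\mathcal F_A\cap\mathcal F_B)$. Your explicit reconciliation of the two incarnations of $P$ (on $\mathbf L_2(\mu_A)$ versus on $\mathbf L_2(\Omega,\mathbb P)$, intertwined by $V_A$) is a point the paper leaves implicit, but it is the same proof.
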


\begin{proof}
The proof follows from the formula
\begin{equation}
\label{sylvie123}
\left(\mathbb E_A\mathbb E_B\right)^{m+1}=V_A(R_{A,B}R_{A,B}^*)^mR_{A,B}V_B^*,\quad m=0,1,\ldots
\end{equation}
which is true for $m=0$ and proved by induction as follows:
\[
\begin{split}
\left(\mathbb E_A\mathbb E_B\right)^{m+1}&=\left(\mathbb E_A\mathbb E_B\right)^{m}V_AR_{A,B}V_B^*\\
&=\overbrace{\left(V_A(R_{A,B}R_{A,B}^*)^{m-1}R_{A,B}V_B^*\right)}^{\text{induction at rank $m$}}\overbrace{\left(V_AR_{A,B}V_B^*
\right)}^{\mathbb E_A\mathbb E_B}\\
&=V_A(R_{A,B}R_{A,B}^*)^{m-1}R_{A,B}\underbrace{V_B^*V_A}_{R_{A,B}^*}R_{A,B}V_B^*\\
&=V_A(R_{A,B}R_{A,B}^*)^{m-1}R_{A,B}R_{A,B}^*R_{A,B}V_B^*\\
&=V_A(R_{A,B}R_{A,B}^*)^mR_{A,B}V_B^*.
\end{split}
\]
To conclude  we remark that $\lim_{m\rightarrow\infty}\left(\mathbb E_A\mathbb E_B\right)^m$, being a projection,
has spectrum consisting of the eigenvalues $0$ and $1$. Indeed, let $S=R_{A,B}R_{A,B}^*$. By the assumptions, the projection-valued spectral
resolution $E^{(S)}$ of the self-adjoint operator $S$ satisfies
\[
S=\int_0^1tE^{(S)}(dt),
\]
and so $\lim_{m\rightarrow\infty}S^m=E^{(S)}\left(\left\{1\right\}\right)$, where $E^{(S)}\left(\left\{1\right\}\right)$ 
(denoted by $P$ in \eqref{juliette1234}) is the spectral projection
onto 
\[
\left\{f\,\in\mathbf L_2(X,\mathcal B_X,\mu_A)\,:\, Sf=f\right\}.
\]  
As a result we get
\[
\mathbb E\left(\cdot\,\big|\,\mathcal F_A\cap \mathcal F_B\right)=V_AE^{(1)}R_{A,B}V_B^*.
\]
\end{proof}

For a related result, see \cite{MR647807}.\smallskip

As a corollary we have (where here and in the sequel we denote by $\mathbb E_A$ the conditional expectation onto $\mathcal F_A$):

\begin{corol}
In the notation of the previous proposition and of its proof, let $S=R_{A,B}R_{A,B}^*$, and let $f\in\mathbf L_2(X,\mathcal B_X,\mu_A)$ and
$\psi=V_Af$. The following are equivalent:\\
$(1)$ $Sf=f$, i.e., $E^{(S)}\left(\left\{1\right\}\right)f=f$.\smallskip

$(2)$  $\psi$ satisfies $\mathbb E_A\mathbb E_B\psi=\psi$.\smallskip

$(3)$ $\psi$ satisfies $\mathbb E_B\mathbb E_A\psi=\psi$.\smallskip

$(4)$ $\mathbb E\left(\psi\,\big|\,\mathcal F_A\cap\mathcal F_B\right)=\psi$.
\end{corol}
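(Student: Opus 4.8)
The plan is to prove the four equivalences by exploiting the key intertwining identity $\mathbb{E}_A\mathbb{E}_B = V_A R_{A,B} V_B^*$ established in the proof of Lemma \ref{prop29}, together with the fact that $V_A$ and $V_B$ are isometries. The core observation is that since $\psi = V_A f$ lies in the range of $V_A$, and $V_A$ is an isometry, applying various combinations of the conditional expectations reduces cleanly to statements about the positive operator $S = R_{A,B}R_{A,B}^*$ acting on $f$ in $\mathbf{L}_2(X,\mathcal B_X,\mu_A)$.

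First I would prove $(1)\Longleftrightarrow(2)$. Starting from $\psi = V_A f$, I compute $\mathbb{E}_A\mathbb{E}_B\psi = V_A R_{A,B}V_B^* V_A f = V_A R_{A,B}R_{A,B}^* f = V_A S f$, using the identity $V_B^* V_A = R_{A,B}^*$ noted in the induction of Lemma \ref{prop29}. Since $V_A$ is an isometry (hence injective), the equation $V_A S f = V_A f = \psi$ holds if and only if $Sf = f$. This gives the equivalence of $(1)$ and $(2)$ at once, and the identification $Sf=f \Longleftrightarrow E^{(S)}(\{1\})f=f$ is immediate from the spectral resolution of $S$ described in the proof of Lemma \ref{prop29}.

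Next I would handle $(2)\Longleftrightarrow(3)$ and the link to $(4)$. For the symmetry between $\mathbb{E}_A\mathbb{E}_B$ and $\mathbb{E}_B\mathbb{E}_A$, I would observe that both $\mathbb{E}_A$ and $\mathbb{E}_B$ are self-adjoint idempotents, so $(\mathbb{E}_B\mathbb{E}_A)^* = \mathbb{E}_A\mathbb{E}_B$; a vector fixed by a contraction $T$ with $\|T\|\le 1$ is fixed by $T^*$ as well (the standard fact that $T\psi=\psi$ forces $\psi$ into the intersection of the ranges of the two projections, which is symmetric in $A$ and $B$). Concretely, if $\mathbb{E}_A\mathbb{E}_B\psi=\psi$ then $\psi\in\mathrm{ran}\,\mathbb{E}_A$ so $\mathbb{E}_A\psi=\psi$, and then $\mathbb{E}_B\psi = \mathbb{E}_B\mathbb{E}_A\mathbb{E}_B\psi$; a short norm computation using $\|\mathbb{E}_B\mathbb{E}_A\psi\|=\|\psi\|$ forces $\psi\in\mathrm{ran}\,\mathbb{E}_B$, whence $\mathbb{E}_B\mathbb{E}_A\psi=\psi$, and symmetrically for the reverse. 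Finally, $(2)$ together with $(3)$ says exactly that $\psi$ is fixed by both $\mathbb{E}_A$ and $\mathbb{E}_B$, i.e. $\psi\in\mathbf{L}_2(\Omega,\mathcal F_A,\mathbb P)\cap\mathbf{L}_2(\Omega,\mathcal F_B,\mathbb P) = \mathbf{L}_2(\Omega,\mathcal F_A\cap\mathcal F_B,\mathbb P)$, which is the assertion $(4)$ via the description of the limit projection in Lemma \ref{prop29}.

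The step I expect to be the main obstacle is the equivalence between the fixed-point conditions $(2)$, $(3)$ and the intersection statement $(4)$, specifically justifying that a vector fixed by the product $\mathbb{E}_A\mathbb{E}_B$ of two orthogonal projections must lie in the intersection of their ranges. The cleanest route is the contraction argument: since each $\mathbb{E}$ is an orthogonal projection, $\|\mathbb{E}_A\mathbb{E}_B\|\le 1$, and equality $\mathbb{E}_A\mathbb{E}_B\psi=\psi$ with $\|\psi\|$ preserved at each stage forces $\|\mathbb{E}_B\psi\|=\|\psi\|$, which for a projection means $\mathbb{E}_B\psi=\psi$; then $\mathbb{E}_A\psi = \mathbb{E}_A\mathbb{E}_B\psi = \psi$ as well. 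Once this is in place, $(4)$ follows directly from the strong-limit identification $\lim_m(\mathbb{E}_A\mathbb{E}_B)^m = \mathbb{E}(\,\cdot\,|\,\mathcal F_A\cap\mathcal F_B)$ already recorded before Lemma \ref{prop29}, since a fixed point of $\mathbb{E}_A\mathbb{E}_B$ is automatically a fixed point of every power, hence fixed by the limit projection.
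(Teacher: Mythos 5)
Your proposal is correct and follows essentially the same route as the paper: the decisive ingredient in both is the fact that a contraction $T$ with $T\psi=\psi$ also satisfies $T^*\psi=\psi$, applied to $T=\mathbb E_A\mathbb E_B$ (whose adjoint is $\mathbb E_B\mathbb E_A$), combined with the factorization $\mathbb E_A\mathbb E_B=V_AR_{A,B}V_B^*$ and the spectral identification of $E^{(S)}(\{1\})$ from Lemma \ref{prop29}. The paper's proof states only the contraction lemma and leaves the rest implicit; your computation $\mathbb E_A\mathbb E_B V_Af=V_ASf$ with $V_A$ injective, and the fixed-point-of-the-limit-projection argument for $(4)$, are exactly the steps it intends the reader to supply.
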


\begin{proof} If $T$ is a contraction from a Hilbert space $\mathcal H$ into itself  and $T\psi=\psi$ for
some $\psi\in\mathcal H$, then we also have $T^*\psi=\psi$. Indeed, using $T\psi=\psi$ we obtain
\[
\|\psi-T^*\psi\|^2=\|T^*\psi\|^2-\|\psi\|^2,
\]
which is negative since $T^*$ is also a contraction. Hence $\|\psi-T^*\psi\|=0$ and $T^*\psi=\psi$. The proof of the corollary follows then 
by applying the above fact to $T=\mathbb E_A\mathbb E_B$.
\end{proof}

\begin{corol}
In the notation of the previous proposition, the following are equivalent for pairs of random variables $A$ and $B$:\\
$(1)$ $\mathcal F_A\subset\mathcal F_B$, (that is containment of the sigma-algebras of subsets of $\Omega$)\smallskip

$(2)$ $\mathbb E_A(\mathbf L_2(\Omega,\mathbb P))\,\subset\, \mathbb E_B(\mathbf L_2(\Omega,\mathbb P))$.\\

$(3)$ $\mathbb E_A\mathbb E_B=\mathbb E_A$, or equivalently $\mathbb E_A\le\mathbb E_B$, where $\le$ denotes the standard ordering of 
projections.\\

$(4)$ $\mathbb E_B\mathbb E_A=\mathbb E_A$, equivalently $\mathbb E_A\le\mathbb E_B$.\\

$(5)$ $R_{A,B}V_B^*=V_A^*$\\

$(6)$ $V_BR_{B,A}=V_A$.
\label{corol211}
\end{corol}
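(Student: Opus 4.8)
The plan is to prove Corollary \ref{corol211} as a cycle of implications, establishing the equivalence of six conditions that all say, in different languages (sigma-algebras, ranges of conditional expectations, products of projections, and identities among the isometries $V_A,V_B$ and the transfer operator $R_{A,B}$), that the information in $A$ is contained in that of $B$. The cleanest route is to show $(1)\Leftrightarrow(2)$ directly from the definitions, then $(2)\Leftrightarrow(3)\Leftrightarrow(4)$ by pure projection theory, and finally connect the operator identities $(5)$ and $(6)$ to this core. I would take advantage of the covariance machinery already in place: recall $V_A$ is an isometry of $\mathbf L_2(\mu_A)$ \emph{onto} $\mathbf L_2(\Omega,\mathcal F_A,\mathbb P)$, so $\mathbb E_A=V_AV_A^*$ is precisely the orthogonal projection onto $\mathbf L_2(\Omega,\mathcal F_A,\mathbb P)$, and likewise $\mathbb E_B=V_BV_B^*$; also $R_{A,B}=V_A^*V_B$ and $R_{B,A}=R_{A,B}^*=V_B^*V_A$ by \eqref{lycee_llg}.

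\textbf{Core equivalences.} First I would dispatch $(1)\Leftrightarrow(2)$: the range $\mathbb E_A(\mathbf L_2(\Omega,\mathbb P))$ \emph{is} the closed subspace $\mathbf L_2(\Omega,\mathcal F_A,\mathbb P)$, so $(2)$ reads $\mathbf L_2(\mathcal F_A)\subset\mathbf L_2(\mathcal F_B)$; containment of these $\mathbf L_2$-spaces is equivalent to $\mathcal F_A\subset\mathcal F_B$ (modulo $\mathbb P$-null sets, which is the standard convention here), because indicators $\chi_{A^{-1}(\Delta)}$ generate $\mathcal F_A$. Next, $(2)\Leftrightarrow(3)\Leftrightarrow(4)$ is the elementary fact from projection theory that for two orthogonal projections $P,Q$ the following are equivalent: $\operatorname{ran}P\subset\operatorname{ran}Q$; $QP=P$; $PQ=P$; and $P\le Q$ in the projection order. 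I would apply this with $P=\mathbb E_A$ and $Q=\mathbb E_B$, remarking that because $\mathbb E_A=\mathbb E_A^*$ the two one-sided identities are adjoints of each other, which forces them to hold simultaneously (so the apparently different relations $\mathbb E_A\mathbb E_B=\mathbb E_A$ and $\mathbb E_B\mathbb E_A=\mathbb E_A$ are the same statement).

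\textbf{Linking the operator identities.} To fold in $(5)$ and $(6)$, I would substitute the factorizations into $\mathbb E_A\mathbb E_B=\mathbb E_A$. Since $\mathbb E_A\mathbb E_B=V_AV_A^*V_BV_B^*=V_A(R_{A,B}V_B^*)$ and $\mathbb E_A=V_AV_A^*$, and since $V_A$ is injective (being an isometry), cancelling the leading $V_A$ shows $(3)$ is equivalent to $R_{A,B}V_B^*=V_A^*$, which is exactly $(5)$. For $(6)$, I would start from $(4)$ written as $\mathbb E_B\mathbb E_A=\mathbb E_A$, i.e. $V_BV_B^*V_AV_A^*=V_AV_A^*$, equivalently $V_B R_{B,A}V_A^*=V_AV_A^*$; cancelling the (injective, range-dense on its image) factor $V_A^*$ on the right via the isometry property yields $V_BR_{B,A}=V_A$, which is $(6)$. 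One must be slightly careful here: right-cancellation of $V_A^*$ is legitimate because the identity already holds on $\operatorname{ran}V_A=\mathbf L_2(\mathcal F_A)$ and both sides annihilate the orthogonal complement, so equality of the operators follows.

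\textbf{Main obstacle.} The genuinely delicate point is not any single algebraic manipulation but the cancellation arguments used to extract $(5)$ and $(6)$ from the projection identities, together with the null-set bookkeeping in $(1)\Leftrightarrow(2)$. Cancelling an isometry $V_A$ on the left is clean (injectivity), but cancelling $V_A^*$ on the right requires knowing the operators agree on the whole space, not merely on a dense subspace, so I would phrase $(6)$ as equality of operators restricted to the range of $V_A$ and extended by zero, rather than literally dividing. I expect the subtlety to be ensuring that each equivalence is genuinely reversible — in particular that $(5)$ implies $(3)$ and not merely the converse — which I would secure by re-multiplying by $V_A$ (for $(5)$) and by $V_A^*$ (for $(6)$) and invoking $\mathbb E_A=V_AV_A^*$, $\mathbb E_B=V_BV_B^*$ to return to the projection picture.
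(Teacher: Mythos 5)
Your proof is correct and is essentially the paper's intended argument: the paper's own ``proof'' of Corollary \ref{corol211} is just the one-line remark that it follows from the preceding lemmas and Table \ref{rubicon}, and the identities you invoke ($\mathbb E_A=V_AV_A^*$, $V_A^*V_A=I_{\mathbf L_2(\mu_A)}$, $R_{A,B}=V_A^*V_B$, $R_{B,A}=R_{A,B}^*$) are exactly the contents of that table, so you are simply supplying the details the paper leaves implicit. One small simplification: the right-cancellation of $V_A^*$ that you treat cautiously is immediate, since $V_A^*$ is surjective onto $\mathbf L_2(\mu_A)$ (equivalently, right-multiply $V_BR_{B,A}V_A^*=V_AV_A^*$ by $V_A$ and use $V_A^*V_A=I$), so no range-plus-complement argument is needed.
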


\begin{proof}
This is essentially from the above, but see also the  arguments outlined in Table \ref{rubicon} below.
\end{proof}
\subsection{A formula for the conditional expectation}
\label{stonesec}
We are in the setting of Section \ref{sec2_1}. Let $A$ be a $X$-valued random variable.  For $f\in \mathcal M(X,
\mathcal B_X)$ we denote by $M_{f\circ A}$ the operator of multiplication by $f\circ A$,  from
$\mathbf L_2(\Omega,\mathcal F,\mathbb P)$ into itself. The space of all these operators when $f$ runs through
$\mathbf L^\infty(X,\mathcal B_X)$ is a commutative von Neumann algebra, denoted $\mathcal M_A$. By Stone's theorem (see
\cite{Nelson_flows}), there exists a $\mathcal M_A$-valued measure $\mathscr E_A$ on $(X,\mathcal B_X)$ such that
\begin{equation}
\label{condi123}
M_{f\circ A}=\int_Xf(x)\mathscr E_A(dx).
\end{equation}

For every $L\in\mathcal B_X$, the operator $\mathscr E(L)\in\mathcal M_A$, and so is of the form $f\circ A$ for some 
$f\in\mathbf L^\infty(X,\mathcal B_X)$, namely $f=\chi_L$.  From the equality
\begin{equation}
\label{EA}
\mathscr E_A(L)=\chi_{\left\{A\in L\right\}}=\chi_{A^{-1}(L)},\quad L\in\mathcal B_X,
\end{equation}
we shall use the notation (after identifying the function and the corresponding multiplier)
\begin{equation}
\label{othernotation}
\mathscr E_A(dx)=M_{\chi_{\left\{A\in dx\right\}}}=\chi_{\left\{A\in dx\right\}}
\end{equation}
and rewrite \eqref{condi123} as
\begin{equation}
\label{condi12345}
f\circ A=\int_Xf(x)\chi_{\left\{A\in dx\right\}},\quad{\rm or}\quad (f\circ A)(\w)=\int_Xf(x)\chi_{\left\{A(\w)\in dx\right\}}.
\end{equation}

\begin{remark}{\rm
While $\chi_{\left\{A\in dx\right\}}$ is a heuristic notation, we stress that it is made precise via the spectral theorem in the form 
\eqref{condi123}, and also by the conclusion of the next theorem.}
\end{remark}

\begin{theorem}
Let $f\in\mathbf L_2(X,\mathcal B_X),\mu_A)$. Then,
\begin{equation}
\int_Xf(x)^2d\mu_A(x)=\int_{\Omega}\left(\int_Xf(x)\chi_{\left\{A(\w)\in dx\right\}}\right)^2d\mathbb P(\w).
\end{equation}
\end{theorem}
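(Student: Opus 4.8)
The plan is to observe that the stated identity is, once the heuristic notation is unwound, simply the isometry property of the operator $V_A$ of \eqref{vn}. The crux is that for each \emph{fixed} $\omega\in\Omega$ the inner spectral integral collapses to a scalar, so no operator-theoretic manipulation is really needed on the right-hand side.

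First I would invoke the pointwise form of \eqref{condi12345}, namely
\[
(f\circ A)(\w)=\int_Xf(x)\chi_{\left\{A(\w)\in dx\right\}},
\]
which identifies, for each fixed $\w$, the inner integral appearing on the right-hand side with the real number $f(A(\w))$. Squaring this scalar and integrating against $\mathbb P$ then shows that the right-hand side of the asserted identity equals $\int_\Omega f(A(\w))^2\,d\mathbb P(\w)=\|V_Af\|_{\mathbb P}^2$.

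Second, I would close the argument by the change-of-variables formula defining the law $\mu_A$: applying $\int_\Omega g(A(\w))\,d\mathbb P(\w)=\int_Xg(x)\,d\mu_A(x)$ with $g=f^2$ gives $\int_\Omega f(A(\w))^2\,d\mathbb P(\w)=\int_Xf(x)^2\,d\mu_A(x)$, which is exactly the left-hand side. Equivalently, this is precisely the statement, recorded just after \eqref{vn}, that $V_A\colon\mathbf L_2(X,\mathcal B_X,\mu_A)\to\mathbf L_2(\Omega,\mathcal F,\mathbb P)$ is an isometry, specialized to real-valued $f$.

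There is no genuine analytic obstacle here; the only point demanding care is the meaning of the operator-valued heuristic symbol $\chi_{\left\{A\in dx\right\}}$ and the legitimacy of pulling the square inside. I would address this by noting that $\mathscr E_A$ is in fact projection-valued: by \eqref{EA} each $\mathscr E_A(L)=\chi_{A^{-1}(L)}$ is multiplication by an indicator, hence idempotent and mutually orthogonal across disjoint $L$. Consequently, at the operator level one has $\left(\int_Xf(x)\mathscr E_A(dx)\right)^2=\int_Xf(x)^2\mathscr E_A(dx)=M_{f^2\circ A}$, and reading this multiplication operator back as the function $f^2\circ A$ and integrating over $\Omega$ reproduces the very same scalar identity. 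Thus the pointwise reading and the operator reading agree, and the theorem follows.
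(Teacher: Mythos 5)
Your proof is correct, but it takes a genuinely different route from the paper's. You read the inner integral pointwise: for each fixed $\omega$ the set function $L\mapsto\chi_{\{A(\omega)\in L\}}$ is the Dirac mass at $A(\omega)$, so the integral collapses to the scalar $f(A(\omega))$, and the theorem reduces to the isometry of $V_A$ from \eqref{vn}, i.e.\ the change-of-variables formula defining $\mu_A$. The paper instead argues by the approximation that the heuristic symbol secretly encodes: it takes finite $\mathcal B$-partitions $\pi=\{L_i\}$ of $X$, forms the simple sums $\sum_i f(x_i)\chi_{\{A(\omega)\in L_i\}}$, observes that the cross terms in the second moment vanish because the sets $A^{-1}(L_i)$ are disjoint (your projection-valued remark, in concrete form), so that $\mathbb E\bigl(\sum_i f(x_i)\chi_{\{A(\omega)\in L_i\}}\bigr)^2=\sum_i f(x_i)^2\mu_A(L_i)$, and then passes to the limit $|\pi|\to 0$. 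The difference matters for what the theorem is meant to accomplish: the remark immediately preceding it says the theorem is one of the devices making $\chi_{\{A\in dx\}}$ precise, and the paper's partition computation does exactly that for $f\in\mathbf L_2(\mu_A)$ --- it shows the approximating sums converge in $\mathbf L_2(\Omega,\mathbb P)$, thereby constructing the object the right-hand side denotes, whereas Stone's theorem in \eqref{condi123} justifies \eqref{condi12345} only for $f\in\mathbf L^\infty(X,\mathcal B_X)$. Your pointwise Dirac-mass reading legitimately sidesteps this for the scalar interpretation (it is valid for every measurable $f$ at each fixed $\omega$), so your argument stands and is shorter; but if one insists on interpreting the inner integral as the $\mathbf L_2(\mathbb P)$-limit along the filter of partitions, your appeal to \eqref{condi12345} for unbounded $f\in\mathbf L_2$ quietly assumes the convergence that the paper's proof verifies. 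In exchange, your version makes transparent what the paper leaves implicit: that the identity is nothing more than the isometry property of $V_A$ in disguise.
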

\begin{proof}  Consider finite partitions $\pi=\left\{L_i,\,i=1,\ldots, m\right\}$ of $X$ into sets of $\mathcal B$ such that $L_i\cap L_j=0$ for $i\not=j$, and for every $i$ chose $x_i\in L_i$. Let $|\pi|=\max_{i=1,\ldots,m}|\mu_A(L_i)|$. We obtain a filter of $\mathcal B$-partitions
along which limits are taken.
By definition of the integral with respect with a measure we have:
\[
\int_Xf(x)\chi_{\left\{A(\w)\in dx\right\}}=\lim_{|\pi|\rightarrow 0}\sum_{i=1}^mf(x_i)\chi_{\left\{A(\w)\in L_i\right\}}.
\]
But 
\[
\begin{split}
\mathbb E\left(\sum_{i=1}^mf(x_i)\chi_{\left\{A(\w)\in L_i\right\}}\right)^2&=\sum_{i=1}^mf(x_i)^2\mu_A(L_i)\\
&\rightarrow\int_Xf(x)^2d\mu_A(x),\quad\text{as $|\pi|\rightarrow 0$},
\end{split}
\]
and the result follows.
\end{proof}

We now consider the case of a discrete random variable. We shall assume that $A\,:\,\Omega\,\longrightarrow\, \mathbb N_0$. So, 
$X=\mathbb N_0$, and the space $\mathbf L_2(X,\mathcal B_X,
\mu_A)$ is the Hilbert space of $\ell_2(\mu_A)$ real-valued sequences $(\xi_n)_{n\in\mathbb N_0}$ such that
\begin{equation}
\sum_{n=0}^\infty\xi_n^2\,\mathbb P\left(\left\{A=n\right\}\right)<\infty.
\end{equation}
We have
\begin{equation}
\left(M_{f\circ A}\psi\right)(\w)=\sum_{n=0}^\infty \xi_n\chi_{\left\{A=n\right\}}(\w)\psi(\w),\quad\forall \psi\in\mathbf L_2(\Omega,\mathcal F,\mathbb P),\,\,\forall f=(\xi_n)_{n\in\mathbb N_0}\in\ell_2(\mu_A),
\end{equation}
and 
\begin{equation}
\mathscr E_A\left(\left\{n\right\}\right)=M_{\chi_{\left\{A=n\right\}}},\quad n=0,1,\ldots.
\end{equation}

\begin{theorem}
The following formulas hold:
\begin{equation}
\mathbb E\left(\psi\,\big|\,\mathcal F_A\right)(\w)=
\begin{cases}\int_X\left(V_A^*\psi\right)(x)\chi_{\left\{A\in dx\right\}}(\w)\quad\hspace{3.8cm}(\text{continuous case})\\
\sum_{k=0}^\infty\frac{1}{\mathbb P(\left\{A=k\right\})}\left(\int_{\left\{A=k\right\}}\psi(\w)d\mathbb P(\w)\right)\chi_{\left\{A=k\right\}}(\w)\quad(\text{discrete case}).\end{cases}
\end{equation}
\end{theorem}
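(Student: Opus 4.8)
The plan is to treat both displayed formulas as specializations of a single identity. The conditional expectation $\mathbb E(\psi\,|\,\mathcal F_A)$ is by construction an $\mathcal F_A$-measurable function, hence of the form $g\circ A$ for some measurable $g$ on $X$, and Lemma \ref{lemma22} identifies this $g$ as precisely $V_A^*\psi$. Once the integrand $g=V_A^*\psi$ is in hand, both formulas follow by expanding $g\circ A$ through the multiplier/spectral representation \eqref{condi12345}. So the whole argument reduces to (a) naming the integrand via Lemma \ref{lemma22}, and (b) feeding it into \eqref{condi12345}.

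For the \emph{continuous case}, I would first invoke Lemma \ref{lemma22} to write $\mathbb E(\psi\,|\,\mathcal F_A)=(V_A^*\psi)\circ A$; the existence of a measurable $g$ with $\mathbb E(\psi\,|\,\mathcal F_A)=g\circ A$ is guaranteed by $\mathcal F_A$-measurability together with the fact that $V_A$ is an isometry onto $\mathbf L_2(\Omega,\mathcal F_A,\mathbb P)$. I would then apply \eqref{condi12345} with $f=V_A^*\psi$, namely $(f\circ A)(\w)=\int_X f(x)\,\chi_{\left\{A(\w)\in dx\right\}}$, to obtain immediately
\[
\mathbb E\left(\psi\,|\,\mathcal F_A\right)(\w)=\int_X (V_A^*\psi)(x)\,\chi_{\left\{A(\w)\in dx\right\}}.
\]

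For the \emph{discrete case}, with $A:\Omega\to\mathbb N_0$, I would specialize the same identity. Here $\mathcal F_A$ is generated by the countable partition $\left\{\left\{A=k\right\}\right\}_{k\in\mathbb N_0}$, the law $\mu_A$ is supported on $\mathbb N_0$ with $\mu_A(\{k\})=\mathbb P(\{A=k\})$, and the spectral measure in \eqref{condi12345} collapses to the sum $\sum_k(\cdot)\,\chi_{\left\{A=k\right\}}$. To pin down the integrand I would use the Corollary following Lemma \ref{lemma22}, which expresses $V_A^*\psi$ as the Radon--Nikodym derivative $(\psi\,d\mathbb P)\circ A^{-1}/d\mu_A$; evaluated at the atom $\{k\}$ this derivative equals $\mathbb P(\{A=k\})^{-1}\int_{\left\{A=k\right\}}\psi\,d\mathbb P$. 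Substituting this value of $(V_A^*\psi)(k)$ into the discretized integral yields the claimed series.

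The routine content is already assembled in the earlier lemmas; the only point demanding care is the status of the heuristic symbol $\chi_{\left\{A\in dx\right\}}$. As the Remark preceding the theorem stresses, this is meaningful only through the spectral/multiplier representation \eqref{condi123}--\eqref{condi12345}, so the main (modest) obstacle is to ensure that combining the conditional-expectation formula with that representation is legitimate at the level of the von Neumann algebra $\mathcal M_A$ rather than merely formal --- that is, that \eqref{condi12345} may be applied to the specific integrand $g=V_A^*\psi$. This is immediate once one recalls that \eqref{condi12345} holds for \emph{every} measurable $f$, in particular for $f=V_A^*\psi\in\mathbf L_2(X,\mathcal B_X,\mu_A)$, so no further justification beyond the spectral theorem is needed.
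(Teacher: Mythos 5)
Your proof is correct and matches the paper's intent exactly: the paper states this theorem without a written proof, placing it immediately after Lemma \ref{lemma22}, its corollary, and the spectral representation \eqref{condi123}--\eqref{condi12345}, which are precisely the ingredients you assemble. Your identification $\mathbb E\left(\psi\,\big|\,\mathcal F_A\right)=(V_A^*\psi)\circ A$ followed by substitution of $f=V_A^*\psi$ into \eqref{condi12345}, together with the Radon--Nikodym corollary evaluating $(V_A^*\psi)(k)=\mathbb P(\left\{A=k\right\})^{-1}\int_{\left\{A=k\right\}}\psi\,d\mathbb P$ on atoms so that the spectral integral collapses to the stated series, is the intended argument.
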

\subsection{Markov processes}
We follow the notation of the previous section, but our starting point is now a sequence of $X$-valued random variables $T_0,T_1,\ldots$
defined on the given probability space $(\Omega,\mathcal F,\mathbb P)$.

\begin{axioms}
\label{axioms123}
$(i)$ Let $\mathcal G_n\subset\mathcal F$ be the smallest sigma-algebra for which the variables $T_0,\ldots ,T_n$ are measurable. We have
that $V_{n}^*V_{n+1}$ does not depend on $n$, and (see \eqref{elisabeth} )
\begin{equation}
\label{markovprop}
\mathbb E\left(f\circ T_{n+1}\big|\mathcal G_n\right)=\mathbb E\left(f\circ T_{n+1}\big|\mathcal F_n\right)=R(f)\circ T_n,\quad n=0,1,\ldots
\end{equation}
$(ii)$ The measures $\mu_0$ and $\mu_1$ are equivalent.
\end{axioms}
We refer to \eqref{markovprop} as the Markov property in the present setting.

\begin{remark}
{\rm
If in the expression $R_{A,B}f=V_A^*V_Bf$, to $A=T_n$ and $B=T_{n+1}$ and if moreovoer $R_{T_{n+1},T_n}$ is independent of $n$ we get
\[
\mathbb E\left(f\circ T_{n+1}\,\big|\,\mathcal F_n\right)=\left(R(f)\right)\circ T_n
\]
as a special case of \eqref{elisabeth}. Iterating we get
\[
\mathbb E\left(f\circ T_{n+k}\,\big|\,\mathcal F_n\right)=\left(R^k(f)\right)\circ T_n.
\]
}
\end{remark}

\begin{lemma}
\label{wzero}
Condition $(ii)$ from Axioms \ref{axioms123} holds if and only if $\mu_0\left(\left\{x\,:\, W(x)=0\right\}\right)=0$.
\end{lemma}
\begin{proof}
Since $\mu_1<<\mu_0$ we can write
\begin{equation}
\label{1-1-1}
\mu_1(\Delta)=\int_\Delta W(x)d\mu_0(x),\quad \forall \Delta\in\mathcal B_X,
\end{equation}
where $W=\frac{d\mu_1}{d\mu_0}$. Let $\Delta_0=\left\{x\in X\,;\, W(x)=0\right\}$. By \eqref{1-1-1} we have $\mu_1(\Delta_0)=0$. Assume that
$\mu_0(\Delta_0)>0$. Then $\mu_0<<\mu_1$ will not hold.\smallskip

Conversely, if $\mu_0(\Delta_0)=0$, then $W^{-1}$ is well defined $\mu_0$ a.e., and $\mu_0<<\mu_1$   with $\frac{d\mu_0}{d\mu_1}=\frac{1}{W}$.
\end{proof}

\begin{definition}{\rm
Assume the previous axioms in force, and set $W=\frac{d\mu_1}{d\mu_0}$. The sequence $T_0,T_1\ldots$ is called a $W$-Markov process.}
\end{definition}

Given $\prod_0^\infty X$, we denote by $\pi_n$ the $n$-th coordinate function:
\[
\pi_n(x_0,x_1,\ldots)=x_n
\]
sent work, this product is always endowed with the cylinder sigma-algebra $\mathcal C$.

\begin{theorem}
Let $(\Omega,\mathcal F,\mathbb P,(T_n)_{n\in\mathbb N_0})$ satisfy axioms $(i)$ and $(ii)$ above. Then there is a probability measure $\mathbb P^\times$
on the Cartesian product $\prod_0^\infty X$, and an isomorphism $\widehat{T}$  between $(\Omega,\mathcal F,\mathbb P,(T_n)_{n\in\mathbb N_0})$ 
and $(\prod_{n=0}^\infty X,\mathcal C,\mathbb P^\times, (\pi_n)_{n\in\mathbb N_0})$, meaning that
\begin{equation}
\pi_n\circ\widehat{T}=T_n,\quad n=0,1,\ldots
\end{equation}
\label{michelle123}
\end{theorem}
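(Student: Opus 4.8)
The plan is to realize the abstract process directly on its own path space by pushing $\mathbb P$ forward along the sample-path map, and then to read off the isomorphism from the resulting composition operator, exactly in the spirit of the isometries $V_A$ used throughout Section \ref{sec-2}. First I would define the path map
\[
\widehat T : \Omega \longrightarrow \prod_{n=0}^\infty X, \qquad \widehat T(\w) = \big(T_0(\w), T_1(\w), \ldots\big).
\]
Measurability of $\widehat T$ relative to $\mathcal F$ and the cylinder sigma-algebra $\mathcal C$ is immediate: $\mathcal C$ is generated by the sets $\pi_n^{-1}(L)$, $L\in\mathcal B_X$, and $\widehat T^{-1}(\pi_n^{-1}(L)) = T_n^{-1}(L)\in\mathcal F$ since each $T_n$ is a random variable. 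I then set $\mathbb P^\times := \mathbb P\circ\widehat T^{-1}$, the image measure on $(\prod_0^\infty X,\mathcal C)$; this is automatically a probability measure, and by construction $\pi_n\circ\widehat T = T_n$ for every $n$, which is the required intertwining.

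The substance of the statement lies in the word \emph{isomorphism}. Here I would adopt the operator-theoretic viewpoint of the previous subsections: the composition operator $V_{\widehat T}F = F\circ\widehat T$ is, exactly as in \eqref{vn} and Lemma \ref{lemma22}, an isometry from $\mathbf L_2(\prod_0^\infty X,\mathcal C,\mathbb P^\times)$ into $\mathbf L_2(\Omega,\mathcal F,\mathbb P)$ whose range is $\mathbf L_2(\Omega,\mathcal G_\infty,\mathbb P)$, where $\mathcal G_\infty = \bigvee_n\mathcal G_n = \sigma(T_0,T_1,\ldots) = \widehat T^{-1}(\mathcal C)$. Thus $\widehat T$ induces a measure-algebra isomorphism of $(\prod_0^\infty X,\mathcal C,\mathbb P^\times)$ onto $(\Omega,\mathcal G_\infty,\mathbb P)$ carrying $\pi_n$ to $T_n$; since the $\pi_n$ generate $\mathcal C$ and the $T_n = \pi_n\circ\widehat T$ generate $\mathcal G_\infty$, no information is lost, and this is the precise content of the claimed isomorphism that the later inductive-limit arguments will use.

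Finally, to see that $\mathbb P^\times$ is genuinely the $W$-Markov measure attached to $(\mu_0,R)$, rather than an arbitrary image measure, I would compute its cylinder marginals. Using the Markov property \eqref{markovprop} of Axioms \ref{axioms123} together with Lemma \ref{lemma24}, one evaluates $\mathbb E\big((f_0\circ T_0)\cdots(f_n\circ T_n)\big)$ by conditioning successively on $\mathcal G_{n-1},\ldots,\mathcal G_0$, replacing each $\mathbb E(f_k\circ T_k\mid\mathcal G_{k-1})$ by $(R(f_k))\circ T_{k-1}$; this expresses the joint law of $(T_0,\ldots,T_n)$, hence the marginals of $\mathbb P^\times$, through the single operator $R$ and the law $\mu_0$ of $T_0$, with Axiom $(ii)$ (equivalently $\mu_0(\{W=0\})=0$, Lemma \ref{wzero}) guaranteeing non-degeneracy of the transition.

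The main obstacle is conceptual rather than computational: one must resist expecting $\widehat T$ to be a pointwise bijection, since it need not be injective on $\Omega$, and instead frame the isomorphism at the level of the generated sigma-algebra, i.e.\ as unitarity of $V_{\widehat T}$ onto $\mathbf L_2(\Omega,\mathcal G_\infty,\mathbb P)$. Once that is taken as the target, every step above is routine; the equivalence $\mathcal G_\infty = \widehat T^{-1}(\mathcal C)$ is exactly what makes $V_{\widehat T}$ surjective onto the correct subspace, and the measure-preservation is built into the definition $\mathbb P^\times = \mathbb P\circ\widehat T^{-1}$.
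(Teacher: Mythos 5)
Your construction coincides with the paper's own proof: the authors likewise define $\widehat T(\w)=(T_0(\w),T_1(\w),\ldots)$ and set $\mathbb P^\times=\mathbb P\circ\widehat T^{-1}$, i.e.\ $\mathbb P^\times$ is the distribution of $\widehat T$. The additional material you supply --- the measurability check, the reading of ``isomorphism'' as a measure-algebra isomorphism (unitarity of $F\mapsto F\circ\widehat T$ onto $\mathbf L_2(\Omega,\sigma(T_0,T_1,\ldots),\mathbb P)$ rather than a pointwise bijection), and the identification of the cylinder marginals through $(\mu_0,R)$ via the Markov property --- is correct and merely fills in details the paper's terse proof leaves implicit.
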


\begin{proof} We define
\begin{equation}
\widehat{T}(\w)=(T_0(\w),T_1(\w),\ldots)
\end{equation}
and 
\begin{equation}
\mathbb P^\times (\Delta)=\mathbb P\left(\widehat{T}^{-1}(\Delta)\right),\quad \forall \Delta\in\mathcal C,
\end{equation}
in other words, $\mathbb P^\times$ is the distribution of $\widehat{T}$.
\end{proof}
\subsection{Discrete case}
We first compute the transfer operator (see \eqref{lycee_llg}) for a pair of  random variables $A$ and $B$ on $(\Omega,\mathcal F,\mathbb P)$  when $A$ is 
discrete. See Section \ref{conditional} for the notation. We denote by $\delta_n$ the Dirac function on $\mathbb N_0$, that is
\[
\delta_n(m)=\begin{cases}\, 1,\quad{\rm if}\quad m=n,\\
                         \,0,\quad{\rm if}\quad m\not=n.
\end{cases}
\]
\begin{proposition}
Let $(\Omega,\mathcal F,\mathbb P)$ and $A,B$ as above. Then the transfer operator $R_{B,A}\,:\, \ell_2(\mu_A)\,\longrightarrow\,
\mathbf L_2(X,\mathcal B_X,\mu_B)$ is  given by:
\begin{equation}
\left(R_{B,A}\left(\delta_n\right)\right)(x)=\mathbb E_{B=x}\left(\left\{A=n\right\}\,\big|\,\mathcal F_B\right).
\label{Adiscrete}
\end{equation}
\end{proposition}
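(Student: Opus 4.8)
The plan is to specialize the general transfer-operator formula from Lemma~\ref{lemma24} to the case at hand, where $A$ is discrete with state space $\mathbb N_0$. Recall that by definition $R_{B,A}=V_B^*V_A$, so for the basis vector $\delta_n\in\ell_2(\mu_A)$ I want to evaluate $(V_B^*V_A\delta_n)(x)$. The key identification is that $V_A\delta_n=\delta_n\circ A=\chi_{\{A=n\}}$, the indicator of the event $\{A=n\}\in\mathcal F_A\subset\mathcal F$, viewed as an element of $\mathbf L_2(\Omega,\mathcal F,\mathbb P)$. This reduces the problem to computing $V_B^*$ applied to this indicator function.

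Next I would invoke Lemma~\ref{lemma22}, which gives the covariance formula $(V_B^*\psi)(x)=\mathbb E_{B=x}(\psi\,|\,\mathcal F_B)$ for any $\psi\in\mathbf L_2(\Omega,\mathcal F,\mathbb P)$. Applying this with $\psi=\chi_{\{A=n\}}$ yields directly
\[
\bigl(R_{B,A}(\delta_n)\bigr)(x)=\bigl(V_B^*\chi_{\{A=n\}}\bigr)(x)=\mathbb E_{B=x}\bigl(\chi_{\{A=n\}}\,\big|\,\mathcal F_B\bigr),
\]
which, under the notational convention identifying the event $\{A=n\}$ with its indicator inside the conditional expectation, is exactly \eqref{Adiscrete}. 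So the bulk of the argument is a direct application of the two preceding lemmas, once the correct reading of $V_A\delta_n$ is pinned down.

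The main point requiring care is verifying that $\delta_n$ genuinely lies in $\ell_2(\mu_A)=\mathbf L_2(\mathbb N_0,\mathcal B_X,\mu_A)$ and that $V_A\delta_n$ equals $\chi_{\{A=n\}}$ as an $\mathbf L_2(\Omega)$-function. The former holds because $\|\delta_n\|_{\mu_A}^2=\mathbb P(\{A=n\})\le 1<\infty$; the latter follows from the defining formula \eqref{vn}, $V_A f=f\circ A$, since $(\delta_n\circ A)(\w)=\delta_n(A(\w))=\chi_{\{A=n\}}(\w)$. I would also remark that the quantity $\mathbb E_{B=x}(\{A=n\}\,|\,\mathcal F_B)$ is the disintegration of $\mathbb P(A=n,\,B\in\,\cdot\,)$ against $\mu_B$, i.e.\ the conditional probability that $A=n$ given $B=x$, which makes the stochastic meaning of \eqref{Adiscrete} transparent.

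I do not anticipate a genuine obstacle here; the statement is essentially a corollary of Lemma~\ref{lemma22} once one observes $V_A\delta_n=\chi_{\{A=n\}}$. The only subtlety is notational, namely the suppressed identification of the indicator function $\chi_{\{A=n\}}$ with the ``event'' $\{A=n\}$ appearing inside the conditional expectation in \eqref{Adiscrete}; I would state this identification explicitly so that the one-line computation above reads off cleanly.
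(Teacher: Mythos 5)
Your proposal is correct and follows essentially the same route as the paper: both write $R_{B,A}=V_B^*V_A$, observe that $V_A\delta_n=\delta_n\circ A=\chi_{\{A=n\}}$, and then apply the covariance formula of Lemma \ref{lemma22} for $V_B^*$, with the same identification of the indicator $\chi_{\{A=n\}}$ with the event $\{A=n\}$. Your added checks (that $\delta_n\in\ell_2(\mu_A)$ and the disintegration remark) are correct embellishments of the paper's two-line argument.
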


\begin{proof} The result is immediate from Lemmas \ref{lemma22} and \ref{lemma24}. We get for $x\in X$ and $n\in\mathbb N_0$
\[
\begin{split}
\left(\left(V_B^*V_A\right)\delta_n\right)(x)&=\left(V_B^*\left(\chi_{\left\{A=n\right\}}\right)\right)(x)\\
&=\mathbb E_{B=x}\left(\left\{A=n\right\}\,|\,\mathcal F_B\right)
\end{split}
\]
where we have identified the indicator function $\chi_{\left\{A=n\right\}}$ on $\Omega$ with the subset 
\[
\left\{A=n\right\}=\left\{\w\in\Omega\,;\, A(\w)=n\right\}.
\]
\end{proof}

Note that, since  $R_{B,A}^*=R_{A,B}$, by Lemma \ref{lemma24} we get for $f\in\mathbf L_2(X,\mathcal B_X,\mu_B)$
\begin{equation}
\label{110616}
\left(R_{A,B}(f)\right)(n)=\int_Xf(x)\mathbb E_{B=x}\left(\left\{A=n\right\}\,\big|\, \mathcal F_B\right)d\mu_B(x).
\end{equation}

\begin{proposition}
\label{prop2}
On $\mathbb N_0\times\mathbb N_0$ we have the following positive definite kernel:
\begin{equation}
k_B(n,m)=\langle R_{A,B}\delta_n,R_{A,B}\delta_m\rangle_{\mathbf L_2(X,\mathcal B_X,\mu_B)}=\int_\Omega \chi_{\left\{A=n\right\}}(\w)\mathbb E_B\left(
\chi_{\left\{A=m\right\}}
\right)(\w)d\mathbb P(\w).
\end{equation}
\end{proposition}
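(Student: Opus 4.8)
The plan is to establish the two assertions in turn: first that $k_B$ is positive definite, and then that the Gram-type inner product coincides with the stated integral. The positive definiteness is purely formal and needs no hypotheses beyond the fact that $k_B$ is a Gram kernel. Writing $v_n := R_{B,A}\delta_n \in \mathbf{L}_2(X,\mathcal B_X,\mu_B)$ (this is the operator appearing in the preceding proposition; note that $R_{A,B}=V_A^*V_B$ maps into $\ell_2(\mu_A)$, so it is $R_{B,A}=V_B^*V_A$ that sends $\delta_n$ into $\mathbf L_2(\mu_B)$, where the inner product lives), we have $k_B(n,m)=\langle v_n,v_m\rangle_{\mu_B}$. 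Then for any finite set of indices $n_1,\dots,n_N$ and real scalars $c_1,\dots,c_N$,
\[
\sum_{i,j=1}^N c_ic_j\,k_B(n_i,n_j)=\Big\|\sum_{i=1}^N c_i\,v_{n_i}\Big\|_{\mu_B}^2\ge 0,
\]
which is exactly positive definiteness.

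For the integral formula, the plan is to unfold $R_{B,A}=V_B^*V_A$ and move the outer $V_B^*$ across the inner product. First I would record that $V_A\delta_n=\delta_n\circ A=\chi_{\{A=n\}}$, so that $v_n=V_B^*\chi_{\{A=n\}}$. Using that $V_B$ is an isometry onto $\mathbf L_2(\Omega,\mathcal F_B,\mathbb P)$, so that $V_BV_B^*=\mathbb E_B$ is the orthogonal projection (conditional expectation) onto $\mathcal F_B$, I would compute
\[
k_B(n,m)=\langle V_B^*\chi_{\{A=n\}},V_B^*\chi_{\{A=m\}}\rangle_{\mu_B}
=\langle \mathbb E_B\chi_{\{A=n\}},\chi_{\{A=m\}}\rangle_{\mathbb P}.
\]
Finally, since $\mathbb E_B$ is an orthogonal projection, hence self-adjoint on $\mathbf L_2(\Omega,\mathcal F,\mathbb P)$, I would transfer it onto the second factor to obtain
\[
k_B(n,m)=\langle \chi_{\{A=n\}},\mathbb E_B\chi_{\{A=m\}}\rangle_{\mathbb P}
=\int_\Omega \chi_{\{A=n\}}(\w)\,\big(\mathbb E_B\chi_{\{A=m\}}\big)(\w)\,d\mathbb P(\w),
\]
which is the stated identity.

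The computation is short and presents no genuine obstacle; the only points demanding care are bookkeeping ones. One must keep straight which of $R_{A,B}$, $R_{B,A}$ is intended so that domains and codomains match, since the inner product is taken in $\mathbf L_2(\mu_B)$; and one must invoke self-adjointness of $\mathbb E_B$ to reconcile the apparently asymmetric placement of the conditional expectation in the final formula with the symmetric Gram expression defining $k_B$. Both steps rest on facts already recorded in the excerpt, namely $V_A\delta_n=\chi_{\{A=n\}}$, the identity $V_BV_B^*=\mathbb E_B$ coming from $V_B$ being an isometry onto $\mathbf L_2(\Omega,\mathcal F_B,\mathbb P)$, and the formula $R_{B,A}=V_B^*V_A$ established in the preceding proposition.
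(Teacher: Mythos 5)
Your proof is correct and takes essentially the same route as the paper: the paper's one-line proof invokes the identity $R_{B,A}^*R_{B,A}=V_A^*\,\mathbb E_B\,V_A$, which is exactly what you unfold via $V_A\delta_n=\chi_{\{A=n\}}$ and $V_BV_B^*=\mathbb E_B$, and your reading of the statement's $R_{A,B}$ as $R_{B,A}$ correctly repairs the paper's notational slip (only $R_{B,A}\delta_n$ lands in $\mathbf L_2(X,\mathcal B_X,\mu_B)$, where the inner product is taken). Your explicit Gram-kernel verification of positive definiteness, left implicit in the paper, is a harmless and welcome addition.
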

\begin{proof}
This follows from \eqref{110616} and from the formula $R_{B,A}^*R_{B,A}=V_A^*\mathbb E_B V_A$
\end{proof}

The last result concerns the case where both $A$ and $B$ are discrete.

\begin{proposition}
Let $(\Omega,\mathcal F,\mathbb P)$ and $A,B$ as above, and assume that both $A$ and $B$ have discrete laws, with $\mu_A$ and $\mu_B$ both 
supported on the same countable discrete set, say $S$. Denote by $\delta^{(B)}_j$ the Dirac function viewed as a vector in $\ell_2(\mu_B)$. Then
\begin{equation}
\left(R_{A,B}\left(\delta_j^{(B)}\right)\right)(i)=\mathbb P\left(B=j\,\big|\, A=i\right).
\end{equation}
\end{proposition}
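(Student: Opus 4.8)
The plan is to compute $R_{A,B}\big(\delta_j^{(B)}\big)$ directly from its definition $R_{A,B}=V_A^*V_B$ in \eqref{lycee_llg}, evaluating the two factors in turn. Since both $A$ and $B$ take values in the common discrete support $S$, everything reduces to manipulations of indicator functions, and the only genuinely delicate point is a normalization convention, discussed at the end.

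First I would compute $V_B\delta_j^{(B)}$. Reading $\delta_j^{(B)}$ as a function on $S$ (equal to $1$ at $j$ and $0$ elsewhere) and using $V_Bg=g\circ B$ from \eqref{vn}, for every $\omega\in\Omega$ one has $(V_B\delta_j^{(B)})(\omega)=\delta_j^{(B)}(B(\omega))$, which equals $1$ exactly when $B(\omega)=j$; hence $V_B\delta_j^{(B)}=\chi_{\{B=j\}}$ as an element of $\mathbf L_2(\Omega,\mathcal F,\mathbb P)$. Next I would apply $V_A^*$ and invoke Lemma \ref{lemma22}, which gives $(V_A^*\psi)(i)=\mathbb E_{A=i}(\psi\,|\,\mathcal F_A)$ for every $\psi$; with $\psi=\chi_{\{B=j\}}$ this yields
\[
\big(R_{A,B}\delta_j^{(B)}\big)(i)=\mathbb E_{A=i}\big(\chi_{\{B=j\}}\,\big|\,\mathcal F_A\big).
\]
To finish I would substitute the discrete-case expression for the $\mathcal F_A$-conditional expectation established earlier, namely $\mathbb E(\psi\,|\,\mathcal F_A)=\sum_k\mathbb P(\{A=k\})^{-1}\big(\int_{\{A=k\}}\psi\,d\mathbb P\big)\chi_{\{A=k\}}$. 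Evaluating at $A=i$ picks out the single term $k=i$, and $\int_{\{A=i\}}\chi_{\{B=j\}}\,d\mathbb P=\mathbb P(\{A=i\}\cap\{B=j\})$, so the right-hand side becomes $\mathbb P(\{A=i\}\cap\{B=j\})/\mathbb P(\{A=i\})=\mathbb P(B=j\,|\,A=i)$, which is the asserted identity. As a consistency check, the same formula is \eqref{Adiscrete} with the roles of $A$ and $B$ interchanged, which is legitimate here because both variables are discrete and supported on $S$.

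The step I expect to require the most care is the passage from the abstract adjoint $V_A^*$ to the pointwise value, because elements of $\ell_2(\mu_A)$ carry the weighted inner product $\langle\delta_n,\delta_m\rangle_{\mu_A}=\delta_{nm}\mathbb P(\{A=n\})$, so a value $g(i)$ and the pairing $\langle g,\delta_i\rangle_{\mu_A}$ differ by the factor $\mathbb P(\{A=i\})$. Routing through Lemma \ref{lemma22} delivers the genuine pointwise value, and hence the correctly normalized conditional probability $\mathbb P(B=j\,|\,A=i)$ rather than the joint probability $\mathbb P(A=i,B=j)$ that the pairing $\langle R_{A,B}\delta_j^{(B)},\delta_i\rangle_{\mu_A}$ would return; this is why I would avoid computing that pairing directly. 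Finally, since the output lives in $\ell_2(\mu_A)$ it is only determined $\mu_A$-almost everywhere, so it suffices to verify the formula at points $i\in S$ with $\mu_A(\{i\})=\mathbb P(\{A=i\})>0$; there the division above is legitimate, and the argument is complete.
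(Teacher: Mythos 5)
Your proof is correct, but it takes a different route from the paper's. The paper disposes of this proposition in one line, citing Lemma \ref{lemma26} (the identity $\mathbb E\left(f\circ B\,\big|\,A=x\right)=(R_{A,B}f)(x)$) with $f=\delta_j^{(B)}$, so that the left-hand side is immediately $\mathbb P\left(B=j\,\big|\,A=i\right)$; note that the topological hypotheses of that lemma ($X$ locally compact, $R_{A,B}$ mapping $C_c(X)$ into $C(X)$) hold trivially when the common support $S$ carries the discrete topology, though the paper leaves this verification implicit. You instead unwind the definition $R_{A,B}=V_A^*V_B$ from \eqref{lycee_llg}: first $V_B\delta_j^{(B)}=\chi_{\{B=j\}}$, then Lemma \ref{lemma22} for the pointwise value of $V_A^*$, then the explicit discrete-case conditional expectation formula $\mathbb E\left(\psi\,\big|\,\mathcal F_A\right)=\sum_k\mathbb P(\{A=k\})^{-1}\bigl(\int_{\{A=k\}}\psi\,d\mathbb P\bigr)\chi_{\{A=k\}}$, which on $\{A=i\}$ yields $\mathbb P(\{A=i\}\cap\{B=j\})/\mathbb P(\{A=i\})$. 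This is in effect a self-contained re-derivation, in the discrete setting, of exactly the instance of Lemma \ref{lemma26} that the paper invokes, so the two arguments bottom out in the same computation (Lemma \ref{lemma26} is itself proved from Lemmas \ref{lemma22} and \ref{lemma24}). What your version buys is twofold: it avoids any appeal to the topological hypotheses, and it makes explicit the normalization point that the one-line proof glosses over, namely that the pointwise value $g(i)$ differs from the weighted pairing $\langle R_{A,B}\delta_j^{(B)},\delta_i\rangle_{\mu_A}$ by the factor $\mathbb P(\{A=i\})$, which is precisely why the result is the conditional probability rather than the joint probability. Your closing remark that the identity need only hold at atoms $i$ with $\mu_A(\{i\})>0$, since elements of $\ell_2(\mu_A)$ are determined $\mu_A$-a.e., is also the right way to legitimize the division.
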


\begin{proof}
This is immediate from Lemma \ref{lemma26}.
\end{proof}
\begin{table}[h!]
\caption{Markov chains}
\begin{tabular}{|c|c|c|}
\toprule
& &\\
\text{Case}&\text{General state space $\mathcal M(X,\mathcal B_X)$}& \text{Discrete state space}\\
&& \\
\hline
& &\\
\text{Transition}&$\mathbb E\left(f\circ T_{n+1}\,\big|\,\bigvee_{j=0}^n \mathcal F_j\right)=$&
$\mathbb E\left(T_{n+1}=j\,\big|\, T_0=i_0,\ldots T_n=i_n\right)=$\\
&&\\
&$\hspace{1.8cm}=\mathbb E\left(f\circ T_{n+1}\,\big|\,\mathcal F_n\right)$&$\hspace{1.8cm}=\mathbb E\left(T_{n+1}=j\,\big|\,T_n=i_n\right)$\\
& &\\
\hline
&&\\
\text{Transfer}&$\left(Rf\right)(x)=\mathbb E\left(f\circ T_{n+1}\,\big|\, T_n=x\right)$& $p_{i,j}=\mathbb P
\left(T_{n+1}=j\,\big|\, T_n=i\right)$\\
\text{operator}& &\\
&$\forall f\in\mathcal M(X,\mathcal B_X)$&\\
\hline
\text{Harmonic}&&\\
\text{functions}& $(Rh)(x)=h(x)$ &$\sum_{j}p_{i,j}h_j=h_i$\\ 
& &\\
\bottomrule
\end{tabular}
\label{table1234}
\end{table}

\subsection{Martingales}
\begin{definition}{\rm
Let $T_0,T_1,\ldots$  be a Markov chain, with each $T_n$ taking values in the space $(X,\mathcal B_X)$. Let $M_0,M_1,\ldots$ be another 
$X$-valued random process. We say that $(M_n)_{n\in\mathbb N_0}$ is a martingale with respect to $(T_n)_{n\in\mathbb N_0}$ if the condition
\begin{equation}
\mathbb E\left(M_{n+k}\,\big|\,\bigvee_{j=0}^n\mathcal F_{T_j}\right)=M_n,\quad \forall n,k\in\mathbb N_0
\end{equation}
holds.}
\label{defmartingale}
\end{definition}

\begin{proposition}
Let $(X,\mathcal B_X)$ be a measure space, and let $T_0,T_1,\ldots$ be a $X$-valued Markov chain, with transition operator $R$ acting on
$\mathcal M(X,\mathcal B_X)$, and let $h$ positive on $X$ and such that $Rh=h$. Then, the process $M_n=h\circ T_n$, $n=0,1,\ldots$ is a
martingale (see Definition \ref{defmartingale} for the latter) with respect to $(T_n)_{n\in\mathbb N_0}$.
\end{proposition}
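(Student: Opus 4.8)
The plan is to verify the defining identity of Definition \ref{defmartingale} for $M_m=h\circ T_m$, namely that
\[
\mathbb{E}\left(M_{n+k}\,\big|\,\textstyle\bigvee_{j=0}^n\mathcal{F}_{T_j}\right)=M_n,\qquad \forall n,k\in\mathbb{N}_0.
\]
The first step is to identify the martingale filtration with the chain filtration from Axioms \ref{axioms123}: the sigma-algebra $\bigvee_{j=0}^n\mathcal{F}_{T_j}$ generated by the union is by definition the smallest sigma-algebra for which $T_0,\ldots,T_n$ are all measurable, which is exactly $\mathcal{G}_n$. Hence it suffices to compute $\mathbb{E}(h\circ T_{n+k}\,\big|\,\mathcal{G}_n)$ and show it equals $h\circ T_n$.

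The heart of the argument is a downward induction on $k$ that peels off one time step at a time via the tower property and the Markov property \eqref{markovprop}. For $k=0$ the claim is immediate, since $h\circ T_n$ is $\mathcal{G}_n$-measurable. For the inductive step (with $k\geq 1$), I condition the innermost variable on the larger sigma-algebra $\mathcal{G}_{n+k-1}\supset\mathcal{G}_n$,
\[
\mathbb{E}\left(h\circ T_{n+k}\,\big|\,\mathcal{G}_n\right)=\mathbb{E}\left(\mathbb{E}\left(h\circ T_{n+k}\,\big|\,\mathcal{G}_{n+k-1}\right)\,\big|\,\mathcal{G}_n\right).
\]
By the Markov property \eqref{markovprop} the inner conditional expectation equals $R(h)\circ T_{n+k-1}$, and the harmonicity hypothesis $Rh=h$ collapses this to $h\circ T_{n+k-1}$. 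Substituting back reduces the exponent from $k$ to $k-1$, and iterating the induction drives the chain down to $\mathbb{E}(h\circ T_{n+1}\,\big|\,\mathcal{G}_n)=R(h)\circ T_n=h\circ T_n=M_n$. Equivalently, one may invoke the identity $\mathbb{E}(h\circ T_{n+k}\,\big|\,\mathcal{F}_n)=R^k(h)\circ T_n$ recorded in the Remark following Axioms \ref{axioms123}, together with $R^k h=h$.

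The one point demanding care — and the main technical obstacle — is integrability, so that the conditional expectations are genuinely defined; this is precisely where the positivity hypothesis $h>0$ is used. Taking full expectations in \eqref{markovprop} and using $Rh=h$ gives $\mathbb{E}(h\circ T_{n+1})=\mathbb{E}(R(h)\circ T_n)=\mathbb{E}(h\circ T_n)$, so the common value $\mathbb{E}(h\circ T_n)=\int_X h\,d\mu_0$ is independent of $n$. Consequently, as soon as $h\in\mathbf{L}_1(X,\mathcal{B}_X,\mu_0)$, every $M_n$ is integrable and all the finite conditional expectations above are legitimate, completing the proof. If one prefers to assume only that $h$ is measurable and positive, the same computation holds verbatim for the generalized $[0,\infty]$-valued conditional expectation, and positivity guarantees that each step is well posed.
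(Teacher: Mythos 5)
Your proof is correct and takes essentially the same route as the paper: the paper simply quotes the iterated Markov identity $\mathbb E\left(f\circ T_{n+k}\,\big|\,\bigvee_{j=0}^n\mathcal F_j\right)=\left(R^k(f)\right)\circ T_n$ and sets $f=h$ with $R^kh=h$, which is precisely what your tower-property induction re-derives one step at a time. Your closing remark on integrability of $h\circ T_n$ is a sound point that the paper leaves implicit, but it does not change the substance of the argument.
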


\begin{proof}
Since $(T_n)_{n\in\mathbb N_0}$ is a Markov chain we have for every $f\in\mathcal M(X,\mathcal B_X)$
\begin{equation}
\label{22222}
\mathbb E\left(f\circ T_{n+k}\,\big|\,\bigvee_{j=0}^n\mathcal F_j\right)=\mathbb E\left(f\circ T_{n+k}\,\big|\,\mathcal F_n\right)=
\left(R^k(f)\right)\circ T_n,
\end{equation}
(by Lemma \ref{elisabeth} with $B=T_{n+k}$ and $T_n$ instead of $A$; see also \eqref{martingale1} for the solenoid).\smallskip

Setting $f=h$ in \eqref{22222} we get
\[
\begin{split}
\mathbb E\left(M_{n+k}\,\big|\,\bigvee_{j=0}^n\mathcal F_j\right)&=(R^k(h))\circ T_n\\
&=h\circ T_n\\
&=M_n,
\end{split}
\]
which is the desired conclusion.
\end{proof}

\begin{remark}
{\rm The same argument will hold for a positive function $f$ such that $Rf=\lambda f$ for some $\lambda\not=0$. Then,
$M^{(\lambda)}_n=\lambda^{-n}f\circ T_n$ is a martingale with respect to  $(T_n)_{n\in\mathbb N_0}$.}
\end{remark}

\begin{definition} (stopping time) {\rm A stopping time $K$  for a random process $(T_n)_{n\in\mathbb N_0}$ on the probability space $(\Omega,
\mathcal F,\mathbb P)$ is a random variable $K\,:\,\Omega\,\longrightarrow\,\mathbb N_0$ such that 
\begin{equation}
K^{-1}\left(\left\{n\right\}\right)\in\bigvee_{j=0}^n\mathcal F_{T_j},\quad\forall,n\in\mathbb N_0.
\end{equation}
\label{stoppingtime}}
\end{definition}

Applying our previous analysis to the pair $A=(T_0,\ldots , T_n)$  (with associated space $X^{n+1}$) and $B=K$, we have the following stopping
time formula:
\begin{equation}
\left(R_{\left\{T_0,\ldots, T_n\right\},K}\right)(m)=\mathbb E_{\left\{K=m\right\}}\left(f(T_0,\ldots, T_n)\,\big|\,\mathcal F_K\right),\quad n,m
\in\mathbb N_0,\quad\forall f\in\mathbf L_2\left(\left(\prod_{n=0}^nX\right),\mu_n\right),
\end{equation}
where $\mu_n$ denotes the joint distribution of $\left\{T_0,\ldots,T_n\right\}$.
\begin{definition}
{\rm Let $(X,\mathcal B_X)$ and $(Y,\mathcal B_Y)$ be two measure spaces, and let $F\,:\, X\times Y\,\longrightarrow\, X$ be a measurable
function, where $X\times Y$ as a measure space is given the product sigma-algebra. Let $\left(T_n\right)_{n\in\mathbb N_0}$ be a
Markov chain with values in $X$, and let $(\psi_n)_{n\in\mathbb N_0}$ be a system of independent identically distributed (i.i.d.) random variables
with values in $Y$. If
\begin{equation}
\label{feed}
T_{n+1}=F(T_n,\psi_n),\quad n\in\mathbb N_0,
\end{equation}
then one says that $\left(T_n\right)_{n\in\mathbb N_0}$ is a homogeneous Markov chain (HMC). In details, the requirement is that
\[
T_{n+1}(\w)=F(T_n(\w),\psi_n(\w)),\quad \w\in\Omega,
\]
where $\Omega$ refers to the sample space in the probability space $(\Omega,\mathcal F,\mathbb P)$ which realizes the two processes; see
also Theorems \ref{loudmila} and \ref{thhmc} below.}
\label{227}
\end{definition}

Recursion \eqref{feed} is a feedback loop in the language of (non-linear)  system  theory. See Figure \ref{fig1} below, and see
e.g. \cite{MR525380} for information on feedback.
We plan to explore these connections in a future publication.\smallskip

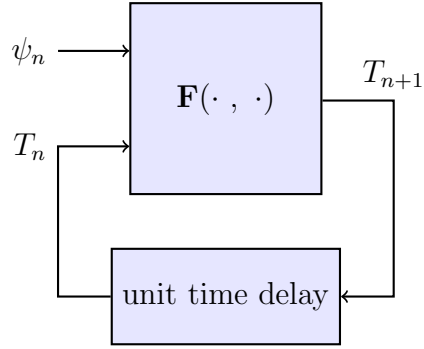
\begin{figure}
	
\hspace{4cm}	 \begin{tikzpicture}
		\matrix (diag_mat) [nodes={draw, fill=blue!10}, row sep=7mm, column sep=7mm]
		{ 
		\coordinate (taun) ; &
		\node[block1] (F) {$\mathbf{F}(\cdot~,~\cdot)$};&
		\coordinate (tnp1) {}; & 
		\coordinate (XB) {}; & 
		~;& 	\\	
		\coordinate (zr) ;&
		\node[block2] (G) {${\rm unit~time~delay}$};
		\coordinate (z0) ; &
		\coordinate (z1) ; &
		~ ;\\ 
	};	
	\draw[connector]  ($(taun)-(0,-0.25in)$) node[left] {$ֿ
\psi_n$}--($(F.west) - (0,-0.25in)$) ;
;
	\draw[connector]  ($(F.east) - (0,0.01in)$) -- ($(tnp1)-(0,0.01in)$) node[above] {$ֿT_{n+1}$}  -- (z1) 
-- 
(G)
;
	\draw[connector]  
(G) -- (zr) -- ($(taun)-(0,0.25in)$) node[left] {$ֿ
T_n$} -- ($(F.west) - (0,0.25in)$);	
	\end{tikzpicture}
\caption{Feedback loop: illustration of the class of i.i.d. feedback processes from Definition \ref{227}.}
\label{fig1}
\end{figure}

There are many applications of these Markov processes, including to control, see \cite{MR544839}, to feedback, 
see \cite{MR1974383}, and to Monte Carlo simulation, see e.g., \cite{MR1689633} and \cite[\S 5.5]{MR1600720}.
 
\subsection{Homogeneous Markov chains (HMC)}
As above $(X,\mathcal B_X)$ is a set  with a fixed sigma-algebra $\mathcal B_X$. We consider another measure-space 
$(Y,\mathcal D)$. Let $\psi_0,\psi_1,\ldots$ be a sequence of independent identically  distributed (i.i.d.) $Y$-valued random variables 
defined on $(\Omega,\mathcal F,\mathbb P)$, with probability distribution $\nu$. Such a sequence is called a white noise or a driving sequence; see
Remark \ref{wnoise}. One way to construct such a sequence is as follows. One takes 
\[
\Omega_Y=\prod_{n=0}^\infty Y\,\left(=Y^{\mathbb N_0}\right)
\]
endowed with the cylinder sigma-algebra $\mathcal C$ (see for instance \cite{Ka48} for the latter), and the infinite product measure 
$\nu_\infty=\nu\times\nu\cdots$, and set
\[
\psi_n(y_0,y_1,\ldots)=y_n
\]

Thus
\begin{equation}
\nu(D)=\nu_\infty(\psi_n^{-1}(D)),\quad D\in\mathcal D,\,\,\,{\rm and}\,\,\, n=0,1,\ldots
\end{equation}
In particular,
\begin{equation}
\int_\Omega F(\cdot, \psi_0(\w))d\nu_\infty(\w)=\int_YF(\cdot, y)d\nu(y)
\end{equation}
We now consider a measurable map $F$ from $X\times Y$ into $X$, where $Y$ is another measure-space. We define
\begin{equation}
\label{roxanna123}
(R_Ff)(x)=\int_Yf(F(x,y))d\nu(y),\qquad f\in\mathcal M(X,\mathcal B).
\end{equation}

Let $\Omega_Y=\prod_{n=0}^\infty Y$, and for $\w=(y_0,y_1,\ldots)\in\Omega_Y$, we define (with $F_y=F(\cdot, y)$)
\begin{equation}
\label{wnfn}
\begin{split}
\w|n&=(y_0,\ldots, y_n)\\
F_{\w|_n}&=F_{y_n}F_{y_{n-1}}\cdots F_{y_1}F_{y_0}.
\end{split}
\end{equation}
We assume that
\begin{equation}
\label{11w}
\cap_{n=1}^\infty F_{\w|_n}(X)=\left\{x_\w\right\}
\end{equation}
is a singleton. We then set
\begin{equation}
\label{Vw}
V(\w)=x_\w \quad(\text{see \eqref{11w}}).
\end{equation}

\begin{lemma}
Let $F\,:\, X\times Y\,\longrightarrow\, X$. The corresponding transfer operator in \eqref{roxanna123} is of the 
form
\begin{equation}
(R_Ff)(x)=\int_Xf(t)\mu(dt,|x)
\end{equation}
where $\mu(\cdot|x)\stackrel{\rm def.}{=}(d\nu)\circ F_x^{-1}$, and where $F_x(\cdot)=F(x,\cdot)\,:\, Y\,\longrightarrow\, X$.
\end{lemma}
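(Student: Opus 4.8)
The plan is to prove the identity by unwinding the definition of the transfer operator $R_F$ in \eqref{roxanna123} and recognizing the integral over $Y$ as a pushforward (image) measure under the map $F_x$. The statement claims that the operator $(R_Ff)(x)=\int_Y f(F(x,y))\,d\nu(y)$ can equivalently be written as integration of $f$ against a measure $\mu(\cdot\,|\,x)$ on $X$, where that measure is precisely the image of $\nu$ under $F_x=F(x,\cdot)\colon Y\to X$.

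First I would fix $x\in X$ and treat it as a parameter, so that $F_x\colon Y\to X$ is a fixed measurable map from $Y$ into $X$. The expression $\int_Y f(F(x,y))\,d\nu(y)$ is exactly $\int_Y (f\circ F_x)(y)\,d\nu(y)$. The key step is then the standard change-of-variables (pushforward) formula for image measures: for any measurable $f$ on $X$,
\[
\int_Y (f\circ F_x)(y)\,d\nu(y)=\int_X f(t)\,d\big((d\nu)\circ F_x^{-1}\big)(t).
\]
By the very definition $\mu(\cdot\,|\,x)\stackrel{\rm def.}{=}(d\nu)\circ F_x^{-1}$, the right-hand side is $\int_X f(t)\,\mu(dt\,|\,x)$, which is exactly the claimed formula. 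This is the same device already used at the start of Section \ref{sec2_1} to pass between $\int_\Omega f(A(\w))\,d\mathbb P(\w)$ and $\int_X f(x)\,d\mu_A(x)$ via the law $\mu_A$; here $\nu$ plays the role of $\mathbb P$ and $F_x$ plays the role of the random variable $A$.

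The honest content to verify is that $\mu(\cdot\,|\,x)$, as defined by $\mu(L\,|\,x)=\nu(F_x^{-1}(L))$ for $L\in\mathcal B_X$, is genuinely a (probability) measure on $(X,\mathcal B_X)$ and that the pushforward formula applies. Measurability of $F_x$ (a consequence of the joint measurability of $F$ on $X\times Y$, fixing the first slot) guarantees $F_x^{-1}(L)\in\mathcal D$, so $\mu(L\,|\,x)$ is well defined; countable additivity of $\mu(\cdot\,|\,x)$ follows from that of $\nu$ together with $F_x^{-1}\big(\bigsqcup_k L_k\big)=\bigsqcup_k F_x^{-1}(L_k)$, and $\mu(X\,|\,x)=\nu(Y)=1$. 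The pushforward identity itself is then proved in the usual layered fashion: it holds for indicator functions $f=\chi_L$ by the very definition of $\mu(\cdot\,|\,x)$, extends to simple functions by linearity, to nonnegative measurable $f$ by monotone convergence, and to general $f\in\mathcal M(X,\mathcal B_X)$ (or $f\in\mathbf L_2$) by splitting into positive and negative parts.

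I expect no serious obstacle here; the statement is essentially definitional once one recognizes $\int_Y f\circ F_x\,d\nu$ as integration against an image measure. The only point requiring mild care is making sure the formula is interpreted pointwise in the parameter $x$ (each $x$ giving its own probability measure $\mu(\cdot\,|\,x)$ on $X$), and that one does not conflate the $Y$-integration variable with the $X$-integration variable; the notation $\mu(dt\,|\,x)$ is designed precisely to keep these apart. If one wished to say more, one could note that $x\mapsto\mu(L\,|\,x)$ is itself measurable for each fixed $L$, so that $\mu(\cdot\,|\,\cdot)$ is a genuine Markov (transition) kernel, but this is not needed for the stated identity.
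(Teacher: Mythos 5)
Your proof is correct and follows the same route as the paper: the paper's own argument is exactly the one-line change-of-variables identity $\int_Y (f\circ F_x)\,d\nu=\int_X f\,d(\nu\circ F_x^{-1})$, which you state and then justify in the standard layered way (indicators, simple functions, monotone convergence). Your added verifications that $\mu(\cdot\,|\,x)$ is a well-defined probability measure are details the paper leaves implicit, but they do not change the approach.
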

\begin{proof}
We have
\[
\begin{split}
\int_Yf(F(x,y))d\nu(y)&=\int_Y(f\circ F_x)(y)d\nu(y)\\
&=\int_Xf(t)\left(d\nu\circ F_x^{-1}\right)(t).
\end{split}
\]
Hence $\mu(\cdot|x)=d\nu\circ F_x^{-1}$ as claimed.
\end{proof}
\begin{theorem}
Let $\nu$ be a probability measure on $(Y,\mathcal D)$, and let $\times_{n=0}^\infty \nu$ be the corresponding infinite product measure on $\Omega$. Assume 
that \eqref{11w} is in force, and let $V$ be defined by \eqref{Vw}. The formula
\begin{equation}
\mu(B)=\left(\times_{n=0}^\infty \nu\right)(V^{-1}(B)),\quad B\in\mathcal B_X,
\end{equation}
then defines a measure on $(X,\mathcal B)$ which satisfies
\begin{equation}
\label{equationRF}
\mu R_F=\mu,
\end{equation}
that is
\begin{equation}
\label{conclusion13} 
\iint_{X\times Y}f(F(x,y))d\nu(y)d\mu(x)=\int_Xf(x)d\mu(x),\quad \forall f\in\mathcal M(X,\mathcal B_X)
\end{equation}
holds.
\label{loudmila}
\end{theorem}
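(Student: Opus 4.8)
The plan is to realize $\mu$ as the distribution (pushforward) of $V$, namely $\mu=\nu_\infty\circ V^{-1}$ with $\nu_\infty=\times_{n=0}^\infty\nu$, and to transport the desired invariance \eqref{conclusion13} from $X$ to the sequence space $\Omega_Y$, where it becomes the statement that $\nu_\infty$ is invariant under prepending one coordinate. First I would record the change-of-variables identity $\int_X g\,d\mu=\int_{\Omega_Y}g(V(\w))\,d\nu_\infty(\w)$, valid for every $g\in\mathcal M(X,\mathcal B_X)$, which is immediate from the definition of $\mu$ as the law of $V$. Applying it to $g=R_Ff$ and unwinding the definition \eqref{roxanna123} (recall $F_y=F(\cdot,y)$) turns the left-hand side of \eqref{conclusion13} into
\[
\int_{\Omega_Y}\int_Y f\bigl(F(V(\w),y)\bigr)\,d\nu(y)\,d\nu_\infty(\w).
\]

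The heart of the argument is a cocycle (functional) identity for $V$. For $y\in Y$ and $\w=(y_0,y_1,\ldots)\in\Omega_Y$ write $y*\w=(y,y_0,y_1,\ldots)$ for the prepended sequence. I claim that
\[
V(y*\w)=F\bigl(V(\w),y\bigr)=F_y\bigl(V(\w)\bigr).
\]
To see this, note that prepending $y$ corresponds, in the compositions \eqref{wnfn}, to one extra application of $F_y$ on the outside, so that $F_{(y*\w)|_n}=F_y\circ F_{\w|_{n-1}}$ and hence $F_{(y*\w)|_n}(X)=F_y\bigl(F_{\w|_{n-1}}(X)\bigr)$. Since $x_\w\in F_{\w|_{n-1}}(X)$ for every $n$, the point $F_y(x_\w)$ lies in every $F_{(y*\w)|_n}(X)$, hence in their intersection; as \eqref{11w} — applied to $y*\w$ — makes this intersection the singleton $\{V(y*\w)\}$, we conclude $V(y*\w)=F_y(x_\w)=F_y(V(\w))$. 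This is the step I expect to be the main obstacle: it is exactly where the standing hypothesis \eqref{11w} and the nested structure of the images $F_{\w|_n}(X)$ are used, and it must be checked that $F_y(x_\w)$ genuinely sits in each $F_{(y*\w)|_n}(X)$ so that the two singletons coincide. Notably, only measurability of $F$ together with \eqref{11w} is needed; no continuity of $F$ is required, since the argument identifies the limit point directly rather than passing $F_y$ through a limit.

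With the cocycle identity in hand, substituting $F(V(\w),y)=V(y*\w)$ rewrites the displayed left-hand side as $\int_{\Omega_Y}\int_Y f\bigl(V(y*\w)\bigr)\,d\nu(y)\,d\nu_\infty(\w)$. The final ingredient is the invariance of the product measure under the prepending map $\Phi(y,\w)=y*\w$: because $\nu_\infty=\nu\times\nu\times\cdots$, prepending one fresh $\nu$-distributed coordinate to an i.i.d. $\nu$-sequence reproduces an i.i.d. $\nu$-sequence, so $\Phi$ pushes $\nu\times\nu_\infty$ forward to $\nu_\infty$. Hence by Fubini,
\[
\int_{\Omega_Y}\int_Y f\bigl(V(y*\w)\bigr)\,d\nu(y)\,d\nu_\infty(\w)=\int_{\Omega_Y}f\bigl(V(\w')\bigr)\,d\nu_\infty(\w'),
\]
and applying the change-of-variables identity once more identifies the right-hand side with $\int_X f\,d\mu$. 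This yields \eqref{conclusion13}, equivalently $\mu R_F=\mu$ as in \eqref{equationRF}, completing the proof.
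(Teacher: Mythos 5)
Your proof is correct and takes essentially the same route as the paper's: your prepending map $y*\w$ is the paper's $\ell(y)$, your cocycle identity $V(y*\w)=F_y(V(\w))$ is exactly the paper's commutation relation \eqref{commu}, and your final steps (pushforward of $\nu\times\nu_\infty$ under prepending, then Fubini) reproduce the paper's chain of equalities, with the only difference being that you carefully verify the step the paper declares clear. One remark: your (correct) identity $F_{(y*\w)|_n}=F_y\circ F_{\w|_{n-1}}$ tacitly reads the composition with $F_{y_0}$ outermost, which is the convention the paper's own proof uses in the line $F_yF_{\w|_n}=F_{\ell(y)(\w)|_{n+1}}$ and the one under which the images are nested and the theorem holds, even though it is the reverse of the order literally displayed in \eqref{wnfn}.
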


\begin{proof}
We define on $\Omega$ 
\[
\ell(y)(y_0,y_1,\ldots)=(y,y_0,y_1,\ldots),\quad{\rm with}\,\, y\in Y\,\,{\rm and}\,\, \w=(y_0,y_1,\ldots)\in\Omega.
\]
Then it is clear from \eqref{11w} and \eqref{Vw} that 
\begin{equation}
\label{commu}
F_yV=V\ell(y),
\end{equation}
since
\[
F_yF_{\w|n}=F_{\ell(\w)|n+1}.
\]
Note that \eqref{commu} means that the following commutative diagram is in force:
\[
\begin{array}{ccc}
\Omega_Y&\xrightarrow{\hspace*{0.5cm}{V}\hspace*{.5cm}}&X \\
\hspace*{-0.5cm}\downarrow{\hspace*{-1.2cm}\ell(y)}
& &\hspace*{0.5cm}\downarrow{F_y}\\
\Omega_Y&\xrightarrow{\hspace*{0.5cm}{V}\hspace*{.5cm}}&X
\end{array}\]
We now prove \eqref{conclusion13}. Let $f\in\mathcal M(X,\mathcal B)$. Then $R_F$ in \eqref{equationRF} can be rewritten as
\[
R_Ff=\int_Y (f\circ F_y)d\nu(y).
\]
With $\mathbb Q=\times_{n=0}^\infty \nu$ and $\mu=\mathbb Q\circ V^{-1}$ we get
\[
\begin{split}
\int_X (R_Ff)(x)d\mu(x)&=\int_X(R_Ff)(x)(d\mathbb Q\circ V^{-1})(x)\\
&=\int_{\Omega_Y}((R_Ff)\circ V)(\w)d\mathbb Q(\w)\\
&=\int_{\Omega_Y}\int_Y((f\circ F_y)\circ V)(\w)d\mathbb Q(\w)d\nu(y)\\
&(\text{    and using \eqref{commu}})\\
&=\int_{\Omega_Y}\int_Y(f\circ V\circ \ell(y))d\mathbb Q(\w)d\nu(y)\\
&=\int_{\Omega_Y}\int_Y(f\circ V)(\w)d\mathbb Q\circ \ell(y)^{-1}d\nu(y)\\
&(\text{    and since $\mathbb Q$ is an infinite product measure})\\
&=\int_{\Omega_Y}\int_Y(f\circ V)(\w)d\mathbb Q(\w)d\nu(y)\\
&=\int_Xf(x)\int_Y(d\mathbb Q\circ V^{-1})(x)d\nu(y)\\
&=\int_Xf(x)(d\mathbb Q\circ V^{-1})(x)\\
&(\text{    since $d\nu$ is a probability measure})\\
&=\int_X f(x)d\mu(x)\quad (\text{with $\mu=\mathbb Q\circ V^{-1}$})
\end{split}
\]
\end{proof}
\begin{theorem}
Let $Y$ and $F$ be as above. Let $\lambda$ be a probability measure on $(X,\mathcal B)$, and let $\psi_0,\psi_1,\ldots$ be a sequence of 
i.i.d. $Y$-valued random variables with probability distribution $\nu$. Then there exists a probability measure $\mathbb P$
on $(\Omega_Y,\mathcal C)$ and a sequence of 
$X$-valued random variables $T_0,T_1,\ldots$ on $\Omega_Y$ such that:\smallskip

\hspace{1cm}$(1)$ $\lambda$ is the distribution of $T_0$, that is
\begin{equation}
\int_\Omega F(T_0(\w),\cdot)d\mathbb P(\w)=\int_XF(x,\cdot)d\lambda(x).
\end{equation}
\hspace{1cm}$(2)$ We have
\begin{equation}
T_{n+1}=F(T_n,\psi_n),\quad n=0,1,\ldots
\label{agathe}
\end{equation}
\hspace{1cm}$(3)$ It holds that
\begin{equation}
\mathbb E\left(f\circ T_{n+1}\,\big|\,\mathcal F_n\right)=\mathbb E\left(f\circ T_{n+1}\big|\mathcal G_n\right)=
(R_F(f))\circ T_n
\end{equation}
where $\mathcal F_n=T_n^{-1}(\mathcal B)$, where $\mathcal G_n$ is the smallest sigma-algebra for which the variables $T_0,\ldots, T_n$ 
are measurable, and where $R_F$ is given by \eqref{roxanna123}.\smallskip

\hspace{1cm}$(4)$ We have
\begin{equation}
\label{proba123}
\int_{\Omega_Y}(f_0\circ T_0)(f_1\circ T_1)\cdots (f_n\circ T_n)d\mathbb P=\int_Xf_0(x)R_F(f_1R_F(f_2\cdots R_F(f_nh)\cdots))d\lambda(x)
\end{equation}
with $f_0,f_1,\ldots, f_n\in \mathcal M(X,\mathcal B)$.
\label{thhmc}
\end{theorem}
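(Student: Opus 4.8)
\emph{Plan of proof.} The first point to settle is the sample space, since the given white-noise sequence $\psi_0,\psi_1,\ldots$ realized on $\Omega_Y=\prod_{n\ge 0}Y$ carries no information about the prescribed initial law $\lambda$. The plan is therefore to enlarge the space: take $\Omega:=X\times\Omega_Y$ with product sigma-algebra $\mathcal B_X\otimes\mathcal C$ and product measure $\mathbb P:=\lambda\times\mathbb Q$, where $\mathbb Q=\times_{n=0}^\infty\nu$, and let $\psi_n(x,\w)=y_n$ depend only on the $\Omega_Y$-coordinate (this matches the ``$\int_\Omega$'' already appearing in part $(1)$; note that assumption \eqref{11w} and the map $V$ play no role here, the construction being purely probabilistic). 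Define $T_0(x,\w)=x$ and recursively $T_{n+1}=F(T_n,\psi_n)=F_{\psi_n}\circ T_n$. Then $(2)$ holds by construction, and $(1)$ is immediate: since $\mathbb Q$ is a probability measure, the push-forward of $\mathbb P$ under the projection $T_0$ onto the $X$-factor is exactly $\lambda$, which is the change-of-variables identity displayed in $(1)$.

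The core of the argument is the Markov property $(3)$. First I would record the measurability bookkeeping: an induction on $n$ shows that $T_0,\ldots,T_n$ are functions of $(x,y_0,\ldots,y_{n-1})$ alone, so $\mathcal G_n=\sigma(T_0,\ldots,T_n)$ is contained in the sigma-algebra generated by the initial coordinate and $\psi_0,\ldots,\psi_{n-1}$. By the product structure of $\mathbb P$ this sigma-algebra is independent of $\psi_n$. Now $T_{n+1}=F(T_n,\psi_n)$ with $T_n$ being $\mathcal G_n$-measurable and $\psi_n$ independent of $\mathcal G_n$; the standard ``freezing'' lemma for conditional expectation (integrate out the independent variable while holding the measurable one fixed) gives
\[
\mathbb E\big(f\circ T_{n+1}\,\big|\,\mathcal G_n\big)=\int_Y f\big(F(T_n,y)\big)\,d\nu(y)=(R_Ff)\circ T_n,
\]
the last equality being the definition \eqref{roxanna123} of $R_F$. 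Since the right-hand side is $\mathcal F_n=\sigma(T_n)$-measurable and $\mathcal F_n\subset\mathcal G_n$, applying $\mathbb E(\cdot\,|\,\mathcal F_n)$ and using the tower property shows $\mathbb E(f\circ T_{n+1}\,|\,\mathcal F_n)$ equals the same expression, yielding the full chain of equalities in $(3)$.

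For the product formula $(4)$ I would argue by peeling off the last factor. Conditioning the integrand $\prod_{j=0}^n f_j\circ T_j$ on $\mathcal G_{n-1}$ and pulling out the $\mathcal G_{n-1}$-measurable factors, part $(3)$ turns the innermost expectation $\mathbb E(f_n\circ T_n\,|\,\mathcal G_{n-1})$ into $(R_Ff_n)\circ T_{n-1}$, so that
\[
\mathbb E\Big[\prod_{j=0}^n f_j\circ T_j\Big]=\mathbb E\Big[\Big(\prod_{j=0}^{n-2}f_j\circ T_j\Big)\big(f_{n-1}\cdot R_Ff_n\big)\circ T_{n-1}\Big].
\]
This replaces the length-$(n{+}1)$ product by a length-$n$ product in which $f_{n-1}$ is replaced by $f_{n-1}\cdot R_Ff_n$. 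Iterating this nesting down to $j=0$ and then invoking $(1)$ (that $T_0$ has law $\lambda$) produces the claimed identity, the innermost term being $R_Ff_n$ (i.e.\ taking the final factor $h\equiv 1$).

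The step I expect to be the main obstacle is the rigorous justification of $(3)$. One must make the independence of $\psi_n$ from $\mathcal G_n$ precise, which rests on the measurability claim that $T_0,\ldots,T_n$ factor through $(x,y_0,\ldots,y_{n-1})$, and then invoke the freezing lemma in the generality of abstract measure spaces $(X,\mathcal B_X)$ and $(Y,\mathcal D)$: one needs Fubini for $\lambda\times\mathbb Q$ together with the fact that $x\mapsto\int_Y f(F(x,y))\,d\nu(y)$ is a measurable function of $x$, reducing by positivity/monotone-class arguments to bounded or nonnegative $f$. Once $(3)$ is in hand, the remaining items---$(1)$, $(2)$, and the inductive collapse in $(4)$---are routine bookkeeping.
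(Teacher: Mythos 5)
Your proposal is correct, but it takes a genuinely different route from the paper's. The paper stays (at least nominally) on the sample space $(\Omega_Y,\mathcal C)$: it defines $T_n$ by iterating $F$ along the coordinates (its Step 1), and then takes the moment formula \eqref{proba123} itself as the \emph{definition} of $\mathbb P$, invoking Kolmogorov's consistency principle to extend the resulting finite-dimensional data to the cylinder sigma-algebra (the consistency check is where the normalization $R_F1=1$ enters, exactly as in the parallel solenoid construction of Theorem \ref{thmmm}); properties $(1)$--$(3)$ are then read off from this formula via Lemmas \ref{lemma22} and \ref{lemma24}. You go the other way: you build $\mathbb P$ explicitly as the product $\lambda\times\mathbb Q$ on the enlarged space $X\times\Omega_Y$, get $(1)$--$(2)$ by construction, prove the Markov property $(3)$ from the freezing/independence lemma, and \emph{derive} $(4)$ by backward induction, whereas for the paper $(4)$ is essentially built in and the work lies in consistency. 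Your enlargement of the sample space deviates from the theorem's literal wording, but it is a defensible --- indeed nearly forced --- repair: under the product measure on $\Omega_Y$ with the coordinates as the driving noise, any $T_0$ independent of all $\psi_n$ would be tail-measurable, hence a.s.\ constant by the zero-one law, so an arbitrary initial law $\lambda$ cannot be realized without either enlarging the space or, as the paper implicitly does, replacing the product measure by the Kolmogorov-extended $\mathbb P$ (under which the coordinates need no longer be i.i.d.\ $\nu$ --- a point the paper leaves unaddressed, as it does the definition of $T_0$ in its Step 1; note that assumption \eqref{11w} and the map $V$ of \eqref{Vw} are used in Theorem \ref{loudmila}, not here, so you are right to discard them). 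Your treatment is the more rigorous of the two on points $(3)$ and $(4)$, and your reading $h\equiv1$ in \eqref{proba123} (legitimate since $R_F1=1$ because $\nu$ is a probability measure) agrees with the paper's normalization convention in Theorem \ref{thmmm} and Remark \ref{utopia}.
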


\begin{remark}
\label{wnoise}
{\rm 
Equation \eqref{agathe} is called an homogeneous Markov chain driven by white noise. The sequence $\psi_0,\psi_1,\ldots$
is called the driving sequence. See \cite[p. 56]{MR1689633}. For general background on  time-homogeneous state equation and 
homogeneous Markov chains, \cite{MR544839,MR1974383}. See also Theorem \ref{thmmm} below.}
\end{remark}

\begin{remark}
\label{utopia}
{\rm When $R$ is not normalized one defines
\[
R^\prime (f)=\frac{R(fh)}{h}.
\] 
Then, $R^\prime 1=1$. The above construction applied to the pair $(R^\prime,hd\lambda)$ will lead to the same 
probability measure $\mathbb P$. This is because
\[
\int_Xf_0(x)R(f_1R(f_2\cdots R(f_nh)\cdots))d\lambda(x)=
\int_Xf_0(x)R^\prime(f_1R^\prime(f_2\cdots R^\prime(f_n)\cdots))h(x)d\lambda(x).
\]
}
\end{remark}
\begin{proof}[Proof of Theorem \ref{thhmc}] The proof is  divided into three steps, which we outline.\\

STEP 1: {\sl Let $\w=(y_0,y_1,\ldots)\in\Omega_Y$ and define $\psi_n(\w)=y_n$, and $T_0,T_1,\ldots$ via
\begin{equation}
T_n(\w)=F(\cdots (F(F(F(T_0(\w),y_0),y_1),\cdots ),y_{n-1}),\ldots).
\end{equation}
}

STEP 2: {\sl Formula \eqref{proba123} defines a unique probability measure $\mathbb P$ on $\Omega_Y$ endowed 
with its cylinder sigma-algebra.}\\

The existence of $\mathbb P$ is an application of Kolmogorov's consistency principle; see for instance \cite{MR0203748}.\\

STEP 3: {\sl The above probability, and the random functions $T_0,T_1,\ldots$ have the desired properties.}\\

See also Lemma \ref{lemma22} and \ref{lemma24}.

\end{proof}

\begin{corol}
The probability distribution of $T_n$  is $\mu_n=\lambda R^n$, $n=1,2,\ldots$
\end{corol}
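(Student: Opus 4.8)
The plan is to prove the identity $\mu_n=\lambda R^n$ by induction on $n$, where $R=R_F$ and the measure $\lambda R^n$ is read through the standard right action of the operator on measures, $\int_X g\,d(\lambda R)=\int_X (Rg)\,d\lambda$ for $g\in\mathcal M(X,\mathcal B_X)$, so that iterating gives $\int_X g\,d(\lambda R^n)=\int_X (R^n g)\,d\lambda$. Since $\mu_n$ is by definition the distribution of $T_n$, establishing the claim amounts to verifying that $\int_{\Omega_Y}(f\circ T_n)\,d\mathbb P=\int_X (R^n f)\,d\lambda$ for every bounded $f\in\mathcal M(X,\mathcal B_X)$.

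First I would record the base case: by part $(1)$ of Theorem \ref{thhmc} the law of $T_0$ is $\lambda$, so $\mu_0=\lambda=\lambda R^0$. For the inductive step, assume $\mu_n=\lambda R^n$. Using the tower property of conditional expectation together with the Markov property in part $(3)$ of Theorem \ref{thhmc}, I would write, for bounded $f$,
\[
\int_{\Omega_Y}(f\circ T_{n+1})\,d\mathbb P=\int_{\Omega_Y}\mathbb E\left(f\circ T_{n+1}\,\big|\,\mathcal F_n\right)\,d\mathbb P=\int_{\Omega_Y}\left((R_Ff)\circ T_n\right)\,d\mathbb P.
\]
The last integral equals $\int_X (R_Ff)\,d\mu_n$ by the definition of $\mu_n$ as the distribution of $T_n$; invoking the inductive hypothesis and the duality above then yields $\int_X (R_Ff)\,d(\lambda R^n)=\int_X R^n(R_Ff)\,d\lambda=\int_X (R^{n+1}f)\,d\lambda=\int_X f\,d(\lambda R^{n+1})$. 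Hence $\mu_{n+1}=\lambda R^{n+1}$, completing the induction.

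A second, non-inductive route is to apply formula \eqref{proba123} of Theorem \ref{thhmc} directly with $f_0=f_1=\cdots=f_{n-1}=1$, $f_n=f$, and $h=1$; the choice $h=1$ is legitimate because $R_F1=\int_Y d\nu=1$ ($\nu$ being a probability measure), so $R_F$ is already normalized. The nested expression on the right-hand side then collapses to $R_F^n f=R^n f$, while the left-hand side reduces to $\int_{\Omega_Y}(f\circ T_n)\,d\mathbb P$, giving the conclusion in a single step. I expect no substantive obstacle here: the only points requiring care are the bookkeeping of the right action $\lambda\mapsto\lambda R$ and the normalization $R_F1=1$; the latter, together with the positivity of $R_F$, ensures that each $\lambda R^n$ is again a probability measure and that all dualities are applied to admissible (bounded measurable) integrands.
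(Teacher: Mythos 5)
Your proposal is correct and takes essentially the same route as the paper, whose entire proof is the citation ``This follows from Theorem \ref{thhmc}'': your second argument (specializing formula \eqref{proba123} with $f_0=\cdots=f_{n-1}\equiv 1$, $f_n=f$, $h\equiv 1$, legitimate since $R_F1=1$ because $\nu$ is a probability measure) is exactly the intended reading, and your inductive argument via part $(3)$ and the tower property is an equivalent filling-in of the same theorem. Both versions, including your bookkeeping of the right action $\int_X f\,d(\lambda R)=\int_X R(f)\,d\lambda$ from \eqref{lambdaR}, are sound.
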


\begin{proof}
This follows from Theorem \ref{thhmc}. See also Theorem \ref{michelle123}.
\end{proof}

\begin{corol}
\begin{equation}
\mu_n(B)=(\lambda\times \underbrace{\nu\times\cdots \times \nu}_{\text{$n$ times}})(F_n^{-1}(B)),\quad B\in\mathcal B_X.
\end{equation}
where 
\begin{equation}
\begin{split}
F_n(x,y_1,y_2,\ldots, y_n)&=(F_{y_n}\cdots F_{y_1})(x)\\
&=F(\cdots F(F(F(x,y_1),y_{2})\cdots ,y_{n-1}),y_n).
\end{split}
\end{equation}

     \end{corol}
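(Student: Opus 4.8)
The plan is to reduce the claim to an identity about iterates of the transfer operator $R_F$ and then to prove that identity by induction on $n$. By the preceding corollary the law of $T_n$ is $\mu_n = \lambda R^n$ (with $R = R_F$), and the measure $\mu R$ is characterized by $\int_X f\, d(\mu R) = \int_X (Rf)\, d\mu$ for all $f \in \mathcal M(X, \mathcal B_X)$ (see \eqref{lab09}); iterating gives $\int_X f\, d\mu_n = \int_X (R^n f)\, d\lambda$. Writing the asserted formula in integrated (change-of-variables) form, I would show that for every $f \in \mathcal M(X, \mathcal B_X)$,
\[
\int_X f(x)\, d\mu_n(x) = \int_{X \times Y^n} f\bigl(F_n(x, y_1, \ldots, y_n)\bigr)\, d\lambda(x)\, d\nu(y_1) \cdots d\nu(y_n),
\]
since taking $f = \chi_B$ then recovers $\mu_n(B) = (\lambda \times \nu^{\times n})(F_n^{-1}(B))$. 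In view of $\int_X f\, d\mu_n = \int_X (R^n f)\, d\lambda$, it suffices to establish the operator identity
\[
(R_F^n f)(x) = \int_{Y^n} f\bigl(F_n(x, y_1, \ldots, y_n)\bigr)\, d\nu(y_1) \cdots d\nu(y_n), \qquad f \in \mathcal M(X, \mathcal B_X).
\]

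For the operator identity I would argue by induction. The case $n = 1$ is exactly the definition \eqref{roxanna123}, since $F_1(x, y_1) = F_{y_1}(x) = F(x, y_1)$. For the inductive step I would use $R_F^{n+1} = R_F(R_F^n)$ together with \eqref{roxanna123} and the notation $F_y(\cdot) = F(\cdot, y)$ from \eqref{wnfn}:
\[
(R_F^{n+1} f)(x) = \int_Y (R_F^n f)\bigl(F(x, z)\bigr)\, d\nu(z) = \int_Y \int_{Y^n} f\bigl(F_{y_n} \cdots F_{y_1} F_z(x)\bigr)\, d\nu(y_1) \cdots d\nu(y_n)\, d\nu(z),
\]
where the induction hypothesis is applied at the base point $F(x,z) = F_z(x)$. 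By Fubini's theorem this is the integral of $f \circ (F_{y_n} \cdots F_{y_1} F_z)$ against the $(n+1)$-fold product measure; renaming the integration variables $(z, y_1, \ldots, y_n)$ as $(y_1, \ldots, y_{n+1})$ turns the integrand into $f\bigl(F_{n+1}(x, y_1, \ldots, y_{n+1})\bigr)$ and the measure into $d\nu(y_1) \cdots d\nu(y_{n+1})$, completing the induction.

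The only point requiring care is the \emph{order} of composition: $R_F$ replaces the current base point $x$ by $F(x, \cdot)$ from the inside, so each application of $R_F$ adjoins a new innermost factor $F_z$, which is exactly what the nested definition $F_n(x, y_1, \ldots, y_n) = F(\cdots F(F(x, y_1), y_2) \cdots, y_n)$ records. I expect this bookkeeping of the composition order through the relabeling to be the main (though mild) obstacle. I would stress that no independence or Kolmogorov-extension argument is needed at this stage, since the previous corollary has already absorbed all the probabilistic content into the equality $\mu_n = \lambda R^n$; what remains is the purely deterministic unwinding of the iterated transfer operator as an $n$-fold integral against $\nu^{\times n}$, assembled over $X \times Y^n$ by Fubini.
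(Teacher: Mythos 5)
Your proof is correct. Note that the paper states this corollary with no proof at all; the nearest guidance is the proof of the immediately preceding corollary (``This follows from Theorem \ref{thhmc}''), and the intended argument is visibly the probabilistic one: Step 1 of the proof of Theorem \ref{thhmc} realizes $T_n$ explicitly as $T_n(\w)=F(\cdots F(F(T_0(\w),y_0),y_1)\cdots,y_{n-1})$, i.e.\ $T_n=F_n(T_0,\psi_0,\ldots,\psi_{n-1})$ after relabeling indices, with $T_0$ having law $\lambda$ and the coordinates i.i.d.\ with law $\nu$ and independent of $T_0$, as encoded in \eqref{proba123}; the corollary is then the bare definition of a pushforward, $\mu_n=(\lambda\times\nu^{\times n})\circ F_n^{-1}$, in one line. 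You instead route everything through $\mu_n=\lambda R_F^n$ and prove the deterministic kernel identity
\[
(R_F^n f)(x)=\int_{Y^n} f\bigl(F_n(x,y_1,\ldots,y_n)\bigr)\,d\nu(y_1)\cdots d\nu(y_n)
\]
by induction, with base case \eqref{roxanna123}, and your inductive step correctly tracks the one genuinely delicate point: each outer application of $R_F$ inserts the new integration variable as the \emph{innermost} composition factor, matching $F_n(x,y_1,\ldots,y_n)=F_{y_n}\cdots F_{y_1}(x)$. Tonelli (everything here is a probability measure and $f=\chi_B\ge 0$, with $F_n$ product-measurable as a composition of measurable maps) then assembles the claim. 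This is a genuinely different, purely analytic derivation: it costs an induction but buys more, namely the pointwise $n$-step transition formula, which identifies the conditional law of $T_n$ given $T_0=x$ as the image of $\nu^{\times n}$ under $F_n(x,\cdot)$, of which the stated corollary is the $\lambda$-average. Two cosmetic remarks: the characterization $\int_X f\,d(\lambda R)=\int_X R(f)\,d\lambda$ you invoke is \eqref{lambdaR} (stated in the solenoid section but identical in content; \eqref{lab09} is the Radon--Nikodym variant), and in this HMC setting $R_F1=1$ holds automatically since $\nu$ is a probability measure, so the function $h$ in \eqref{proba123} may indeed be taken $\equiv 1$ when extracting $\mu_n=\lambda R_F^n$, as your argument tacitly requires.
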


We set
\[
\pi_1(x,y)=x.
\]

\begin{corol}
Let $B\in\mathcal B$ and $x\in X$. Then
\begin{equation}
\mathbb E\left(T_{n+1}\in B\,\big|\, T_n=x\right)=\nu(\pi_1^{-1}(x)\cap F^{-1}(B))
\end{equation}
\end{corol}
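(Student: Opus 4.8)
The plan is to unwind the conditional expectation $\mathbb E\left(T_{n+1}\in B\,\big|\, T_n=x\right)$ using the recursion \eqref{agathe}, namely $T_{n+1}=F(T_n,\psi_n)$, together with the fact that $\psi_n$ is independent of $T_n$. First I would rewrite the event $\left\{T_{n+1}\in B\right\}$ as $\left\{F(T_n,\psi_n)\in B\right\}$, so that conditioning on $T_n=x$ freezes the first slot of $F$ and leaves only the randomness coming from $\psi_n$. Since $\psi_n$ has law $\nu$ and is independent of $T_n$ (the driving sequence is i.i.d.\ and independent of the past, in particular of $T_n$), the conditional probability collapses to an integral against $\nu$ of the indicator of $\left\{y\,:\, F(x,y)\in B\right\}$.

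The key identity to establish is that the set $\left\{y\in Y\,:\, F(x,y)\in B\right\}$ coincides with $\pi_1^{-1}(x)\cap F^{-1}(B)$, viewed appropriately. Here $\pi_1(x,y)=x$ is the projection onto the first coordinate, so $\pi_1^{-1}(x)=\left\{(x,y)\,:\, y\in Y\right\}$ is the fiber over $x$ in $X\times Y$, and $F^{-1}(B)=\left\{(x',y)\,:\, F(x',y)\in B\right\}$. Intersecting, $\pi_1^{-1}(x)\cap F^{-1}(B)=\left\{(x,y)\,:\, F(x,y)\in B\right\}$, which under the identification of the fiber with $Y$ is exactly the set of $y$ with $F(x,y)\in B$. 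Thus
\[
\mathbb E\left(T_{n+1}\in B\,\big|\, T_n=x\right)=\int_Y\chi_B(F(x,y))\,d\nu(y)=\nu\left(\pi_1^{-1}(x)\cap F^{-1}(B)\right),
\]
which is the claimed formula.

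Concretely, I would derive this from the apparatus already in place rather than re-proving independence from scratch. Applying Lemma \ref{lemma26} (or equivalently part $(3)$ of Theorem \ref{thhmc}) with $f=\chi_B$ gives $\mathbb E\left(\chi_B\circ T_{n+1}\,\big|\, T_n=x\right)=(R_F\chi_B)(x)$, and the defining formula \eqref{roxanna123} for the transfer operator yields $(R_F\chi_B)(x)=\int_Y\chi_B(F(x,y))\,d\nu(y)$. It then only remains to rewrite this integral as the measure of the fiber-section, which is the set-theoretic bookkeeping described above.

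The main obstacle is essentially the measurability and set-identification step rather than anything analytically deep: one must check that $\pi_1^{-1}(x)\cap F^{-1}(B)$ is a measurable subset on which $\nu$ (a measure on $Y$, not on $X\times Y$) can be evaluated. This is handled by the canonical identification of the fiber $\pi_1^{-1}(x)$ with $Y$ via $y\mapsto(x,y)$, which is measurable because $X\times Y$ carries the product sigma-algebra; under this identification $\pi_1^{-1}(x)\cap F^{-1}(B)$ corresponds to $F_x^{-1}(B)=\left\{y\,:\, F(x,y)\in B\right\}$, measurable since $F_x=F(x,\cdot)$ is measurable, and $\nu$ is then applied to this section. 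The notational conflation of the fiber with $Y$ is the only point requiring care; once it is made explicit, the corollary is immediate.
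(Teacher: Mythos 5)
Your proposal is correct and follows essentially the same route as the paper, whose entire proof is the citation of Lemmas \ref{lemma22} and \ref{lemma24} (the transfer-operator-as-conditional-expectation machinery that you invoke via Lemma \ref{lemma26} and Theorem \ref{thhmc}, part $(3)$, together with the definition \eqref{roxanna123} of $R_F$). Your explicit identification of the fiber $\pi_1^{-1}(x)\cap F^{-1}(B)\subset X\times Y$ with the section $F_x^{-1}(B)\subset Y$ is a useful clarification of an abuse of notation that the paper leaves implicit, since $\nu$ is a measure on $Y$ and not on the product.
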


\begin{proof}
The result follows from Lemmas \ref{lemma22} and \ref{lemma24}.
\end{proof}

Background references on multiresolutions include \cite{MR2559716,MR2888226,BrJo02a,MR1162107,MR1162107}.

\subsection{Multiresolutions and Cuntz-Krieger relations}
As a corollary of the previous analysis we now consider the case where possibly more than two random variables are given. 

\begin{corol}
\label{corol224}
Given $N$ random variables $A_1,\ldots, A_N$ with values in $X$, ($N=\infty$ is allowed),
the following hold:
\begin{eqnarray}
\label{c1}
V_{A_u}^*V_{A_v}&=&R_{u,v}\quad (\text{definition of the transfer operator from $A_u$ to $A_v$})\\
\label{c2}
V_{A_u}^*V_{A_u}&=&I_{\mathbf L_2(\mu_u)},\quad u=1,2,\ldots, N\\
V_{A_u}V_{A_u}^*&=&\mathbb E\left(\cdot\,\big|\,\mathcal F_{A_u}\right),\quad u=1,2,\ldots, N\\
\label{c4}
\sum_{u=1}^N V_{A_u}V_{A_u}^*&=&\mathbb E\left(\cdot\,\big|\,\cup_{u=1}^N\mathcal F_u\right),\\
\sum_{u=1}^N V_{A_u}V_{A_u}^*&=&\mathbb E\left(\cdot\,\big|\,\mathcal F\right)\,=I\,\,\,\, if\,\,\,\,\,\mathcal F =\cup_{n=1}^N \mathcal F_{A_n}.
\label{c5}
\end{eqnarray}
If $N=\infty$, the latter sums \eqref{c4}-\eqref{c5} converge in the strong operator topology.
\end{corol}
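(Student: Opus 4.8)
The plan is to read off relations \eqref{c1}--\eqref{c5} directly from the isometry $V_{A_u}f=f\circ A_u$ of \eqref{vn} together with the adjoint formula of Lemma \ref{lemma22}, treating the five identities in increasing order of difficulty. Relation \eqref{c1} is nothing but the definition \eqref{lycee_llg} of the transfer operator, specialized to the pair $(A_u,A_v)$: I set $R_{u,v}:=R_{A_u,A_v}=V_{A_u}^*V_{A_v}$, so there is nothing to prove. For \eqref{c2}, I invoke that $V_{A_u}$ is, by the discussion following \eqref{vn}, an isometry of $\mathbf L_2(\mu_u)$ onto $\mathbf L_2(\Omega,\mathcal F_{A_u},\mathbb P)$, and isometries satisfy $V_{A_u}^*V_{A_u}=I_{\mathbf L_2(\mu_u)}$.

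For the third, co-isometry, relation I note that $P_u:=V_{A_u}V_{A_u}^*$ is self-adjoint and, using \eqref{c2},
\[
P_u^2=V_{A_u}(V_{A_u}^*V_{A_u})V_{A_u}^*=V_{A_u}V_{A_u}^*=P_u,
\]
so $P_u$ is the orthogonal projection onto $\operatorname{ran}V_{A_u}=\mathbf L_2(\Omega,\mathcal F_{A_u},\mathbb P)$. Since the orthogonal projection of $\mathbf L_2(\Omega,\mathcal F,\mathbb P)$ onto the closed subspace $\mathbf L_2(\Omega,\mathcal F_{A_u},\mathbb P)$ is by definition the conditional expectation $\mathbb E(\cdot\,|\,\mathcal F_{A_u})$ (see the paragraph preceding Lemma \ref{lemma24}), this gives the third identity.

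The two summation formulas \eqref{c4}--\eqref{c5} are where the Cuntz--Krieger structure really enters, and this is the step I expect to be the main obstacle. The sum $\sum_u P_u$ of the projections $P_u=\mathbb E(\cdot\,|\,\mathcal F_{A_u})$ is itself a projection precisely when the ranges are mutually orthogonal, i.e. when $P_uP_v=V_{A_u}R_{u,v}V_{A_v}^*=0$ for $u\neq v$; this orthogonality of the subspaces $\mathbf L_2(\Omega,\mathcal F_{A_u},\mathbb P)$ is exactly the representation of the Cuntz--Krieger relations alluded to before the statement, and it is the hypothesis I would make explicit (equivalently $R_{u,v}=0$ for $u\neq v$). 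Granting it, the partial sums $\sum_{u=1}^N P_u$ form an increasing family of projections dominated by $I$, and I would identify $\sum_u P_u$ as the orthogonal projection onto the closed linear span of the subspaces $\mathbf L_2(\Omega,\mathcal F_{A_u},\mathbb P)$. By the characterization of conditional expectation as the orthogonal projection onto the $L_2$-space of a sub-sigma-algebra, this projection is $\mathbb E(\cdot\,|\,\bigcup_u\mathcal F_u)$ in the notation of the statement, giving \eqref{c4}; relation \eqref{c5} is the special case $\mathcal F=\bigcup_u\mathcal F_{A_u}$, where this closed span is all of $\mathbf L_2(\Omega,\mathcal F,\mathbb P)$ and the projection is $I$.

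Finally, for $N=\infty$ I would handle convergence by monotonicity. With the ranges orthogonal, $\|(\sum_{u=1}^N P_u)\psi\|^2=\sum_{u=1}^N\|P_u\psi\|^2$ is increasing in $N$ and bounded by $\|\psi\|^2$, so the tails $\sum_{u=N+1}^M\|P_u\psi\|^2$ tend to $0$; hence $(\sum_{u=1}^N P_u)\psi$ is Cauchy for every $\psi$, and the partial sums converge in the strong operator topology to the projection onto the closed span above. The only genuinely delicate points are the orthogonality hypothesis and the identification of this closed span with $\mathbf L_2(\Omega,\bigcup_u\mathcal F_u,\mathbb P)$; everything else is the standard algebra of isometries and conditional expectations already assembled in Lemmas \ref{lemma22} and \ref{lemma24}.
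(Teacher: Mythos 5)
Your handling of \eqref{c1}--\eqref{c2} and of the projection identity $V_{A_u}V_{A_u}^*=\mathbb E\left(\cdot\,\big|\,\mathcal F_{A_u}\right)$ is correct and is exactly the isometry algebra the paper has in mind: its own proof is only the citation ``follows from Corollary \ref{corol25} and Table \ref{rubicon},'' i.e.\ the relations $V_A^*V_A=I$ and $V_AV_A^*=\mathbb E\left(\cdot\,\big|\,\mathcal F_A\right)$ that you rederive. The problem lies in \eqref{c4}--\eqref{c5}, where you correctly sense that an extra hypothesis is needed, but the hypothesis you propose is vacuous: the orthogonality condition $R_{u,v}=0$ for $u\neq v$ can \emph{never} hold in this setting. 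Indeed $V_{A_u}$ sends the constant function $\mathbf 1\in\mathbf L_2(\mu_u)$ to $\mathbf 1\in\mathbf L_2(\Omega,\mathbb P)$ and $V_{A_u}^*\mathbf 1=\mathbf 1$, so $R_{u,v}\mathbf 1=\mathbf 1$ for every pair; equivalently, each subspace $\mathbf L_2(\Omega,\mathcal F_{A_u},\mathbb P)$ contains the constants, so on a probability space the ranges of the $P_u$ are never mutually orthogonal, and in fact $\left(\sum_{u=1}^N P_u\right)\mathbf 1=N\mathbf 1$, which contradicts \eqref{c4}--\eqref{c5} read as you read them. Orthogonality can at best be arranged on the mean-zero complements (compare Proposition \ref{prop1}: for independent variables $R_{u,v}$ is the rank-one operator $|1\rangle\langle 1|$, which vanishes only after means are subtracted), or in the genuinely Cuntz--Krieger models of Section \ref{unitcircle}, where the isometries carry multiplier filters $m_i$ and are not of the plain composition form $f\mapsto f\circ A$. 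So the hypothesis must be reformulated on the orthogonal complement of the constants, and the statement read accordingly.

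The second delicate point, which you flag but do not resolve, is also a genuine obstruction: even where orthogonality is available, the closed linear span of the subspaces $\mathbf L_2(\Omega,\mathcal F_{A_u},\mathbb P)$ is in general a \emph{proper} subspace of $\mathbf L_2\left(\Omega,\sigma\left(\cup_u\mathcal F_{A_u}\right),\mathbb P\right)$. A two-point example already shows this: for two independent fair coin tosses $A,B$, the $L_2$ space of the join sigma-algebra is four-dimensional, while $\mathbf L_2(\mathcal F_A)+\mathbf L_2(\mathcal F_B)$ is only three-dimensional (the product $\chi_{\{A=1\}}\chi_{\{B=1\}}$ is missed). Hence the identification of $\sum_u P_u$ with $\mathbb E\left(\cdot\,\big|\,\cup_u\mathcal F_u\right)$ requires this density as a separate standing assumption; it does not follow from the characterization of conditional expectation as an orthogonal projection. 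Your monotonicity argument for strong-operator convergence when $N=\infty$ is fine once orthogonality (suitably corrected) is granted. The paper's one-line proof silently elides both of these points, so your instinct to isolate them was right; but as written your argument neither states a satisfiable orthogonality hypothesis nor closes the span identification, and these are precisely the places where the corollary, taken literally, fails without further assumptions.
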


\begin{proof}
This follows from Corollary \ref{corol25} and Table \ref{rubicon}.
\end{proof}

\begin{remark}{\rm
Relations \eqref{c1}-\eqref{c5} can be seen as a generalization of the Cuntz-Krieger relations (see \cite{Cun77,MR592141} for the latter), 
and they lead to a multiresolution decomposition of the probability space $\mathbf L_2(\Omega,\mathcal F,\mathbb P)$; see Section \ref{unitcircle}.
A practical interpretation of formulae \eqref{c4} and \eqref{c5}, is the 
assertion that certain random variables may be reconstructed by samples. In this case, the sampling is performed with the use of random variables as specified in the premise in Corollary \ref{corol224}. For a practical use of related 
sampling formulas in learning theory, see e.g., \cite{MR2488871}.}
\end{remark}

\subsection{The Schur algorithm and homogeneous Markov chains (HMC)}
The Schur algorithm provides an application of the above analysis. We first recall the following (see 
\cite{MR1638044,MR94b:47022,C-schur,MR2004e:46003,MR1511896,schur}). 
Let $s$ be a function analytic and strictly contractive in the open unit disk
$\mathbb D$ (we will call such functions Schur functions, and denote their set by $\mathcal S$). Then, the functions $s_1,\ldots$ defined recursively by
$s_0(z)=s(z)$ and
\begin{equation}
s_{n+1}(z)=\dfrac{s_n(z)-s_n(0)}{z(1-\overline{s_n(0)}s_n(z))},\quad n=0,1,\ldots
\end{equation}
belong to $\mathcal S$ as long as $|s_n(0)|<1$. The recursion stops at rank $n$ if $|s_n(0)|=1$. As already proved by Schur, this  will happen if and
only if $s$ is a finite Blaschke product. The numbers $\rho_n=s_n(0)$, $n=0,1,\ldots$ are called the Schur parameters of $s$, and determine uniquely
the function $s$ in terms of a partial fraction exansion
\begin{equation}
\label{shelly}
s(z)=\rho_0+\dfrac{z(1-|\rho_0|^2)}{\overline{\rho_0}z-\dfrac{1}{\rho_1+\dfrac{z(1-|\rho_1|^2)}{\overline{\rho_1}-\cdots}}}
\end{equation}
See \cite[p.   285]{wall}. When $s$ is a finite Blaschke product, the sequence is finite, and its last element is of modulus $1$.\\

Set $X=\mathcal S\setminus\left\{\mbox{\text unitary constants and finite Blaschke products}\right\}$ and $Y=\mathbb D$. We define
\begin{equation}
F(s,\rho)(z)=\frac{s(z)-\rho}{z(1-s(z)\overline{\rho})}
\end{equation}
which maps $X\times Y$ into $X$. We will also  use the notations $F_\rho(s)$ and $(F_\rho(s))(z)$. 
We set $\Omega=\prod_{n=0}^\infty \mathbb D$ and,
\[
\w=(\rho_0,\rho_1,\ldots)\quad{\rm and}\quad \w|_n=(\rho_0,\ldots, \rho_n).
\]
Furthermore, we define (see \eqref{wnfn})
\[
(F_{\w|_n})(s)=\left( F_{\rho_n}F_{\rho_{n-1}}\cdots F_{\rho_1}F_{\rho_0}\right)(s),\quad \text{see \eqref{shelly}.}
\]
We denote by $V$ the map
\begin{equation}
\label{Vdef}
V(\w)=s_\w
\end{equation}
where $s_\w\in\mathcal S$ is uniquely defined element from $\w$ via \eqref{shelly}.

\begin{lemma}
\label{Wis}
For every $\w\in\Omega$ we have
\begin{equation}
\cap_{n=0}^\infty F_{\w|_n}(\mathcal S)=\left\{s_\w\right\}
\end{equation}
where $s_w=V(\w)$, see \eqref{11w}.
\begin{proof}
This follows from the fact that a given Schur function is uniquely determined by the sequence of Schur coefficients when the latter is 
infinite. See \cite{schur}.
\end{proof}
\end{lemma}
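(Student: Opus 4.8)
The plan is to establish the claimed identity
\[
\cap_{n=0}^\infty F_{\w|_n}(\mathcal S)=\left\{s_\w\right\}
\]
by invoking the fundamental uniqueness property of the Schur algorithm, namely that an infinite sequence of Schur parameters $(\rho_0,\rho_1,\ldots)$ (all of modulus strictly less than $1$, since we have removed the finite Blaschke products from $X$) determines a \emph{unique} Schur function via the continued-fraction expansion \eqref{shelly}. The key observation is that $F_{\w|_n}$ is the $n$-fold composition of the backward Schur shift maps, so $F_{\w|_n}(\mathcal S)$ is precisely the set of all Schur functions whose first $n+1$ Schur parameters agree with $\rho_0,\ldots,\rho_n$. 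Taking the intersection over all $n$ then pins down the \emph{entire} infinite sequence of Schur parameters, and by uniqueness this leaves exactly one function, namely $s_\w=V(\w)$ from \eqref{Vdef}.

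More concretely, first I would verify the two inclusions separately. For the inclusion $s_\w\in\cap_n F_{\w|_n}(\mathcal S)$, I would note that by construction $s_\w$ has Schur parameters $\rho_0,\rho_1,\ldots$, and that applying the inverse of the first $n+1$ Schur steps exhibits $s_\w$ as $F_{\w|_n}(t)$ for the tail Schur function $t$ with parameters $\rho_{n+1},\rho_{n+2},\ldots$; hence $s_\w\in F_{\w|_n}(\mathcal S)$ for every $n$. For the reverse inclusion, suppose $g\in\cap_n F_{\w|_n}(\mathcal S)$. Then for each $n$ there is some $h_n\in\mathcal S$ with $g=F_{\w|_n}(h_n)$, which forces the first $n+1$ Schur parameters of $g$ to be exactly $\rho_0,\ldots,\rho_n$. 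Letting $n\to\infty$ shows that $g$ has the full Schur parameter sequence $(\rho_0,\rho_1,\ldots)=\w$, and by the cited uniqueness theorem $g=s_\w$.

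The main obstacle, and the step that genuinely requires the deep input, is the uniqueness claim: that two distinct Schur functions cannot share the same infinite sequence of Schur parameters. This is not elementary and rests on Schur's classical theory—it is precisely the statement that the map from Schur functions to their parameter sequences is injective on the non-Blaschke part, equivalently that the continued fraction \eqref{shelly} converges to a well-defined limit recovering $s$. I would cite \cite{schur} for this, exactly as the author does, rather than reprove it. The remaining work—identifying $F_{\w|_n}(\mathcal S)$ with the set of Schur functions having prescribed initial parameters—is a bookkeeping exercise that follows directly from the definition of the forward map $F_\rho$ and the recursion preceding \eqref{shelly}, so I would keep that part brief and let the uniqueness theorem do the heavy lifting.
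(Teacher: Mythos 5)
Your proof is correct and follows essentially the same route as the paper: the paper's entire argument is the citation of Schur's uniqueness theorem (a Schur function with infinitely many parameters is uniquely determined by them, see \cite{schur}), and your two-inclusion argument merely spells out the bookkeeping that the paper leaves implicit, namely that $F_{\w|_n}(\mathcal S)$ consists exactly of the Schur functions whose first $n+1$ parameters are $\rho_0,\ldots,\rho_n$. Nothing in your write-up deviates from or adds a genuinely different idea to the paper's proof, so no further comparison is needed.
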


As a consequence of Theorem \ref{loudmila} we have the following result. In the proof the transfer operator now  takes the form
as in \eqref{roxanna123}, that is
\begin{equation}
\label{roxanna2}
((R_Ff)(s))(z)=\int_{\mathbb D}f((F(s,\rho))(z))d\nu(\rho).
\end{equation}

\begin{theorem}
\label{danielle}
Let $\nu$ be a probability measure on $\mathbb D$ endowed with its Borel sigma-algebra, and let $\mathbb Q=\mathbb Q_\nu=\nu\times\nu\times\cdots$ be 
the corresponding infinite product measure on $\Omega=\prod_{n=0}^\infty \mathbb D$ endowed with the cylinder sigma-algebra.
Then $\mu=\mathbb Q_\nu\circ V^{-1}$, where $V$ is defined by \eqref{Vdef}, is a positive measure on $\mathcal S$ (or, more precisely, on the
set $\mathcal S$ from which the unitary constants and finite Blaschke products have been removed) satisfying
\begin{equation}
\mu R_F=\mu.
\end{equation}
\end{theorem}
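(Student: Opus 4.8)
The plan is to recognize this theorem as a direct special case of Theorem \ref{loudmila}, and to verify that the hypotheses of that general result are satisfied in the concrete Schur-function setting. Theorem \ref{loudmila} asserts precisely that, given a measurable map $F\colon X\times Y\to X$, a probability measure $\nu$ on $(Y,\mathcal D)$, the infinite product measure $\times_{n=0}^\infty\nu$ on $\Omega$, and the singleton condition \eqref{11w}, the pushforward $\mu=\mathbb Q_\nu\circ V^{-1}$ satisfies $\mu R_F=\mu$. So my strategy is to match the present data to the abstract template and invoke that theorem rather than re-deriving the invariance from scratch.

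First I would record the identifications: here $X=\mathcal S\setminus\{\text{unitary constants and finite Blaschke products}\}$, $Y=\mathbb D$, and $F(s,\rho)(z)=\frac{s(z)-\rho}{z(1-s(z)\overline\rho)}$, which the paper has already noted maps $X\times Y$ into $X$. The transfer operator takes the explicit form \eqref{roxanna2}, matching \eqref{roxanna123} with this choice of $F$ and $\nu$. The maps $F_{\w|_n}$ and the encoding $V(\w)=s_\w$ are exactly the ones defined via the Schur recursion \eqref{shelly} and \eqref{Vdef}. Thus the only nontrivial hypothesis to check is the singleton/nesting condition \eqref{11w}, namely $\cap_{n=0}^\infty F_{\w|_n}(\mathcal S)=\{s_\w\}$.

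The key step, then, is precisely Lemma \ref{Wis}, which establishes this singleton condition: for every $\w\in\Omega$, the nested intersection $\cap_{n=0}^\infty F_{\w|_n}(\mathcal S)$ collapses to the single Schur function $s_\w=V(\w)$, because an infinite sequence of Schur parameters determines a Schur function uniquely. With \eqref{11w} in force and $V$ defined accordingly, all hypotheses of Theorem \ref{loudmila} are met. Applying it with this $F$ and $\nu$ yields immediately that $\mu=\mathbb Q_\nu\circ V^{-1}$ satisfies $\mu R_F=\mu$, which is the assertion to be proved; positivity of $\mu$ is automatic as the pushforward of the positive product measure $\mathbb Q_\nu$.

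I expect the main obstacle to be not the invariance identity itself, which is inherited wholesale from Theorem \ref{loudmila}, but rather the careful bookkeeping needed to confirm that the Schur-theoretic objects genuinely instantiate the abstract setup. Two points deserve attention: one must ensure $F$ really maps $X$ into $X$ (that is, that the excluded unitary constants and finite Blaschke products stay excluded under the recursion, so that the domain restriction is consistent and the singleton condition holds on the nose), and one must confirm that the measure-theoretic structure on $\mathcal S$ is well behaved enough for $V^{-1}(B)$ to be measurable, so that $\mu$ is genuinely a measure on $(\mathcal S,\mathcal B)$. Once these consistency checks are in place, the proof reduces to citing Lemma \ref{Wis} to verify \eqref{11w} and then quoting Theorem \ref{loudmila}.
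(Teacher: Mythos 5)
Your proposal matches the paper's own argument: the paper presents Theorem \ref{danielle} explicitly as a consequence of Theorem \ref{loudmila}, with the transfer operator specialized to \eqref{roxanna2} and the singleton condition \eqref{11w} supplied by Lemma \ref{Wis} (uniqueness of a Schur function with infinitely many Schur parameters). Your added consistency checks (that $F$ maps $X$ into $X$ and that $V$ is measurable into $(\mathcal S,\mathcal B)$) are sound bookkeeping the paper leaves implicit, but the route is essentially identical.
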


\subsection{A summary of formulas}
We now summarize some of the formulas obtained in this section, pertaining to  two  given $X$-valued random variables $A$ and $B$
\begin{table}[h!]
\caption{Summary}
\begin{tabular}{|c|c|c|}
\toprule
& &\\
\text{Hilbert spaces}& $\mathbf L_2(\mu_A)\longrightarrow \mathbf L_2(\Omega,\mathbb P)$&
$\mathbf L_2(\Omega,\mathbb P)\longrightarrow    \mathbf L_2(\mu_A)$\\
& &\\
\hline
& &\\
\text{Operators}& $V_Af=f\circ A$&$(V_A^*\psi)(x)=\mathbb E_{A=x}\left(\psi\,\big|\,\mathcal F_A\right)$\\
& &\\
\hline
& &\\
\text{Hilbert spaces}& $\mathbf L_2(\Omega,\mathbb P)\longrightarrow \mathbf L_2(\Omega,\mathbb P)$& 
$\mathbf L_2(\mu_B)\longrightarrow \mathbf L_2(\mu_A)$\\
& &\\
\hline
& &\\
\text{Operators}& $V_AV_B^*\psi=\mathbb E\left(\psi\,\big|\,\mathcal F_B\cap\left\{A=B\right\}\right)$&
$(V_A^*V_Bf)(x)=\mathbb E_{A=x}\left(f\circ B\,\big|\,\mathcal F_A\right)$\\
& &\\
& (\text{need $V_B^*\psi\in\mathbf L_2(\mu_A)\cap\mathbf L_2(\mu_B)$})&\text{the tranfer operator $R_{A,B}$}\\
& &\\
\hline
& &\\
\text{Special case $A=B$}& $V_AV_A^*=\mathbb E\left(\cdot\,\big|\,\mathcal F_A\right)$& $V_A^*V_A=I_{\mathbf L_2(\mu_A)}$\\
& &\\
\hline
& &\\
\text{Product of the}& &\\
\text{conditional expectations}& $\mathbb E_{\mathcal F_A}\mathbb E_{\mathcal F_B}=V_AR_{A,B}V_B^*$& 
$\mathbf L_2(\Omega,\mathbb P)\longrightarrow\mathbf L_2(\mu_B)\longrightarrow$\\
$\mathbb E_{\mathcal F_A}$ and $\mathbb E_{\mathcal F_B}$& &\\
& & $\longrightarrow \mathbf L_2(\mu_A)\longrightarrow\mathbf L_2(\Omega,\mathbb P)$\\
\bottomrule
\end{tabular}
\label{rubicon}
\end{table}

The proofs of these various formulas are given in the section. See in particular Lemmas \ref{lemma22}, \ref{lemma24} and Corollary
\ref{corol25}.

\begin{remark}{\rm
In the case when $B$ is discrete, say $B\,:\,\Omega\,\longrightarrow\, \mathbb N_0$, the formula for $R_{A,B}$ simplifies as follows:
\[
\left(R_{A,B}f\right)(x)=\sum_{n=0}^\infty\frac{f(n)}{\mathbb P(\left\{B=n\right\})}\mathbb E_{\left\{A=x\right\}}\left(\chi_{\left\{B=n\right\}}\,\big|
\,\mathcal F_A\right),\quad \forall x\in X
\]
for functions $f\,:\, \mathbb N_0\,\rightarrow\,\mathbb R$ such that
\[
\sum_{n=0}^\infty|f(n)|^2\mathbb P(\left\{B=n\right\})<\infty,\quad i.e., f\in\ell_2(\mu_B).
\]
}
\end{remark}
\section{Solenoid probability spaces}
\setcounter{equation}{0}

{\bf Why the solenoids?} A number of reasons. Given an endomorphism  $\sigma$ in a measure space, the associated solenoid  
${\rm Sol}_\sigma$ is then a useful tool for the study of scales of multiresolutions (see Definitions \ref{sol} and \ref{newdef123}). The latter includes those resolutions arising naturally 
from discrete wavelet algorithms, as well as from the study of non-reversible dynamics in ergodic theory in and physics. In fact it is not so 
much ${\rm Sol}_\sigma$ itself that is central in this program, but rather probability spaces $({\rm Sol}_\sigma, \mathcal F, \mathbb P)$ 
where the solenoid is the sample space. It is the pair $(\mathcal F, \mathbb P)$  which carries the information about the relevant scales 
of multiresolutions for the problem at hand, and the nature and the details of $(\mathcal F, \mathbb P)$ change from one algorithm to the 
next; much like traditional wavelet analysis depend on scaling functions, father function, mother functions etc in $\mathbf L_2(\mathbb R^d)$.
But the latter is too restrictive a framework; see e.g. Section \ref{sec4} and \cite{MR1760275,BrJo02a,MR1162107}. 
See also Tables \ref{table1} and \ref{rubicon2}.

\smallskip

By ``discrete wavelet algorithms'' we mean recursive algorithms with selfsimilarity given by a 
scaling matrix. In one dimension, this may be just the $N$-adic scaling, but in general we allow for “discrete time” to be modelled by 
higher rank lattices, by more general discrete abelian groups, or even by infinite discrete sets with some given structure. For a given time-series, even in this general form, we may always introduce an associated generating function. 
This will be a function in ``dual frequency variables'' in one or more complex variables, and called
the frequency response function (see e.g., \cite{BrJo02a}). In many classical 
wavelet settings the given 
discrete wavelet algorithms may be realized in $\mathbf L_2(\mathbb R^d)$ for some $d$, but such a realization places very strong 
restrictions and limitations on the given multi-band filters making up the discrete wavelet algorithm at hand. We show that with the Hilbert 
space $\mathbf L_2({\rm Sol}_\sigma, \mathcal F, \mathbb P)$, we can get around this difficulty, and still retain the useful features of 
multi-scale resolutions and selfsimilarity which makes the wavelet realizations so useful.\smallskip

Motivated by multiresolutions in statistical computations, in many applications, and in particular in generalized wavelet algorithms, we study here a setting of dynamics of endomorphisms of measure spaces, denoting a given endomorphism by  $\sigma$, say acting in $X$
(see \cite{MR1793194,MR3441734,MR2599889,xMR2599889,MR1837681}). If the associated transfer operator $R$ is further given to be $\sigma$-homogeneous (see Definition \ref{julie} below), we show that the associated 
$R$-Markov processes will be of a special kind: when realized in the natural probability space of an associated solenoid ${\rm Sol}_\sigma$ computed from $\sigma$, we then arrive at natural multi-scale resolutions inside the Hilbert space  $\mathbf L_2({\rm Sol}_\sigma, 
\mathcal F,\mathbb P)$, with the scale of resolutions in question defined from the given endomorphism $\sigma$. In the case when 
$\sigma$ is the scale endomorphism of a wavelet construction, we show that the multi-scale resolution at hand will agree with that of the associated solenoid analysis.
And when a wavelet is realizable in Euclidean space, for example on the real line $\mathbb R$, then we show that then 
$\mathbb R$ is naturally embedded as a “curve” in the solenoid. Moreover, we identify the analogous multivariable setting with 
endomorphism and solenoid. Background references on analysis on solenoids and related multiresolutions include 
\cite{MR2888226,BrJo02a,MR3275999,MR2391805,MR1837681,MR3204025}.\smallskip

In our discussion of solenoids and multiresolutions, we have here restricted the discussion to the commutative case, as our motivation is 
from stochastic processes. But in the recent literature, there is also an exciting, and somewhat parallel non-commutative theory of 
solenoids and their multiresolutions. It too is motivated (at least in part) by developments in the analysis of wavelet-multiresolutions, and the corresponding scaling operators. However, the relevant questions in the non-commutative theory are quite different from those 
addressed here. The relevant questions are simply different in the non-commutative theory. The differences between the two in fact reflect 
the dichotomy for two different notions of probability theory, the difference between (classical) commutative, versus non-commutative 
probability theory. Among the recent papers on the non-commutative theory, we mention 
\cite{MR2559716,MR2609543,MR2888226,MR3204025,MR3441736}, and the literature cited there.
 
\subsection{Definitions}
Consider a locally compact Hausdorff space $X$, with associated Borel sigma-algebra $\mathcal B$, let $\sigma$ be a measurable endomorphism of $X$, which is
onto. We denote by $\mathcal M(X,\mathcal B)$ the space of all measurable functions from $X$ into $\mathbb R$.

\begin{definition}{\rm 
A map $R$ from $\mathcal M(X,\mathcal B)$ into itself is called a $\sigma$-{\rm transfer operator} (or a {\rm Ruelle operator}) if
\begin{equation}
\label{trans123}
Rf\ge 0,\quad \forall f\in \mathcal M(X,\mathcal B)\,\,{\rm satisfying}\,\, f(x)\ge 0,\,\,\forall x\in X,
\end{equation}
and if the {\rm pull-out} property
\begin{equation}
\label{roxanna}
R\left((f\circ\sigma )g\right)=fR(g),\quad \forall f,g\in\,\mathcal M(X,\mathcal B)
\end{equation}
holds.}
\label{julie}
\end{definition}

As a first example we have:
\begin{lemma}
Let $(X,\mathcal B)$ be a measure-space and let $A$ and $B$ be two $X$-valued random variables  on the probability space $(\Omega,
\mathcal F,\mathbb P)$, with transfer operator $R_{A,B}$ given by \eqref{lycee_llg}. Let $\sigma$ be an endomorphism of $X$ which is onto
and such that 
\begin{equation}
\label{notsigma}
\sigma\circ B=A.
\end{equation} 
Then $R_{A,B}$ satisfies the pull-out property \eqref{roxanna}. Moreover
\begin{equation}
R_{A,B}^*f=f\circ \sigma.
\label{adjointRAB}
\end{equation}
\end{lemma}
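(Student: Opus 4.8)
The plan is to reduce everything to the single intertwining identity that the hypothesis $\sigma\circ B=A$ forces between the two isometries $V_A$ and $V_B$. First I would record that for every $f\in\mathcal M(X,\mathcal B)$ one has $f\circ A=f\circ(\sigma\circ B)=(f\circ\sigma)\circ B$, so that $V_Af=V_B(f\circ\sigma)$; the change-of-variables computation $\int_X|f\circ\sigma|^2\,d\mu_B=\int_\Omega|f(\sigma(B))|^2\,d\mathbb P=\int_\Omega|f(A)|^2\,d\mathbb P=\int_X|f|^2\,d\mu_A$ shows simultaneously that $f\mapsto f\circ\sigma$ maps $\mathbf L_2(\mu_A)$ isometrically into $\mathbf L_2(\mu_B)$, so the identity $V_A=V_BC_\sigma$ makes sense at the level of $\mathbf L_2$, where $C_\sigma f=f\circ\sigma$.

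The adjoint formula \eqref{adjointRAB} is then immediate. Since $R_{A,B}=V_A^*V_B$ by \eqref{lycee_llg}, we have $R_{A,B}^*=V_B^*V_A$, and substituting $V_A=V_BC_\sigma$ gives $R_{A,B}^*=V_B^*V_B\,C_\sigma=C_\sigma$, because $V_B^*V_B=I_{\mathbf L_2(\mu_B)}$ by the isometry relation \eqref{c2} (equivalently the ``special case $A=B$'' entry of Table \ref{rubicon}). Thus $R_{A,B}^*f=f\circ\sigma$.

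For the pull-out property \eqref{roxanna} I would compute $R_{A,B}((f\circ\sigma)g)=V_A^*V_B((f\circ\sigma)g)$ directly. Applying $V_B$ and using $(f\circ\sigma)\circ B=f\circ A$ gives $V_B((f\circ\sigma)g)=(f\circ A)(g\circ B)=M_{f\circ A}V_Bg$, where $M_{f\circ A}$ denotes multiplication by $f\circ A$. The key is the covariance relation $V_A^*M_{f\circ A}=M_fV_A^*$, obtained by taking adjoints in the (immediate) intertwining $V_AM_f=M_{f\circ A}V_A$; equivalently it is the ``take out what is $\mathcal F_A$-measurable'' property of the conditional expectation described by Lemma \ref{lemma22}, since $f\circ A$ is $\mathcal F_A$-measurable. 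Applying it yields $R_{A,B}((f\circ\sigma)g)=M_fV_A^*V_Bg=f\,R_{A,B}(g)$, which is \eqref{roxanna}.

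The only genuine point requiring care---and thus the main obstacle---is the passage between the $\mathbf L_2$ setting, in which $V_A,V_B$ and their adjoints are defined, and the statement of the pull-out property, which is phrased for arbitrary $f,g\in\mathcal M(X,\mathcal B)$ without integrability hypotheses. I would handle this either by restricting to $f,g$ for which all the products lie in the relevant $\mathbf L_2$-spaces and then extending by a standard approximation/monotone-class argument, or by reading the covariance relation $V_A^*M_{f\circ A}=M_fV_A^*$ at the level of multipliers and conditional expectations (Lemma \ref{lemma22} together with the spectral/multiplier description \eqref{condi123}), where the manipulation is valid for all measurable $f$. Everything else is a formal consequence of $V_A=V_BC_\sigma$ together with $V_B^*V_B=I$.
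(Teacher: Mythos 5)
Your proof is correct, and for the pull-out property it is essentially the paper's own argument in operator-theoretic dress: the paper computes $\left(R_{A,B}((f\circ\sigma)g)\right)\circ A=\mathbb E\left[(f\circ A)(g\circ B)\,\big|\,\mathcal F_A\right]=(f\circ A)\,\mathbb E\left[g\circ B\,\big|\,\mathcal F_A\right]=\left(fR_{A,B}(g)\right)\circ A$, using \eqref{notsigma} and the ``take out what is known'' property of conditional expectation together with Lemma \ref{lemma24}; that is exactly your identity $V_B((f\circ\sigma)g)=M_{f\circ A}V_Bg$ followed by $V_A^*M_{f\circ A}=M_fV_A^*$, as you yourself note. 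Where you genuinely diverge is in \eqref{adjointRAB}. The paper proves the adjoint formula \emph{after} and \emph{from} the pull-out property: it writes $\langle g,f\circ\sigma\rangle_{\mu_B}=\int_XR_{A,B}\left(g(f\circ\sigma)\right)d\mu_A=\int_Xf\,R_{A,B}(g)\,d\mu_A=\langle R_{A,B}(g),f\rangle_{\mu_A}$, where the first equality silently invokes the identity $\int_XR_{A,B}(h)\,d\mu_A=\int_Xh\,d\mu_B$ from the proof of Theorem \ref{danielle123}, and the second uses the pull-out property just established. Your route instead isolates the intertwining $V_A=V_BC_\sigma$ (with $C_\sigma f=f\circ\sigma$ an isometry of $\mathbf L_2(\mu_A)$ into $\mathbf L_2(\mu_B)$, i.e.\ $\mu_B\circ\sigma^{-1}=\mu_A$) and obtains $R_{A,B}^*=V_B^*V_A=V_B^*V_BC_\sigma=C_\sigma$ in one line from \eqref{c2}. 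This buys two things: the adjoint formula becomes logically independent of both the pull-out property and the measure identity $\mu_AR_{A,B}=\mu_B$, and it exposes the structural fact $R_{A,B}=C_\sigma^*$, from which the pull-out property itself could be rederived by taking adjoints in $C_\sigma M_f=M_{f\circ\sigma}C_\sigma$. Your closing caveat about passing from the $\mathbf L_2$ setting (where adjoints of bounded multipliers make sense) to arbitrary $f,g\in\mathcal M(X,\mathcal B)$ via approximation or the conditional-expectation reading is well placed; the paper glosses over this point entirely, so on that score you are more careful than the source.
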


\begin{proof}
Indeed
\[
\begin{split}
\left(R_{A,B}((f\circ \sigma)g)\right)\circ A&=\mathbb E\left[((f\circ\sigma)g)\circ B\,\big|\, \mathcal F_A\right]\\
&=\mathbb E\left[(f\circ A)(g\circ B)\,\big|\, \mathcal F_A\right]\quad (\text{since $\sigma\circ B=A$})\\
&=(f\circ A)\mathbb E\left[g\circ B\,\big|\, \mathcal F_A\right]\\
&=\left(fR_{A,B}(g)\right)\circ A.
\end{split}
\]
We now prove \eqref{adjointRAB}. We have
\[
\begin{split}
\langle g,f\circ\sigma\rangle_{\mu_B}&=\int_X g(x)((f\circ\sigma)(x))d\mu_B(x)\\
&=\int_X R(g(f\circ\sigma))(x)d\mu_A(x)\\
&=\int_Xf(x)(R(g)(x))d\mu_A(x)\\
&=\langle R(g),f\rangle_{\mu_A}.
\end{split}
\]
\end{proof}

We note that $\mathcal F_A\subset\mathcal F_B$ when \eqref{notsigma} is in force. We now present an example of pairs of random variables for 
which neither $\mathcal F_A\subset\mathcal F_B$ nor $\mathcal F_B\subset\mathcal F_A$ hold. In particular they cannot be connected by an 
endomorphism of $X$.

\begin{example}
Consider the space $\left\{-1,1\right\}$ with probability distribution 
\[
p(\left\{1\right\})=p(\left\{-1\right\})=\frac{1}{2}.
\]
We take $\Omega=\prod_{n=1}^\infty \left\{-1,+1\right\}$, and $\mathbb P$ the corresponding infinite product measure on the
cylinder sigma-algebra. Let $a\in (0,1)$ and define
\begin{equation}
E_a(\w)=\sum_{k=1}^\infty \w_ka^k,
\end{equation}
where $\w=(\w_1,\w_2,\ldots)\in\Omega$ and thus $\w_k\in\left\{-1,1\right\}$.
The random variable $E_a$ takes values in $\mathbb R$, and its distribution, defined by
\[
\alpha_a(x_1,x_2)=\mathbb P(\w\in\Omega,\,x_1<E_a(\w)<x_2)
\]
has Fourier transform
\begin{equation}
\widehat{\alpha_a}(t)=\prod_{k=1}^\infty \cos (a^kt).
\end{equation}
It is known that (see \cite{MR2945155,MR3275999,MR1356783}:\smallskip

$(1)$ When $a<1/2$ the distributions $\alpha_a$ are singular, and mutually singular.\\
$(2)$ When $a=1/2$ we obtain the Lebesgue measure.\\
$(3)$ When $a\in(1/2,1)$ the corresponding $\alpha_a$ are absolutely continuous with respect to Lebesgue measure, for almost all values
of $a$. This is called the Erd\"os conjecture (see \cite{MR0000311,MR0000858}), and was proved in \cite{MR1356783}. The only known value of $a>\frac{1}{2}$ for which 
$\alpha_a$ is known not to be absolutely continuous with respect to Lebesgue measure is the reciprocal of the 
golden ratio $a=\frac{\sqrt{5}-1}{2}$. See also \cite[p. 48]{MR1669737} for further references and information.\smallskip

Taking $a_1$ and $a_2$ such that the corresponding distributions are mutually singular leads to random variables $E_{a_1}$ and 
$E_{a_2}$ which cannot be related by an endomorphism of $X$.
\end{example}

\begin{definition}
\label{defsol123}
{\rm The solenoid ${\rm Sol}_\sigma(X)$ associated with $\sigma$ is the subset of sequences $(x_k)_{k\in\mathbb N_0}$ in $X^{\mathbb N_0}$ such that
\begin{equation}
\label{sol}
\sigma(x_{k+1})=x_k,\quad k=0,1,\ldots
\end{equation}}
\end{definition}
\begin{remark}
{\rm
We think of a ``point'' in ${\rm Sol}_\sigma(X)$ as a path-governed by $\sigma$, and hence ${\rm Sol}_\sigma(X)$ as a path-space.}
\end{remark}
We set
\begin{equation}
\pi_k(x_0,x_1,\ldots)=x_k,\quad k\in\mathbb N_0\,\,\, {\rm and}\,\,\, (x_k)_{k\in\mathbb N_0}\,\in\,X^{\mathbb N_0},
\end{equation}
and \eqref{sol} can be rewritten as
\begin{equation}
\label{sol1}
\sigma\circ\pi_{n+1}=\pi_n,\quad n=1,2,\ldots
\end{equation}
The endomorphism $\sigma$ is (in general) neither one-to-one nor onto. But:

\begin{proposition}
The induced map $\widehat{\sigma}$ defined by
\begin{equation}
\label{sigmahat}
\widehat{\sigma}(x_0,x_1,\ldots)=(\sigma(x_0),x_0,x_1\ldots)
\end{equation}
is one-to-one from ${\rm Sol}_\sigma(X)$ onto itself, with inverse
\begin{equation}
\label{tau123}
\widehat{\sigma}^{-1}(x_0,x_1,\ldots)=(x_1,x_2,\ldots)
\end{equation}
\end{proposition}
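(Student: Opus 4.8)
The plan is to verify directly that the two maps $\widehat{\sigma}$ and the proposed inverse are mutually inverse bijections of ${\rm Sol}_\sigma(X)$, and along the way confirm that each is well-defined as a self-map of the solenoid. The core obligation is set-theoretic: once I show $\widehat{\sigma}$ maps the solenoid into itself and that \eqref{tau123} is a two-sided inverse, bijectivity follows automatically. So the first thing I would check is that $\widehat{\sigma}$ actually lands in ${\rm Sol}_\sigma(X)$. Given a point $(x_0,x_1,\ldots)$ satisfying the compatibility relation \eqref{sol}, namely $\sigma(x_{k+1})=x_k$ for all $k$, I must verify that the shifted sequence $(\sigma(x_0),x_0,x_1,\ldots)$ again satisfies \eqref{sol}. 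Writing the image as $(y_0,y_1,y_2,\ldots)$ with $y_0=\sigma(x_0)$ and $y_{k+1}=x_k$ for $k\ge 0$, the required identities $\sigma(y_{k+1})=y_k$ reduce, for $k=0$, to $\sigma(x_0)=\sigma(x_0)$, and for $k\ge 1$ to $\sigma(x_k)=x_{k-1}$, which is exactly \eqref{sol}. Hence $\widehat{\sigma}$ is a well-defined self-map.

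Next I would check that \eqref{tau123} also defines a self-map of the solenoid. Here there is a genuine (if small) point to watch: the naive ``drop the first coordinate'' map sends $(x_0,x_1,\ldots)$ to $(x_1,x_2,\ldots)$, and I must confirm the tail sequence still satisfies the solenoid relation. Relabelling $(x_1,x_2,\ldots)$ as $(z_0,z_1,\ldots)$ with $z_k=x_{k+1}$, the relation $\sigma(z_{k+1})=z_k$ becomes $\sigma(x_{k+2})=x_{k+1}$, which is again an instance of \eqref{sol}. So the shift-left map is a well-defined endomorphism-level inverse candidate; note that, reassuringly, no information is lost by deleting $x_0$, since $x_0=\sigma(x_1)$ is recoverable from $x_1$, which foreshadows why this map is injective.

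With both maps shown to preserve the solenoid, I would establish that they are mutually inverse by composing in both orders. Applying \eqref{tau123} after \eqref{sigmahat} to $(x_0,x_1,\ldots)$ gives first $(\sigma(x_0),x_0,x_1,\ldots)$ and then, deleting the leading coordinate, $(x_0,x_1,\ldots)$, recovering the original point. For the reverse composition, applying \eqref{sigmahat} after \eqref{tau123} sends $(x_0,x_1,\ldots)$ first to $(x_1,x_2,\ldots)$ and then to $(\sigma(x_1),x_1,x_2,\ldots)$; invoking \eqref{sol} in the form $\sigma(x_1)=x_0$, this equals $(x_0,x_1,\ldots)$ as well. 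Thus both composites are the identity on ${\rm Sol}_\sigma(X)$, which proves simultaneously that $\widehat{\sigma}$ is injective, surjective, and that its inverse is exactly \eqref{tau123}.

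I do not anticipate a serious obstacle; the statement is essentially a bookkeeping verification that the coordinate-shift structure of the solenoid interacts correctly with $\sigma$. The only place requiring genuine care is ensuring the solenoid compatibility relation \eqref{sol} is used in the correct direction at each step, in particular the identity $\sigma(x_1)=x_0$ that makes the reverse composition collapse back to the identity. That single application of \eqref{sol} is the crux of the whole argument, and it is precisely the defining property of the solenoid that guarantees the first coordinate deleted by \eqref{tau123} can always be reconstructed by $\sigma$.
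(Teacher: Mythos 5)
Your proposal is correct and follows essentially the same route as the paper: both verify that the shift-left map $\tau$ of \eqref{tau123} is a two-sided inverse of $\widehat{\sigma}$ by computing the two composites, with the single substantive use of the solenoid relation being $\sigma(x_1)=x_0$ in the composite $\widehat{\sigma}\circ\tau$. Your explicit checks that $\widehat{\sigma}$ and $\tau$ actually map ${\rm Sol}_\sigma(X)$ into itself are a welcome addition of detail that the paper leaves implicit, but they do not change the argument.
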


\begin{proof} One-to-oneness is clear. Let $\tau$ denote the map in \eqref{tau123}. Then, 
\[
\widehat{\sigma}\circ\tau(x_0,x_1,\ldots)=\widehat{\sigma}(x_1,\ldots)=(\sigma(x_1),x_1,\ldots)=(x_0,x_1,\ldots)
\]
since $\sigma(x_1)=x_0$ and
\[
\tau\circ\widehat{\sigma}(x_0,x_1,\ldots)=\tau(\sigma(x_0),x_0,x_1,\ldots)=(x_0,x_1,\ldots).
\]
\end{proof}

We note that
\begin{equation}
\label{corinne}
\pi_0\circ\widehat{\sigma}=\sigma\circ\pi_0\quad{\rm and}\quad \pi_{n+1}\circ\widehat{\sigma}=\pi_n,\quad n=0,1,\ldots
\end{equation}

Recall that the notation $\mathcal F_A$ was introduced in \eqref{rachel}. We set
\[
\mathcal F_{\pi_n}=\mathcal F_n,
\] 

\begin{definition}
\label{fn}
{\rm
$\mathcal F_n=\pi_n^{-1}(\mathcal B)$ is the sigma-algebra generated by the random variables $f\circ\pi_n$, where $f$ runs through 
the measurable functions on $(X,\mathcal B)$.}
\end{definition}

As an immediate consequence of \eqref{corinne} we have:

\begin{lemma}
In the notation of Definition \ref{fn}, we have
\[
\mathcal F_0\subset\mathcal F_1\subset\cdots\subset\mathcal F_n\subset\mathcal F_{n+1}\subset\cdots
\]
and $\cup_{n=0}^\infty \mathcal F_n=\mathcal F$.
\end{lemma}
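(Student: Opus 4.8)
The plan is to prove the two assertions separately, in each case reducing everything to the single solenoid compatibility relation $\sigma\circ\pi_{n+1}=\pi_n$ recorded in \eqref{sol1}, together with the measurability of $\sigma$ and the definition $\mathcal F_n=\pi_n^{-1}(\mathcal B)$ from Definition \ref{fn}.

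For the increasing chain, I would fix $n\in\mathbb N_0$ and an arbitrary $L\in\mathcal B$. Since a point of ${\rm Sol}_\sigma(X)$ satisfies $\sigma(x_{n+1})=x_n$, relation \eqref{sol1} gives $\pi_n=\sigma\circ\pi_{n+1}$ on the solenoid, whence
\[
\pi_n^{-1}(L)=(\sigma\circ\pi_{n+1})^{-1}(L)=\pi_{n+1}^{-1}\left(\sigma^{-1}(L)\right).
\]
Because $\sigma$ is measurable, $\sigma^{-1}(L)\in\mathcal B$, so $\pi_n^{-1}(L)\in\pi_{n+1}^{-1}(\mathcal B)=\mathcal F_{n+1}$. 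As $L\in\mathcal B$ was arbitrary, this yields $\mathcal F_n=\pi_n^{-1}(\mathcal B)\subset\mathcal F_{n+1}$, which is the claimed nesting $\mathcal F_0\subset\mathcal F_1\subset\cdots$. Iterating the same identity shows moreover that $\pi_k=\sigma^{n-k}\circ\pi_n$ for $0\le k\le n$, so that each of $\pi_0,\ldots,\pi_n$ is a measurable function of $\pi_n$; hence $\mathcal F_n$ is in fact the sigma-algebra generated jointly by $\pi_0,\ldots,\pi_n$, matching the role of the $\mathcal G_n$ in Axioms \ref{axioms123}.

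For the identity $\cup_{n=0}^\infty\mathcal F_n=\mathcal F$, I would first recall that the sigma-algebra $\mathcal F$ carried by ${\rm Sol}_\sigma(X)$ is the trace on the solenoid of the cylinder sigma-algebra $\mathcal C$ of $X^{\mathbb N_0}$, that is, the smallest sigma-algebra making every coordinate map $\pi_n$ measurable. Each $\pi_n$ is $\mathcal F$-measurable, so $\mathcal F_n\subset\mathcal F$ for every $n$, giving $\cup_n\mathcal F_n\subset\mathcal F$ and hence $\bigvee_n\mathcal F_n\subset\mathcal F$, where $\bigvee_n\mathcal F_n$ denotes the smallest sigma-algebra containing $\cup_n\mathcal F_n$. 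Conversely, $\bigvee_n\mathcal F_n$ contains each $\mathcal F_n=\pi_n^{-1}(\mathcal B)$ and therefore already makes every $\pi_n$ measurable, so by minimality of $\mathcal F$ we obtain $\mathcal F\subset\bigvee_n\mathcal F_n$. Combining the two inclusions gives $\mathcal F=\bigvee_n\mathcal F_n$.

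The one point that warrants care, and which I regard as the main (mild) obstacle, is the precise reading of the displayed equality $\cup_n\mathcal F_n=\mathcal F$. An increasing union of sigma-algebras is in general only an algebra (a field of sets), not a sigma-algebra, so the literal set-theoretic union need not equal $\mathcal F$. The statement is therefore to be understood as $\mathcal F=\bigvee_n\mathcal F_n$, and the genuine content established above is that the increasing chain $(\mathcal F_n)_{n\in\mathbb N_0}$ generates the full cylinder structure on the solenoid. Since the chain is increasing, $\cup_n\mathcal F_n$ is an algebra generating $\mathcal F$, which is exactly what is needed for the inductive-limit and martingale arguments in the sequel.
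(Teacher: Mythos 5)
Your proof is correct and follows essentially the same route as the paper: the nesting comes from the compatibility relation $\sigma\circ\pi_{n+1}=\pi_n$ (the paper phrases your preimage computation functionally, as $f\circ\pi_n=(f\circ\sigma)\circ\pi_{n+1}$), and the second assertion from the fact that $\mathcal F$ is the cylinder sigma-algebra generated by the coordinates. Your caveat that the displayed union must be read as the generated sigma-algebra is well taken and matches the paper's own proof, which silently replaces the $\cup$ of the statement by the lattice join, concluding $\mathcal F=\vee_{n=0}^\infty\mathcal F_n$.
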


\begin{proof} Let $f\in\mathcal M(X,\mathcal B_X)$ and $n\in\mathbb N$. We have
\[
f\circ \pi_n=(f\circ\sigma)\circ\pi_{n+1}
\]
and so $\mathcal F_n\subset\mathcal F_{n+1}$. Since the sigma-algebra $\mathcal F$ on ${\rm Sol}_\sigma(X)$ is the cylinder sigma-algebra 
obtained from $\prod_{n=0}^\infty X$ we have that $\mathcal F=\vee_{n=0}^\infty \mathcal F_n$, where $\vee$ denotes the lattice operation on
sigma-algebras.
\end{proof}

\begin{theorem}
\label{thmmm}
Let $(X,\mathcal B,\sigma, R,h,\lambda)$ be as above, and assume $Rh=h$. Then there exists a unique probability measure $\mathbb P$ defined on the cylinder 
sigma-algebra on the associated solenoid ${\rm Sol}_\sigma(X)$ such that
\begin{equation}
\int_{{\rm Sol}_\sigma(X)}(f_0\pi_0)(\w)(f_1\pi_1)(\w)\cdots (f_n\pi_n)(\w)d\mathbb P(\w)=
\int_{X}(f_0(x)R(f_1R(f_2\cdots R(f_nh))))(x)d\lambda(x)
\end{equation}
for all $n\in\mathbb N_0$ and $f_0,\ldots, f_n\in \mathcal M(X,\mathcal B)$. In the normalized case, we use $h\equiv1$
\end{theorem}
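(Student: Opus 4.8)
The plan is to realize $\mathbb{P}$ as a Kolmogorov projective limit of a consistent family of finite-dimensional distributions, where consistency is forced by the harmonicity $Rh=h$ and concentration on ${\rm Sol}_\sigma(X)$ is forced by the pull-out property \eqref{roxanna}; uniqueness is then automatic. So the whole construction splits into a measure-theoretic step (building the infinite product measure) and two algebraic checks (consistency and the solenoid relation), both of which should fall directly out of the two defining properties of the Ruelle operator.

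First I would define, for each $n\in\mathbb{N}_0$, a candidate finite-dimensional law $\mu_n$ on $X^{n+1}$ by prescribing
\[
\int_{X^{n+1}} f_0(x_0)\cdots f_n(x_n)\,d\mu_n=\int_X f_0\,R(f_1 R(f_2\cdots R(f_n h)))\,d\lambda
\]
on product integrands and extending by linearity. Positivity of $\mu_n$ follows by peeling the nesting from the inside out: if $f_0,\dots,f_n\ge 0$ then $f_n h\ge 0$, so $R(f_n h)\ge 0$ by the positivity axiom \eqref{trans123}, whence $f_{n-1}R(f_n h)\ge0$, and so on, the outermost integration against $\lambda\ge0$ giving a nonnegative number. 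Setting all $f_i\equiv 1$ and using $R(1\cdot h)=Rh=h$ repeatedly collapses every level to $h$, so the total mass is $\int_X h\,d\lambda$, which equals $1$ in the normalized situation (in particular when $h\equiv 1$ and $R1=1$, so that the mass is $\lambda(X)=1$). To upgrade the positive functional on products to a genuine Borel measure on the locally compact Hausdorff space $X^{n+1}$ I would invoke Riesz--Markov, after checking positivity on all of $C_c(X^{n+1})$.

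The engine is consistency. Marginalizing $\mu_{n+1}$ over its last coordinate is exactly the substitution $f_{n+1}\equiv 1$; the innermost term then becomes $R(1\cdot h)=R(h)=h$, which collapses the $(n{+}1)$-fold nesting to the $n$-fold one and reproduces the level-$n$ formula verbatim. Thus $\{\mu_n\}$ is a projective family, and Kolmogorov's extension theorem (see \cite{MR0203748}) yields a probability measure $\mathbb{P}$ on $X^{\mathbb{N}_0}$ with these marginals, so the displayed integral identity of the theorem holds. It remains to show $\mathbb{P}$ is carried by the solenoid, i.e. $\sigma\circ\pi_{n+1}=\pi_n$ a.s. Here I would exploit the pull-out property: replacing $f_{n+1}$ by $(f\circ\sigma)g$ in the level-$(n{+}1)$ identity and applying $R(((f\circ\sigma)g)h)=f\,R(gh)$ turns the right-hand side into the level-$(n{+}1)$ identity with $f_n$ replaced by $f_nf$ and $f_{n+1}$ by $g$. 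Comparing the two expressions gives
\[
\mathbb{E}\big[\Phi\,(f\circ\sigma\circ\pi_{n+1})(g\circ\pi_{n+1})\big]=\mathbb{E}\big[\Phi\,(f\circ\pi_n)(g\circ\pi_{n+1})\big]
\]
for every factor $\Phi=(f_0\circ\pi_0)\cdots(f_n\circ\pi_n)$. Since the products $(f_0\circ\pi_0)\cdots(f_{n+1}\circ\pi_{n+1})$ form a determining class for $\mathcal F_{n+1}$, this forces $f\circ(\sigma\circ\pi_{n+1})=f\circ\pi_n$ a.s. for every $f$, hence $\sigma\circ\pi_{n+1}=\pi_n$ $\mathbb{P}$-a.s. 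Therefore $\mathbb{P}({\rm Sol}_\sigma(X))=1$ and $\mathbb{P}$ descends to the cylinder $\sigma$-algebra of the solenoid; uniqueness is immediate because the same product cylinder functions are a determining class and the identity pins down all their expectations.

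The step I expect to be the main obstacle is not the algebra — consistency and the solenoid relation both emerge cleanly from $Rh=h$ and \eqref{roxanna} — but the measure-theoretic passage from the positive linear functional to an honest projective family and the legitimate invocation of Kolmogorov's theorem on a locally compact, possibly non-Polish, $X$. Concretely, one must verify that each functional $\mu_n$ is positive on all of $C_c(X^{n+1})$ and not merely on the cone of products (products of nonnegatives do not exhaust the nonnegative functions), and confirm the regularity hypotheses under which the extension theorem produces a countably additive $\mathbb{P}$ on the cylinder $\sigma$-algebra. An alternative that sidesteps part of this is to represent $R$ by its transition kernel and build the $\mu_n$ as iterated kernel integrals against $\lambda$, at the cost of assuming the kernel form at the outset.
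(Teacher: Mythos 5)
Your proposal is correct in its skeleton and rests on the same two pillars as the paper's proof --- Kolmogorov extension, with consistency supplied by $Rh=h$ and the solenoid relations supplied by the pull-out property \eqref{roxanna} --- but it takes a genuinely different route: you build multi-dimensional laws $\mu_n$ on $X^{n+1}$, extend to the full product $X^{\mathbb N_0}$, and only afterwards force concentration on ${\rm Sol}_\sigma(X)$ by an $L_2$/determining-class argument, whereas the paper never leaves the solenoid. There, by \eqref{sol1}, the sigma-algebras $\mathcal F_n=\pi_n^{-1}(\mathcal B)$ form an increasing filtration and each $\mathcal F_n$ is generated by the \emph{single} coordinate $\pi_n$ (since $\pi_k=\sigma^{n-k}\circ\pi_n$ for $k\le n$); so the paper only needs the one-dimensional marginals $\mathbb P_n$ determined by $\int (f\circ\pi_n)\,d\mathbb P_n=\int_X R^n(fh)\,d\lambda$ as in \eqref{eqn3}, whose positivity is immediate by iterating \eqref{trans123}, and consistency is the one-line computation $R^{n+1}\left((f\circ\sigma)h\right)=R^n\left(fR(h)\right)=R^n(fh)$; the multivariable formula of the theorem is then recovered because on the solenoid $(f_0\circ\pi_0)\cdots(f_n\circ\pi_n)=F\circ\pi_n$ with $F=\prod_{k=0}^{n}\left(f_k\circ\sigma^{n-k}\right)$, and repeated pull-out turns $R^n(Fh)$ into the nested expression $f_0R(f_1R(\cdots R(f_nh)))$. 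This observation is exactly what dissolves the obstacle you flag as your main one: by reverse pull-out your functional satisfies
\[
\mu_n(F)=\int_X R^n\left(F_\Delta\, h\right)\,d\lambda,\qquad F_\Delta(y)=F\left(\sigma^n y,\sigma^{n-1}y,\ldots,y\right),
\]
i.e.\ $\mu_n$ is the pushforward of a one-dimensional measure under the graph map $y\mapsto(\sigma^ny,\ldots,y)$, so positivity on all of $C_c(X^{n+1})$ (not merely on the product cone) is automatic, the support condition holds by construction, and your a.s.\ concentration step --- which as written also needs $\mathcal B$ countably generated and point-separating to pass from $f\circ\sigma\circ\pi_{n+1}=f\circ\pi_n$ a.s.\ for every $f$ to $\sigma\circ\pi_{n+1}=\pi_n$ a.s.\ --- becomes unnecessary. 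What your route buys is flexibility (it would work verbatim for a non-stationary family $R_n$ on the full product space, where no solenoid structure is available); what the paper's route buys is economy: no product-cone positivity issue, no separability hypothesis, and concentration on the solenoid for free. One caveat common to both arguments, left implicit by the paper as well: countable additivity of $B\mapsto\int_X R^n(\chi_B h)\,d\lambda$ tacitly requires $R$ to interchange with increasing pointwise limits, which holds for the kernel-type transfer operators in all the paper's examples.
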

\begin{proof}
 We first remark that, in view of \eqref{sol1},  $\mathbb P$ (if it exists) is uniquely determined by
\begin{equation}
\label{eqn31}
\int_{{\rm Sol}_\sigma}(f\circ \pi_0)(\w)d\mathbb P(\w)=\int_Xf(x)d\lambda(x),
\end{equation}
and
\begin{equation}
\label{eqn3}
\int_{{\rm Sol}_\sigma(X)}(f\circ \pi_n)(\w)d\mathbb P(\w)=\int_XR^n(fh)(x)d\lambda(x).
\end{equation}
For every $n\in\mathbb N_0$, there exists a measure $\mathbb P_n$ on $\mathcal F_n$ such that
\begin{equation}
\label{eqn311}
\int_{{\rm Sol}_\sigma(X)}(f\circ \pi_n)(\w)d\mathbb P_n(\w)=\int_XR^n(fh)(x)d\lambda(x).
\end{equation}
Setting, in \eqref{eqn3}, $n+1$ instead of $n$, and taking into account \eqref{sol1}, we have
\[
\begin{split}
\int_{{\rm Sol}_\sigma(X)}(f\circ \pi_{n})(\w)d\mathbb P_{n+1}(\w)&=\int_{{\rm Sol}_\sigma(X)}(f\circ \pi_{n+1})(\w)d\mathbb P_{n+1}(\w)\\
&=\int_XR^{n+1}(f\circ \sigma)(x)d\lambda(x)\\
&=\int_XR^{n}\left(R(f\circ \sigma)\right)(x)d\lambda(x)\\
&=\int_X\left(R^{n}f\right)(x)d\lambda(x)\\
&\hspace{-3.cm}(\text{by the pull-out property \eqref{roxanna} since $R$ is normalized})\\
&=\int_{{\rm Sol}_\sigma(X)}(f\circ \pi_{n})(\w)d\mathbb P_{n}(\w).
\end{split}
\]

By Kolmogorov's extension theorem (see e.g. \cite{MR33:6659}), the family $(\mathbb P_n)_{n\in\mathbb N}$ extends to a probability measure 
$\mathbb P$ on the cylinder sigma-algebra.
\end{proof}
We define the measure $(\lambda R)$ by
\begin{equation}
\label{lambdaR}
\int_Xf(x)d(\lambda R)(x)=\int_XR(f)(x)d\lambda(x).
\end{equation}

\begin{definition}
{\rm
We say that $\sigma$ is ergodic if
\[
\cap_{n=1}^\infty\sigma^{-n}\left(\mathcal B_X\right)=\left\{\emptyset, X\right\},
\]
modulo sets of $\lambda$-measure zero.}
\end{definition}

The examples of endomorphisms $\sigma$ which we consider here are ergodic.

\begin{theorem}
Let $W$ be a positive measurable function on $(X,\mathcal B)$.
The following are equivalent:\\
$(1)$ $\lambda R<<\lambda$, and
\begin{equation}
\label{lambdaR1}
\frac{d\lambda R}{d\lambda}=W.
\end{equation}
$(2)$ $\mathbb  P\circ\widehat{\sigma}<< \mathbb P$ and
\begin{equation}
\label{murielle}
\frac{d\mathbb P\circ\widehat{\sigma}}{d\mathbb P}=W\circ \pi_0
\end{equation}
\label{elinor!!!}
\end{theorem}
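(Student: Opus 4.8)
The plan is to verify the equivalence by testing the measure identity encoded in $(2)$ against a generating family of cylinder functions and translating it, via the moment formula of Theorem~\ref{thmmm}, into the scalar identity $\int_X R(\phi)\,d\lambda=\int_X \phi\,W\,d\lambda$ that is the content of $(1)$. First I would fix the meaning of the pushforward: for a bounded measurable $g$ on ${\rm Sol}_\sigma(X)$ one has $\int g\,d(\mathbb P\circ\widehat\sigma)=\int (g\circ\widehat\sigma^{-1})\,d\mathbb P$, and since $\widehat\sigma^{-1}$ is the shift $\tau$ of \eqref{tau123}, the relations \eqref{corinne} give $\pi_k\circ\widehat\sigma^{-1}=\pi_{k+1}$ for every $k$. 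Hence statement $(2)$ is precisely the assertion that $\int (g\circ\widehat\sigma^{-1})\,d\mathbb P=\int g\,(W\circ\pi_0)\,d\mathbb P$ holds for all such $g$. Because the functions $g=\prod_{k=0}^{n}(f_k\circ\pi_k)$, with $f_k\in\mathcal M(X,\mathcal B)$, form a multiplicative family generating $\mathcal F=\vee_n\mathcal F_n$, it suffices to check this identity on them.

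For the implication $(1)\Rightarrow(2)$ I would take such a cylinder $g$ and compute $g\circ\widehat\sigma^{-1}=\prod_{k=0}^{n}(f_k\circ\pi_{k+1})$, i.e. the same cylinder with coordinates shifted up by one and a constant factor $1$ inserted at $\pi_0$. Applying the moment formula of Theorem~\ref{thmmm} to both sides (in the normalized case $h\equiv1$) turns the left-hand side into $\int_X R\bigl(f_0R(f_1\cdots R(f_n))\bigr)\,d\lambda$ and the right-hand side into $\int_X f_0\,W\,R(f_1\cdots R(f_n))\,d\lambda$. By the definition \eqref{lambdaR} of $\lambda R$, hypothesis $(1)$ says exactly $\int_X R(\phi)\,d\lambda=\int_X \phi\,W\,d\lambda$; taking $\phi=f_0R(f_1\cdots R(f_n))$ identifies the two expressions, so the cylinder identity holds and $(2)$ follows.

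For the converse $(2)\Rightarrow(1)$ it is enough to specialize the already-established identity to the one-coordinate function $g=f\circ\pi_0$. Then $g\circ\widehat\sigma^{-1}=f\circ\pi_1$, so the left-hand side equals $\int_X R(f)\,d\lambda$ by \eqref{eqn3}, while the right-hand side equals $\int ((fW)\circ\pi_0)\,d\mathbb P=\int_X f\,W\,d\lambda$ by \eqref{eqn31}. Since $f\in\mathcal M(X,\mathcal B)$ is arbitrary, this gives $d(\lambda R)=W\,d\lambda$, which is $(1)$; absolute continuity $\lambda R\ll\lambda$ is then automatic from $W\ge0$.

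The routine part is the bookkeeping in the nested $R$'s; the step I expect to require the most care is the correct reading of $\mathbb P\circ\widehat\sigma$. One must check that it is the pushforward under $\widehat\sigma^{-1}$ and not under $\widehat\sigma$: testing against $f\circ\pi_0$ under the wrong convention would produce $\int_X(f\circ\sigma)\,d\lambda$ rather than $\int_X R(f)\,d\lambda$, and the equivalence with $(1)$ would fail. The other point to handle cleanly is the measure-theoretic reduction: both $\mathbb P\circ\widehat\sigma$ and $(W\circ\pi_0)\,d\mathbb P$ are finite measures (the latter of total mass $\int_X W\,d\lambda=(\lambda R)(X)$), so agreement on the generating multiplicative family of cylinder functions forces equality by the functional monotone class theorem. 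Finally, I would note that for non-normalized $R$ the factor $h$ from Theorem~\ref{thmmm} sits at the innermost slot and is carried unchanged through both computations, so the argument is unaffected.
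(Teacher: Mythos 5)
Your proof is correct and follows essentially the same route as the paper: both reduce statement $(2)$ to an integral identity tested against a generating class of cylinder functions, translate it via the Kolmogorov moment formulas (\eqref{eqn31}, \eqref{eqn3}, Theorem \ref{thmmm}) into the scalar identity $\int_X R(f)\,d\lambda=\int_X fW\,d\lambda$, and your converse (specializing to $g=f\circ\pi_0$, i.e.\ $\psi=f\circ\pi_1$) is literally the paper's. The only cosmetic difference is that the paper tests single-coordinate functions $f\circ\pi_n$ (on the solenoid every cylinder collapses to a function of its last coordinate, since $\pi_k=\sigma^{n-k}\circ\pi_n$) and settles the $n=0$ case with the pull-out property and $R1=1$, whereas you test full product cylinders through the nested moment formula, with the monotone class step made explicit.
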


\begin{proof}
Assume that $(1)$ is in force. To prove $(2)$ we will show that \eqref{murielle} holds, or equivalently, that
\begin{equation}
\label{eq12345}
\int_{{\rm Sol}_\sigma(X)}\psi\mathbb P=\int_{{\rm Sol}_\sigma(X)}\left(\psi\circ \widehat{\sigma}\right)(W\circ \pi_0)d\mathbb P
\end{equation}
for all measurable functions $\psi$ on ${\rm Sol}_\sigma(X)$.It suffices to  take $\psi$ of the form $\psi=f\circ\pi_n$ for $n=0,1,\ldots$. We first 
consider the case $n=0$. Let $\w=(x_0,x_1,\ldots)$. Then $\widehat{\sigma}(\w)=(\sigma(x_0),x_0,x_1,\ldots)$, and so
with $\psi=f\circ \pi_0$, the right-hand side of \eqref{eq12345} is equal to
\[
\begin{split}
\int_{{\rm Sol}_\sigma(X)}(f\circ\pi_0\circ\widehat{\sigma})(\w)(W\circ\pi_0)(\w)d\mathbb P(\w)&=
\int_{{\rm Sol}_\sigma(X)}(f\circ{\sigma})(x_0)(W\circ\pi_0)(\w)d\mathbb P(\w)\\
&=\int_{{\rm Sol}_\sigma(X)}(f\circ\sigma\circ\pi_0)(\w)(W\circ\pi_0)(\w)d\mathbb P(\w)\\
&=\int_{{\rm Sol}_\sigma(X)}\left(\left((f\circ\sigma)W\right)\circ\pi_0\right)(\w)d\mathbb P(\w)\\
&=\int_X(f\circ\sigma)(x)W(x)d\lambda(x)\\
&=\int_X\left(R(f\circ\sigma)\right)(x)d\lambda(x)\quad(\text{{\rm using \eqref{lambdaR} and \eqref{lambdaR1}}})\\
&=\int_Xf(x)d\lambda(x)\quad\text{\rm since $R$ is normalized: $R1=1$}\\
&\hspace{2.5cm}\text{{and using the pullout property \eqref{roxanna}}}\\
&=\int_{{\rm Sol}_\sigma(X)}(f\circ\pi_0)(\w)d\mathbb P(\w),
\end{split}
\]
which is the left-hand side of \eqref{eq12345}.\smallskip

We now consider the case $n>0$. The  right-hand side of \eqref{eq12345} is equal to:
\[
\begin{split}
\int_{{\rm Sol}_\sigma(X)}(f\circ\pi_n\circ\widehat{\sigma})(\w)(W\circ\pi_0)(\w)d\mathbb P(\w)&=
\int_{{\rm Sol}_\sigma(X)}f(x_{n-1})(W\circ\pi_0)(\w)d\mathbb P(\w)\\
&=\int_{{\rm Sol}_\sigma(X)}(f\circ\pi_{n-1})(\w)(W\circ\pi_0)(\w)d\mathbb P(\w)\\
&=\int_X\left(R^{n-1}(f)\right)(x)W(x)d\lambda(x)\\
&\quad(\text{\rm where we have used \eqref{eqn3}})\\
&=\int_X\left(R^{n}(f)\right)(x)d\lambda(x)\\
&\quad(\text{\rm by definition of $W$})\\
&=\int_{{\rm Sol}_\sigma(X)}(f\circ\pi_n)(\w)d\mathbb P(\w)\\
&\quad(\text{\rm by \eqref{eqn3}})\\
&=\int_{{\rm Sol}_\sigma(X)}\psi(\w)d\mathbb P(\w),
\end{split}
\]
by definition of $\psi=f\circ\pi_n$.\smallskip

Conversely, we now assume that $\mathbb  P\circ\widehat{\sigma}<< \mathbb P$, with Radon-Nikodym derivative given by
\eqref{murielle}. Let $\psi=f\circ\pi_1$. We have:
\begin{equation}
\label{lysbeth}
\int_{{\rm Sol}_\sigma(X)}(\psi\circ\widehat{\sigma})(\w)(W\circ\pi_0)(\w)d\mathbb P(\w)=
\int_{{\rm Sol}_\sigma(X)}\psi(\w)d\mathbb P(\w).
\end{equation}
Since $\pi\circ\widehat{\sigma}=\pi_0$, this latter equality is equivalent to:
\[
\int_{{\rm Sol}_\sigma(X)}\left((fW)\circ\pi_0\right)(\w)d\mathbb P(\w)=\int_{{\rm Sol}_\sigma(X)}(f\circ\pi_1)(\w)d\mathbb P(\w),
\]
that is
\begin{equation}
\int_X(fW)(x)d\lambda(x)=\int_X(R(f))(x)d\lambda(x)
\label{elinor}
\end{equation}
where we used \eqref{eqn3}. But \eqref{elinor} means that $\lambda R<<\lambda$ with Radon-Nikodym derivative  equal to $W$.
\end{proof}

\begin{remark}
{\rm
It follows from \eqref{eqn3} that the probability distribution $\mu_n$ of the random variable $\pi_n$ is equal to
\[
d\mu_n(x)=W(x)((W\circ\sigma)(x))\cdots ((W\circ\sigma^{n-1})(x))d\lambda(x),
\]
and that
\begin{equation}
\frac{d\mu_{n+1}}{d\mu_n}=(W\circ\sigma^n)(x).
\end{equation}
}
\end{remark}

We will assume that there the Radon-Nikodym derivative $W=\frac{d(\lambda R)}{d\lambda}$ exists, that  is:
\begin{equation}
\int_X R(f)(x)d\lambda(x)=\int_Xf(x)W(x)d\lambda(x),\quad\forall f\,\,\in\, \mathcal M(X,\mathcal B),
\label{voltaire}
\end{equation}
and that, furthermore, the Ruelle operator $R$ in equations \eqref{ruelle1234} and \eqref{trans123} is of
Perron-Frobenius type in the sense that there exists
$h\geq 0$, $h\in(X, \mathcal{B})$ such that
\begin{equation}
Rh=h,
\end{equation}
and normalized to
\begin{equation}
\int_X h(x)d\lambda(x)=1.
\end{equation}
 
When $R$ is not normalized one can replace $R$ with the operator $R^\prime$ defined by
\[
R^\prime f=\frac{R(fh)}{h}.
\]
It satisfies $R^\prime 1=1$. See Remark \ref{utopia}.

\begin{definition}{\rm
We will call $(X,\mathcal B,\sigma, R,h,\lambda)$ a generator for a path space when $\lambda R<<\lambda$ and when $R$ is normalized.}
\end{definition}

As a consequence of \eqref{sol1} we have (recall that $\sigma$ is not one-to-one in general):
\begin{proposition}
The distributions $\mu_{k}$ and $\mu_{k+1}$ are related by
\begin{equation}
\label{sol123}
\mu_{k+1}\circ \sigma^{-1}=\mu_k,
\end{equation}
meaning that
\begin{equation}
\label{regine}
\mu_{k+1}\left(\sigma^{-1}(B)\right)=\mu_k(B),\quad \forall B\in\mathcal B.
\end{equation}
\end{proposition}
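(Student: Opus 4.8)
The plan is to unwind the statement to the definition of $\mu_k$ as the law (push-forward) of the coordinate map $\pi_k$ under $\mathbb P$, and then feed in the single compatibility relation $\sigma\circ\pi_{k+1}=\pi_k$ that is already built into the solenoid. Recall that, exactly as in the opening conventions of the paper ($\mu_A(L)=\mathbb P(A^{-1}(L))$) and as recorded in the Remark following Theorem \ref{elinor!!!}, $\mu_k$ is the distribution of the random variable $\pi_k$ on $({\rm Sol}_\sigma(X),\mathcal F,\mathbb P)$, that is, $\mu_k(B)=\mathbb P(\pi_k^{-1}(B))$ for every $B\in\mathcal B$.

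First I would fix $B\in\mathcal B$ and rewrite the left-hand side of \eqref{regine} by the definition of the push-forward:
\[
\mu_{k+1}\left(\sigma^{-1}(B)\right)=\mathbb P\left(\pi_{k+1}^{-1}\left(\sigma^{-1}(B)\right)\right).
\]
Next I would apply the elementary preimage identity for a composition, $\pi_{k+1}^{-1}(\sigma^{-1}(B))=(\sigma\circ\pi_{k+1})^{-1}(B)$. The only structural input then enters: the solenoid relation \eqref{sol1}, namely $\sigma\circ\pi_{k+1}=\pi_k$ (a restatement of \eqref{corinne} and of the defining condition \eqref{sol} of ${\rm Sol}_\sigma(X)$). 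Substituting gives $(\sigma\circ\pi_{k+1})^{-1}(B)=\pi_k^{-1}(B)$, whence
\[
\mu_{k+1}\left(\sigma^{-1}(B)\right)=\mathbb P\left(\pi_k^{-1}(B)\right)=\mu_k(B),
\]
which is precisely \eqref{regine}; since $B\in\mathcal B$ was arbitrary, the equivalent operator form \eqref{sol123} follows.

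There is essentially no genuine obstacle here: the identity is a direct consequence of the functoriality of push-forward measures under composition together with the one equation $\sigma\circ\pi_{k+1}=\pi_k$. The only point worth a line of care is measurability, i.e. that $\sigma^{-1}(B)\in\mathcal B$ so that the expression $\mu_{k+1}(\sigma^{-1}(B))$ is even well-defined; this is immediate because $\sigma$ is a measurable endomorphism of $(X,\mathcal B)$, so $\sigma^{-1}$ carries $\mathcal B$ into $\mathcal B$. Thus, once the interpretation of $\mu_k$ as the law of $\pi_k$ is made explicit, the whole argument collapses to a single preimage computation.
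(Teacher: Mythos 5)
Your proof is correct and is essentially the paper's own argument: both rest solely on the definition of $\mu_k$ as the law of $\pi_k$ under $\mathbb P$ together with the solenoid relation $\sigma\circ\pi_{k+1}=\pi_k$ from \eqref{sol1}. The only cosmetic difference is that the paper runs the computation through integrals $\int (f\circ\sigma)\,d\mu_{k+1}$ for general measurable $f$ and then specializes to $f=\chi_B$, whereas you work directly with preimages of sets, which is the same calculation in indicator-function form.
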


\begin{proof}
By definition of $\mu_k$ we have:
\[
\int_{{\rm Sol}_\sigma(X)}(f\circ\pi_k)(\w)d\mathbb P(\w)=\int_Xf(x)d\mu_k(x).
\]
Hence,
\[
\begin{split}
\int_X(f\circ \sigma)(x)d\mu_{k+1}(x)&=\int_{{\rm Sol}_\sigma(X)}((f\circ\sigma)\circ\pi_{k+1})(\w)d\mathbb P(\w)\\
&=\int_{{\rm Sol}_\sigma(X)}(f\circ\pi_k)(\w)d\mathbb P(\w)\quad (\text{\rm using \eqref{sol1}})\\
&=\int_Xf(x)d\mu_k(x).
\end{split}
\]
It suffices to take $f(x)=\chi_B(x)$ to obtain \eqref{regine}.
\end{proof}
\begin{remark}
{\rm
\eqref{regine} is independent of the given probability measure on the cylinder sigma-algebra.
}
\end{remark}
\subsection{The multiresolution associated with a solenoid}
We begin with a table relative to the wavelet realization by unitary operators; the third column, related to the classical 
$\mathbf L_2(\mathbb R,dx)$ wavelets is elaborated upon in Section \ref{unitcircle}.

\begin{definition}
\label{newdef123}
{\rm
Let $\mathcal H$ be a Hilbert space, let $U\,:\,\mathcal H\,\longrightarrow\,\mathcal H$ be a unitary operator and let $(\mathcal H_n)_{n\in
\mathbb Z}$ be an indexed family of closed subspaces such that:\\
$(i)$ $\mathcal H_{n+1}\subset \mathcal H_n$, $n\in\mathbb Z$,\\
$(ii)$ $U^{-k}\mathcal H_0=\mathcal H_k$, $k\in\mathbb Z$,\\
$(iii)$ $\bigwedge_{k\in\mathbb Z}\mathcal H_k$ is at most one dimensional,\\ 
and\\
$(iv)$ $\bigvee_{k\in\mathbb Z}\mathcal H_k=\mathcal H$.\\
Here $\bigwedge$ and $\bigvee$ refer to the lattice operations applied to closed subspaces in $\mathcal H$. \\
When $(\mathcal H, U, (\mathcal H_n)_{n\in\mathbb Z})$ satisfy $(i)$-$(iv)$, then we say that it is a multiresolution
(or multi-scale resolution), and that $U$ is the associated scaling operator.}
\end{definition}

\begin{remark}{\rm
Let $U$ be a unitary operator which is part of a multiresolution, then it can be shown that the spectrum of $U$ must be as follows: Except for the point $\lambda = 1$  occurring with at most multiplicity one, the spectrum of $U$ must be absolutely continuous with uniform multiplicity infinity. This is an application of ideas of Wold, Lax-Phillips, and Stone-von Neumann; see 
\cite{MR1037774,MR1171011}. See also Remark \ref{cov} below.
}
\end{remark}
We shall outline below a number of examples of multiresolutions, in wavelet theory and in dynamics more generally. This will make use of the
theory we already developed in Section 2 above.
\begin{table}[h!]
\caption{Wavelets  realization by unitary operators}
\begin{tabular}{|c|c|c|c|}
\toprule
& & &\\
\text{The case} &$\mathbf L_2(\mathbb R,dx)$ &Fourier transform &General (solenoid)\\
& &of $\mathbf L_2(\mathbb R,dx)$ & $\mathbf L_2({\rm Sol}_\sigma,\mathcal C,\mathbb P)$\\
\hline
& & &\\
\text{The unitary}& & &\hspace{-1.4cm}$(U\psi)(\w)=$\\
\text{operator}&$(Ug)(x)=\frac{1}{\sqrt{N}}g(x/N)$ & $(U\gamma)(t)=\sqrt{N}\gamma(Nt)$& $=(\psi\circ\widehat{\sigma})
(m_0\circ\pi_0)$\\
& & &\\
\hline
& & &\\
\text{\hspace{-1.6cm} $(1)$ Map onto}&$K\xi=\sum_{k\in\mathbb Z}\xi_k\varphi(x-k)$ &$(Kf)(t)=f(t)\widehat{\varphi_0}(t)$ & $V_{\pi_0}f=f\circ \pi_0$\\
\text{\hspace{-4mm}the zero resolution}&\hspace{-3.5mm}belongs to $\mathbf L_2(\mathbb R,dx)$ & \hspace{-3.5mm}belongs to $\mathbf L_2(\mathbb R,dx)$ 
&belongs to $\mathbf L_2({\rm Sol}_\sigma,\mathbb P)$\\
\text{\hspace{-2.cm}subspace.}&\hspace{-11.5mm}\text{for  }$(\xi_n)\in\ell_2(\mathbb Z)$. &
\hspace{-8mm}\text{for $f\in\mathbf L_2(\mathbb T)$.} &\hspace{-7.6mm}\text{for  } $f\in\mathbf L_2(X,\lambda)$.\\
& & &\\
\hline
& & &\\
\text{$(2)$ Average operator.}&$(S\xi)_j=\sum_{k\in\mathbb Z}\xi_ka_{Nk-j}$ &$(Sf)(t)=m_0(t)f(Nt)$ &$Sf=m_0\cdot f\circ \sigma$\\
& & &\\
\hline
\text{\hspace{-5.mm}Level zero resolution}& & &\\
\text{\hspace{-8mm}Invariant subspace}&$\xi\in\ell_2(\mathbb Z)\simeq \mathcal H_0$ & $f\in\mathbf L_2(\mathbb T)\simeq \mathcal H_0
$&$f\in\mathbf L_2(X,\lambda)\simeq \mathcal H_0^{{\rm Sol}_\sigma}$\\
\text{for $S$ (using $(1)$ above)} & & &\\
\bottomrule
\end{tabular}
\label{rubicon2}
\end{table}
\begin{remark}{\rm
In the above table, in the second column, the map $m_0$ is a continuous function $m_0(z)$ on the unit circle, and the coefficients $a_n$ 
are the Fourier coefficients of $|m_0(e^{it})|^2$. In the third column, we are in the special case where $W=|m_0|^2$, then $W$ satisfies 
the conditions of Theorem \ref{elinor}.}
\end{remark}

Recall from Lemma \ref{wzero} that if $\mu_0$ and $\mu_1$ are equivalent. Then, the set where $W$ vanishes has measure zero.

\begin{proposition}
Assume that $\mu_0$ and $\mu_1$ are equivalent, Then, the map
\begin{equation}
\label{isabelle}
U\psi=\sqrt{W\circ \pi_0}\left(\psi\circ\widehat{\sigma}\right)
\end{equation}
is unitary from $\mathbf L_2({\rm Sol}_\sigma(X),\mathbb P)$ onto itself, and its inverse is given by
\begin{equation}
U^{-1}\psi=\frac{1}{\sqrt{W\circ\pi_1}}\psi\circ\widehat{\sigma}^{-1},
\end{equation}
where $\widehat{\sigma}^{-1}$ is given by \eqref{tau123}. We now check that $UU^{-1}\psi=\psi$. We have
\[
\begin{split}
UU^{-1}\psi&=\sqrt{W\circ\pi_0}\left(U^{-1}\psi\right)\circ\widehat{\sigma}^{-1}\\
&=\sqrt{W\circ\pi_0}\frac{1}{\sqrt{W\circ\pi_1\circ\widehat{\sigma}}}\psi\circ\widehat{\sigma}\circ\widehat{\sigma}^{-1}\\
&=\psi
\end{split}
\]
since $\pi_1\circ \widehat{\sigma}=\pi_0$. The proof that $U^{-1}U\psi=\psi$ is similar, and omitted.
\end{proposition}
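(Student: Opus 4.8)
The plan is to prove that $U$ in \eqref{isabelle} is unitary by establishing two facts separately: that $U$ is an isometry of $\mathbf L_2({\rm Sol}_\sigma(X),\mathbb P)$, and that the operator displayed as $U^{-1}$ is a genuine two-sided inverse. A surjective isometry is automatically unitary, so these two facts suffice. The isometry is the crux, and it is exactly where the Radon--Nikodym identity of Theorem \ref{elinor!!!} enters.

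First I would verify the isometry. For $\psi\in\mathbf L_2({\rm Sol}_\sigma(X),\mathbb P)$ one has $\|U\psi\|^2=\int_{{\rm Sol}_\sigma(X)}(W\circ\pi_0)\,(\psi\circ\widehat\sigma)^2\,d\mathbb P$. Since the standing assumption $\lambda R\ll\lambda$ with $d(\lambda R)/d\lambda=W$ is in force (see \eqref{voltaire}), hypothesis $(1)$ of Theorem \ref{elinor!!!} holds, so its conclusion \eqref{murielle}, equivalently the change-of-variables identity \eqref{eq12345}, is available: $\int\phi\,d\mathbb P=\int(\phi\circ\widehat\sigma)(W\circ\pi_0)\,d\mathbb P$ for every measurable $\phi$. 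Applying this with $\phi=\psi^2$ turns the right-hand side above into $\int\psi^2\,d\mathbb P=\|\psi\|^2$, whence $\|U\psi\|=\|\psi\|$. In particular $U\psi\in\mathbf L_2(\mathbb P)$, so $U$ is a well-defined bounded isometry.

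Next I would confirm the inverse formula. The computation $UU^{-1}\psi=\psi$ is already carried out above using $\pi_1\circ\widehat\sigma=\pi_0$; the reverse identity $U^{-1}U\psi=\psi$ follows by the same bookkeeping from \eqref{corinne} and \eqref{tau123}, namely $\pi_1\circ\widehat\sigma=\pi_0$ and $\pi_0\circ\widehat\sigma^{-1}=\pi_1$, together with $\widehat\sigma\circ\widehat\sigma^{-1}=\mathrm{id}$: one computes $(U\psi)\circ\widehat\sigma^{-1}=\sqrt{W\circ\pi_1}\,\psi$ and then divides by $\sqrt{W\circ\pi_1}$. This step is purely algebraic once one knows that $1/\sqrt{W\circ\pi_1}$ is defined $\mathbb P$-a.e., which is guaranteed by Lemma \ref{wzero}, since equivalence of $\mu_0$ and $\mu_1$ forces $W>0$ a.e. (and hence $W\circ\pi_1>0$ a.e.). Running the isometry computation with $\widehat\sigma^{-1}$ and density $1/W$ in place of $\widehat\sigma$ and $W$ shows $U^{-1}$ is itself an isometry, so $U^{-1}\psi\in\mathbf L_2(\mathbb P)$ for every $\psi$; combined with $UU^{-1}=I$ this makes $U$ surjective, and a surjective isometry is unitary.

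I expect the main obstacle to be the isometry step, and specifically the correct invocation of Theorem \ref{elinor!!!}: one must match the pushforward convention so that the density $d(\mathbb P\circ\widehat\sigma)/d\mathbb P=W\circ\pi_0$ is inserted on the correct side of \eqref{eq12345}, and one must be careful to apply the identity to the nonnegative function $\psi^2$ rather than to $\psi$ itself. The only other point needing attention is the a.e.-positivity of $W$ used to make sense of $U^{-1}$, which is supplied by Lemma \ref{wzero} under the hypothesis that $\mu_0$ and $\mu_1$ are equivalent.
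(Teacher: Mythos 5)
Your proposal is correct and follows essentially the same route as the paper: the isometry is obtained from the quasi-invariance identity \eqref{eq12345} (the paper cites its special case \eqref{lysbeth}) applied to $\psi^2$, and the inverse formula is checked by the same algebraic computation using $\pi_1\circ\widehat{\sigma}=\pi_0$. You merely fill in details the paper leaves implicit, namely the $\mathbb P$-a.e.\ positivity of $W$ via Lemma \ref{wzero} and the surjectivity step making the isometry unitary.
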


\begin{proof}
The first claim  is a consequence of \eqref{lysbeth} with $|\psi^2|$ instead of $\psi$.
\end{proof}

\begin{definition}
{\rm
Let 
\begin{equation}
\mathcal H_0=\left\{f\circ \pi_0\,\,|\,\,f\in\mathbf L_2(X,hd\lambda)\right\},
\end{equation}
the resolution subspace.
The family $\mathcal H_n=U^{-n}\mathcal H_0$, $n\in\mathbb Z$, is called the multiresolution associated with the solenoid, and will be denoted
by ${\rm MR}_{\sigma}$.}
\end{definition}

Note that
\begin{equation}
\label{odeon}
\int_{\mathbf L_2(\mathbb P)}|f\circ \pi_0|^2d\mathbb P=\int_X|f(x)|^2h(x)d\lambda(x).
\end{equation}

\begin{proposition}
Let $f\in L^\infty(X)$. The multiplication map
\begin{equation}
M_{f\circ \pi_0}
\end{equation}
sends $\mathbf L_2({\rm Sol}_\sigma,\mathbb P)$ into itself, and $\mathcal H_n$ into itself for all $n$. We have
\[
\begin{array}{ccc}
\mathbf L_2(\Omega,\mathcal F,\mathbb P)&\xrightarrow{\hspace*{0.5cm}{U}\hspace*{.5cm}}
%
&\mathbf L_2(\Omega,\mathcal F,\mathbb P)\\ 
{M_{f\circ \pi_0}}\xdownarrow{0.6cm}
& &\xdownarrow{0.6cm} {M_{f\circ\sigma\circ\pi_0}}\\
\mathbf L_2(\Omega,\mathcal F,\mathbb P)&\xrightarrow{\hspace*{0.5cm}{U}\hspace*{.5cm}}
%
&\mathbf L_2(\Omega,\mathcal F,\mathbb P)\\ 
\end{array}.\]
and the following covariance relation holds (see also Remarks \ref{cov} and \ref{cov2})
\begin{equation}
\label{jardin-du-luxembourg}
UM_{f\circ\pi_0}U^{-1}=M_{f\circ \sigma\circ \pi_0}.
\end{equation}
The map $f\mapsto M_f$ defines a representation of $L^\infty(X)$ by bounded operators on $\mathbf L_2({\rm Sol}_\sigma,\mathbb P)$.
\end{proposition}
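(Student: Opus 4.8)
The plan is to establish the three assertions in turn, with the covariance identity \eqref{jardin-du-luxembourg} as the computational core from which invariance of the resolution subspaces will follow. First, boundedness: since $f\in L^\infty(X)$ and $\pi_0$ is measurable, $f\circ\pi_0$ is a bounded measurable function on ${\rm Sol}_\sigma(X)$ with $\|f\circ\pi_0\|_\infty\le\|f\|_\infty$ (the distribution of $\pi_0$ under $\mathbb P$ being $hd\lambda$, cf. \eqref{odeon}), so $M_{f\circ\pi_0}$ is bounded on $\mathbf L_2({\rm Sol}_\sigma,\mathbb P)$ with norm at most $\|f\|_\infty$. The invariance of $\mathcal H_0$ is then immediate: for $g\circ\pi_0\in\mathcal H_0$ with $g\in\mathbf L_2(X,hd\lambda)$ one has $M_{f\circ\pi_0}(g\circ\pi_0)=(fg)\circ\pi_0$, and $fg\in\mathbf L_2(X,hd\lambda)$ because $f$ is bounded.

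Second, I would verify \eqref{jardin-du-luxembourg} by direct substitution, using $U\psi=\sqrt{W\circ\pi_0}\,(\psi\circ\widehat{\sigma})$, its inverse $U^{-1}\psi=(W\circ\pi_1)^{-1/2}(\psi\circ\widehat{\sigma}^{-1})$, and the intertwining identities $\pi_0\circ\widehat{\sigma}=\sigma\circ\pi_0$ and $\pi_1\circ\widehat{\sigma}=\pi_0$ from \eqref{corinne}. Applying $M_{f\circ\pi_0}U^{-1}$ and then $U$ to a test function $\psi$, composition with $\widehat{\sigma}$ turns $f\circ\pi_0$ into $f\circ\sigma\circ\pi_0$ and turns $(W\circ\pi_1)^{-1/2}$ into $(W\circ\pi_0)^{-1/2}$, so the weight factors $\sqrt{W\circ\pi_0}$ and $(W\circ\pi_0)^{-1/2}$ cancel while $\psi\circ\widehat{\sigma}^{-1}\circ\widehat{\sigma}=\psi$ survives, leaving $M_{f\circ\sigma\circ\pi_0}\psi$. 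The same computation gives $UM_{\phi}U^{-1}=M_{\phi\circ\widehat{\sigma}}$ for any bounded multiplier $\phi$, and I would record this as a small lemma, since it is exactly what propagates invariance through the tower.

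Third, for invariance of $\mathcal H_n=U^{-n}\mathcal H_0$, iterating the covariance relation yields $U^{n}M_{f\circ\pi_0}U^{-n}=M_{f\circ\pi_0\circ\widehat{\sigma}^{\,n}}$, and for $n\ge 0$ the identity $\pi_0\circ\widehat{\sigma}^{\,n}=\sigma^{n}\circ\pi_0$ rewrites the multiplier as $(f\circ\sigma^{n})\circ\pi_0$. Thus for $\psi=U^{-n}\phi$ with $\phi\in\mathcal H_0$,
\[
M_{f\circ\pi_0}\psi=U^{-n}\bigl(U^{n}M_{f\circ\pi_0}U^{-n}\bigr)\phi=U^{-n}M_{(f\circ\sigma^{n})\circ\pi_0}\phi\in U^{-n}\mathcal H_0=\mathcal H_n,
\]
because $f\circ\sigma^{n}\in L^\infty(X)$ and $\mathcal H_0$ is invariant under such multipliers by the first step. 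The representation property is then formal: $f\mapsto f\circ\pi_0$ is a unital $*$-algebra homomorphism from $L^\infty(X)$ into the bounded measurable functions on the solenoid, so $f\mapsto M_{f\circ\pi_0}$ is linear, multiplicative, unital, and sends $\overline f$ to $M_{f\circ\pi_0}^{\,*}$; with the norm bound from the first step this is precisely a contractive $*$-representation.

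The step I expect to be the main obstacle is the invariance \emph{for all} $n$. The conjugation argument is clean exactly when $\widehat{\sigma}^{\,n}$ pushes $\pi_0$ forward along $\sigma$, since $\pi_0\circ\widehat{\sigma}^{\,n}=\sigma^{n}\circ\pi_0$ keeps the conjugated multiplier a function of the single coordinate $\pi_0$. In the opposite direction one has instead $\pi_0\circ\widehat{\sigma}^{-m}=\pi_m$, so the conjugated operator is $M_{f\circ\pi_m}$, a multiplier depending on a deeper coordinate; handling this forces one to use that elements of $\mathcal H_0$ factor through $\sigma^m$ (i.e. $g\circ\pi_0=(g\circ\sigma^m)\circ\pi_m$), and it is here, in matching the coordinate shift under negative powers of $\widehat{\sigma}$ against the definition of $\mathcal H_0$, that the argument needs the most care.
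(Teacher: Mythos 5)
Your proposal is correct, and its computational core is exactly the paper's proof: the paper's entire argument consists of the single computation $UM_{f\circ\pi_0}\psi=\sqrt{W\circ\pi_0}\,(f\circ\pi_0\circ\widehat{\sigma})(\psi\circ\widehat{\sigma})=M_{f\circ\sigma\circ\pi_0}U\psi$, using $\pi_0\circ\widehat{\sigma}=\sigma\circ\pi_0$ from \eqref{corinne} --- this is your second step, written as the intertwining $UM_{f\circ\pi_0}=M_{f\circ\sigma\circ\pi_0}U$ rather than in conjugated form. Everything else you supply --- the norm bound $\|M_{f\circ\pi_0}\|\le\|f\|_\infty$, the invariance of $\mathcal H_0$, the iterated identity $U^{n}M_{f\circ\pi_0}U^{-n}=M_{(f\circ\sigma^{n})\circ\pi_0}$ for $n\ge 0$, and the $*$-representation property --- the paper asserts without proof, so your write-up is strictly more complete while methodologically the same.

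Your flagged obstacle for negative $n$ is genuine, and you have diagnosed it correctly. For $m>0$ one has $U^{-m}M_{f\circ\pi_0}U^{m}=M_{f\circ\pi_m}$, and $(f\circ\pi_m)(g\circ\pi_0)=\bigl(f\cdot(g\circ\sigma^{m})\bigr)\circ\pi_m$ lies back in $\mathcal H_0$ only when $f\cdot(g\circ\sigma^{m})$ again factors through $\sigma^{m}$, which fails in general: take $X=\mathbb T$, $\sigma(z)=z^{2}$, $W\equiv h\equiv 1$; then $\mathcal H_{-1}=U\mathcal H_0$ consists of the functions $g(z_0^{2})$, with Fourier support in the even frequencies of $z_0$, while multiplication by $f\circ\pi_0$ with $f(z)=z$ produces $z_0\,g(z_0^{2})$, supported on odd frequencies, hence not in $\mathcal H_{-1}$ unless $g=0$. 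So the proposition's ``for all $n$'' must be read as $n\in\mathbb N_0$, consistently with \eqref{matilde}, where $\mathcal H_n=\mathbf L_2({\rm Sol}_\sigma,\mathcal F_n,\mathbb P)$; in that reading the invariance is in fact immediate without any conjugation, since $f\circ\pi_0=(f\circ\sigma^{n})\circ\pi_n$ is bounded and $\mathcal F_n$-measurable, so multiplication by it preserves $\mathbf L_2(\mathcal F_n,\mathbb P)$. Your conjugation route and this direct route are equivalent for $n\ge 0$; the direct one makes plain why the restriction to nonnegative indices is not an artifact of the method but a feature of the statement.
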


\begin{proof} For $\psi\in \mathcal M(\Omega)$ we have
\[
\begin{split}
UM_{f\circ \pi_0 }\psi&=\sqrt{W\circ\pi_0}(f\circ\pi_0\circ\widehat{\sigma})(\psi\circ\widehat{\sigma})\\
&=\sqrt{W\circ\pi_0}(f\circ\sigma\circ\pi_0)(\psi\circ\widehat{\sigma})\quad (\text{\rm where we use $\pi_0\circ\widehat{\sigma}=\sigma\circ\pi_0$;
see \eqref{corinne}})\\
&=M_{f\circ\sigma\circ\pi_0}U\psi.
\end{split}
\]
\end{proof}

\begin{remark}
\label{cov}
{\rm
Note that \eqref{jardin-du-luxembourg} is an instance of a covariance relation: It states that the representation $M$ is unitarily equivalent to the representation obtained from it by substitution with the endomorphism $\sigma$. As a result, the projection valued measure determining $M$ will satisfy the analogous covariance. This is outlined in \eqref{UEUE} below. For the convenience of the reader, let us give the
following analogy: Consider the two canonical variables $P$ and $Q$ in the canonical commutation relation from quantum mechanics; in the Weyl exponentiated form. If $E_Q$ denotes the projection valued spectral measure of $Q$, then the unitary one-parameter group $U(t)$, generated by $P$, satisfies a covariance in the form
\[
U(t) E_Q(B) U(-t)  =  E_Q(B + t),
\]
all for all Borel sets $B$, and all $t\in\mathbb R$.
Here we use the word “covariance” in the same general context, but now for endomorphisms, also now instead for a single unitary operator. Many covariance relations have solutions that are unique up to unitary equivalence, for example the canonical $P-Q$ relation  does; this is a form of the Stone-von Neumann uniqueness theorem. See \cite{MR555264,MR1037774,MR1171011}.}
\end{remark}

\begin{remark}{\rm
The commutative von Neumann algebra $\mathcal M_{\pi_0}$ of the multiplication operators $M_{f\circ \pi_0}$ with $f\in\mathbf L^\infty(X,\mathcal B_X)$ has the 
spectral representation (see Section \ref{stonesec} and equation \eqref{condi12345})
\[
M_{f\circ \pi_0}=\int_Xf(x)\chi_{\left\{\pi_0\in dx\right\}}
\]
where $\chi_{\left\{\pi_0\in dx\right\}}$ (also denoted by $\mathscr E_{\pi_0}(dx)$ is the  projection-valued measure given by \eqref{EA} and 
arising from the Stone theorem applied to $\mathcal M_{\pi_0}$; see \cite{Nelson_flows}.\smallskip

Define 
\[
\mathscr E_{\pi_0}^{(\sigma)}(\w)=M_{\chi_{\left\{\pi_0\in\sigma^{-1}(L)\right\}}}
\]
As in \eqref{jardin-du-luxembourg} we arrive at the following selfsimilarity property for $\mathscr E_{\pi_0}$ with $U$ given by \eqref{isabelle}:
\begin{equation}
U\mathscr E_{\pi_0}(L)U^{-1}=\mathscr E_{\pi_0}\left(\sigma^{-1}(L)\right),\quad \forall L\in\mathcal B_X,
\label{UEUE}
\end{equation}
which we also rewrite as $U\mathscr E_{\pi_0}U^{-1}=\mathscr E_{\pi_0}^{(\sigma)}$.
}
\label{cov2}
\end{remark}

We now give another interpretation of the resolution subspace $\mathcal H_n$. For $\mathcal F_n$, see Definition \ref{fn}.

\begin{proposition}
We have:
\begin{equation}
\label{matilde}
\mathcal H_n=\mathbf L_2({\rm Sol}_\sigma,\mathcal F_n,\mathbb P)=\mathbb E_{\mathcal F_n}(\mathbf L_2({\rm Sol}_\sigma,\mathcal F,\mathbb P)).
\end{equation}
\end{proposition}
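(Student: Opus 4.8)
The plan is to prove the first equality $\mathcal{H}_n = \mathbf{L}_2(\mathrm{Sol}_\sigma, \mathcal{F}_n, \mathbb{P})$ by induction on $n\ge 0$, reducing the inductive step to the single claim that the unitary $U^{-1}$ shifts the filtration $(\mathcal{F}_n)$ up by one index; the second equality will then follow from the general identification of conditional expectation with orthogonal projection. Throughout I write $\mathbf{L}_2(\mathcal{F}_k)$ for $\mathbf{L}_2(\mathrm{Sol}_\sigma,\mathcal{F}_k,\mathbb{P})$, and I use the standing hypothesis of the surrounding proposition that $\mu_0$ and $\mu_1$ are equivalent, so that $W>0$ a.e.\ by Lemma \ref{wzero}.

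First I would settle the base case $\mathcal{H}_0=\mathbf{L}_2(\mathcal{F}_0)$. By the Doob--Dynkin factorization lemma, a function on $\mathrm{Sol}_\sigma(X)$ is $\mathcal{F}_0=\pi_0^{-1}(\mathcal B)$-measurable if and only if it has the form $f\circ\pi_0$ with $f$ measurable on $(X,\mathcal B)$. By \eqref{odeon} we have $\int|f\circ\pi_0|^2\,d\mathbb{P}=\int_X|f|^2 h\,d\lambda$, so $f\circ\pi_0\in\mathbf{L}_2(\mathbb{P})$ precisely when $f\in\mathbf{L}_2(X,h\,d\lambda)$. Hence $f\mapsto f\circ\pi_0$ is an isometry of $\mathbf{L}_2(X,h\,d\lambda)$ onto $\mathbf{L}_2(\mathcal{F}_0)$, and its range is by definition exactly $\mathcal{H}_0$; this gives $\mathcal{H}_0=\mathbf{L}_2(\mathcal{F}_0)$.

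The heart of the argument is the claim
\[
U^{-1}\mathbf{L}_2(\mathrm{Sol}_\sigma,\mathcal{F}_n,\mathbb{P})=\mathbf{L}_2(\mathrm{Sol}_\sigma,\mathcal{F}_{n+1},\mathbb{P}),\qquad n\ge 0.
\]
For the inclusion $\subseteq$, take $\psi=g\circ\pi_n\in\mathbf{L}_2(\mathcal{F}_n)$; from \eqref{tau123} one has $\pi_n\circ\widehat{\sigma}^{-1}=\pi_{n+1}$, so the formula $U^{-1}\psi=(W\circ\pi_1)^{-1/2}\,\psi\circ\widehat{\sigma}^{-1}$ yields $U^{-1}\psi=(W\circ\pi_1)^{-1/2}(g\circ\pi_{n+1})$. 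The weight $(W\circ\pi_1)^{-1/2}$ is $\mathcal{F}_1$-measurable and a.e.\ finite (again by Lemma \ref{wzero}), and since $\mathcal{F}_1\subseteq\mathcal{F}_{n+1}$, the product is $\mathcal{F}_{n+1}$-measurable; it lies in $\mathbf{L}_2$ because $U$ is unitary. For the reverse inclusion I would argue symmetrically with $U$: for $\phi=g\circ\pi_{n+1}$, using $\pi_{n+1}\circ\widehat{\sigma}=\pi_n$ from \eqref{corinne} and the formula \eqref{isabelle}, one gets $U\phi=(W\circ\pi_0)^{1/2}(g\circ\pi_n)$, which is $\mathcal{F}_n$-measurable since $(W\circ\pi_0)^{1/2}$ is $\mathcal{F}_0\subseteq\mathcal{F}_n$-measurable; thus $U\mathbf{L}_2(\mathcal{F}_{n+1})\subseteq\mathbf{L}_2(\mathcal{F}_n)$, i.e.\ $\mathbf{L}_2(\mathcal{F}_{n+1})\subseteq U^{-1}\mathbf{L}_2(\mathcal{F}_n)$. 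Iterating the claim $n$ times starting from the base case gives $\mathcal{H}_n=U^{-n}\mathcal{H}_0=\mathbf{L}_2(\mathcal{F}_n)$. The second equality in \eqref{matilde} is then immediate from Corollary \ref{corol224} applied to $A=\pi_n$: there $V_{\pi_n}V_{\pi_n}^\ast=\mathbb{E}(\cdot\mid\mathcal{F}_n)$ is the orthogonal projection of $\mathbf{L}_2(\mathrm{Sol}_\sigma,\mathcal{F},\mathbb{P})$ whose range is exactly $\mathbf{L}_2(\mathcal{F}_n)$.

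The hard part will not be any single estimate but the book-keeping in the inductive step: one must verify that the multiplicative weights produced by $U^{\pm1}$ are measurable with respect to the \emph{lower}-indexed sigma-algebra ($\mathcal{F}_0$, resp.\ $\mathcal{F}_1$), so that they do not disturb the index shift already induced by the coordinate maps $\widehat{\sigma}^{\pm1}$. This is precisely the step that forces the reciprocal weight $(W\circ\pi_1)^{-1/2}$ to be a.e.\ finite, and hence where the equivalence $\mu_0\sim\mu_1$ (through Lemma \ref{wzero}) is genuinely used; the remaining inclusions are purely formal once unitarity of $U$ is invoked to transfer measurability statements into $\mathbf{L}_2$-membership.
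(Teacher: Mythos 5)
Your proof is correct, and it is genuinely more explicit than the paper's, which disposes of \eqref{matilde} in one line by citing Corollary \ref{corol211} (the equivalences between containment of sigma-algebras, ordering of the conditional expectations, and the operator identities $R_{A,B}V_B^*=V_A^*$, $V_BR_{B,A}=V_A$). What the paper leaves implicit, and what you supply, is the actual mechanism identifying $\mathcal H_n=U^{-n}\mathcal H_0$ with $\mathbf L_2({\rm Sol}_\sigma,\mathcal F_n,\mathbb P)$: your base case is the observation that $V_{\pi_0}$ is an isometry of $\mathbf L_2(X,h\,d\lambda)$ \emph{onto} $\mathbf L_2(\mathcal F_0)$ (Doob--Dynkin, which is the paper's Lemma \ref{bastille}, combined with \eqref{odeon}), and your inductive step is the exact unitary intertwining $U^{-1}\mathbf L_2(\mathcal F_n)=\mathbf L_2(\mathcal F_{n+1})$, proved in both directions from $\pi_n\circ\widehat{\sigma}^{-1}=\pi_{n+1}$ and $\pi_{n+1}\circ\widehat{\sigma}=\pi_n$ (see \eqref{tau123}, \eqref{corinne}). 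You correctly isolate the only nontrivial analytic point: the multiplicative weights $(W\circ\pi_0)^{1/2}$ and $(W\circ\pi_1)^{-1/2}$ produced by $U$ and $U^{-1}$ are measurable with respect to the \emph{lower} sigma-algebras $\mathcal F_0\subseteq\mathcal F_n$ and $\mathcal F_1\subseteq\mathcal F_{n+1}$, and are a.e.\ finite and nonzero precisely because $\mu_0\sim\mu_1$ forces $W>0$ a.e.\ via Lemma \ref{wzero} (note $\mu_1(\{W=0\})=\int_{\{W=0\}}W\,d\mu_0=0$, so the weight on $\pi_1$ is indeed a.e.\ invertible under the law of $\pi_1$); unitarity of $U$ from \eqref{isabelle} then converts these pointwise identities into the two $\mathbf L_2$-inclusions. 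Your handling of the second equality in \eqref{matilde} via $V_{\pi_n}V_{\pi_n}^*=\mathbb E(\cdot\,|\,\mathcal F_n)$ (Corollary \ref{corol224}), whose range is the range of the onto-isometry $V_{\pi_n}$, matches the standard identification the paper intends. In short: the paper's route buys brevity by appealing to the general operator-theoretic dictionary of Section \ref{sec-2}, while yours buys a self-contained verification, with the filtration-shifting role of $U^{\pm 1}$ and the use of the hypothesis $\mu_0\sim\mu_1$ made visible; both arguments are restricted, as they must be, to $n\in\mathbb N_0$, where $\mathcal F_n$ is defined.
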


\begin{proof}
The proof follows from Corollary \ref{corol211}.
\end{proof}

Equation \eqref{martingale} below is a generalization of the classical notion of martingale. 

\begin{proposition}
Assume $R$ normalized, {\rm i.e.} $R1=1$. Then
\begin{equation}
\label{martingale}
\mathbb E\left(f\circ\pi_{n+1}\big|\mathcal F_n\right)
=R(f)\circ \pi_n,\quad n=0,\ldots
\end{equation}
\end{proposition}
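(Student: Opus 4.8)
The plan is to verify the defining property of conditional expectation directly: I would show that $R(f)\circ\pi_n$ is $\mathcal F_n$-measurable and that it produces the same $\mathcal F_n$-conditional integrals as $f\circ\pi_{n+1}$ against a generating family of test functions. The first point is immediate, since $R(f)\circ\pi_n$ is a function of $\pi_n$, hence $\mathcal F_n$-measurable (recall from Definition \ref{fn} that $\mathcal F_n=\pi_n^{-1}(\mathcal B)$ is generated by $\pi_n$ alone). For the second point, because $\mathcal F_n$ is generated by $\pi_n$, every $A\in\mathcal F_n$ has the form $A=\pi_n^{-1}(B)$ with $B\in\mathcal B$, so $\chi_A=\chi_B\circ\pi_n$; it therefore suffices to prove, for all bounded measurable $g$ on $X$,
\[
\int_{{\rm Sol}_\sigma(X)}(f\circ\pi_{n+1})(g\circ\pi_n)\,d\mathbb P=\int_{{\rm Sol}_\sigma(X)}(R(f)\circ\pi_n)(g\circ\pi_n)\,d\mathbb P,
\]
and then extend from indicators to bounded $g$ by linearity and monotone convergence.

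For the left-hand side I would apply Theorem \ref{thmmm} (with $n$ replaced by $n+1$, and with $h\equiv1$, as permitted by the normalization $R1=1$) to the choice $f_0=\cdots=f_{n-1}=1$, $f_n=g$, $f_{n+1}=f$. Working the nested Ruelle expression from the inside out, the innermost factor collapses to $R(f_{n+1}h)=R(f)$, the next level yields $R\bigl(g\,R(f)\bigr)$, and the remaining $n-1$ levels each multiply by $1$ and apply one further $R$, so the whole nested expression becomes $R^n\bigl(g\,R(f)\bigr)$; hence the left-hand side equals $\int_X R^n\bigl(g\,R(f)\bigr)\,d\lambda$. For the right-hand side I would rewrite the integrand as $\bigl(g\,R(f)\bigr)\circ\pi_n$ and invoke \eqref{eqn3} (again with $h\equiv1$), which gives directly $\int_X R^n\bigl(g\,R(f)\bigr)\,d\lambda$. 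The two expressions coincide, which is exactly the required identity.

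The delicate step is the bookkeeping in the nested expression of Theorem \ref{thmmm}, so I would carry it out carefully from the inside out and keep track of precisely where the normalization $h\equiv1$ (equivalently $R1=1$) enters: it is used both to turn the innermost factor $R(f_{n+1}h)$ into $R(f)$ and to suppress the weight $h$ in the application of \eqref{eqn3}, and without it the two sides would differ by an extra factor of $h$. A cleaner but essentially equivalent alternative would be to specialize Lemma \ref{lemma24} to $A=\pi_n$, $B=\pi_{n+1}$, obtaining $\mathbb E\bigl(f\circ\pi_{n+1}\,\big|\,\mathcal F_n\bigr)=(R_{\pi_n,\pi_{n+1}}f)\circ\pi_n$, and then to identify the transfer operator $R_{\pi_n,\pi_{n+1}}=V_{\pi_n}^*V_{\pi_{n+1}}$ with the Ruelle operator $R$; since that identification is itself established by the very same inner-product computation through Theorem \ref{thmmm}, I would present the direct route above as the main argument.
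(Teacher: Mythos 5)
Your proof is correct, but it takes a genuinely different route from the paper's. The paper disposes of the proposition in one line by citing Lemma \ref{lemma24} with $A=\pi_n$, $B=\pi_{n+1}$: the conditional expectation of $f\circ\pi_{n+1}$ given $\mathcal F_n$ is $(R_{\pi_n,\pi_{n+1}}f)\circ\pi_n$, where $R_{\pi_n,\pi_{n+1}}=V_{\pi_n}^*V_{\pi_{n+1}}$ is the abstract transfer operator of the pair of random variables. What the paper leaves implicit is the identification of this abstract operator with the given Ruelle operator $R$ --- and that identification is exactly the moment computation you carry out. Your direct verification is sound: $\mathcal F_n$-measurability of $R(f)\circ\pi_n$ is immediate from $\mathcal F_n=\pi_n^{-1}(\mathcal B)$, and your bookkeeping in the nested expression of Theorem \ref{thmmm} is right --- with $f_0=\cdots=f_{n-1}=1$, $f_n=g$, $f_{n+1}=f$ and $h\equiv1$, the nest has $n+1$ applications of $R$ and collapses to $R^{n-1}\bigl(R(g\,R(f))\bigr)=R^n\bigl(g\,R(f)\bigr)$, which matches the right-hand side computed from \eqref{eqn3}. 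There is no circularity in invoking Theorem \ref{thmmm}, since that theorem is the construction of $\mathbb P$ itself. In effect your argument is a self-contained proof that supplies the content the paper's citation presupposes: the paper's route is shorter and situates the result in the general two-variable transfer-operator framework of Section \ref{sec2_1}, while yours makes explicit exactly where the normalization $R1=1$ (equivalently $h\equiv1$) enters, and correctly flags that without it one would instead obtain the operator $R^\prime(f)=R(fh)/h$ of Remark \ref{utopia}. Your closing remark even identifies the paper's actual proof --- the specialization of Lemma \ref{lemma24} --- and your reason for preferring the direct route (that identifying $R_{\pi_n,\pi_{n+1}}$ with $R$ requires the same computation anyway) is well taken.
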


\begin{proof}
This follows from Lemma \ref{lemma24}.
\end{proof}
\begin{definition}
{\rm 
The sequence $(T_n)_{n\in\mathbb N_0}$ of random variables from the probability space $(\Omega,\mathcal F,\mathbb P)$ into the measurable space 
$(X,\mathcal B)$ is called a Markov chain if
\begin{equation}
Pr(T_{n+1}\in B\big| T_0,\ldots, T_n)=Pr(T_{n+1}\in B\big| T_n)=(R(\chi_B))\circ T_n
\end{equation}
}
\end{definition}
\begin{proposition}
We have
\begin{equation}
\label{martingale-sept-78}
\mathbb E\left(f\circ\pi_{n+1}\big|\mathcal F_0,\ldots, \mathcal F_n\right)=\mathbb E\left(f\circ\pi_{n+1}\big|\mathcal F_n\right),
\quad n=0,\ldots
\end{equation}
\label{prop323}
\end{proposition}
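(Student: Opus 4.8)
The plan is to show that the two conditioning sigma-algebras in \eqref{martingale-sept-78} actually coincide, so that the identity is immediate. Writing $\mathbb E(\cdot\,|\,\mathcal F_0,\ldots,\mathcal F_n)$ for conditioning on the join $\bigvee_{j=0}^n\mathcal F_j$, I would first recall from \eqref{sol1} that $\sigma\circ\pi_{k+1}=\pi_k$ on ${\rm Sol}_\sigma(X)$. Iterating this relation gives $\pi_k=\sigma^{\,n-k}\circ\pi_n$ for every $k\le n$, so each coordinate $\pi_k$ with $k\le n$ is a measurable function of $\pi_n$ alone. Consequently $\mathcal F_k=\pi_k^{-1}(\mathcal B)\subset\pi_n^{-1}(\mathcal B)=\mathcal F_n$ for all $k\le n$; this is exactly the monotonicity $\mathcal F_0\subset\mathcal F_1\subset\cdots$ recorded in the Lemma following Definition \ref{fn}.

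Given this nesting, the join collapses: $\bigvee_{j=0}^n\mathcal F_j=\mathcal F_n$. Interpreting each conditional expectation as the orthogonal projection of $\mathbf L_2({\rm Sol}_\sigma(X),\mathcal F,\mathbb P)$ onto the corresponding subspace (as in the discussion preceding Corollary \ref{corol211} and in \eqref{matilde}), both sides of \eqref{martingale-sept-78} are the projection onto the single subspace $\mathbf L_2({\rm Sol}_\sigma(X),\mathcal F_n,\mathbb P)$. Hence they agree for every $f$, which is the claim.

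The point worth stressing --- and the only conceptual content --- is that there is no genuine obstacle here: unlike the classical Markov property, where conditioning on the full past $\bigvee_{j=0}^n\mathcal F_j$ is strictly more information than conditioning on the present $\mathcal F_n$, on the solenoid the backward-compatibility constraint $\sigma(x_{k+1})=x_k$ forces the entire past $(\pi_0,\ldots,\pi_{n-1})$ to be a deterministic function of the present coordinate $\pi_n$. Thus \eqref{martingale-sept-78} is automatic from the geometry of ${\rm Sol}_\sigma(X)$ rather than a probabilistic restriction on $\mathbb P$, and in particular it holds for any probability measure on the cylinder sigma-algebra.
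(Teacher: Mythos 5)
Your proof is correct, and it takes a genuinely different route from the paper's. The paper disposes of Proposition \ref{prop323} with a one-line appeal to Theorem \ref{thhmc}: it realizes $(\pi_n)_{n\in\mathbb N_0}$ as a homogeneous Markov chain and reads off the equality of the two conditional expectations from part $(3)$ of that theorem, an argument tied to the specific measure $\mathbb P$ built through the iterated transfer-operator formula \eqref{proba123} (equivalently, the Kolmogorov construction of Theorem \ref{thmmm}). You instead observe that on the solenoid the identity is not probabilistic at all: iterating $\sigma\circ\pi_{k+1}=\pi_k$ from \eqref{sol1} gives $\pi_k=\sigma^{\,n-k}\circ\pi_n$ for $k\le n$, hence $\mathcal F_k\subset\mathcal F_n$, hence $\bigvee_{j=0}^n\mathcal F_j=\mathcal F_n$, and both sides of \eqref{martingale-sept-78} are the projection onto the single subspace $\mathbf L_2({\rm Sol}_\sigma(X),\mathcal F_n,\mathbb P)$ as in \eqref{matilde}. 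This is exactly the nesting already proved in the lemma following Definition \ref{fn}, which you correctly invoke, so your argument uses only material established before the proposition. What your route buys: it is more elementary (no HMC machinery, no normalization $R1=1$, no harmonic $h$) and strictly more general, since as you note \eqref{martingale-sept-78} then holds for \emph{every} probability measure on the cylinder sigma-algebra of ${\rm Sol}_\sigma(X)$. What the paper's route buys: it keeps the proposition inside the Markov-chain framework of Section 2, where for a forward chain without the backward constraint $\sigma\circ T_{n+1}=T_n$ the join $\bigvee_{j=0}^n\mathcal F_j$ is genuinely larger than $\mathcal F_n$ and the analogous identity is a real restriction on $\mathbb P$. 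Your closing remark draws precisely this contrast and correctly locates the probabilistic content of the solenoid analysis in \eqref{martingale}, not in \eqref{martingale-sept-78}.
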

\begin{proof}
The result follows from Theorem \ref{thhmc}.
\end{proof}
\begin{theorem}
Assume that $R$ is normalized, and let $(\pi_n)_{n\in\mathbb N_0}$ be the stochastic process on ${{\rm Sol}_\sigma(X)}$ defined by the coordinates.
Then:
\begin{equation}
\label{martingale1}
\mathbb E\left(f\circ\pi_{n+k}\big|\mathcal F_n\right)=R^k(f)\circ \pi_n,\quad n=0,\ldots,\quad k=1,2,\ldots
\end{equation}
\end{theorem}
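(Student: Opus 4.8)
The plan is to argue by induction on $k$, using the one-step identity \eqref{martingale} as the base case together with the tower property of conditional expectation and the nesting of the sigma-algebras $\mathcal F_n$. The base case $k=1$ is exactly the content of \eqref{martingale}, namely $\mathbb E\left(f\circ\pi_{n+1}\,\big|\,\mathcal F_n\right)=R(f)\circ\pi_n$, which holds for every $n$ (and which already uses the normalization $R1=1$). So the substance of the proof is purely the passage from $k$ to $k+1$.

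For the inductive step I would assume the asserted identity for some fixed $k\ge 1$, for all $n$ and all admissible $f$, and derive it for $k+1$. The structural fact that makes this work is the increasing chain $\mathcal F_0\subset\mathcal F_1\subset\cdots$ established in the lemma following Definition \ref{fn}; in particular $\mathcal F_n\subset\mathcal F_{n+k}$, which legitimizes the tower property:
\[
\mathbb E\left(f\circ\pi_{n+k+1}\,\big|\,\mathcal F_n\right)=\mathbb E\left(\mathbb E\left(f\circ\pi_{n+k+1}\,\big|\,\mathcal F_{n+k}\right)\,\big|\,\mathcal F_n\right).
\]
Applying \eqref{martingale} with $n+k$ in place of $n$ to the inner conditional expectation gives $\mathbb E\left(f\circ\pi_{n+k+1}\,\big|\,\mathcal F_{n+k}\right)=R(f)\circ\pi_{n+k}$, so that
\[
\mathbb E\left(f\circ\pi_{n+k+1}\,\big|\,\mathcal F_n\right)=\mathbb E\left(R(f)\circ\pi_{n+k}\,\big|\,\mathcal F_n\right).
\]

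Now I would invoke the induction hypothesis applied to the function $R(f)$ rather than $f$. Since $R$ maps $\mathcal M(X,\mathcal B)$ into itself, $R(f)$ is again an admissible argument, and the hypothesis yields $\mathbb E\left(R(f)\circ\pi_{n+k}\,\big|\,\mathcal F_n\right)=R^k\left(R(f)\right)\circ\pi_n=R^{k+1}(f)\circ\pi_n$. Chaining the last two displays closes the induction and establishes \eqref{martingale1}. This is the solenoid counterpart of the iteration recorded in the Remark following Axioms \ref{axioms123} for the variables $T_n$.

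I do not expect a serious obstacle here: every step is routine once the base case \eqref{martingale} is in hand. The only two points that genuinely require care are (a) the correct nesting $\mathcal F_n\subset\mathcal F_{n+k}$, without which the tower property could not be split as above, and (b) checking that $R(f)$ lies in the same function class to which the induction hypothesis applies, which is guaranteed precisely because $R$ carries $\mathcal M(X,\mathcal B)$ into $\mathcal M(X,\mathcal B)$. The normalization $R1=1$ enters only through the base case; it is what keeps the identity consistent (for instance $R^k 1=1$ matches $\mathbb E\left(1\,\big|\,\mathcal F_n\right)=1$), and no further use of it is needed for the inductive step itself.
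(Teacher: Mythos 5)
Your proposal is correct and follows essentially the same route as the paper: the paper's proof likewise combines the tower property (the ``chain rule'' for conditional expectations, justified by the nesting $\mathcal F_n\subset\mathcal F_{n+1}$) with repeated use of the one-step identity \eqref{martingale}, carrying out the $k=2$ case explicitly and invoking induction for general $k$. Your write-up merely makes the induction formal, including the observation that the inductive hypothesis is applied to $R(f)\in\mathcal M(X,\mathcal B)$, which is exactly the step the paper leaves implicit.
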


\begin{proof}
The proof uses the chain rule for conditional expectation and induction. It is enough to consider the case $k=2$. We then have:
\[
\begin{split}
\mathbb E\left(f\circ\pi_{n+2}\big|\mathcal F_n\right)&=\mathbb E\left(\mathbb E\left(f\circ\pi_{n+2}\big|\mathcal F_{n+1}\right)\big|\mathcal F_n\right)\\
&=\mathbb E\left(R(f)\circ\pi_{n+1}\big|\mathcal F_n\right)\\
&=R^2(f)\circ\pi_n,
\end{split}
\]
where we used twice \eqref{martingale}.
\end{proof}
\subsection{Conditional expectations associated with a solenoid}
\label{conditional}
\begin{proposition}
The map $Vf=f\circ\pi_0$ is an isometry from $\mathbf L_2(X,hd\lambda)$ into $\mathbf L_2({\rm Sol}_\sigma,\mathbb P)$.
\end{proposition}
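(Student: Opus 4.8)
The plan is to verify the defining property of an isometry, namely that $V$ is linear and norm-preserving from $\mathbf L_2(X,h\,d\lambda)$ into $\mathbf L_2({\rm Sol}_\sigma(X),\mathbb P)$; since both are Hilbert spaces, norm preservation together with polarization automatically yields preservation of inner products. Linearity of $Vf=f\circ\pi_0$ is immediate, as composition with the fixed coordinate map $\pi_0$ is linear, and $f\circ\pi_0$ is measurable whenever $f$ is, being a composition of measurable maps. Hence the entire content reduces to the single norm identity
\[
\int_{{\rm Sol}_\sigma(X)}|f\circ\pi_0|^2\,d\mathbb P=\int_X|f(x)|^2h(x)\,d\lambda(x),
\]
which is precisely \eqref{odeon}.

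First I would obtain this identity directly from the defining formula of $\mathbb P$ in Theorem \ref{thmmm}. Specializing that formula to $n=0$ and to the single test function $f_0=|f|^2\in\mathcal M(X,\mathcal B)$, the right-hand side collapses (the empty string of operators $R$ leaving only the factor $h$) to $\int_X|f(x)|^2h(x)\,d\lambda(x)$, while the left-hand side is $\int_{{\rm Sol}_\sigma(X)}(|f|^2\circ\pi_0)\,d\mathbb P$. Using $|f|^2\circ\pi_0=|f\circ\pi_0|^2$ gives exactly the displayed identity; equivalently one may read it off from \eqref{eqn3} with $n=0$, which identifies $h\,d\lambda$ as the distribution of the coordinate variable $\pi_0$ under $\mathbb P$. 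In particular, finiteness of the right-hand side for $f\in\mathbf L_2(X,h\,d\lambda)$ shows that $f\circ\pi_0$ indeed lies in $\mathbf L_2({\rm Sol}_\sigma(X),\mathbb P)$, so that $V$ is well defined as a map into this space.

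There is essentially no serious obstacle here: the statement is a direct specialization of Theorem \ref{thmmm}. The only point deserving a line of care is that Theorem \ref{thmmm} is phrased for general measurable $f_0,\dots,f_n$, whereas $|f|^2$ need only be integrable after weighting by $h$; since $|f|^2\ge 0$, both sides of the specialized formula are well defined in $[0,\infty]$ and the identity persists (by monotone approximation, if one wishes to reduce first to bounded $f_0$ and then pass to the limit). Once the norm identity is in hand, polarization upgrades it to $\langle Vf,Vg\rangle_{\mathbb P}=\langle f,g\rangle_{h\,d\lambda}$ for all $f,g\in\mathbf L_2(X,h\,d\lambda)$, completing the proof that $V$ is an isometry into $\mathbf L_2({\rm Sol}_\sigma(X),\mathbb P)$.
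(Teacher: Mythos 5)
Your proof is correct and is in substance the paper's own argument: the paper disposes of the proposition with a one-line citation of Lemma \ref{lemma22} (the general fact that $f\mapsto f\circ A$ is an isometry from $\mathbf L_2$ of the law of $A$ onto $\mathbf L_2(\Omega,\mathcal F_A,\mathbb P)$), which here amounts to exactly your observation, via the $n=0$ case of Theorem \ref{thmmm} (equivalently \eqref{eqn3}, and recorded as \eqref{odeon}), that the distribution of $\pi_0$ under $\mathbb P$ is $h\,d\lambda$. You simply make explicit, with the monotone-approximation and polarization details, what the paper's citation leaves implicit, and you correctly use the weighted law $h\,d\lambda$ rather than the normalization in \eqref{eqn31}.
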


\begin{proof}
This is a corollary of Lemma \ref{lemma22}.
\end{proof}

\begin{definition}{\rm
The projection $\mathbb E_0=VV^*$ in $\mathbf L_2({\rm Sol}_\sigma,\mathbb P)$ is called the conditional expectation onto 
$\mathcal F_0$ 
of the multiresolution $({\rm Sol}_\sigma,\mathbb P)$.}
\end{definition}
\subsection{A general setting and an inverse problem}

We now present a general setting, which includes the preceding analysis. We start from a probability space $(\Omega,\mathcal F,\mathbb P)$, and a 
measurable space $(X,\mathcal B)$. We assume given a sequence of random variables $(T_n)_{n\in\mathbb N_0}$ from $\Omega$ to $X$, and an endomorphism
$\sigma$ from $X$ into $X$. We assume that
\begin{equation}
\sigma \circ T_{n+1}=T_n,\quad n=0,1,\ldots,
\end{equation}
\label{totalrecall}
or, equivalently,
\begin{equation}
\forall \,\w\in\Omega,\,\, T_{n+1}(\w)\in\sigma^{-1}(T_n(\w)).
\end{equation}
The map
\begin{equation}
\widetilde{T}(\w)=(T_0(\w),T_1(\w),\ldots)
\end{equation}
is measurable from $\Omega$ into ${\rm Sol}_\sigma(X)$. It induces a probability measure $\mathbb P^*$ on the cylinder sigma-algebra of $X^{\mathbb N}$ via 
the formula
\begin{equation}
\mathbb P^*(A)=\mathbb P(\widetilde{T}^{-1}(A)).
\end{equation}
The sequence $(T_n)_{n\in\mathbb N_0}$ generates a family of sigma-algebras, namely
\begin{equation}
\mathcal F_n=\left\{T_n^{-1}(A)\,;\, A\in\mathcal B\right\}.
\end{equation}
In  view of \eqref{totalrecall} we have $\mathcal F_n\subset\mathcal F_{n+1}$.\smallskip

We now recall a technical lemma, to be used in the proof of Theorem \ref{thm123}

\begin{lemma}
Let $(\Omega,\mathcal F)$ and $(X,\mathcal B)$ be two measurable spaces and let $T$ be a map from $\Omega$ into $X$. Let 
\[
\mathcal F_T=\left\{T^{-1}(B)\,;\, B\in\mathcal B)\right\}.
\]
Then a real valued function $\psi$ defined on $\Omega$ is $\mathcal F_T$-measurable if and only if it can written in the form 
\[
\psi=f\circ T
\]
for a uniquely defined $\mathcal B$-measurable function $f$.
\label{bastille}
\end{lemma}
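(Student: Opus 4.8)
The plan is to prove this as the classical Doob--Dynkin factorization lemma, by the standard approximation (``standard machine'') argument, handling the two implications separately.

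The easy direction is the ``if'' part: suppose $\psi = f \circ T$ with $f$ a $\mathcal B$-measurable function. Then for every Borel set $C \subseteq \mathbb R$ one has $\psi^{-1}(C) = T^{-1}\bigl(f^{-1}(C)\bigr)$, and since $f^{-1}(C) \in \mathcal B$ this set lies in $\mathcal F_T$ by definition; hence $\psi$ is $\mathcal F_T$-measurable. This requires no further work.

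For the converse I would build $f$ in the usual four steps. If $\psi = \chi_E$ with $E \in \mathcal F_T$, then by the very definition of $\mathcal F_T$ we may write $E = T^{-1}(B)$ with $B \in \mathcal B$, so $\chi_E = \chi_B \circ T$ and $f = \chi_B$ works. By linearity this extends at once to every $\mathcal F_T$-measurable simple function $\psi = \sum_{i=1}^m c_i \chi_{E_i}$, taking $f = \sum_{i=1}^m c_i \chi_{B_i}$ where $E_i = T^{-1}(B_i)$. For a nonnegative $\mathcal F_T$-measurable $\psi$, I would pick an increasing sequence of nonnegative $\mathcal F_T$-measurable simple functions $\psi_n \uparrow \psi$ pointwise, write each as $\psi_n = f_n \circ T$ with $f_n$ $\mathcal B$-measurable from the previous step, and set $f = \limsup_{n\to\infty} f_n$, which is automatically $\mathcal B$-measurable (one may then replace $f$ by $f \cdot \chi_{\{|f|<\infty\}}$, still $\mathcal B$-measurable, to keep $f$ real-valued). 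Finally, for general real-valued $\psi$ I split $\psi = \psi^+ - \psi^-$ and apply the nonnegative case to each summand.

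The one point requiring care---and the main obstacle---is to verify that $f = \limsup_n f_n$ genuinely satisfies $f \circ T = \psi$, and to state the uniqueness correctly. For the factorization, fix $\omega \in \Omega$ and set $x = T(\omega)$; since each $\psi_n = f_n \circ T$ depends on $\omega$ only through $T(\omega)$, we get $f_n(x) = \psi_n(\omega) \uparrow \psi(\omega)$, so along the range of $T$ the limit exists and $f(x) = \limsup_n f_n(x) = \psi(\omega)$, giving $f \circ T = \psi$. The subtlety is that $f$ is pinned down only on $T(\Omega)$: its values off the range are immaterial to the identity $\psi = f \circ T$, and $T(\Omega)$ need not belong to $\mathcal B$. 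Accordingly, the asserted uniqueness should be read as uniqueness on $T(\Omega)$ (equivalently, $\mu$-a.e.\ for the push-forward law $\mu = \mathbb P \circ T^{-1}$, which is exactly the sense used for the isometry $V_A$ in Lemma \ref{lemma22}); if $T$ is onto, $f$ is unique outright. This is immediate, since $f_1 \circ T = f_2 \circ T$ forces $f_1(x) = f_2(x)$ at every $x \in T(\Omega)$.
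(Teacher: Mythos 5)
Your proof is correct, and there is nothing in the paper to compare it against line by line: the paper states Lemma \ref{bastille} without proof, introducing it with ``we now recall a technical lemma'' --- it is the classical Doob--Dynkin factorization, cited as known. Your blind proof therefore fills a genuine gap rather than diverging from an existing argument, and it does so by exactly the canonical route: indicators via the definition of $\mathcal F_T$, simple functions by linearity, nonnegative functions by monotone approximation, and the general case by splitting into positive and negative parts. The two points where care is needed are handled properly. First, since the factorizing functions $f_n$ for the simple approximants are not canonical off the range of $T$, a pointwise limit need not exist there; your use of $\limsup_n f_n$, together with the truncation to $\{|f|<\infty\}$ to restore real-valuedness, is the right fix, and your verification that $f_n(T(\w))=\psi_n(\w)\uparrow\psi(\w)$ pins down $f$ correctly on $T(\Omega)$. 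Second, your observation that the ``uniquely defined'' in the statement can only mean uniqueness on $T(\Omega)$ (equivalently, $\mu$-a.e.\ for the push-forward $\mu=\mathbb P\circ T^{-1}$, unless $T$ is onto) is a legitimate sharpening of the lemma's literal wording, and it is consistent with how the paper itself invokes uniqueness elsewhere --- in the proof of Lemma \ref{lemma22} uniqueness of $g$ with $\mathbb E(\psi\,|\,\mathcal F_A)=g\circ A$ is deduced precisely from $V_A$ being an isometry onto $\mathbf L_2(\Omega,\mathcal F_A,\mathbb P)$, i.e.\ $\mu_A$-a.e.\ uniqueness. This weaker uniqueness also suffices for the paper's application in Theorem \ref{thm123}, since the identity \eqref{yorba-linda} only evaluates $R(f)$ along the values of $T_n$.
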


\begin{theorem}
\label{thm123}
There exists a positive operator defined on the space of measurable functions from $X$ to $\mathbb R$ such that
\begin{equation}
\label{yorba-linda}
\mathbb E\left(f\circ T_{n+1}\big|\mathcal F_n\right)=R(f)\circ T_n.
\end{equation}
\end{theorem}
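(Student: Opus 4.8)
The plan is to produce $R$ as the transfer operator of the consecutive pair $(T_n,T_{n+1})$ and to read \eqref{yorba-linda} directly off the conditional-expectation identities of Section~\ref{sec-2}. First I would fix $n$, set $A=T_n$ and $B=T_{n+1}$, form the isometries $V_{T_n}f=f\circ T_n$ and $V_{T_{n+1}}f=f\circ T_{n+1}$ as in \eqref{vn}, and put $R:=R_{T_n,T_{n+1}}=V_{T_n}^*V_{T_{n+1}}$ from \eqref{lycee_llg}. Specializing \eqref{murielle123} of Lemma~\ref{lemma24} to this pair gives $\mathbb E(f\circ T_{n+1}\,|\,\mathcal F_n)=(R_{T_n,T_{n+1}}f)\circ T_n$, which is exactly the asserted identity \eqref{yorba-linda}. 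Thus the factorization of the conditional expectation through $T_n$ is immediate once the pair is in place; when $X$ is locally compact one may instead invoke \eqref{christine} of Lemma~\ref{lemma26}, which realizes $(Rf)(x)=\mathbb E(f\circ T_{n+1}\,|\,T_n=x)$ and thereby lets $R$ act on all of $\mathcal M(X,\mathcal B)$ rather than merely on $\mathbf L_2(\mu_n)$, as the statement requires.

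Positivity of $R$ needs no separate argument. If $f\ge 0$ then $V_{T_{n+1}}f=f\circ T_{n+1}\ge 0$, and $V_{T_n}^*$ sends nonnegative functions to nonnegative functions, so $Rf=V_{T_n}^*V_{T_{n+1}}f\ge 0$; this is precisely the implication recorded just after \eqref{lycee_llg}. Hence $R$ is a positive operator on measurable functions, which completes the purely formal part of the argument.

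The one substantive point—and the step I expect to be the crux—is that a single operator $R$ can serve in \eqref{yorba-linda} for every $n$, i.e.\ that $R_{T_n,T_{n+1}}$ does not drift with $n$. This is exactly where the standing relation $\sigma\circ T_{n+1}=T_n$ enters, since it is condition \eqref{notsigma} for the pair $(T_n,T_{n+1})$. I would invoke the lemma yielding \eqref{adjointRAB} and the pull-out property \eqref{roxanna}: for every $n$ it gives $R_{T_n,T_{n+1}}^*f=f\circ\sigma$ and $R_{T_n,T_{n+1}}((f\circ\sigma)g)=f\,R_{T_n,T_{n+1}}(g)$, so each $R_{T_n,T_{n+1}}$ is a $\sigma$-transfer operator in the sense of Definition~\ref{julie}. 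The right-hand side $f\mapsto f\circ\sigma$ of the adjoint relation is visibly free of $n$, and combined with the automatic compatibility $\mu_n=\mu_{n+1}\circ\sigma^{-1}$ of the marginals (immediate from $\sigma\circ T_{n+1}=T_n$) this is what collapses the family $\{R_{T_n,T_{n+1}}\}_n$ to a single Ruelle operator $R$. I would flag this collapse as the delicate point to check carefully, since the factorization and positivity are formal consequences of the pair construction, whereas it is the endomorphism relation, through the $n$-free adjoint $R^*=V_\sigma$, that does the real work of pinning $R$ down independently of $n$.
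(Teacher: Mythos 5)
Your first two paragraphs are correct and close in spirit to the paper's argument, but the paper takes a more direct and slightly more general route: it applies the factorization Lemma \ref{bastille} (Doob--Dynkin) to the $\mathcal F_n$-measurable function $\mathbb E\left(f\circ T_{n+1}\,\big|\,\mathcal F_n\right)$, which therefore equals $R(f)\circ T_n$ for a uniquely determined measurable function $R(f)$, and it gets positivity from the positivity of the conditional expectation. That defines $R$ on all of $\mathcal M(X,\mathcal B)$ in one stroke, with no $\mathbf L_2$ theory; your route through $R_{T_n,T_{n+1}}=V_{T_n}^*V_{T_{n+1}}$ and \eqref{murielle123} defines $R$ only on $\mathbf L_2(\mu_{n+1})$, and your fallback on Lemma \ref{lemma26} to enlarge the domain imports that lemma's extra hypotheses (local compactness, and $R_{A,B}$ mapping $C_c(X)$ into $C(X)$), which are not assumed in this subsection.

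The genuine gap is your third paragraph. The adjoint identity \eqref{adjointRAB} does not collapse the family $\left\{R_{T_n,T_{n+1}}\right\}_n$ to a single operator: the formula $f\mapsto f\circ\sigma$ is indeed free of $n$, but the adjoint of this map is computed with respect to the $n$-dependent inner products of $\mathbf L_2(\mu_n)$ and $\mathbf L_2(\mu_{n+1})$; concretely, $R_{T_n,T_{n+1}}g$ is the density of $(g\,d\mu_{n+1})\circ\sigma^{-1}$ with respect to $\mu_n$, which depends on $\mu_{n+1}$ and not only on $\sigma$. The compatibility $\mu_n=\mu_{n+1}\circ\sigma^{-1}$ constrains only push-forwards and leaves the conditional weights on each fiber $\sigma^{-1}(x)$ free to drift with $n$. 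For a counterexample take $X=\mathbb T$, $\sigma(z)=z^2$, $T_0$ uniform, and, given $T_n$, let $T_{n+1}=r(T_n)$ with probability $p_n$ and $T_{n+1}=-r(T_n)$ with probability $1-p_n$, where $r$ is a fixed measurable branch of the square root and $p_n$ genuinely varies with $n$. Then $\sigma\circ T_{n+1}=T_n$ holds, \eqref{adjointRAB} holds for every $n$, and the marginals are compatible, yet $\mathbb E\left(f\circ T_{n+1}\,\big|\,\mathcal F_n\right)(\omega)=p_n f\bigl(r(T_n(\omega))\bigr)+(1-p_n)f\bigl(-r(T_n(\omega))\bigr)$, so no single $R$ can satisfy \eqref{yorba-linda} for all $n$. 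The $n$-uniformity is an additional homogeneity hypothesis, not a consequence of the endomorphism relation: it is exactly what item $(i)$ of Axioms \ref{axioms123} postulates (``$V_n^*V_{n+1}$ does not depend on $n$''), and it holds automatically for the path-space measures built from a fixed $R$ in Theorem \ref{thmmm}. Read accordingly, the paper's proof of Theorem \ref{thm123} asserts, and Lemma \ref{bastille} delivers, only the per-$n$ existence of a positive operator realizing the conditional expectation; the step you flagged as the crux is not provable from \eqref{adjointRAB} plus marginal compatibility, and your sketch of it fails.
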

\begin{proof}
The existence of $R$ follows from Lemma \ref{bastille}, and the positivity of $R$ follows from the  fact that a conditional expectation is an orthogonal
projection.
\end{proof}

\begin{corol}
\begin{eqnarray}
\label{eqnorm1234}
R(1)&=1,\\
R((f\circ \sigma)g)&=&fR(g).
\label{eqnorm12341234}
\end{eqnarray}
\end{corol}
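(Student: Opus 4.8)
The plan is to read off both identities directly from the defining relation \eqref{yorba-linda}, namely $\mathbb E(f\circ T_{n+1}\,|\,\mathcal F_n)=R(f)\circ T_n$, using only three inputs: the elementary properties of conditional expectation (constants are preserved; $\mathcal F_n$-measurable factors pull out), the standing hypothesis $\sigma\circ T_{n+1}=T_n$, and the uniqueness clause of Lemma \ref{bastille}, which lets me cancel the outer composition with $T_n$ and descend from an identity of functions on $\Omega$ to an identity of functions on $X$. No normalization of $R$ is needed, since $R$ here arises from an honest conditional expectation (an orthogonal projection), which fixes constants.

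For \eqref{eqnorm1234}, first I would apply \eqref{yorba-linda} to the constant function $f\equiv 1$. Since $1\circ T_{n+1}=1$ and the conditional expectation of a constant is that same constant, the left-hand side equals $1$, whence $R(1)\circ T_n=1=1\circ T_n$. The uniqueness in Lemma \ref{bastille} then forces $R(1)=1$.

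For the pull-out property \eqref{eqnorm12341234}, I would apply \eqref{yorba-linda} to the function $(f\circ\sigma)g$, obtaining $R((f\circ\sigma)g)\circ T_n=\mathbb E\bigl(((f\circ\sigma)g)\circ T_{n+1}\,|\,\mathcal F_n\bigr)$. The key simplification of the integrand is $((f\circ\sigma)g)\circ T_{n+1}=(f\circ\sigma\circ T_{n+1})(g\circ T_{n+1})=(f\circ T_n)(g\circ T_{n+1})$, where the hypothesis $\sigma\circ T_{n+1}=T_n$ is used. Since $f\circ T_n$ is $\mathcal F_n$-measurable (because $\mathcal F_n=T_n^{-1}(\mathcal B)$), it may be pulled outside the conditional expectation, and a second use of \eqref{yorba-linda} on the remaining factor gives $(f\circ T_n)\,\mathbb E(g\circ T_{n+1}\,|\,\mathcal F_n)=(f\circ T_n)(R(g)\circ T_n)=(fR(g))\circ T_n$. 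Comparing the two expressions yields $R((f\circ\sigma)g)\circ T_n=(fR(g))\circ T_n$, and Lemma \ref{bastille} again removes the composition with $T_n$ to give $R((f\circ\sigma)g)=fR(g)$.

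The only genuine subtlety is the final cancellation: equality after composing with $T_n$ is a priori only equality on the range of $T_n$, i.e. $\mu_n$-almost everywhere. This is precisely what the uniqueness assertion of Lemma \ref{bastille} is built to handle, so the identities hold as stated in $\mathcal M(X,\mathcal B)$ modulo the relevant null sets, with no extra hypothesis. I expect no real computational difficulty beyond bookkeeping; the one point to state carefully is which sigma-algebra each factor is measurable with respect to, so that the pull-out of the $\mathcal F_n$-measurable factor from the conditional expectation is fully justified.
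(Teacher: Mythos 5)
Your proof is correct and follows essentially the same route as the paper's: setting $f\equiv 1$ in \eqref{yorba-linda} for the first identity, and for the second, using $\sigma\circ T_{n+1}=T_n$ to rewrite the integrand, pulling the $\mathcal F_n$-measurable factor $f\circ T_n$ out of the conditional expectation, and applying \eqref{yorba-linda} to $g$ (the paper does this at $n=0$). Your explicit appeal to the uniqueness clause of Lemma \ref{bastille} to cancel the composition with $T_n$ is a point the paper leaves implicit, but it is the same argument, just stated more carefully.
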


\begin{proof}
The first equation follows from setting $n=0$ and $f\equiv 1$ in \eqref{yorba-linda}. The second equation is proved as follows.
We have
\[
\begin{split}
\mathbb E\left(((f\circ \sigma)g)\circ T_1\,\big|\,\mathcal F_0\right)&=\mathbb E\left((f\circ \sigma\circ T_1)(g\circ T_1)\,\big|\,\mathcal F_0\right)\\
&=\mathbb E\left((f\circ T_0)(g\circ T_1)\,\big|\,\mathcal F_0\right)\\
&=(f\circ T_0)\mathbb E\left((g\circ T_1)\,\big|\,\mathcal F_0\right)\\
&=(f\circ T_0)(R(g)\circ T_0)\\
&=(fR(g))\circ T_0.
\end{split}
\]
\end{proof}

\section{Examples and applications: Transfer operators and Markov moves}
\label{sec4}
\setcounter{equation}{0}
While in the abstract, as we showed, Markov chains are derived from positive operators $R$, acting on functions on a fixed measure space 
$(X, \mathcal B_X)$. Starting with a choice of $R$ (the transfer operator), we then build a Markov chain $T_0, T_1, T_2,\ldots$, with 
these random variables (r.v) realized in a suitable probability space $(\Omega, \mathcal F, \mathbb P)$, and each r.v.  taking values in $X$, 
measurable of course with respect to the respective sigma algebras, $\mathcal F$ on $\Omega$, and $\mathcal B_X$ on $X$. Conversely, 
every Markov chain is determined by its transfer operator.\smallskip
    
The purpose of the examples below is to put this correspondence into more practical terms. The range of the examples we give will cover 
$(i)$ iterated function systems (IFS), $(ii)$ wavelet multiresolution constructions, and $(iii)$  IFSs with random “control.”\smallskip

An IFS on a fixed measure space $(X, \mathcal B_X)$ is a system of mappings $\tau_i$, each acting in $(X, \mathcal B_X)$, and each assigned 
a probability, say $p_i$   which may or may not be a function of $x$. For standard IFSs it is not, but for wavelet constructions it is. 
In the latter, the functions $p_i(x)$ reflect the multi-band filters making up the wavelet algorithm. Moreover, the sets $\tau_i(X)$  
partition $X$, but they may have overlap, or not. The Markov chains for the non-overlapping IFSs are simpler.\smallskip

Returning to the general case, we now briefly sketch the idea behind Markov moves in IFSs with random control in a bit more detail. 
The examples below will supply hands-on cases, serving to illustrate the general idea.\smallskip

The Markov move:  Starting with a point $x$ in $X$, the Markov move to the next point is in two steps, as follows, the combined two steps 
describing the move from $T_0$ to $T_1$, and more generally from $T_n$ to $T_{n+1}$. The initial point $x$ will first move to one of the sets 
$\tau_i(X)$  with probability  $p_i$, and once there, it will “choose” a definite position (within $\tau_i(X)$), and this second move 
will be prescribed by a fixed law (a given probability distribution); for example, the law could be the uniform distribution, or 
something different. However, for Markov chains, the law is the same in the move from $T_n$ to $T_{n+1}$, for all $n$.

\subsection{First examples}
\begin{example}
{\rm
In the first example, $X=[0,1]$ and $\sigma(x)=4x(1-x)$, called the logistic map. von Neumann and Ulam proved that an invariant measure is
\[
d\mu(x)=\frac{dx}{\pi\sqrt{x(1-x)}},
\]
the Beta $B(\frac{1}{2},\frac{1}{2})$ distribution, i.e. $\mu\circ\sigma^{-1}=\mu$. See \cite[pp. 87-91]{MR1974383}. 
The corresponding transfer operator is
\begin{equation}
(Rf)(x)=\frac{1}{2}\left(f\left(\frac{1+\sqrt{1-x}}{2}\right)+f\left(\frac{1-\sqrt{1-x}}{2}\right)\right)
\end{equation}

We note that
\[
\mu R\not=\mu.
\]
}
\end{example}

We now turn to an example of a transfer operator $R_F\,:\, X\times Y\,\longrightarrow\, X$
\[
(R_Ff)(x)=\int_Yf(F(x,y))d\nu(y)
\]
in which 
\begin{equation}
\label{mumumu}
\mu R_F=\mu
\end{equation}
for the $B(\frac{1}{2},\frac{1}{2})$ law $\mu$. As a consequence of \eqref{mumumu}, we have that the corresponding probability measure 
$\mathbb P$ in $(\prod_{n=0}^\infty X,\mathcal F,\mathbb P)$ will be shift-invariant.

\begin{example}
{\rm 
We take $X=(0,1)$. The endomorphism $\sigma$ will depend on a parameter
$u\in (0,1)$, and is defined as follows. Set

\begin{eqnarray}
\tau^{(u)}_0(x)&=&ux,\\
\tau^{(u)}_1(x)&=&u+(1-u)x.
\end{eqnarray}
Then,
\begin{equation}
\sigma^{(u)}(x)=\begin{cases}\,\frac{x}{u},\quad\hspace{1.8cm} 0<x\le u,\\
\,-\frac{u}{1-u}+\frac{x}{1-u},\quad u<x<1.
\end{cases}
\end{equation}
Then, 
\[
\sigma^{(u)}\circ\tau_i^{(u)}(x)=x,\quad {\rm for}\,\,\,i=1,2,\quad{\rm and}\quad x\in(0,1).
\]
Then,
\begin{equation}
R^{(u)}f(x)=\frac{1}{2}\left(f(\tau_0^{(u)})(x)+f(\tau_1^{(u)})(x)\right).
\end{equation}
Let $\lambda$ be the Lebesgue measure on $[0,1]$. Then
\begin{equation}
d(\lambda R^{(u)})=W^{(u)}d\lambda,\quad u\in(0,1)
\end{equation}
with
\[
W^{(u)}(x)=\begin{cases}\, \frac{1}{2u},\quad\,\,\,\,\,\,\,\, 0\le x<u,\\
\,\frac{1}{2(1-u)},\quad u\le x<1.
\end{cases}
\]
Note that $W^{(u)}(x)\equiv x$ if and only if $u=\frac{1}{2}$.
For every $u\in(0,1)$ we have a quasi-invariant measure $\mathbb P^{(u)}$ such that
\[
\frac{\mathbb P^{(u)}\circ\widehat{\sigma^{(u)}}}{d\mathbb P^{(u)}}=W^{(u)}\circ \pi_0.
\]
Let $Y=\left\{0,1\right\}\times (0,1)$ and $d\nu=p_1\times p_2$ be the product measure with $p_1(0)=p_1(1)=\frac{1}{2}$ and $p_2$ the uniform 
probability distribution on $(0,1)$. Let furthermore
\[
F(x,(i,u))=\begin{cases}\,ux,\quad\quad\hspace{1.35cm}{\rm if}\quad i=0,\\
\, (1-u)x+u,\quad{\rm if}\quad i=1,
\end{cases}  
\]
and 
\begin{equation}
\label{elinor123123}
\begin{split}
(R_Ff)(x)&=\int_Yf(F(x,(i,u)))d\nu(i,u)\\
&=\frac{1}{2}\int_0^1\left(f(ux)+f((1-u)x+u)\right)du\\
&=\frac{1}{2}\left(\frac{1}{x}\int_0^xf(t)dt+\frac{1}{1-x}\int_x^1f(t)dt\right).
\end{split}
\end{equation}
}
\end{example}

Now we show that the transfer operator which we just introduced   has an invariant measure with absolutely continuous density.

\begin{proposition}
Let $R_F$ denote the transfer operator defined in \eqref{elinor123123}, and set 
\begin{equation}
\label{123mu123}
d\mu(x)=\frac{dx}{\pi\sqrt{x(1-x)}}, \qquad x\in(0,1).
\end{equation}
We then have
\[
\mu R_F=\mu,
\]
that is,
\begin{equation}
\label{liza}
\int_0^1(R_Ff)(x)d\mu(x)=\int_0^1f(x)dx,\quad \forall\,f\,\in\,\mathcal M((0,1),\mathcal B).
\end{equation}
\end{proposition}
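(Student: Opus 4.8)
The plan is to verify the invariance directly from the closed form of $R_F$ recorded in \eqref{elinor123123},
\[
(R_Ff)(x)=\frac{1}{2}\left(\frac{1}{x}\int_0^xf(t)\,dt+\frac{1}{1-x}\int_x^1f(t)\,dt\right).
\]
By the definition \eqref{lambdaR} of $\mu R_F$, the assertion $\mu R_F=\mu$ is precisely the identity $\int_0^1(R_Ff)(x)\,d\mu(x)=\int_0^1f(x)\,d\mu(x)$ for all admissible $f$, with $d\mu(x)=\frac{dx}{\pi\sqrt{x(1-x)}}$. It is enough to establish this for $f\ge0$, the general case following by writing $f=f_+-f_-$ and using linearity; the restriction to $f\ge0$ is what will license the interchange of integrations below through Tonelli's theorem.

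First I would substitute the closed form into $\int_0^1(R_Ff)(x)\,d\mu(x)$ and split the result into its two summands. In the first summand the domain of the double integral is $\{0<t<x<1\}$, and in the second it is $\{0<x<t<1\}$. Swapping the order of integration in each (Tonelli), the whole expression takes the form $\int_0^1 f(t)\,K(t)\,dt$, where $K(t)$ equals $\frac{1}{2\pi}$ times the sum of the two inner integrals
\[
I_1(t)=\int_t^1\frac{dx}{x\sqrt{x(1-x)}}\qquad\text{and}\qquad I_2(t)=\int_0^t\frac{dx}{(1-x)\sqrt{x(1-x)}}.
\]

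The only genuine computation is the evaluation of $I_1$ and $I_2$. I would observe that $-2\sqrt{(1-x)/x}$ is an antiderivative of $1/\bigl(x\sqrt{x(1-x)}\bigr)$ --- an elementary differentiation, or the substitution $x=\sin^2\theta$ --- so that $I_1(t)=2\sqrt{(1-t)/t}$. The reflection $x\mapsto 1-x$ turns $I_2$ into $I_1(1-t)$, giving $I_2(t)=2\sqrt{t/(1-t)}$. Adding and clearing denominators, $I_1(t)+I_2(t)=\dfrac{2(1-t)+2t}{\sqrt{t(1-t)}}=\dfrac{2}{\sqrt{t(1-t)}}$, whence $K(t)=\dfrac{1}{\pi\sqrt{t(1-t)}}$ and
\[
\int_0^1(R_Ff)(x)\,d\mu(x)=\int_0^1\frac{f(t)}{\pi\sqrt{t(1-t)}}\,dt=\int_0^1f(t)\,d\mu(t),
\]
which is exactly $\mu R_F=\mu$.

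The main obstacle is light and essentially bookkeeping: justifying the Tonelli interchange (handled by the reduction to $f\ge0$ together with the local integrability of the kernels away from the endpoints) and spotting the antiderivative that collapses $I_1+I_2$ back to the density of $\mu$. I would emphasize that the cancellation $2(1-t)+2t=2$ in the numerator is the structural reason the arcsine (Beta$(\tfrac12,\tfrac12)$) law is the fixed point here, and I would record the quick sanity check $f\equiv1$: then $R_F1=1$ and both sides equal $\mu\bigl((0,1)\bigr)=1$.
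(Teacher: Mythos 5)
Your proof is correct, but it runs in the opposite direction from the paper's. You \emph{verify} invariance directly: substitute the closed form \eqref{elinor123123} into $\int_0^1(R_Ff)\,d\mu$, interchange the order of integration by Tonelli (licensed by the reduction to $f\ge 0$), and evaluate the two inner integrals in closed form via the antiderivative $-2\sqrt{(1-x)/x}$ and the reflection $x\mapsto 1-x$, so that the kernel collapses exactly to the arcsine density $\frac{1}{\pi\sqrt{t(1-t)}}$. The paper instead treats the density as an unknown: writing $d\mu(x)=G(x)dx$, it performs the same interchange to reduce invariance to the integral equation $G(y)=\frac12\bigl(\int_y^1\frac{G(x)}{x}\,dx+\int_0^y\frac{G(x)}{1-x}\,dx\bigr)$, differentiates to obtain $\frac{G'(y)}{G(y)}=\frac12\bigl(-\frac1y+\frac1{1-y}\bigr)$, and solves this ODE to get $G(y)=c/\sqrt{y(1-y)}$. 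The paper's route is a \emph{derivation}: it explains how the $B(\frac12,\frac12)$ law is found and shows it is the unique differentiable fixed density, but strictly speaking the differentiation step only establishes necessity, leaving it to be checked that the arcsine density satisfies the integral equation itself and not merely the ODE (a constant of integration must be matched); your computation settles sufficiency outright and also handles the Tonelli and integrability bookkeeping that the paper passes over, so as a proof of the stated proposition yours is the more complete argument. One point of care: as printed, the right-hand side of \eqref{liza} reads $\int_0^1 f(x)\,dx$, which is inconsistent with the assertion $\mu R_F=\mu$ (and with the paper's own proof, whose integral equation has $G$ on the left); your argument correctly produces $\int_0^1 f\,d\mu$ on the right, so you have in effect proved the intended statement and silently corrected a typo in the display.
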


\begin{proof}
For $d\mu(x)=G(x)dx$ to satisfy \eqref{liza} we must have
\[
G(y)=\frac{1}{2}\left(\int_y^1\frac{G(x)}{x}dx+\int_0^y\frac{G(x)}{1-x}dx\right).
\]
Hence
\[
\frac{G^\prime(y)}{G(y)}=\frac{1}{2}\left(-\frac{1}{y}+\frac{1}{1-y}\right),
\]
and hence the result.
\end{proof}

\begin{definition}
{\rm
We define the backward shift $s$ on sequences of $\prod_{n=0}^\infty X$ by
\begin{equation}
s(x_0,x_1,x_2,\ldots)=(x_1,x_2,\ldots).
\end{equation}
}
\end{definition}

\begin{proposition}
In the setting of Theorem \ref{loudmila},  let $\mu$ be an invariant measure for the transfer operator, and let $\pi_0$ be endowed  with $\mu$ 
as probability law. Then the corresponding probability measure is shift-invariant: 
\[
\mathbb P\circ s^{-1}=\mathbb P.
\]
\end{proposition}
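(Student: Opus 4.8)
The plan is to verify the identity $\mathbb P\circ s^{-1}=\mathbb P$ by testing both measures against the multiplicative cylinder functions $\prod_{k=0}^n f_k\circ\pi_k$. These functions are stable under multiplication and generate the cylinder sigma-algebra $\mathcal C$ (taking $f_k=\chi_{B_k}$ recovers the cylinder sets, a $\pi$-system generating $\mathcal C$), so by the uniqueness of a measure determined on a generating $\pi$-system it suffices to prove, for every $n$ and all bounded measurable $f_0,\dots,f_n$, that
\[
\int\Big(\prod_{k=0}^n f_k\circ\pi_k\Big)\,d(\mathbb P\circ s^{-1})=\int\Big(\prod_{k=0}^n f_k\circ\pi_k\Big)\,d\mathbb P .
\]
Here $\mathbb P$ is the path-space law characterized by \eqref{proba123} with initial law $\lambda=\mu$, realized on $\prod_{n=0}^\infty X$ via Theorem \ref{michelle123} so that each $T_k$ is identified with the coordinate $\pi_k$.

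First I would transport the left-hand side through $s$ using $\int g\,d(\mathbb P\circ s^{-1})=\int (g\circ s)\,d\mathbb P$ together with $\pi_k\circ s=\pi_{k+1}$, whence the integrand becomes $\prod_{k=0}^n f_k\circ\pi_{k+1}$, a product over the consecutive indices $1,\dots,n+1$. To bring this into the exact form governed by \eqref{proba123} I would insert a harmless factor $1\circ\pi_0$, so that the product now runs over $0,1,\dots,n+1$ with the function $1$ at index $0$ and $f_{j}$ at index $j+1$. Applying \eqref{proba123} in the normalized case $h\equiv 1$ (legitimate since $R_F1=\int_Y d\nu=1$) to both sides then yields, writing $G=f_0R_F(f_1R_F(f_2\cdots R_F(f_n)\cdots))$,
\[
\int\Big(\prod_{k=0}^n f_k\circ\pi_k\Big)\,d\mathbb P=\int_X G(x)\,d\mu(x),\qquad
\int\Big(\prod_{k=0}^n f_k\circ\pi_k\Big)\,d(\mathbb P\circ s^{-1})=\int_X (R_FG)(x)\,d\mu(x).
\]

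The final step is to absorb the extra outer $R_F$ by stationarity: the invariance $\mu R_F=\mu$ from Theorem \ref{loudmila} (equation \eqref{conclusion13}) reads $\int_X (R_FG)\,d\mu=\int_X G\,d\mu$, which makes the two displayed integrals coincide and closes the argument. The computation is essentially bookkeeping, and I expect the only delicate point to be the simultaneous index shift and insertion of the constant at $\pi_0$: these must be arranged so that the nested-$R_F$ expression produced on the shifted side differs from the unshifted one by exactly one application of $R_F$, which is precisely the discrepancy that the invariance of $\mu$ removes. I see no genuine analytic obstacle beyond this matching.
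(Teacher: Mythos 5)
Your proof is correct and follows essentially the same route as the paper: the paper's proof consists only of citing the Kolmogorov-construction formula \eqref{proba123} with $\lambda=\mu$ and $h\equiv 1$, leaving the verification implicit. Your computation --- pushing the cylinder function through $s$ via $\pi_k\circ s=\pi_{k+1}$, inserting the constant $1$ at index $0$, and cancelling the resulting extra outer $R_F$ by the invariance $\mu R_F=\mu$ --- is precisely the bookkeeping that the paper's one-line proof suppresses.
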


\begin{proof}
$\mathbb P$ is built from the Kolmogorov construction by
\begin{equation}
\int_{{\rm Sol}_\sigma(X)}(f_0\pi_0)(\w)(f_1\pi_1)(\w)\cdots (f_n\pi_n)(\w)d\mathbb P(\w)=
\int_{X}(f_0(x)R(f_1R(f_2\cdots R(f_nh))))(x)d\mu(x).
\end{equation}
\end{proof}
\subsection{Cases where $\sigma$ is not onto}
When the endomorphism $\sigma$ is not onto, the solenoid satisfies
\[
{\rm Sol}_\sigma(X)\subset \prod_{n=1}^\infty X_\infty^{(\sigma)},\quad{\rm where}\quad X_\infty^{(\sigma)}\stackrel{\rm def.}{=}
\cap_{n=1}^\infty\sigma^n(X),
\]
and the latter can be a very small set, as we now illustrate.

\begin{example}
{\rm
Take $X=[0,1]$ and $\sigma(x)=2x(1-x)$. Then 
\[
\sigma(X)=[0,\frac{1}{2}]\quad{\rm  and}\quad X_\infty^{(\sigma)}=\left\{0,\frac{1}{2}\right\}.
\]
The solenoid consists of the two points
\[
(0,0,\ldots)\quad{\rm and}\quad (\frac{1}{2},\frac{1}{2},\ldots).
\]
}
\end{example}

\begin{example}
{\rm
This example is from complex dynamics. We take $X=\mathbb C$ and for a pre-assigned $c\in \mathbb C$,
\[
\sigma_c(z)=z^2+c.
\]
Then $ X_\infty^{(\sigma)}$ is the Julia set, see \cite{MR1721240}.
}
\end{example}
\subsection{Solenoids associated with the unit circle}
\label{unitcircle}

 In the period since the mid 1990ties, the term “wavelet” has come to have a broader meaning: From referring to systems of bases in 
$\mathbf L_2(\mathbb R)$ with dyadic scale symmetry, “wavelet” now typically refers to finite systems of functions on a suitable measure space that can be used in order to construct either an orthonormal basis, or frame basis by means of operators connected to algebraic and geometric information involving a notion of “scaling function.” The latter often in the form of a probability measure on a solenoid-measure space. In the case of fractals, there are natural choices of finite systems of functions yielding very well-behaved orthonormal bases, and thus giving direct information about the topological structure of the particular fractal involved. Our framework below makes use of solenoids (from endomorphisms) in order to offer an even more inclusive framework for wavelet bases and multiresolutions. Background references for the present section include 
\cite{MR2888226,zbMATH03176105,BrJo02a,MR3275999,MR2268116,MR2391805,MR1837681,MR2821778,MR3204025}.

\subsubsection{Definition}
Starting from a continuous function $m_0(z)$ on the unit circle $\mathbb T$ and $N\in\left\{2,3,\ldots\right\}$ 
one can construct (at least) two representations of the algebra of operators  generated by
two operators $T,U$ such that $U$ is unitary and $UTU^{-1}=T^N$ (such an algebra is an algebra generated by a group of the kind studied
in \cite{zbMATH03176105} by Baumslag and Solitar). To be more precise 
let $R=R_{m_0}$ denote the corresponding Ruelle operator:
\begin{equation}
\label{ruelle1234}
(Rf)(z)=\frac{1}{N}\sum_{w^N=z}|m_0(w)|^2f(w).
\end{equation}
When $R 1= 1$ the infinite product $\prod_{u=1}^\infty \frac{m_0(e^{\frac{2\pi i t}{N^u}})}{\sqrt{N}}$ belongs to 
$\mathbf L_2(\mathbb R)$, and is the Fourier transform of the scaling function $\varphi_0$. The space
\begin{equation}
\label{llg1973}
\mathcal H_0=\left\{\widehat{\varphi_0}(t)f(t)\,;\, f\,\,\text{{\rm function on }}\mathbb R/\mathbb Z\,\,\text{\rm measurable and } f(t)=f(t+1)\right\}
\end{equation}
is the $0$-resolution subspace of the multiresolution
\begin{equation}
\label{llg1973-1975}
\mathcal H_k=\left\{2^{-k/2}\widehat{\varphi_0}(t/2^k)f(t)\,;\, f\,\,\text{{\rm measurable function on }}\mathbb R/\mathbb Z\,\,\text{\rm i.e.} 
f(t)=f(t+1)\right\},\quad k\in\mathbb Z.
\end{equation}

One defines a representation $\rho$ of  $\mathbf L_\infty(\mathbb T)$ into $\mathbf B(\mathbf L_2(\mathbb R))$ as follows: If 
$f\in\mathbf L_\infty(\mathbb T)$ with associated Fourier series $f(e^{it})=\sum_{n\in\mathbb Z}\widehat{f}(n)e^{int}$, 
one sets

\begin{equation}
\label{pi}
\rho(f)(g)=\sum_{n\in\mathbb Z} \widehat{f}(n)g(x-n),\quad g\in\mathbf L_2(\mathbb R).
\end{equation}

In this paper we remove the $\mathbf L_2(\mathbb R,dx)$ requirement (which we assumed in \cite{ajl1papers,ajl1paper}) from the wavelet setting. 
Now wavelet multiresolutions may be viewed as a special case of a probability space 
multiresolution. In the latter, the resolution subspaces will be specified by a system of conditional expectations.
In the classical wavelet application, the solenoid becomes the real line, realized as a dense curve in ${\rm Sol}_\sigma(X)$,
and the solenoid measure $P$ becomes Lebesgue measure.\smallskip

We now consider the special case where $X$ is equal to the unit circle $\mathbb R/\mathbb Z=\mathbb T$ and $\sigma(z)=z^N$. When
using the notation $z=e^{2\pi it}$ we have $\sigma(t)=Nt\,\, ({\rm mod}\, 1)$.\\

The solenoid $G_N$ is a compact group, included in $\prod_{k=0}^\infty \mathbb T$,
and consists of the sequences $z=(z_0,z_1,z_2,\ldots)\in\prod_{k=0}^\infty \mathbb T$ such that
\[
z_{k+1}^N=z_k,\quad k=1,2,\ldots
\]
See \cite{MR1837681}.
We define
\begin{equation}
\sigma(z_0,z_1,z_2,\ldots)=(z_0^N,z_1^N,z_2^N,\ldots)=(z_0^N,z_0,z_1,\ldots)
\end{equation}
and
\begin{equation}
\tau(z_0,z_1,z_2,\ldots)=(z_1,z_2,\ldots)
\end{equation}
We have
\[
\sigma\circ \tau=\tau\circ\sigma=I.
\]

It is the dual of the discrete group $\mathbb Z[1/N]$, with characters $\chi\left( \frac{\ell}{N^k}\right)$
given by
\begin{equation}
\label{eqduality}
\langle \chi\left(\frac{\ell}{N^k}\right),z\rangle=z_k^\ell,\quad k,\ell=0,1,\ldots
\end{equation}
See \cite{MR3441734,BrJo02a}. Note that \eqref{eqduality} is well defined since
\[
\langle \chi\left(\frac{N\ell}{N^{k+1}}\right),z\rangle=z_{k+1}^{N\ell}=(z_{k+1}^N)^\ell=z_k^\ell=\langle\chi\left( \frac{\ell}{N^k}\right),z\rangle.
\]
\subsubsection{Ruelle operators and wavelets}
We use the term Ruelle operator consistent with \cite{MR3393694,MR1793194,BrJo02a,MR2129258} to indicate a transfer operator which governs branching 
in a number of different context.
Every filter in the family we have can be realized as a wavelet filter on the solenoid. Fix a low-pass filter $m_0$ with the usual properties,
and define \\

Two cases occur: When the function identically equal to $1$ (denoted in this paper by $\mathbf 1$) is an eigenvalue of $R$ with eigenvalue $1$, that is,
\[
\frac{1}{N}\sum_{\substack{w\in\mathbb T\\w^N=z}}|m_0(w)|^2\equiv 1,
\]
one can construct $\varphi_0$ and use the space $\mathbf L_2(\mathbb R,dx)$.
We study the representations of the algebra generated by $(U,T)$ such that
\begin{equation}
UTU^{-1}=T^N
\end{equation}

We take
\[
U\left(\sum  \xi_n\varphi_0(\cdot-n)\right)=m_0(z)f(z^N)\quad\text{with}\quad f(z)=\sum \xi_nz^n.
\]
Thus we have a slanted Toeplitz matrix
\[
(S\xi)_n=\sum_{j\in\mathbb Z} a_{n-jN}\xi_j.
\]

The following result reflects the scaling law for the father function $\varphi_0$ of the wavelet under consideration,
\begin{equation}
\label{scaling}
\varphi_0(x)=\sqrt{N}\sum_{k\in\mathbb Z}a_k\varphi_0(Nx-k),\quad x\in\mathbb R,
\end{equation}
where
\begin{equation}
\label{scaling2}
m_0(x)=\sum_{k\in\mathbb Z}a_ke^{2\pi  i kx}.
\end{equation}

\begin{lemma}
For the operators $K$ and $S$ (see $(1)$ and $(2)$ in Table \ref{rubicon2} above) we define
\[
K\,:\,\ell_2(\mathbb Z)\,\longrightarrow\,\mathcal H_0 \,\,(\text{the zero resolution subspace in $\mathbf L_2(\mathbb R)$})
\]
by
\[
(K\xi)(x)=\sum_{n\in\mathbb Z}\xi_n\varphi_0(x-n).
\]
Then 
\begin{equation}
\label{KUUK}
KS=UK
\end{equation}
holds, that is the following diagram is commutative:
\[
\begin{array}{ccc}
  \mathbf L_2(\mathbb R)&\xrightarrow{\hspace*{0.5cm}{U}\hspace*{0.5cm}}&\mathbf L_2(\mathbb R) \\
\hspace*{-0.5cm}
\uparrow{\hspace*{0.5cm}\hspace*{-1.2cm}K}
& &\hspace*{0.5cm}\uparrow{K}\\
\ell_2(\mathbb Z)&\xrightarrow{\hspace*{0.5cm}{S}\hspace*{.5cm}}&\ell_2(\mathbb Z),
\end{array}\]
where 
\[
(U\gamma)(x)=\frac{1}{\sqrt{N}}\gamma(x/N),\quad \gamma\in\mathbf L_2(\mathbb R,dx).
\]

\end{lemma}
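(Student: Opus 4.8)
The plan is to prove the intertwining $KS=UK$ by evaluating both sides on an arbitrary sequence $\xi=(\xi_n)_{n\in\mathbb{Z}}\in\ell_2(\mathbb{Z})$ and reducing the identity to the refinement (scaling) equation \eqref{scaling} for the father function $\varphi_0$. Since the computation involves reindexing and interchanging two infinite summations, I would first carry it out for finitely supported $\xi$, where every sum is finite and all rearrangements are legitimate, and only afterwards extend to all of $\ell_2(\mathbb{Z})$ by density, using that $K$, $S$ and $U$ are bounded on the relevant spaces.

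First I would expand the left-hand side. Using the slanted-Toeplitz form $(S\xi)_n=\sum_{j\in\mathbb{Z}}a_{n-jN}\xi_j$ followed by the definition of $K$, one obtains
\[
(KS\xi)(x)=\sum_{n\in\mathbb{Z}}(S\xi)_n\,\varphi_0(x-n)=\sum_{j\in\mathbb{Z}}\xi_j\sum_{n\in\mathbb{Z}}a_{n-jN}\,\varphi_0(x-n).
\]
The substitution $m=n-jN$ turns the inner sum into $\sum_m a_m\varphi_0(x-m-Nj)$, so that $(KS\xi)(x)=\sum_j\xi_j\sum_m a_m\varphi_0(x-m-Nj)$.

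Next I would expand the right-hand side. From $(U\gamma)(x)=\tfrac{1}{\sqrt N}\gamma(x/N)$ together with the definition of $K$ we get $(UK\xi)(x)=\tfrac{1}{\sqrt N}\sum_j\xi_j\,\varphi_0(x/N-j)$. The key step is to insert the scaling law \eqref{scaling}: substituting $y=(x-Nj)/N$ into $\varphi_0(y)=\sqrt N\sum_k a_k\varphi_0(Ny-k)$ gives $\varphi_0(x/N-j)=\sqrt N\sum_k a_k\varphi_0(x-Nj-k)$. Plugging this in and cancelling the factors $\sqrt N$ and $1/\sqrt N$ yields $(UK\xi)(x)=\sum_j\xi_j\sum_k a_k\varphi_0(x-Nj-k)$, which is exactly the expression found for $(KS\xi)(x)$ after renaming $k$ as $m$. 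Hence $KS\xi=UK\xi$ for every $\xi$, i.e.\ $KS=UK$, which is the asserted commutativity of the diagram.

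I expect the only genuine obstacle to be bookkeeping rather than anything conceptual: the entire content of the lemma is that the refinement equation \eqref{scaling} is precisely what forces the diagram to commute, so once the reindexing and the interchange of the two sums are justified (which is why I would dispose of convergence up front on finitely supported $\xi$) the identity simply drops out. A consistency check that comes for free is that $UK\xi=KS\xi$ automatically lies in the image of $K$, namely $\mathcal{H}_0$; this reflects the fact that \eqref{scaling} places the dilate of $\varphi_0$ back into the closed span of the integer translates of $\varphi_0$.
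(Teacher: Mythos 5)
Your proof is correct and follows essentially the same route as the paper's: both expand $(KS\xi)(x)$ via the slanted Toeplitz form, expand $(UK\xi)(x)$ and insert the scaling relation \eqref{scaling}, and identify the two double sums through the reindexing $n=jN+k$. Your extra care about first working with finitely supported $\xi$ and extending by density is a reasonable refinement the paper leaves implicit, but the argument is the same.
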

\begin{proof} We have for $\xi\in\ell_2(\mathbb Z)$:
\[
(KS\xi)(x)=\sum_{n\in\mathbb Z}\sum_{j\in\mathbb Z}a_{n-jN}\xi_j\varphi_0(x-n)
\]
and
\[
\begin{split}
(UK\xi)(x)&=\sum_{j\in\mathbb Z}\xi_j\frac{1}{\sqrt{N}}\varphi_0\left(\frac{x-jN}{N}\right)\\
&=\sum_{j\in\mathbb Z}\sum_{k\in\mathbb Z}\xi_ja_k\varphi_0(x-jN-k)\\
&\text{and, with the change of variable $n=jN+k$,}\\
&=\sum_{n\in\mathbb Z}\sum_{j\in\mathbb Z}\xi_ja_{n-jN}\varphi_0(x-n),
\end{split}
\]
and the result follows.
\end{proof}

More generally for many  choices of filters (see \eqref{scaling2}) there are no $\mathbf L_2(\mathbb R,dx)$-solution to \eqref{scaling}, and
then  one leaves the setting of $\mathbf L_2(\mathbb R)$. We still get
counterparts of \eqref{jardin-du-luxembourg} and \eqref{eqnorm12341234} using the solenoid.\\ 

\begin{proposition}
The operator $R$ in \eqref{ruelle1234} is bounded from $\mathbf L_2(\mathbb T,d\lambda)$ into itself, and its adjoint is given by the formula
\begin{equation}
(R^*f)(z)=|m_0(z)|^2f(z^N).
\label{adjointR}
\end{equation}
\end{proposition}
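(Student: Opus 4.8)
The plan is to reduce everything to a single change-of-variables identity expressing that the normalized Haar measure $\lambda$ on $\mathbb T$ is invariant under $\sigma(z)=z^N$ and that $\sigma$ is an unramified $N$-fold covering. Concretely, I would first establish the \emph{fiber-averaging formula}
\[
\int_{\mathbb T}\Big(\frac1N\sum_{w^N=z}h(w)\Big)\,d\lambda(z)=\int_{\mathbb T}h(w)\,d\lambda(w),\qquad h\in\mathbf L_1(\mathbb T).
\]
This is proved by parametrizing $\mathbb T$ as $[0,1)$ with $z=e^{2\pi it}$ and $d\lambda=dt$, splitting $[0,1)=\bigcup_{j=0}^{N-1}[j/N,(j+1)/N)$, and applying on each piece the substitution $s=Nt-j$; the $N$ points $e^{2\pi i(s+j)/N}$, $j=0,\dots,N-1$, are exactly the solutions of $w^N=e^{2\pi is}$, which reassembles the averaged fiber sum.

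With this identity in hand, I would verify the pairing. For $f,g\in\mathbf L_2(\mathbb T)$ write
\[
\langle Rf,g\rangle=\int_{\mathbb T}\Big(\frac1N\sum_{w^N=z}|m_0(w)|^2f(w)\Big)\overline{g(z)}\,d\lambda(z),
\]
replace $\overline{g(z)}$ by $\overline{g(w^N)}$ inside the sum (legitimate since $z=w^N$ on each fiber), and apply the fiber-averaging formula with $h(w)=|m_0(w)|^2f(w)\overline{g(w^N)}$. Since $|m_0|^2$ is real-valued this gives
\[
\langle Rf,g\rangle=\int_{\mathbb T}f(w)\,\overline{|m_0(w)|^2\,g(w^N)}\,d\lambda(w)=\langle f,Tg\rangle,
\]
where $(Tg)(w)=|m_0(w)|^2g(w^N)$ is the candidate for $R^*$.

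For boundedness I would factor $T=M_{|m_0|^2}\,C_\sigma$, where $C_\sigma g=g\circ\sigma$. The composition operator $C_\sigma$ is an isometry of $\mathbf L_2(\mathbb T)$ by $\sigma$-invariance of $\lambda$ (the case $h=|g|^2$ of the averaging formula), and $M_{|m_0|^2}$ is bounded with norm $\|m_0\|_\infty^2$ since $m_0$ is continuous on the compact group $\mathbb T$; hence $T$ is everywhere defined and bounded with $\|T\|\le\|m_0\|_\infty^2$. Because $\langle Rf,g\rangle=\langle f,Tg\rangle$ holds for all $f,g$, we conclude $R=T^*$, so $R$ is bounded with $\|R\|=\|T\|\le\|m_0\|_\infty^2$ and $R^*=T$, which is precisely \eqref{adjointR}. (The same norm bound also follows directly from Cauchy--Schwarz applied to the $N$-term fiber sum, followed by the averaging formula.)

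The only non-mechanical ingredient is the fiber-averaging identity, so that is where I would concentrate the care: the bookkeeping of the $N$ preimages and the piecewise change of variables. A secondary point worth flagging is well-definedness: to speak of $R^*$ one wants $R$ a priori bounded and everywhere defined, and I sidestep any circularity by exhibiting the manifestly bounded $T=M_{|m_0|^2}C_\sigma$ first and deducing $R=T^*$ from the pairing identity, so that boundedness of $R$ comes for free rather than being assumed.
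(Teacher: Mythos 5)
Your proof is correct. A point of comparison worth knowing: the paper states this proposition without any proof at all, so there is no in-text argument to measure against; the nearest thing is the abstract lemma of Section 3 establishing \eqref{adjointRAB}, which derives $R_{A,B}^*f=f\circ\sigma$ from the pull-out property, combined with the Radon--Nikodym identity \eqref{voltaire}. Specialized to the operator \eqref{ruelle1234}, that machinery yields the adjoint formula in one line,
\[
\langle Rf,g\rangle_{\lambda}=\int_{\mathbb T}R\bigl(f\cdot(\overline{g}\circ\sigma)\bigr)\,d\lambda=\int_{\mathbb T}f\,(\overline{g}\circ\sigma)\,|m_0|^2\,d\lambda,
\]
using the pull-out property \eqref{roxanna} and the fact that $\frac{d(\lambda R)}{d\lambda}=|m_0|^2$. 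Your argument is the concrete instantiation of exactly these two ingredients: the substitution $\overline{g(z)}=\overline{g(w^N)}$ on each fiber is the pull-out step, and your piecewise change of variables on $[0,1)$ is precisely the verification that the unweighted averaging operator preserves $\lambda$, i.e.\ that $W=|m_0|^2$ here. What your route buys is self-containedness and an explicit norm bound $\|R\|\le\|m_0\|_\infty^2$; what the paper's abstract route buys is that the same computation works verbatim for any $\sigma$-transfer operator on any measure space, with $|m_0|^2$ replaced by the general weight $W$. Your device of first exhibiting the manifestly bounded operator $T=M_{|m_0|^2}C_\sigma$ and then deducing $R=T^*$ from the pairing identity is a genuine improvement in rigor over the paper, which asserts boundedness without comment; it also correctly avoids the circularity of invoking $R^*$ before $R$ is known to be bounded (strictly, from $\int(Rf)\overline{g}\,d\lambda=\langle T^*f,g\rangle$ for all $g$ one concludes $Rf=T^*f$ a.e., which is the standard final step). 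One microscopic imprecision: the isometry of $C_\sigma$ is the averaging formula applied to $h=|g|^2\circ\sigma$ (for which the fiber sum collapses, since $h$ is constant on fibers), not to $h=|g|^2$; this is cosmetic and does not affect the argument.
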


We now discuss the multiresolution associated with $m_0$ and its relationships with the multiresolution ${\rm MR}_\sigma$. We first note that 
the space $\mathcal H_0$ defined by \eqref{llg1973} is equal to the closed linear span of the functions $x\mapsto \widehat{\varphi_0(x+k)}$, 
when $k$ runs through $\mathbb Z$. In general the family of functions $x\mapsto \varphi_0(x+k)$ ($k\in\mathbb Z$) is not orthogonal in 
$\mathbf L_2(\mathbb R,dx)$.

\begin{proposition}
Let $W=|m_0|^2$, let
\begin{equation}
\label{190616}
h_{\varphi_0}(t)=\sum_{n\in\mathbb Z}|\widehat{\varphi_0}(t+n)|^2,
\end{equation}
and let
\begin{equation}
(Rf)(t)=\frac{1}{N}\left(\sum_{k=0}^{N-1}(Wf)\left(\frac{t+k}{N}\right)\right).
\end{equation}
Then 
\begin{equation}
\label{445}
Rh_{\varphi_0}=h_{\varphi_0}.
\end{equation}
\label{prop49}
\end{proposition}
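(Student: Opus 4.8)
The plan is to reduce everything to the scaling (refinement) identity for $\widehat{\varphi_0}$ in the frequency domain, after which the statement becomes a clean reindexing of a double sum. First I would translate the time-domain scaling law \eqref{scaling} into the frequency domain. Taking Fourier transforms in \eqref{scaling}, and using \eqref{scaling2}, produces the familiar refinement equation $\widehat{\varphi_0}(t)=\frac{1}{\sqrt N}\,m_0(t/N)\,\widehat{\varphi_0}(t/N)$ (any sign or conjugation convention being irrelevant, since it is absorbed into $W=|m_0|^2$). Taking modulus squared, this is the only analytic input I need:
\[
\left|\widehat{\varphi_0}(Ns)\right|^2=\frac{1}{N}\,W(s)\left|\widehat{\varphi_0}(s)\right|^2,\qquad s\in\mathbb R.
\]
Note that the factor $1/N$ here matches exactly the normalizing factor in the definition of $R$, which is what makes the eigenvalue come out to be $1$.

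Next I would expand $(Rh_{\varphi_0})(t)$ straight from the definition of $R$ and of $h_{\varphi_0}$:
\[
(Rh_{\varphi_0})(t)=\frac{1}{N}\sum_{k=0}^{N-1}W\!\left(\frac{t+k}{N}\right)\sum_{n\in\mathbb Z}\left|\widehat{\varphi_0}\!\left(\frac{t+k}{N}+n\right)\right|^2.
\]
Since $W=|m_0|^2$ descends from the circle $\mathbb T$, it is $1$-periodic, so $W((t+k)/N)=W((t+k)/N+n)$ for every $n\in\mathbb Z$, and I may pull the weight inside the $n$-sum. All summands are non-negative, so Tonelli justifies every rearrangement. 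Applying the squared refinement identity with $s=(t+k)/N+n$ converts each term into $\left|\widehat{\varphi_0}(t+k+Nn)\right|^2$, giving
\[
(Rh_{\varphi_0})(t)=\sum_{k=0}^{N-1}\sum_{n\in\mathbb Z}\left|\widehat{\varphi_0}(t+k+Nn)\right|^2.
\]

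Finally I would invoke Euclidean division: as the pair $(k,n)$ ranges over $\{0,\dots,N-1\}\times\mathbb Z$, the integer $k+Nn$ ranges over $\mathbb Z$ exactly once. Hence the double sum collapses to $\sum_{m\in\mathbb Z}|\widehat{\varphi_0}(t+m)|^2=h_{\varphi_0}(t)$, which is precisely \eqref{445}. There is no real analytic obstacle once the refinement equation is in hand; the one step that demands care is the periodicity-plus-reindexing manoeuvre, where one must keep the factor $1/N$ correctly paired with the squared refinement identity and verify that $(k,n)\mapsto k+Nn$ is a genuine bijection onto $\mathbb Z$ so that the two nested sums reassemble the full periodization $h_{\varphi_0}$.
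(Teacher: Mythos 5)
Your proof is correct and follows essentially the same route as the paper's: expand $(Rh_{\varphi_0})(t)$ by definition, use the $1$-periodicity of $W=|m_0|^2$ together with the squared refinement identity $|\widehat{\varphi_0}(Ns)|^2=\frac{1}{N}W(s)|\widehat{\varphi_0}(s)|^2$, and collapse the double sum via the Euclidean bijection $(k,n)\mapsto k+Nn$. Your version merely makes explicit two points the paper leaves implicit (the frequency-domain form of the scaling law and the Tonelli justification for rearranging the non-negative sums), which is fine.
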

\begin{proof}
We have
\[
\begin{split}
\left(Rh_{\varphi_0}\right)(t)&=\frac{1}{N}\sum_{k\in \mathbb Z_N}W\left(\frac{t+k}{N}\right)\sum_{n\in\mathbb Z}\big|\widehat{\varphi_0}\left(\frac{
t+k+nN}{N}\right)\big|^2\\
&=\sum_{k\in \mathbb Z_N}\sum_{n\in\mathbb Z}\big|\widehat{\varphi_0}\left(t+\overbrace{k+nN}^{m}\right)\big|^2\\
&=\sum_{m\in\mathbb Z}|\widehat{\varphi_0}(t+m)\big|^2\\
&=h_{\varphi_0}(t),
\end{split}
\]
where we wrote $\mathbb Z_N$ for the cyclic group $\mathbb Z/N\mathbb Z$, and we used the Euclidean algorithm on $\mathbb Z$, mod $N$, in the 
last step ($m=k+nN$). The first step used the  scaling identity for $\varphi_0$ and $W=|m_0|^2$.
\end{proof}

As an application of Proposition \ref{prop49} we get the following results for wavelets on solenoids.

\begin{corol}
Let $W=|m_0|^2$ and $h_{\varphi_0}$ be as in Proposition \ref{prop49}; then $M_n\stackrel{\rm def.}{=}h_{\varphi_0}(\pi_n)$ is a 
$(\pi_n)_{n\in\mathbb N_0}$-martingale, where $(\pi_n)_{n\in\mathbb N_0}$ denotes the ${\rm Sol}_N(\mathbb T)$-Markov chain.
\end{corol}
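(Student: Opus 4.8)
The plan is to read this off as a direct instance of the harmonic-function-to-martingale correspondence already recorded in the Proposition following Definition \ref{defmartingale}, with the required harmonic function supplied by Proposition \ref{prop49}. First I would isolate the two ingredients. On the dynamical side, under the standing normalization $R\mathbf 1=\mathbf 1$ (the quadrature condition $\frac{1}{N}\sum_{w^N=z}|m_0(w)|^2\equiv 1$ on the low-pass filter $m_0$), the coordinate process $(\pi_n)_{n\in\mathbb N_0}$ on ${\rm Sol}_N(\mathbb T)$ is exactly the $R$-Markov chain for the Ruelle operator $R$ of \eqref{ruelle1234}, which in the frequency variable is the operator written in Proposition \ref{prop49}; its multi-step transition law is Theorem \eqref{martingale1}, namely $\mathbb E(f\circ\pi_{n+k}\mid\mathcal F_n)=R^k(f)\circ\pi_n$, and by the Markov property \eqref{martingale-sept-78} of Proposition \ref{prop323} the conditioning on $\mathcal F_n$ may be replaced by conditioning on the full past $\bigvee_{j=0}^n\mathcal F_j$. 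On the spectral side, Proposition \ref{prop49} gives that $h_{\varphi_0}$, which is nonnegative since by \eqref{190616} it is a sum of squared moduli, satisfies $Rh_{\varphi_0}=h_{\varphi_0}$, whence by iteration $R^k h_{\varphi_0}=h_{\varphi_0}$ for every $k\in\mathbb N_0$.

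Then I would simply combine the two. Taking $f=h_{\varphi_0}$ in \eqref{martingale1} and passing to the full past via \eqref{martingale-sept-78}, one obtains for all $n,k\in\mathbb N_0$
\[
\mathbb E\Big(M_{n+k}\,\big|\,\textstyle\bigvee_{j=0}^n\mathcal F_j\Big)
=\mathbb E\big(h_{\varphi_0}\circ\pi_{n+k}\,\big|\,\mathcal F_n\big)
=R^k(h_{\varphi_0})\circ\pi_n
=h_{\varphi_0}\circ\pi_n
=M_n,
\]
the third equality being the iterated harmonicity $R^k h_{\varphi_0}=h_{\varphi_0}$. This is precisely the defining relation of Definition \ref{defmartingale}, so $(M_n)_{n\in\mathbb N_0}$ is a $(\pi_n)_{n\in\mathbb N_0}$-martingale. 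The mechanism is identical to that of the Proposition after Definition \ref{defmartingale}, now with the chain $T_n$ replaced by the solenoid coordinate $\pi_n$ and the harmonic function $h$ replaced by $h_{\varphi_0}$.

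There is no substantive obstacle here; the only points demanding care are bookkeeping, and I would make them explicit. The first is the identification that the operator $R$ of Proposition \ref{prop49}, written as $\frac{1}{N}\sum_{k=0}^{N-1}(Wf)\big(\frac{t+k}{N}\big)$, is literally the transition operator of the ${\rm Sol}_N(\mathbb T)$-Markov chain, via $\sigma(z)=z^N$ and $W=|m_0|^2$ as in Theorem \ref{elinor!!!}, and that it is normalized, so that Theorem \eqref{martingale1} is indeed available. The second is that $M_n$ is integrable against the solenoid measure $\mathbb P$: in the normalized case $h\equiv\mathbf 1$, formula \eqref{eqn3} gives $\int_{{\rm Sol}_N(\mathbb T)} h_{\varphi_0}\circ\pi_n\,d\mathbb P=\int R^n(h_{\varphi_0})\,d\lambda=\int h_{\varphi_0}\,d\lambda$, which equals $\|\varphi_0\|_2^2<\infty$, so $M_n\in\mathbf L_1(\mathbb P)$ for every $n$. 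One should also note that $h_{\varphi_0}$ need not be proportional to $\mathbf 1$, so this martingale is genuinely built from a harmonic function distinct from the one defining $\mathbb P$; this causes no difficulty, as the argument only uses $Rh_{\varphi_0}=h_{\varphi_0}$.
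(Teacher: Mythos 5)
Your proof is correct and matches the paper's intended argument: the paper offers no separate proof, presenting the corollary as an immediate application of Proposition \ref{prop49}, i.e., combining the harmonicity $Rh_{\varphi_0}=h_{\varphi_0}$ with the general harmonic-function-to-martingale mechanism of Section \ref{sec-2} via \eqref{martingale1} and \eqref{martingale-sept-78}, exactly as you do. Your additional bookkeeping (identifying the operator of Proposition \ref{prop49} with the normalized Ruelle operator \eqref{ruelle1234} of the solenoid chain, and verifying $M_n\in\mathbf L_1(\mathbb P)$ via $\int h_{\varphi_0}\,d\lambda=\|\varphi_0\|_2^2$) is sound and goes slightly beyond what the paper records.
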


\begin{corol}
Consider the wavelet filter $m_0$ with scaling function $\varphi_0\in\mathbf L_2(\mathbb R)$. Let $h_{\varphi_0}$ be the corresponding
harmonic function: $R_{m_0}h_{\varphi_0}=h_{\varphi_0}$, see \eqref{445}. Then the level-$0$ isometry
\[
V_0\,:\,f\in\mathbf L_2(\mathbb T,h_{\varphi_0}(t)dt)\,\,\mapsto\,\, \left(f(t)\widehat{\varphi_0}(t)\right)\in\mathbf L_2(\mathbb R)
\]
has the following explicit adjoint $V_0^*$ computed on $\mathbf L_2(\mathbb R)$:
\[
\left(V_0^*\gamma\right)(t)=\frac{1}{h_{\varphi_0}(t)}\sum_{n\in\mathbb Z}\gamma(t+n)\overline{\widehat{\varphi_0}(t+n)},\quad t\in\mathbb R,\quad
\gamma\in\mathbf L_2(\mathbb R,dx).
\]
\end{corol}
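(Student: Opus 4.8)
The plan is to derive the formula directly from the defining relation of the adjoint, $\langle V_0^*\gamma, f\rangle = \langle \gamma, V_0 f\rangle$, combined with the $1$-periodicity of the functions $f$ on $\mathbb{T}$ and the Parseval-type folding that produces $h_{\varphi_0}$. First I would record that $V_0$ is genuinely an isometry, which is exactly the identity implicit in \eqref{190616}: for $f$ a $1$-periodic function, writing $\int_{\mathbb{R}}=\sum_{n\in\mathbb{Z}}\int_0^1(\,\cdot\,+n)$ and using $f(t+n)=f(t)$ gives
\[
\|V_0 f\|^2_{\mathbf{L}_2(\mathbb{R})}=\int_{\mathbb{R}}|f(t)|^2|\widehat{\varphi_0}(t)|^2\,dt=\int_0^1|f(t)|^2 h_{\varphi_0}(t)\,dt=\|f\|^2_{\mathbf{L}_2(\mathbb{T},h_{\varphi_0}dt)},
\]
so $V_0$ is well defined and $V_0^*$ exists as the genuine Hilbert-space adjoint.

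Second, for $\gamma\in\mathbf{L}_2(\mathbb{R})$ and arbitrary $f\in\mathbf{L}_2(\mathbb{T},h_{\varphi_0}dt)$ I would perform the same folding on the pairing:
\[
\langle \gamma, V_0 f\rangle_{\mathbf{L}_2(\mathbb{R})}=\int_{\mathbb{R}}\gamma(t)\,\overline{f(t)}\,\overline{\widehat{\varphi_0}(t)}\,dt=\int_0^1\overline{f(t)}\Big(\sum_{n\in\mathbb{Z}}\gamma(t+n)\overline{\widehat{\varphi_0}(t+n)}\Big)\,dt,
\]
pulling out the periodic factor $\overline{f}$. The interchange of sum and integral is justified by Tonelli, once one checks via Cauchy--Schwarz that $t\mapsto\sum_n|\gamma(t+n)|\,|\widehat{\varphi_0}(t+n)|$ is integrable on $[0,1]$, its square being dominated by $\big(\sum_n|\gamma(t+n)|^2\big)h_{\varphi_0}(t)$ with both factors integrable.

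Third, I would compare this with $\langle V_0^*\gamma, f\rangle_{\mathbf{L}_2(\mathbb{T},h_{\varphi_0}dt)}=\int_0^1 (V_0^*\gamma)(t)\,\overline{f(t)}\,h_{\varphi_0}(t)\,dt$. Since $f$ is arbitrary, this forces $(V_0^*\gamma)(t)\,h_{\varphi_0}(t)=\sum_n\gamma(t+n)\overline{\widehat{\varphi_0}(t+n)}$ for $h_{\varphi_0}dt$-almost every $t$, which is the asserted formula after dividing by $h_{\varphi_0}(t)$. I would also note that this is precisely the concrete wavelet-coordinate instance of the general adjoint formula $V_A^*\psi=\mathbb{E}_{A=x}(\psi\,|\,\mathcal{F}_A)$ of Lemma \ref{lemma22}.

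The one point needing care, and the main (mild) obstacle, is the division by $h_{\varphi_0}$: on the set $\{h_{\varphi_0}=0\}$ every $\widehat{\varphi_0}(t+n)$ vanishes, so the folded sum on the right is also zero and the quotient is a harmless $0/0$; since this set is null for the measure $h_{\varphi_0}\,dt$ on $\mathbb{T}$, the value of $V_0^*\gamma$ there is irrelevant in $\mathbf{L}_2(\mathbb{T},h_{\varphi_0}dt)$, so the formula is unambiguously well defined. This is exactly the role of the equivalence of $\mu_0$ and $\mu_1$, equivalently $h_{\varphi_0}>0$ off a null set, recorded in Lemma \ref{wzero}.
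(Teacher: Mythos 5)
Your proposal is correct, and it supplies precisely the computation the paper leaves implicit: the paper states this corollary without a written proof, offering it as an application of Proposition \ref{prop49} (the harmonicity $Rh_{\varphi_0}=h_{\varphi_0}$, equation \eqref{445}) together with the general adjoint machinery of Section \ref{sec2_1}, and your periodization (``folding'') argument is exactly the verification the authors intend. Your three steps are sound: the isometry identity is \eqref{190616} folded over $\mathbb Z$; pulling the $1$-periodic factor $\overline{f}$ out of $\int_{\mathbb R}\gamma\,\overline{f}\,\overline{\widehat{\varphi_0}}\,dt$ gives $\int_0^1\overline{f(t)}\bigl(\sum_{n}\gamma(t+n)\overline{\widehat{\varphi_0}(t+n)}\bigr)dt$; and comparing with $\int_0^1(V_0^*\gamma)(t)\overline{f(t)}\,h_{\varphi_0}(t)\,dt$ for arbitrary $f$ identifies $V_0^*\gamma$ up to $h_{\varphi_0}\,dt$-null sets. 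One step is slightly compressed: from $G(t)^2\le\Gamma(t)\,h_{\varphi_0}(t)$ with $G(t)=\sum_n|\gamma(t+n)|\,|\widehat{\varphi_0}(t+n)|$ and $\Gamma(t)=\sum_n|\gamma(t+n)|^2$, integrability of $G$ on $[0,1]$ does not follow merely because $\Gamma$ and $h_{\varphi_0}$ are each integrable (a product of integrable functions need not be integrable); you need one more application of Cauchy--Schwarz in $\mathbf L_2[0,1]$, namely $\int_0^1 G\le\bigl(\int_0^1\Gamma\bigr)^{1/2}\bigl(\int_0^1 h_{\varphi_0}\bigr)^{1/2}=\|\gamma\|_{\mathbf L_2(\mathbb R)}\|\varphi_0\|_{\mathbf L_2(\mathbb R)}$, which is clearly what you meant. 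Your handling of the division by $h_{\varphi_0}$ is also the right one: since $h_{\varphi_0}(t)=0$ forces every $\widehat{\varphi_0}(t+n)$ to vanish, the quotient is a harmless $0/0$ on a set that is null for $h_{\varphi_0}\,dt$, in line with Lemma \ref{wzero}; and your closing remark that the formula is the concrete instance of $V_A^*\psi=\mathbb E_{A=x}(\psi\,|\,\mathcal F_A)$ from Lemma \ref{lemma22} is exactly the conceptual link the paper's framework is built on.
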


\begin{remark}{\rm For functions $k$ defined on $[0,1]$ (or, equivalently, on $\mathbb R/\mathbb Z$) we introduce the Fourier
coefficients
\[
\widehat{k}(n)=\int_0^1e^{-2\pi int}k(t)dt,\quad n\in\mathbb Z.
\]
With $k=h_{\varphi_0}$ from \eqref{190616} we can then compute the inner products $\int_{\mathbb R}\varphi_0(x+n)\overline{\varphi_0(x)}dx$ for
$n\in\mathbb Z$. See the following proposition.}
\end{remark}

\begin{proposition} Let $h_{\varphi_0}$ be the harmonic function associated to a scaling function $\varphi_0\in\mathbf L_2(\mathbb R,dx)$. 
Then the following hold:\\
$(i)$
\[
\int_{\mathbb R}\varphi_0(x+n)\overline{\varphi_0}(x)dx=\widehat{h_{\varphi_0}}(n),\quad n\in\mathbb Z.
\]
$(ii)$ The generating function
\[
\zeta\in\mathbb C\,\mapsto\,G_{\varphi_0}(\zeta)=\sum_{n\in\mathbb Z}\zeta^n\left(\int_{\mathbb R}\varphi_0(x+n)\overline{\varphi_0}(x)dx\right)
\]
is an analytic extension of $h_{\varphi_0}$ to an open neighborhood of $\mathbb T$.\\
$(iii)$ The scaling function $\varphi_0$ is compactly supported on $\mathbb R$  if and only if $G_{\varphi_0}$ is a polynomial.
\end{proposition}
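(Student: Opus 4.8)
The plan is to treat the three parts in order, with $(i)$ feeding $(ii)$ and both feeding $(iii)$. For $(i)$ I would pass to the frequency side and then periodize. Writing the autocorrelation coefficient $c_n=\int_{\mathbb R}\varphi_0(x+n)\overline{\varphi_0}(x)\,dx=\langle \varphi_0(\cdot+n),\varphi_0\rangle$ and applying Plancherel together with the translation rule for the Fourier transform, one gets $c_n=\int_{\mathbb R}e^{2\pi i n t}|\widehat{\varphi_0}(t)|^2\,dt$ (the sign here is the one matching the convention fixed for the Fourier coefficients $\widehat k(n)=\int_0^1 e^{-2\pi i n t}k(t)\,dt$). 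Splitting $\mathbb R=\bigsqcup_{m\in\mathbb Z}[m,m+1)$ and using $e^{2\pi i n(t+m)}=e^{2\pi i n t}$ for integers $n,m$ folds the integral over $\mathbb R$ into an integral over $[0,1)$ against $\sum_{m}|\widehat{\varphi_0}(t+m)|^2=h_{\varphi_0}(t)$, which is exactly $\widehat{h_{\varphi_0}}(n)$. The only point requiring a word of justification is the interchange of summation and integration, legitimate because $|\widehat{\varphi_0}|^2\in\mathbf L_1(\mathbb R)$ and $h_{\varphi_0}\in\mathbf L_1(\mathbb T)$.

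Part $(ii)$ is then essentially a restatement of $(i)$. By construction $G_{\varphi_0}(\zeta)=\sum_{n\in\mathbb Z}c_n\zeta^n$, so on the unit circle $\zeta=e^{2\pi i t}$ one has $G_{\varphi_0}(e^{2\pi i t})=\sum_n\widehat{h_{\varphi_0}}(n)e^{2\pi i n t}$, which is the Fourier series of $h_{\varphi_0}$; hence $G_{\varphi_0}|_{\mathbb T}=h_{\varphi_0}$. To upgrade this boundary identity to a genuine analytic extension across $\mathbb T$ I would invoke the region of convergence of the Laurent series: convergence in an annulus $\rho<|\zeta|<\rho^{-1}$ with $\rho<1$ follows once $|c_n|$ decays geometrically in $|n|$. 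I expect this to be the point to watch in $(ii)$, since membership $\varphi_0\in\mathbf L_2(\mathbb R)$ alone gives only $c_n\to 0$; the geometric decay must be extracted from the decay/regularity of $\varphi_0$, and it is automatic, indeed $G_{\varphi_0}$ becomes a Laurent polynomial, precisely in the compactly supported case treated in $(iii)$.

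For $(iii)$ the forward implication is immediate: if $\mathrm{supp}\,\varphi_0\subseteq[-M,M]$, then $\varphi_0(\cdot+n)$ and $\varphi_0$ have disjoint supports whenever $|n|>2M$, so $c_n=0$ for all such $n$ and $G_{\varphi_0}(\zeta)=\sum_{|n|\le 2M}c_n\zeta^n$ is a (Laurent) polynomial. The converse is where the real work lies, and I regard it as the main obstacle: one must deduce compact support of $\varphi_0$ from the hypothesis that only finitely many integer autocorrelations $c_n$ are nonzero, equivalently that the periodization $h_{\varphi_0}$ is a trigonometric polynomial. I would attack this through the two-scale refinement relation $\widehat{\varphi_0}(t)=m_0(t/N)\,\widehat{\varphi_0}(t/N)$, which ties $h_{\varphi_0}$ to the filter $m_0$, aiming to force $m_0$ to be a trigonometric polynomial and then to conclude compact support of $\varphi_0$ by the standard finite-filter argument. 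This step genuinely uses the scaling structure rather than $(i)$–$(ii)$ alone, and care is needed, since a finitely supported autocorrelation sequence is a priori a weaker condition than compact support in the absence of the refinement hypothesis.
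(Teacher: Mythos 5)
Your part $(i)$ is exactly the paper's proof: Parseval's identity in $\mathbf L_2(\mathbb R)$ followed by folding $\mathbb R=\bigsqcup_{m\in\mathbb Z}[m,m+1)$ to turn $\int_{\mathbb R}e^{-2\pi int}|\widehat{\varphi_0}(t)|^2dt$ into $\int_0^1e^{-2\pi int}h_{\varphi_0}(t)dt$; the paper justifies the interchange by dominated convergence, you by $|\widehat{\varphi_0}|^2\in\mathbf L_1(\mathbb R)$, which is the same point, and the sign bookkeeping is harmless since $h_{\varphi_0}$ is real, so $c_{-n}=\overline{c_n}$. Where you and the paper diverge is $(ii)$--$(iii)$: the paper offers no argument at all (``The other two claims follow then easily''), whereas you correctly refuse to accept this and isolate the two genuine issues, namely that $\varphi_0\in\mathbf L_2$ gives only $c_n\to 0$, not the geometric decay needed for a Laurent series converging in an annulus around $\mathbb T$, and that the converse half of $(iii)$ does not follow from $(i)$--$(ii)$.

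Your instincts here are not merely cautious but vindicated, and in a stronger form than you state: the converse of $(iii)$ is \emph{false} as written, so the refinement-equation strategy you sketch cannot be completed. Take any scaling function with orthonormal integer translates that is not compactly supported --- the Shannon scaling function $\varphi_0(x)=\sin(\pi x)/(\pi x)$, or Meyer's Schwartz-class scaling function. Then $c_n=\delta_{n,0}$, $h_{\varphi_0}\equiv 1$, and $G_{\varphi_0}\equiv 1$ is a polynomial, yet ${\rm supp}\,\varphi_0=\mathbb R$. (The paper explicitly allows this class: it notes the translates need not be orthogonal, so the orthonormal case is included.) The structural reason is the one you half-identify: finiteness of the autocorrelation sequence only says the periodization $\sum_m|\widehat{\varphi_0}(t+m)|^2$ is a trigonometric polynomial, which carries no Paley--Wiener information about $\widehat{\varphi_0}$ itself; nor does it force $m_0$ to be a finite filter (Meyer's $m_0$ is smooth and non-polynomial while $h_{\varphi_0}\equiv 1$), so the step in your plan ``force $m_0$ to be a trigonometric polynomial'' already fails. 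Similarly, $(ii)$ holds exactly when $c_n$ decays geometrically (equivalently, $h_{\varphi_0}$ is real-analytic), which is automatic for compactly supported or exponentially decaying $\varphi_0$ but is an added hypothesis in general. So the correct assessment is: your $(i)$ matches the paper, your forward implication in $(iii)$ is complete and correct, and the remaining claims require supplementary hypotheses (exponential decay of $\varphi_0$ for $(ii)$; for $(iii)$, only the implication you proved survives), not a better argument --- a defect of the proposition itself that the paper's ``follow then easily'' conceals.
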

\begin{proof} We need only to prove $(i)$. The other two claims follow then easily. Using Parseval's equality in $\mathbf L_2(\mathbb R)$
we have
\[
\begin{split}
\int_{\mathbb R}\varphi_0(x+n)\overline{\varphi_0}(x)dx&=\int_{\mathbb R}e^{-2\pi i nt}|\widehat{\varphi_0}(t)|^2dt\\
&=\sum_{m\in\mathbb Z}\int_0^1e^{-2\pi i nt}|\widehat{\varphi_0}(t+m)|^2dt\\
&\text{and using the dominated convergence theorem,}\\
&=\int_0^1e^{-2\pi i nt}\left(\sum_{m\in\mathbb Z}|\widehat{\varphi_0}(t+m)|^2\right)dt\\
&=\int_0^1e^{-2\pi i nt}h_{\varphi_0}(t)dt\\
&=\widehat{h_{\varphi_0}}(n),\quad \forall n\in\mathbb Z.
\end{split}
\]
\end{proof}

\begin{corol} Orthogonality of the family $\left\{\varphi_0(\cdot+n)\right\}_{n\in\mathbb Z}$ in $\mathbf L_2(\mathbb R,dx)$ is  
equivalent to the condition $h_{\varphi_0}\equiv 1$.
\end{corol}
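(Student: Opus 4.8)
The plan is to reduce the Hilbert-space orthogonality condition on the integer translates $\{\varphi_0(\cdot+n)\}_{n\in\mathbb Z}$ to a statement about the Fourier coefficients of $h_{\varphi_0}$, using part $(i)$ of the preceding proposition. That identity, namely $\int_{\mathbb R}\varphi_0(x+n)\overline{\varphi_0}(x)\,dx=\widehat{h_{\varphi_0}}(n)$ for all $n\in\mathbb Z$, is precisely the bridge: it rewrites each translation inner product as a single Fourier coefficient of the periodic function $h_{\varphi_0}$, so the geometric hypothesis is transported directly onto the Fourier side.

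First I would observe that orthonormality of the family $\{\varphi_0(\cdot+n)\}_{n\in\mathbb Z}$ in $\mathbf L_2(\mathbb R,dx)$ is the assertion that $\int_{\mathbb R}\varphi_0(x+n)\overline{\varphi_0}(x)\,dx=\delta_{n,0}$ for every $n\in\mathbb Z$. Feeding this into the identity above, the orthonormality condition becomes exactly $\widehat{h_{\varphi_0}}(n)=\delta_{n,0}$ for all $n$. On the other hand, the constant function $\mathbf 1$ on $\mathbb R/\mathbb Z$ has Fourier coefficients $\int_0^1 e^{-2\pi i nt}\,dt=\delta_{n,0}$, so the claim amounts to saying that $h_{\varphi_0}$ and $\mathbf 1$ share the same Fourier coefficients.

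The concluding step is a uniqueness argument. Since $\varphi_0\in\mathbf L_2(\mathbb R)$, one has $\widehat{\varphi_0}\in\mathbf L_2(\mathbb R)$ and hence $\int_0^1 h_{\varphi_0}(t)\,dt=\|\widehat{\varphi_0}\|_{\mathbf L_2(\mathbb R)}^2=\|\varphi_0\|^2<\infty$; thus $h_{\varphi_0}\in\mathbf L_1([0,1])$, its Fourier coefficients are well defined, and they determine it uniquely almost everywhere. Therefore $\widehat{h_{\varphi_0}}(n)=\delta_{n,0}$ for all $n$ forces $h_{\varphi_0}\equiv 1$, and conversely $h_{\varphi_0}\equiv 1$ returns $\widehat{h_{\varphi_0}}(n)=\delta_{n,0}$ and hence the orthonormality relations back through part $(i)$. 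The one point requiring care---and the only real obstacle---is the interpretation of ``orthogonality'': the equivalence with $h_{\varphi_0}\equiv 1$ needs the normalization $\|\varphi_0\|=1$ built in, since mere pairwise orthogonality only yields $\widehat{h_{\varphi_0}}(n)=\|\varphi_0\|^2\,\delta_{n,0}$, i.e.\ $h_{\varphi_0}$ constant equal to $\|\varphi_0\|^2$. Under the standard wavelet normalization $\|\varphi_0\|=1$ this constant is $1$, which completes the argument.
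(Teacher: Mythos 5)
Your proof is correct and matches the paper's (implicit) argument exactly: the corollary is stated without proof there, as an immediate consequence of part $(i)$ of the preceding proposition, $\int_{\mathbb R}\varphi_0(x+n)\overline{\varphi_0}(x)\,dx=\widehat{h_{\varphi_0}}(n)$, combined with uniqueness of Fourier coefficients for the $\mathbf L_1$-function $h_{\varphi_0}$ on $\mathbb R/\mathbb Z$. Your closing remark is also well taken: as literally stated (``orthogonality''), the equivalence yields only $h_{\varphi_0}\equiv\|\varphi_0\|^2$, so the paper tacitly assumes the standard normalization $\|\varphi_0\|_{\mathbf L_2(\mathbb R)}=1$, and your proof correctly makes that hypothesis explicit.
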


In the next example we show that the Fej\'er kernels arise as $h_{\varphi_0}$ for a family of scaling
functions $\varphi_0\in\mathbf L_2(\mathbb R,dx)$.
We first recall that the Dirichlet kernel and Fej\'er kernels are defined respectively by
\[
D_k(\zeta)=\sum_{j=-k}^k\zeta^j
\]
and
\[
F_k(\zeta)=\frac{\sum_{u=0}^k D_u(\zeta)}{k+1}.
\]

\begin{example}
We take $\varphi_0(x)=\frac{1}{\sqrt{2m+1}}\chi_{[0,2m+1]}(x)$, where $m\in\mathbb N$ is  fixed. Then
\[
\int_{\mathbb R}\varphi_0(x)\varphi_0(x-n)dx=\begin{cases}\,\, 0,\,\,\,\hspace{0.9cm}{\rm if}\,\,\, |n|\ge 2m+1,\\
\, \frac{2m+1-n}{2m+1}\,\,\,\,{\rm for} \,\,\, n\in\left\{0,\ldots, 2m\right\}.
\end{cases}
\]
Thus
\[
(2m+1)h_{\varphi_0}(\zeta)=\zeta^{-2m}+2\zeta^{1-2m}+\cdots +(2m)\zeta^{-1}+(2m+1)+(2m)\zeta+\cdots +2\zeta^{2m-1}+\zeta^{2m},
\]
which is the Fej\'er kernel $F_{2m}$.
\end{example}

\subsubsection{Realization using the solenoid}

We set  $e_n(z)=z^n$, $n\in\mathbb Z$.

\begin{theorem}
Let $z\in\mathbb T$. The function
\begin{equation}
L\left(\frac{n}{N^k}\right)=\left(R^k\left(e_nh\right)\right)(z)
\end{equation}
is positive definite on $\mathbb Z[1/N]$, and there exists a positive finite measure $d\mu_z$ on ${\rm Sol}_N(\mathbb T)$ such that
\begin{equation}
L\left(\frac{n}{N^k}\right)=\int_{{\rm Sol}_N(\mathbb T)}\chi\left(\frac{n}{N^k}\right)(x)dP_z(x)
\end{equation}
\end{theorem}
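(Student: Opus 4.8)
The plan is to recognize $L$ as a positive definite function on the discrete abelian group $\mathbb{Z}[1/N]$ and then invoke Bochner's theorem for locally compact abelian groups, whose Pontryagin dual in this case is precisely the solenoid ${\rm Sol}_N(\mathbb{T})$ via the pairing \eqref{eqduality}. First I would check that $L$ is well defined on $\mathbb{Z}[1/N]$, i.e. that $(R^k(e_n h))(z)$ depends only on the rational $n/N^k$ and not on the chosen representative. Since $e_{Nn}(w) = w^{Nn} = (w^N)^n = (e_n\circ\sigma)(w)$, the pull-out property \eqref{roxanna} together with $Rh = h$ gives
\[
R(e_{Nn} h) = R\big((e_n\circ\sigma)h\big) = e_n R(h) = e_n h,
\]
so that $R^{k+1}(e_{Nn}h) = R^k(e_n h)$, which is exactly the consistency needed to pass from $n/N^k$ to $Nn/N^{k+1}$; iterating this covers all representatives. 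In particular $L(0) = (R^k h)(z) = h(z)$.

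Next I would establish positive definiteness. Given finitely many points of $\mathbb{Z}[1/N]$, I would write them over a common denominator as $n_1/N^k,\dots,n_m/N^k$ and take $c_1,\dots,c_m\in\mathbb{C}$. Using $e_{n-m} = e_n\overline{e_m}$ on $\mathbb{T}$ and the linearity of $R$,
\[
\sum_{i,j} c_i \overline{c_j}\, L\!\left(\frac{n_i - n_j}{N^k}\right) = \left(R^k\!\left(\Big|\sum_i c_i e_{n_i}\Big|^2\, h\right)\right)(z).
\]
Setting $g = \sum_i c_i e_{n_i}$, the integrand $|g|^2 h$ is nonnegative (since $h\ge 0$ by the Perron--Frobenius normalization), and as $R$ is a positive operator (see \eqref{trans123}) so is $R^k$; hence the right-hand side is $\ge 0$. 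This proves that $L$ is positive definite on $\mathbb{Z}[1/N]$.

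Finally, because $\mathbb{Z}[1/N]$ is discrete, $L$ is automatically continuous, so Bochner's theorem for LCA groups yields a unique positive finite measure $P_z$ on the dual of $\mathbb{Z}[1/N]$, which by \eqref{eqduality} is the solenoid ${\rm Sol}_N(\mathbb{T})$ with characters $\chi(\ell/N^k)(x) = x_k^\ell$, such that
\[
L\!\left(\frac{n}{N^k}\right) = \int_{{\rm Sol}_N(\mathbb{T})} \chi\!\left(\frac{n}{N^k}\right)(x)\, dP_z(x),
\]
and with total mass $P_z({\rm Sol}_N(\mathbb{T})) = L(0) = h(z)$, confirming finiteness. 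I expect the main obstacle to be not the positivity itself (which is immediate from $R\ge 0$ and $h\ge 0$) but rather the careful bookkeeping in the well-definedness step and the matching of the duality convention in \eqref{eqduality} with the Fourier-transform convention used in the statement of Bochner's theorem, so that the character $\chi(n/N^k)$ appears with the correct exponent in the integral representation.
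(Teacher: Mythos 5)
Your proposal is correct and follows essentially the same route as the paper: well-definedness via $R(e_{Nn}h)=e_nh$ (you invoke the pull-out property \eqref{roxanna} abstractly where the paper verifies the same identity by the explicit branch sum in \eqref{ruelle1234}), positive definiteness by clearing denominators and writing the quadratic form as $\bigl(R^k(|g|^2h)\bigr)(z)\ge 0$, and then Bochner's theorem with ${\rm Sol}_N(\mathbb T)$ as the dual of the discrete group $\mathbb Z[1/N]$ via \eqref{eqduality}. Your added observations that discreteness gives continuity for free and that the total mass is $L(0)=h(z)$ are correct refinements, not departures from the paper's argument.
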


{\bf Proof:} We first check that $L$ is well defined. We have
\[
\begin{split}
L\left(\frac{Nn}{N^{k+1}}\right)&=\left(R^{k+1}\left(e_{Nn}h\right)\right)(z)\\
&=\left(R^k\left(Re_{nN}h\right)\right)(z)\\
&=\left(R^k\left(\sum_{\substack{w\in\mathbb T\\ w^N=z}}|m_0(w)|^2e_{nN}(w)h(w)\right)\right)(z)\\
&=\left(R^k\left(\sum_{\substack{w\in\mathbb T\\ w^N=z}}|m_0(w)|^2e_{n}(z)h(w)\right)\right)(z)\quad\text{(since $e_{nN}(w)=e_n(z)$)}\\
&=\left(R^k\left(e_n\sum_{\substack{w\in\mathbb T\\ w^N=z}}|m_0(w)|^2h(w)\right)\right)(z)\\
&=\left(R^ke_nRh\right)(z)\\
&=\left(R^ke_nh\right)(z)\\
&=L\left(\frac{n}{N^{k}}\right).
\end{split}
\]
We now prove that $L$ is positive definite on $\mathbb Z[1/N]$. Let $M\in\mathbb N$, $c_1,\ldots, c_M\in\mathbb C$ and
$\frac{n_1}{N^{k_1}},\ldots, \frac{n_M}{N^{k_M}}\in\mathbb Z[1/N]$. In view of the first part of the proof, we assume all the denominators equal, say to $k$. We have
\[
\begin{split}
\sum_{u,v=1}^M \overline{c_u}c_vL\left(\frac{n_u}{N^{k}}-\frac{n_v}{N^{k}}\right)&=\sum_{u,v=1}^M \overline{c_u}c_vR^k\left((e_{n_u-n_v})h\right)(z)\\
&=\sum_{u,v=1}^M \overline{c_u}c_vR^k\left((e_{n_u-n_v})h\right)(z)\\
&=\sum_{u,v=1}^M \overline{c_u}c_vR^k\left((e_{n_u}\overline{e_{n_v}})h\right)(z)\\
&=\left(R\left(|g|^2h\right)\right)(z)\ge 0,
\end{split}
\]
with $g=\sum_{u=1}^M\overline{c_u}e_u$.\\

The second claim comes from Bochner's theorem for compact groups.

\mbox{}\qed\mbox{}\\

Let
\[
\pi_0(z)=z_0,
\]
and
\[
U(\psi)=m_0(\pi_0(z))(\psi\circ \sigma_N)(z).
\]
\begin{proposition}
$U$ is unitary and its adjoint is given by the formula
\begin{equation}
U^*\psi=\frac{1}{m\circ\pi_1}\psi\circ\sigma_N^{-1}.
\end{equation}
\end{proposition}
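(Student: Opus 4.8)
The plan is to show that $U$ is a surjective isometry of $\mathbf L_2({\rm Sol}_N(\mathbb T),\mathbb P)$, so that it is unitary and its adjoint equals its inverse, which I will exhibit explicitly and recognize as the asserted formula (with $m=m_0$). Throughout I identify the map $\sigma_N$ with the solenoid automorphism $\widehat\sigma$ of \eqref{sigmahat} (so that $\sigma_N^{-1}=\tau$, see \eqref{tau123}), and I use the relations $\pi_{n+1}\circ\sigma_N=\pi_n$ and $\pi_n\circ\sigma_N^{-1}=\pi_{n+1}$ coming from \eqref{corinne}, together with the quasi-invariance $\frac{d\mathbb P\circ\sigma_N}{d\mathbb P}=W\circ\pi_0$ from Theorem \ref{elinor!!!}, where in the present wavelet setting $W=|m_0|^2$ and $R$ is normalized, $R1=1$.

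First I would establish that $U$ is isometric. Since $|U\psi|^2=(|m_0|^2\circ\pi_0)\,(|\psi|^2\circ\sigma_N)=(W\circ\pi_0)(|\psi|^2\circ\sigma_N)$, applying the change-of-variables identity \eqref{lysbeth} (equivalently \eqref{eq12345}) with $|\psi|^2$ in place of $\psi$ gives
\[
\|U\psi\|^2=\int_{{\rm Sol}_N(\mathbb T)}(W\circ\pi_0)\,(|\psi|^2\circ\sigma_N)\,d\mathbb P=\int_{{\rm Sol}_N(\mathbb T)}|\psi|^2\,d\mathbb P=\|\psi\|^2 .
\]
This is precisely the computation underlying the unitarity of the operator in \eqref{isabelle}; the present $U$ differs from it only by the phase of $m_0$, which is invisible to the norm because $|m_0|^2=W$. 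By the polarization identity, $U$ then preserves the full inner product.

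Next I would introduce the candidate operator $V\psi=\frac{1}{m_0\circ\pi_1}\,(\psi\circ\sigma_N^{-1})$ and verify directly that $UV=VU=I$. For $VU$, using $(U\psi)\circ\sigma_N^{-1}=(m_0\circ\pi_0\circ\sigma_N^{-1})\,\psi=(m_0\circ\pi_1)\,\psi$ I obtain $VU\psi=\psi$; for $UV$, using $(V\psi)\circ\sigma_N=\frac{1}{m_0\circ\pi_0}\,\psi$ (because $\pi_1\circ\sigma_N=\pi_0$ and $\sigma_N^{-1}\circ\sigma_N=I$) I obtain $UV\psi=\psi$. Hence $V=U^{-1}$. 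Combined with the previous step, $U$ is a surjective isometry, therefore unitary, and $U^*=U^{-1}=V$, which is exactly the claimed formula $U^*\psi=\frac{1}{m\circ\pi_1}\,\psi\circ\sigma_N^{-1}$ with $m=m_0$. Note that no complex conjugation appears: we are merely reading off the two-sided inverse of a unitary operator.

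The step requiring the most care, and the main obstacle, is the well-definedness of $V$: it divides by $m_0\circ\pi_1$, so I must know that $m_0\neq 0$ holds $\mathbb P$-almost everywhere. This is supplied by the standing hypothesis that $\mu_0$ and $\mu_1$ are equivalent, which by Lemma \ref{wzero} forces $W=|m_0|^2\neq 0$ a.e., so that $1/(m_0\circ\pi_1)$ is finite off a null set and $V$ maps $\mathbf L_2$ into itself. The only remaining point is that the isometry identity \eqref{lysbeth} is legitimately available in the normalized regime $R1=1$ with $W=|m_0|^2$, which is exactly the content of Theorem \ref{elinor!!!} in the present circle/solenoid case.
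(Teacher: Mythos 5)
Your proof is correct and is essentially the argument the paper intends: the paper's own ``proof'' merely defers to the previous considerations --- the unitarity of the operator \eqref{isabelle} together with the quasi-invariance identity of Theorem \ref{elinor!!!} --- and your verification (isometry via \eqref{lysbeth} with $W=|m_0|^2$, explicit two-sided inverse using $\pi_0\circ\widehat{\sigma}^{-1}=\pi_1$ and $\pi_1\circ\widehat{\sigma}=\pi_0$, non-vanishing of $m_0$ via Lemma \ref{wzero}) is exactly a worked-out version of those considerations, including the observation that the phase of $m_0$ is invisible to the norm. One small point to tighten: a.e.\ finiteness of $1/(m_0\circ\pi_1)$ alone does not yet show that $V$ maps $\mathbf L_2$ into itself, but since $|V\psi|=\bigl(W\circ\pi_1\bigr)^{-1/2}\bigl(|\psi|\circ\widehat{\sigma}^{-1}\bigr)$, one more application of \eqref{lysbeth} (equivalently, the inverse formula in \eqref{isabelle} applied to $|\psi|$) yields $\|V\psi\|=\|\psi\|$, which closes this gap in one line.
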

{Proof:}   The results follow from the previous considerations; see also  \cite{MR1837681}.

\mbox{}\qed\mbox{}\\

\subsubsection{Multiresolutions}

We set
\[
\mathcal L_k=\text{\rm closed linear span}\,\left\{\chi\left(\frac{n}{N^k}\right),\, n\in\mathbb Z\right\},\quad k\in\mathbb Z.
\]

\subsubsection{Embedding the real line into the solenoid}

We define
\begin{equation}
\label{gammaN}
\gamma_N(t)=\left(e^{2\pi i[t]},e^{2\pi i[t/N]},e^{2\pi i[t/N^2]},\ldots\right)\in\prod_{n=0}^\infty\left(\mathbb R/\mathbb Z\right),
\end{equation}
where $[x]$ denotes the value of $x\in\mathbb Z$ modulo $1$.
\begin{lemma}
The map $\gamma_N$ is one-to-one from $\mathbb R$ into ${\rm Sol}_N(\mathbb T)$, meaning that
\[
\gamma_N(t)=(1,1,1,\ldots)\,\,\,\iff\,\,\, t=0.
\]
\end{lemma}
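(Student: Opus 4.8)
The plan is to exploit that $\gamma_N$ is a group homomorphism and thereby reduce injectivity to the stated kernel computation. First I would record that $\gamma_N$ really does land in ${\rm Sol}_N(\mathbb T)$: writing $z_k = e^{2\pi i t/N^k}$ (the bracket $[\,\cdot\,]$ only selects a representative modulo $1$ and so may be dropped inside the exponential, since $e^{2\pi i [x]} = e^{2\pi i x}$), one checks $z_{k+1}^N = e^{2\pi i N t / N^{k+1}} = e^{2\pi i t/N^k} = z_k$, which is exactly the defining relation of the solenoid. Next I would observe that ${\rm Sol}_N(\mathbb T)$ is an abelian group under coordinatewise multiplication, with identity $(1,1,1,\ldots)$, and that $\gamma_N(s+t) = \gamma_N(s)\gamma_N(t)$ because $e^{2\pi i(s+t)/N^k} = e^{2\pi i s/N^k}e^{2\pi i t/N^k}$ in each coordinate. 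Thus $\gamma_N$ is a homomorphism of $(\mathbb R,+)$ into this group, and injectivity is equivalent to triviality of its kernel, i.e.\ precisely to the displayed equivalence.

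It then remains to prove $\gamma_N(t) = (1,1,1,\ldots) \iff t = 0$. The implication ``$\Leftarrow$'' is immediate. For ``$\Rightarrow$'', I would unwind the condition coordinate by coordinate: $\gamma_N(t) = (1,1,\ldots)$ forces $e^{2\pi i t/N^k} = 1$, hence $t/N^k \in \mathbb Z$, for every $k = 0, 1, 2, \ldots$. In other words $t \in \bigcap_{k\ge 0} N^k\mathbb Z$. The coordinate $k=0$ already gives $t\in\mathbb Z$, and then $N^k \mid t$ for all $k$; since $N \ge 2$ the quantity $N^k$ tends to $\infty$, so an integer divisible by every $N^k$ can only be $0$. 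Hence $t=0$, establishing that the kernel is trivial and therefore that $\gamma_N$ is one-to-one.

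There is no serious obstacle here: the whole argument rests on the elementary fact $\bigcap_{k\ge 0} N^k\mathbb Z = \{0\}$, valid exactly because $N\ge 2$. The only point deserving a line of care is the interpretation of the bracket notation, namely that each coordinate of $\gamma_N(t)$ is genuinely $e^{2\pi i t/N^k}$; once this is settled the verification is routine.
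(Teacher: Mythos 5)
Your proof is correct, and it is worth noting that the paper supplies no argument of its own at this point: its ``proof'' consists solely of the citation \cite{MR1837681}, so your write-up is a genuine, self-contained replacement rather than a variant of an internal argument. Your route is the standard elementary one, and every step checks out: the identity $e^{2\pi i[x]}=e^{2\pi ix}$ legitimizes dropping the bracket, after which the solenoid relation $z_{k+1}^N=e^{2\pi iNt/N^{k+1}}=e^{2\pi it/N^k}=z_k$ confirms that $\gamma_N$ indeed lands in ${\rm Sol}_N(\mathbb T)$ (a verification the paper's definition, with codomain $\prod_{n=0}^\infty(\mathbb R/\mathbb Z)$, genuinely requires); the observation that ${\rm Sol}_N(\mathbb T)$ is a group under coordinatewise multiplication and $\gamma_N$ a homomorphism of $(\mathbb R,+)$ into it is what makes the lemma's phrasing legitimate in the first place, since only for a homomorphism is injectivity equivalent to the single kernel condition $\gamma_N(t)=(1,1,1,\ldots)\iff t=0$ --- a point the paper leaves entirely implicit; and the kernel computation $t/N^k\in\mathbb Z$ for all $k\ge 0$, i.e.\ $t\in\bigcap_{k\ge 0}N^k\mathbb Z=\{0\}$ for $N\ge 2$, is exactly right. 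What the paper's citation buys instead of your computation is context: in \cite{MR1837681} the map $\gamma_N$ arises as the canonical densely embedded one-parameter subgroup of the $N$-solenoid, dual to the inclusion $\mathbb Z\hookrightarrow\mathbb Z[1/N]$ discussed just before the lemma, and injectivity comes packaged with that duality picture. Your argument recovers the claim with no machinery at all, at the cost of not exhibiting the (true, but unneeded here) density of the image.
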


{\bf Proof:} See \cite{MR1837681}.

\mbox{}\qed\mbox{}\\

\subsubsection{Probability}

Let $h\ge 0$ be such that $Rh=h$, and assume that $\int_{\mathbb T}h(\lambda)d\lambda=1$. 

\begin{proposition}
The distribution of the random variable
\[
\pi_k(\mathbf z)=z_k
\]
is $|m^{(k)}(z)|^2h(z)d\lambda$, where
\begin{equation}
m^{(k)}(z)=m_0(z)m_0(z^N)\cdots m_0(z^{N^{k-1}}).
\end{equation}
\end{proposition}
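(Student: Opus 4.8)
The plan is to read the law of $\pi_k$ directly off the defining identity \eqref{eqn3} for the solenoid measure $\mathbb P$, and then to transfer the iterated Ruelle operator onto the test function by duality. First I would specialize \eqref{eqn3} to the present setting $X=\mathbb T$, $\sigma(z)=z^N$: for every bounded measurable $f$ on $\mathbb T$, the distribution $\mu_k$ of $\pi_k$ obeys
\[
\int_{\mathbb T}f(z)\,d\mu_k(z)=\int_{{\rm Sol}_N(\mathbb T)}(f\circ\pi_k)\,d\mathbb P=\int_{\mathbb T}R^k(fh)(z)\,d\lambda(z).
\]
It then suffices to rewrite the right-hand side as $\int_{\mathbb T}f(z)\rho(z)\,d\lambda(z)$ for an explicit density $\rho$, whence $d\mu_k=\rho\,d\lambda$.

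The main step will be to move $R^k$ off the integrand by duality. I would use $\int_{\mathbb T}(Rg)\phi\,d\lambda=\int_{\mathbb T}g\,(R^*\phi)\,d\lambda$ together with $(R^k)^*=(R^*)^k$ and the constant test function $\phi\equiv\mathbf 1$, obtaining
\[
\int_{\mathbb T}R^k(fh)\,d\lambda=\int_{\mathbb T}(fh)\big((R^*)^k\mathbf 1\big)\,d\lambda.
\]
Next I would invoke the explicit adjoint formula \eqref{adjointR}, $(R^*\phi)(z)=|m_0(z)|^2\phi(z^N)$, and compute $(R^*)^k\mathbf 1$ by induction on $k$: the base case is trivial, and the inductive step gives
\[
\big((R^*)^k\mathbf 1\big)(z)=|m_0(z)|^2\big((R^*)^{k-1}\mathbf 1\big)(z^N)=|m_0(z)|^2\prod_{j=0}^{k-2}\big|m_0\big((z^N)^{N^j}\big)\big|^2=\prod_{i=0}^{k-1}\big|m_0\big(z^{N^i}\big)\big|^2,
\]
which is exactly $|m^{(k)}(z)|^2$ by the definition of $m^{(k)}$. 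The only bookkeeping to watch is the reindexing $z\mapsto z^N$ inside the adjoint, which sends each factor $z^{N^j}$ to $z^{N^{j+1}}$.

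Combining the two displays yields $\int_{\mathbb T}f\,d\mu_k=\int_{\mathbb T}f\,|m^{(k)}|^2 h\,d\lambda$ for all bounded measurable $f$, i.e.\ $d\mu_k=|m^{(k)}|^2 h\,d\lambda$, as claimed. I do not expect a genuine obstacle here: the argument is a clean duality-plus-induction computation in which the explicit adjoint \eqref{adjointR} does all the work, and the relation $Rh=h$ is not even needed for the density formula itself. As a consistency check I would record that $\mu_k(\mathbb T)=\int_{\mathbb T}R^k h\,d\lambda=\int_{\mathbb T}h\,d\lambda=1$, now using $Rh=h$ and the normalization $\int_{\mathbb T}h\,d\lambda=1$, so that $\mu_k$ is indeed a probability measure.
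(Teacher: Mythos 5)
Your proof is correct and follows essentially the same route as the paper: both reduce the claim to $\int_{\mathbb T}R^k(fh)\,d\lambda=\int_{\mathbb T}fh\,(R^*)^k\mathbf 1\,d\lambda$ and compute $(R^*)^k\mathbf 1=|m^{(k)}|^2$ from the adjoint formula \eqref{adjointR}, the only cosmetic difference being that the paper tests against the characters $e_n$ (via the duality with $\mathbb Z[1/N]$) while you test against general bounded measurable $f$ directly from \eqref{eqn3}. Your side remarks are also apt: $Rh=h$ enters only through the existence of $\mathbb P$ and the normalization check, which the paper records in the remark immediately following its proof.
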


{\bf Proof:} We want to show that for every bounded measurable function $f$ on $\mathbb T$ we have
\[
\int_{\mathbb T} f(e^{it})d\lambda(e^{it})=\int_{{\rm Sol}_N(\mathbb T)}f(\pi_k(\mathbf z))dP(\mathbf z).
\]
We prove this equality for $f(z)=z^n$ (which we denoted by $e_n(z)$ ) with $n\in\mathbb Z$, that is
\begin{equation}
\label{Len}
\int_{\mathbb T}e^{int}d\lambda(e^{it})=\int_{{\rm Sol}_N(\mathbb T)}e_n(\pi(\mathbf z))dP(\mathbf z).
\end{equation}
But recall that
\[
\chi\left(\frac{n}{N^k}\right)(\mathbf z)=z_k^n=(\pi_k(\mathbf z))^n=e_n(\pi_k(\mathbf z)).
\]
Hence
\[
\begin{split}
\int_{{\rm Sol}_N(\mathbb T)}\chi\left(\frac{n}{N^k}\right)(\mathbf z)dP(\mathbf z)&=\int_{\mathbb T}R^k(e_nh)(e^{it})d\lambda(e^{it})\\
&=\int_{\mathbb T}(R^{*k}(1))(e^{it})e_n(e^{it})h(e^{it})d\lambda(e^{it})\\
&=\int_{\mathbb T}|m^{(k)}(e^{it})|^2h(e^{it})d\lambda(e^{it}),
\end{split}
\]
where we have used \eqref{adjointR} to compute $R^{*k}1$.
\mbox{}\qed\mbox{}\\

\begin{remark}
{\rm
We note that
\[
\begin{split}
\int_{\mathbb T}|m^{(k)}(e^{it})|^2h(e^{it})d\lambda(e^{it})&=\int_{\mathbb T}(R^{*k}(1))(e^{it})h(e^{it})d\lambda(e^{it})\\
&=\int_{\mathbb T}(R^{k}(h))(e^{it})d\lambda(e^{it})\\
&=\int_{\mathbb T}h(e^{it})d\lambda(e^{it})=1,
\end{split}
\]
as it should be.
}
\end{remark}

\subsubsection{The martingale property}

As in the previous section $h$ denotes a positive function such that $Rh=h$ and $\int_{\mathbb T}h(\lambda)d\lambda=1$. 
Let $z,w\in\mathbb T$ be such that $w^N=z$, and set
\[
P(z\mapsto w)=\frac{1}{N}|m(w)|^2h(w).
\]
The Markov property now reads
\begin{equation}
\label{eqmarkov}
\sum_{\substack{w\in\mathbb T\\ w^N=z}}P(z\mapsto w)=1.
\end{equation}

The martingale property is now
\[
R(\xi_{n+1}h)=\xi_nh,
\]
and the following formulas hold:
\[
\begin{split}
\frac{R(fh)}{h}&\le 1\quad (\text{conditional expectation})\\
\frac{R(\xi(z^n))h}{h}&=\xi\\
\sum_{w^N=z}\xi(w)h(w)&=\xi(z)\frac{Rh}{h}\\
M_{z_0z_1}M_{z_1z_2}\cdots &=R^k,
\end{split}
\]
with
\[
M_{z_0z_1}M_{z_1z_2}=\frac{1}{N}\sum_{z_1^n=z_0}|m(z_1)|^2\sum_{z_2^N=z_1}|m(z_1)|^2.
\]

\subsection{Fractal examples}
We here consider $X$ to be the set of numbers of the form
\[
x=\sum_{n=1}^\infty\frac{b_n}{3^n},\quad {\rm where}\quad b_n\in\left\{0,2\right\}
\]
and $\sigma(x)=3x$ (mod $1$). In symbolic form we have
\[
(b_1,b_2,b_3,\ldots)\,\xrightarrow{\hspace{.3cm}{\sigma}\hspace*{.3cm}}\,(b_2,b_3,b_4,\ldots)
\]
More generally, let $d\in\mathbb N$ and let $A\in\mathbb Z^{d\times d}$ with all eigenvalues of modulus strictly bigger than $1$, and let 
$m<|\det A|$. Fix $d$ residue classes $b_1,\ldots, b_d$ 
in $\mathbb Z^d/A\mathbb Z^d$. We set $Y=\left\{b_1,\ldots, b_n\right\}$.
We consider the set $X$ of vectors in $\mathbb R^d$ of the form
\[
x=\sum_{n=1}^\infty A^{-n}c_n
\]
where to make connections with homogeneous Markov chains (see \eqref{agathe} for the latter) we define

\begin{equation}
F_{b_j}(x)=A^{-1}(x+b_j),\quad j=1,\ldots, d
\end{equation}
and $\sigma(x)=Ax$ modulo $\mathbb Z^d$.\smallskip

Since $Y$ is a finite set, a probability measure on $Y$ is given by a finite number of positive numbers $p_1,\ldots ,p_d$ adding up to $1$, and the transfer operator
is now given by
\begin{equation}
Rf=\sum_{j=1}^d p_jf\circ F_{b_j}.
\end{equation}

Consider the set ${\rm Prob}(X)$ of probabilities on $X$, and let $\nu,\mu\in {\rm Prob}(X)$. Consider 
the Hausdorff distance between $\nu$ and $\mu$:
\[
d_H(\nu,\mu)=\sup\left\{ \int_Xf(x)(d\nu(x)-d\mu(x))\right\}
\]
where the supremum is on the set of all Lipschitz functions:
\[
|f(x)-f(y)|\le \|x-y\|,\quad(\text{with $\|x-y\|$ being the usual distance in $\mathbb R^d$}).
\]
Define a measure $\nu R$ on $X$ via
\[
\int_X f(d(\nu R))=\int_X(Rf)d\nu.
\]
A theorem of Hutchinson (see \cite{BrJo02a,MR625600}) states that the map  $\nu\mapsto \nu R$ is then strictly contractive. 
There exists $\alpha\in(0,1)$ such that
\[
d_H(R\nu,R\mu)\le\alpha d_H(\nu,\mu).
\]
Existence and uniqueness of a solution to the equation $\nu R=\nu$ follows from Banach fixed point theorem.
\subsection{The Gauss operator}
The present example is related to number theory and has links with information theory; see \cite{blachman1965,MR755797,MR1176073,Ryll1951f}.
We take $X=(0,1)$ and $d\lambda(x)=dx$, and
\begin{equation}
\sigma(x)=<\frac{1}{x}>,
\end{equation}
where $<\cdot>$ denotes the ``fractional part'', defined as follows: If $x\in(\frac{1}{k+1},\frac{1}{k})$ then $\sigma(x)=\frac{1}{x}-k$. 
We also define $\tau_k(x)=\frac{1}{x+k}$ with $k=1,2,\ldots$. Note that
\[
\sigma\circ\tau_k(x)=x,\quad k=1,2,\ldots
\]
The solenoid (see Definition \ref{defsol123}) associated with $X$ is described as follows:
\begin{equation}
\label{sol-voltaire}
{\rm Sol}_\sigma(0,1)=\left\{(x_0,x_1,\ldots )\in\prod_{n=0}^\infty (0,1)\,\,\mbox{\text{such that}}\,\, x_k=<\frac{1}{x_{k+1}}>\right\}.
\end{equation}
We thus obtain the continued fraction associated with $x_0$.
\[
\begin{split}
x_0&=k_1+\dfrac{1}{\sigma(x_1)}\\
&=k_1+\dfrac{1}{k_2+\dfrac{1}{\sigma(x_2)}}\\
&=k_1+\dfrac{1}{k_2+\dfrac{1}{k_3+\dfrac{1}{\ddots}}}\\
&\hspace{2mm}\vdots
\end{split}
\]

Now the transfer operator is given by:
\begin{equation}
(Rf)(x)=\sum_{n=1}^\infty\frac{1}{(n+x)^2}f\left(\frac{1}{n+x}\right).
\end{equation}
\begin{proposition}
Let $h(x)=\frac{1}{\ln2}\frac{1}{1+x}$. Then $\int_0^1h(x)dx=1$ and $\lambda R=\lambda$.
\end{proposition}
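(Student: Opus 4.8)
The plan is to verify the two assertions separately, both by direct computation; neither requires the general machinery of Sections~2 and~3. First, the normalization $\int_0^1 h(x)\,dx = 1$ is immediate: since $h(x) = \frac{1}{\ln 2}\frac{1}{1+x}$ and $\int_0^1 \frac{dx}{1+x} = \ln 2$, one gets $\int_0^1 h\,dx = \frac{1}{\ln 2}\cdot \ln 2 = 1$, so that $h\,d\lambda$ is a probability measure on $(0,1)$.

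For the identity $\lambda R = \lambda$, I recall the definition \eqref{lambdaR} of the measure $\lambda R$, according to which $\lambda R = \lambda$ is equivalent to
\[
\int_0^1 (Rf)(x)\,dx = \int_0^1 f(x)\,dx, \qquad \forall f\in\mathcal M((0,1),\mathcal B).
\]
I would first treat nonnegative $f$, so that Tonelli's theorem licenses interchanging the integral with the series defining $R$, the general case then following by splitting $f=f^+-f^-$. Writing
\[
\int_0^1 (Rf)(x)\,dx = \sum_{n=1}^\infty \int_0^1 \frac{1}{(n+x)^2}\, f\!\left(\frac{1}{n+x}\right)dx,
\]
I evaluate the $n$-th term by the substitution $u = \frac{1}{n+x}$, for which $\frac{dx}{(n+x)^2} = -du$ and the limits $x=0,\,x=1$ correspond to $u=\frac1n,\,u=\frac{1}{n+1}$; this turns the $n$-th integral into $\int_{1/(n+1)}^{1/n} f(u)\,du$.

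The one step requiring a moment's care is the combinatorial observation that the intervals $\bigl(\frac{1}{n+1},\frac1n\bigr)$, $n=1,2,\dots$, tile $(0,1)$ up to a countable endpoint set (they are precisely the branch intervals of the Gauss map $\sigma(x)=\langle 1/x\rangle$). Consequently the series of integrals reassembles into a single integral,
\[
\int_0^1 (Rf)(x)\,dx = \sum_{n=1}^\infty \int_{1/(n+1)}^{1/n} f(u)\,du = \int_0^1 f(u)\,du,
\]
which is the asserted $\lambda R = \lambda$. I expect no serious obstacle; the only points to watch are the integrability hypotheses justifying the term-by-term change of variables (handled by Tonelli for $f\ge 0$) and the verification that the $\sigma$-branches partition $(0,1)$.

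Finally, although the statement asks only for these two facts, I would record that $h$ is the harmonic function for this $R$: substituting and using $1+\frac{1}{n+x}=\frac{n+x+1}{n+x}$ gives $(Rh)(x)=\frac{1}{\ln 2}\sum_{n\ge 1}\frac{1}{(n+x)(n+x+1)}$, a telescoping sum equal to $\frac{1}{\ln 2}\frac{1}{1+x}=h(x)$. Note that $R$ itself is not normalized (here $(R1)(x)=\sum_{n\ge1}(n+x)^{-2}\not\equiv 1$), so to fit the normalized framework one passes to $R'f=R(fh)/h$, which satisfies $R'1=1$, as in Remark~\ref{utopia}. Together with $\lambda R=\lambda$ and $Rh=h$, this exhibits the Gauss map as a generator of a solenoid path-space in the sense of Theorem~\ref{thmmm}.
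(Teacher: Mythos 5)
Your proof is correct and follows essentially the same route as the paper's: interchange the sum with the integral, substitute $y=\frac{1}{n+x}$ to turn the $n$-th term into $\int_{1/(n+1)}^{1/n}f(y)\,dy$, and observe that the branch intervals $\left(\frac{1}{n+1},\frac{1}{n}\right)$ tile $(0,1)$. Your extra touches---the explicit Tonelli justification for the interchange and the telescoping verification that $Rh=h$---go slightly beyond the paper's terser argument but do not change the method.
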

\begin{proof}
We have
\[
\begin{split}
\lambda R=\lambda&\iff \int_0^1\left(\sum_{n=1}^\infty \frac{1}{(n+x)^2}f\left(\frac{1}{n+x}\right)\right)dx=\int_0^1f(x)dx\\
&\iff \sum_{n=1}^\infty\int_0^1\frac{1}{(n+x)^2}f\left(\frac{1}{n+x}\right)dx
= \int_0^1f(x).
\end{split}
\]
Note that the change of variable $y=\frac{1}{n+x}$ leads to
\[
\int_{0}^1\frac{1}{(n+x)^2}f\left(\frac{1}{n+x}\right)dx=\int_{\frac{1}{n+1}}^{\frac{1}{n}}f(y)dy,
\]
and hence
\[
\sum_{n=1}^\infty\int_0^1\frac{1}{(n+x)^2}f\left(\frac{1}{n+x}\right)dx= \sum_{n=1}^\infty\int_{\frac{1}{n+1}}^{\frac{1}{n}}f(x)dx=\int_0^1f(x)dx,
\]
and the result follows.
\end{proof}

\bibliographystyle{plain}
\def\cfgrv#1{\ifmmode\setbox7\hbox{$\accent"5E#1$}\else
  \setbox7\hbox{\accent"5E#1}\penalty 10000\relax\fi\raise 1\ht7
  \hbox{\lower1.05ex\hbox to 1\wd7{\hss\accent"12\hss}}\penalty 10000
  \hskip-1\wd7\penalty 10000\box7} \def\cprime{$'$} \def\cprime{$'$}
  \def\cprime{$'$} \def\lfhook#1{\setbox0=\hbox{#1}{\ooalign{\hidewidth
  \lower1.5ex\hbox{'}\hidewidth\crcr\unhbox0}}} \def\cprime{$'$}
  \def\cprime{$'$} \def\cprime{$'$} \def\cprime{$'$} \def\cprime{$'$}
  \def\cprime{$'$}

\end{document}